\newcommand*{\isoarrow}[1]{\arrow[#1,"\rotatebox{90}{\(\sim\)}"]}
\theoremstyle{plain}
\newtheorem{theorem}{Theorem}[section]
\newtheorem{conjecture}[theorem]{Conjecture}
\newtheorem{prop}[theorem]{Proposition}
\newtheorem{corollary}[theorem]{Corollary}
\newtheorem{lemma}[theorem]{Lemma}
\newcommand{\longtwoheadrightarrow}{\relbar\joinrel\twoheadrightarrow}
\theoremstyle{definition}
\newtheorem{definition}[theorem]{Definition}
\newtheorem{remark}[theorem]{Remark}
\long\def\symbolfootnote[#1]#2{\begingroup
\def\thefootnote{\fnsymbol{footnote}}\footnote[#1]{#2}\endgroup}
\DeclareMathOperator{\GL}{GL}
\def\PP{{\mathbf P}}
\def\A{\mathbf{A}}
\def\cA{\mathcal{A}}
\def\sgn{\mathrm{sgn}}
\def\N{\mathrm{N}}
\def\1{\mf{1}}
\DeclareMathOperator{\id}{id}
\DeclareMathOperator{\KS}{SKu}
\DeclareMathOperator{\Fitt}{Fitt}
\DeclareMathOperator{\Ann}{Ann}
\DeclareMathOperator{\Ind}{Ind}
\DeclareMathOperator{\nr}{nr}
\DeclareMathOperator{\chr}{char}
\DeclareMathOperator{\Frac}{Frac}
\DeclareMathOperator{\RBS}{RBS}
\DeclareMathOperator{\res}{res}
\DeclareMathOperator{\cyc}{cyc}
\DeclareMathOperator{\adj}{adj}
\DeclareMathOperator{\cond}{cond}
 \DeclareMathOperator{\Norm}{Norm}
\DeclareMathOperator{\rec}{rec} 
\DeclareMathOperator{\Hom}{Hom} \DeclareMathOperator{\End}{End}
\DeclareMathOperator{\coker}{coker} \DeclareMathOperator{\Tr}{Tr}
\DeclareMathOperator{\ord}{ord}
\DeclareMathOperator{\cusps}{cusps}
 \DeclareMathOperator{\real}{Re}
 \DeclareMathOperator{\Cl}{Cl}
\DeclareMathOperator{\lcm}{lcm} 
 \DeclareMathOperator{\Frob}{Frob}
\DeclareMathOperator{\Ext}{Ext}
\DeclareMathOperator{\Sel}{Sel}
\DeclareMathOperator{\tr}{tr}
\DeclareMathOperator{\rank}{rank}
\DeclareMathOperator{\cont}{cont}
\newcommand{\bpsi}{\pmb{\psi}}
\newcommand{\mat}[4]{\begin{pmatrix}{#1} & {#2} \\ {#3} & {#4}
\end{pmatrix}}
\newcommand{\stack}[2]{\genfrac{}{}{0pt}{}{#1}{#2}}
\newcommand{\mf}{\mathfrak }
\newcommand{\mscr}{\mathscr}
\def\fa{\mathfrak{a}}
\def\fn{\mathfrak{n}}
\def\ft{\mathfrak{N}}
\def\fp{\mathfrak{p}}
\def\fq{\mathfrak{q}}
\def\fm{\mathfrak{m}}
\def\ft{\mathfrak{t}}
\def\fr{\mathfrak{r}}
\def\fl{\mathfrak{l}}
\def\fP{\mathfrak{P}}
\def\fm{\mathfrak{m}}
\def\fM{\mathfrak{M}}
\def\fd{\mathfrak{d}}
\def\fO{\mathfrak{O}}
\def\T{\mathbf{T}}
\def\Z{\mathbf{Z}}
\def\Q{\mathbf{Q}}
\def\C{\mathbf{C}}
\def\R{\mathbf{R}}
\def\bdf{\begin{defn}}
\def\edf{\end{defn}}
\def\cH{\mathcal{H}}
\def\cO{\mathcal{O}}
\def\cC{\mathcal{C}}
\def\fb{\mathfrak{b}}
\def\fc{\mathfrak{c}}
\def\Gal{{\rm Gal}}
\def\ab{{\rm ab}}
\def\cF{{\cal F}}
\def\ram{\text{ram}}
\def\ab{\text{ab}}
\def\sL{{\mscr L}}
\begin{document}
\baselineskip 15.8pt

\date{May 14, 2022}

\title{On the Brumer--Stark Conjecture}
\author{Samit Dasgupta \\ Mahesh Kakde}

\maketitle

\begin{abstract}

Let $H/F$ be a finite abelian extension of number fields with $F$ totally real and $H$ a CM field.
Let $S$ and $T$ be disjoint finite sets of places of $F$ satisfying the standard conditions. The Brumer--Stark conjecture states that the Stickelberger element $\Theta^{H/F}_{S, T}$ annihilates the $T$-smoothed class group $\Cl^T(H)$.  We prove this conjecture away from $p=2$, that is, after tensoring with $\Z[1/2]$.  We prove a stronger version of this result conjectured by Kurihara that gives a formula for the 0th Fitting ideal of the minus part of the Pontryagin dual of $\Cl^T(H) \otimes \Z[1/2]$ in terms of Stickelberger elements.
   We also show that this stronger result implies Rubin's higher rank version of the Brumer--Stark conjecture, again away from 2.

Our technique is a generalization of Ribet's method, building upon on our earlier work on the Gross--Stark conjecture.  Here we work with group ring valued Hilbert modular forms as introduced by Wiles.  A key aspect of our approach is the construction of  congruences between cusp forms and Eisenstein series that are stronger than usually expected, arising as shadows of the trivial zeroes of $p$-adic $L$-functions.  These stronger congruences are essential to proving that the cohomology classes we construct are unramified at $p$.  
\end{abstract}

\tableofcontents

\section{Introduction}

Let $F$ be a totally real field of degree $n$ over $\Q$.  Let $H$ be a finite abelian extension of $F$ that is a CM field.  Write $G = \Gal(H/F)$.   
 Associated to any character $\chi \colon G \longrightarrow \C^*$  one has the Artin $L$-function
\[ 
L(\chi, s) = \prod_{\fp} \frac{1}{1 - \chi(\fp) \N\fp^{-s}}, \qquad \real(s) > 1, 
\]
where the product ranges over  the maximal ideals $\fp \subset \cO_F$.  We adopt the convention that $\chi(\fp) = 0$ if $\chi$ is ramified at $\fp$.  The Artin $L$-function $L(\chi, s)$ has a meromorphic continuation to $\C$ that is analytic if $\chi \neq 1$, and has only a single simple pole at $s=1$ if $\chi=1$.

Let $\Sigma, \Sigma'$ denote disjoint finite sets of places of $F$ with $\Sigma \supset S_\infty$, the set of infinite places of $F$.  We do not impose any other conditions on $\Sigma, \Sigma'$.

The ``$\Sigma$-depleted, $\Sigma'$-smoothed" $L$-function of $\chi$ is defined by
\[ 
L_{\Sigma,\Sigma'}(\chi, s) = L(\chi, s) \prod_{\fp \in \Sigma \setminus S_\infty} (1 - \chi(\fp)\N\fp^{-s}) \prod_{\fp \in \Sigma'} (1 - \chi(\fp)\N\fp^{1-s}). 
\]
These $L$-functions can be packaged together into a Stickelberger element
\[ 
\Theta_{\Sigma, \Sigma'}^{H/F}(s) \in \C[G] 
\] 
defined by (we drop the superscript $H/F$ when unambiguous)
\[ 
\chi(\Theta_{\Sigma,\Sigma'}(s)) = L_{\Sigma, \Sigma'}(\chi^{-1}, s) \qquad \text{ for all } \chi \in \hat{G}. 
\]

A classical  theorem of Siegel, Klingen and Shintani implies that the specialization  \[ \Theta_{\Sigma, \Sigma'} = \Theta_{\Sigma, \Sigma'}(0) \] lies in $\Q[G]$.  For an integral statement, we must impose conditions on the depletion and smoothing sets.  Let $S, T$ denote disjoint finite sets of places of $F$ with $S\supset S_\infty \cup S_{\ram}$, where $S_{\ram}$ denotes the set of finite primes of $F$ ramified in $H$.  We impose the following condition on $T$.
\begin{equation}
\label{e:drcond}
  \parbox{\dimexpr\linewidth-4em}{
    \strut
Let $T_H$ denote the set of primes of $H$ above those in $T$.  The group of roots of unity $\zeta \in \mu(H)$ such that $\zeta \equiv 1 \!\!\!\!\!\pmod{\fp}$ for all $\fp \in T_H$ is trivial.
    \strut
  }
  \end{equation}
If $T$ contains two primes of different residue characteristic, or one prime of residue characterstic larger than $[F:\Q] + 1$, then this condition automatically holds. 
  A celebrated theorem of  Deligne--Ribet \cite{dr} and Cassou-Nogu\`es \cite{cn} states that \begin{equation} \label{e:dr}
 \Theta_{S,T} \in \Z[G]. \end{equation}

Let $\Cl^T(H)$ denote the ray class group of $H$ with conductor equal to the product of primes  in $T_H$.  This is defined as follows.
Let $I_T(H)$ denote the group of fractional ideals of $H$ relatively prime to the primes in $T_H$.
  Let $P_{T}(H)$ denote the subgroup of $I_T(H)$ generated by principal ideals $(\alpha)$ where $\alpha \in \cO_{H}$ satisfies $\alpha \equiv 1 \pmod{\fp}$ for all $\fp \in T_H$.  Then \[ \Cl^T(H) = I_T(H)/P_{T}(H). \]  This $T$-smoothed class group is naturally a $\Z[G]$-module.  
  
  Further, for later use, we define $\Cl^T_S(H)$ as follows. Let $S$ be a finite set of primes of $F$ disjoint from $T$. The group $\Cl^T_S(H)$ is defined as the quotient of $\Cl^T(H)$ by the subgroup generated by classes of primes of $H$ lying above primes in $S$.
  
  The following conjecture stated by Tate (\cite{tatebook}*{Conjecture~IV.6.2}) is often called the Brumer--Stark conjecture.  Note that the actual conjecture stated by Tate is very slightly stronger---see the discussion following (\ref{e:actual}) below.  This discrepancy disappears when 2 is inverted as it is in our results.

\begin{conjecture}[``The Brumer--Stark Conjecture"] \label{c:bs}  We have
\begin{equation} \label{e:bs}
 \Theta_{S,T} \in \Ann_{\Z[G]}(\Cl^T(H)). 
 \end{equation}
\end{conjecture}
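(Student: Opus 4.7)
The plan is to deduce the Brumer--Stark conjecture away from $2$ from a stronger statement, conjectured by Kurihara, giving a formula for the $0$th Fitting ideal of the minus part of the Pontryagin dual of $\Cl^T(H) \otimes \Z[1/2]$ in terms of Stickelberger elements. Once this Fitting ideal identity is in hand, the annihilation statement \eqref{e:bs} (after inverting $2$) follows immediately, since the $0$th Fitting ideal is always contained in the annihilator, and the minus part contains all the content of $\Cl^T(H) \otimes \Z[1/2]$ that $\Theta_{S,T}$ can see. The case of $2$-parts is untouched by the method and is left aside. A first step is therefore to set up the decomposition $\Z[1/2][G]^- = \prod_{p \text{ odd}} \Z_p[G]^-$ and, after extending scalars to $\Z_p[\chi]$, to break the problem into $\chi$-isotypic pieces for each odd character $\chi$ of $G$.

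For each odd prime $p$ and each such $\chi$, I would prove the desired Fitting ideal equality by climbing to a cyclotomic Iwasawa tower. Let $H_\infty$ be the compositum of $H$ with the cyclotomic $\Z_p$-extension of $F$, set $\mathcal{G} = \Gal(H_\infty/F)$, and work over the Iwasawa algebra $\Lambda = \Z_p[[\mathcal{G}]]$. The goal becomes an equivariant Iwasawa main conjecture identifying the characteristic ideal of a certain Iwasawa-theoretic analogue of $\Cl^T(H_\infty)^-$ with the image of the Stickelberger elements. The hard direction is the divisibility producing enough relations in the class group, and this is where a Ribet-type argument enters: following Wiles, I would construct group-ring-valued Hilbert modular Eisenstein series over $F$ whose constant terms at the various cusps encode $\Theta_{S,T}$ and related Stickelberger data. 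Using Hida theory, I would produce a cuspidal family congruent to this Eisenstein family modulo a specified power of the Stickelberger element, attach to it a Galois representation, and extract nontrivial extension classes in $H^1(G_F, \Z_p(1) \otimes \chi^{-1})$ in the style of Mazur--Wiles. A lattice-counting argument then converts these classes into the Fitting ideal divisibility.

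The main obstacle, and the point at which this problem differs essentially from the Mazur--Wiles setting, is to guarantee that the cohomology classes produced are unramified at the primes above $p$. The naive congruence predicted by the constant term of the Eisenstein series produces classes whose $p$-local behaviour is controlled only up to the local Euler factors $(1 - \chi(\fp)\N\fp^{-s})$ at primes $\fp \mid p$, precisely the factors responsible for the trivial zeros of the $p$-adic $L$-function. To unwind this, I would prove a congruence between the cuspidal and Eisenstein families that is stronger than what the constant term alone dictates: such a strengthened congruence is exactly the shadow on the modular side of those trivial zeros, and extending our earlier work on the Gross--Stark conjecture to the group-ring-valued setting is what yields it. Cashing in the stronger congruence gives extension classes that are genuinely unramified at $p$, which is precisely what is needed to land in the correct Selmer group and close the argument.
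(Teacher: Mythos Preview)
Note first that the statement is a \emph{conjecture}; the paper proves only its prime-to-$2$ part (Theorem~\ref{t:ptt}), which is also what your proposal targets. Your high-level plan---deduce annihilation from Kurihara's Fitting ideal formula, run Ribet's method with group-ring-valued Hilbert modular forms, and force unramifiedness at $p$ via congruences stronger than the Eisenstein constant term predicts (``shadows of trivial zeros'')---matches the paper's central ideas. The gap is in the framework: you propose to climb the cyclotomic $\Z_p$-tower, prove an equivariant main conjecture over $\Lambda=\Z_p[[\mathcal{G}]]$, and descend. This is exactly the route prior partial results took (Greither, Greither--Popescu, Burns), and in each case descent required auxiliary hypotheses such as $\mu_p(F)=0$ or the Gross--Kuzmin conjecture. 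Control of a characteristic ideal over $\Lambda$ does not by itself pin down $\Fitt_{\Z_p[G]^-}$ at finite level, and your sketch does not say how to make this step unconditional.

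The paper sidesteps the tower entirely. It replaces $\Cl^T(H)^{\vee}$, which is not in general quadratically presented, by the Selmer module $\Sel_\Sigma^{\Sigma'}(H)$ of Burns--Kurihara--Sano and its transpose, the Ritter--Weiss module $\nabla_\Sigma^{\Sigma'}(H)$; for the choice (\ref{e:ssi}) these \emph{are} quadratically presented over $\Z_p[G]^-$. Ribet's method with the trivial-zero congruences is then run directly over the finite group ring to give $\Fitt_R(\Sel_\Sigma^{\Sigma'}(H)_R)\subset(\Theta_{\Sigma,\Sigma'}^\#)$, and an analytic class-number-formula argument (\S\ref{s:die}) upgrades the inclusion to equality. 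These finite-level ingredients---the quadratically presented Selmer module and the ``divisibility implies equality'' mechanism---are what let the paper avoid Iwasawa-theoretic descent and its $\mu$-invariant obstructions, and they are absent from your plan.
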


A corollary of our main result is the prime-to-2 part of the Brumer--Stark conjecture.
\begin{theorem} \label{t:ptt}
 We have
\begin{equation} \label{e:ptt}
 \Theta_{S,T} \in \Ann_{\Z[G]}(\Cl^T(H)) \otimes \Z[\textstyle\frac{1}{2}]. \end{equation}
\end{theorem}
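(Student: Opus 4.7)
The plan is to deduce Theorem~\ref{t:ptt} from the stronger main result of the paper: the Kurihara-style formula computing the $0$th Fitting ideal of the minus part of the Pontryagin dual of $\Cl^T(H)\otimes\Z[\tfrac12]$ in terms of Stickelberger elements. Because a Fitting ideal always annihilates the module it computes, and because $\Theta_{S,T}$ will visibly lie in the conjectural Fitting ideal, the annihilation statement (\ref{e:ptt}) is immediate from that formula. Moreover, since $H/F$ is CM over a totally real field, complex conjugation splits $\Cl^T(H)\otimes\Z[\tfrac12]$ into plus and minus eigenspaces; on the plus part the relevant $L$-values vanish after inverting $2$, so $\Theta_{S,T}$ annihilates that part automatically and the whole problem reduces to the minus side, which is precisely what the Fitting ideal formula controls.

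To prove the Fitting ideal formula itself, I would argue one odd prime $p$ at a time, localizing $\Z_p[G]$ at a maximal ideal $\fm$ corresponding to an odd $p$-adic character $\chi$ of $G$. By class field theory, bounding the $\fm$-localized class group from below, equivalently producing generators of its annihilator, is tantamount to constructing enough classes in a Galois cohomology group $H^1_S$ parametrizing certain extensions of Galois modules that are unramified outside $S$ and, crucially, unramified at $p$. Following the Ribet--Wiles paradigm generalized in earlier work of the authors on the Gross--Stark conjecture, I would manufacture these classes from group-ring-valued Hilbert modular forms on $\GL_2/F$: first build an Eisenstein series with coefficients in $\Z_p[G]_\fm$ whose constant term at the cusps realizes the Stickelberger element $\Theta_{S,T}$; then, via a Hida/Hecke-algebra argument, produce a cuspidal Hilbert modular form $f$ congruent to this Eisenstein series modulo the ideal generated by $\Theta_{S,T}$; attach to $f$ the associated two-dimensional Galois representation $\rho_f$, which is residually reducible of the form $\bigl(\begin{smallmatrix} \ast & \ast \\ 0 & \ast \end{smallmatrix}\bigr)$; and extract the desired cocycle from the upper-right entry of $\rho_f$ modulo the congruence ideal.

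The main obstacle, and the technical innovation flagged in the abstract, will be forcing this cocycle to be unramified at the prime $p$ itself. The congruences provided by ordinary Hida theory have depth equal to the $p$-adic valuation of the relevant $p$-adic $L$-value, and that valuation is artificially inflated by trivial zeroes of the $p$-adic $L$-function, a phenomenon endemic to the totally real CM setting. A congruence only that deep produces a class which, \emph{a priori}, is unramified only outside $S\cup\{p\}$ but possibly ramified at $p$, which is not enough to land in the class group. The crux of the argument will therefore be to construct \emph{stronger-than-expected} congruences between cusp forms and Eisenstein series, turning each trivial zero of the $p$-adic $L$-function from an obstruction into an extra source of congruence, so that together with the ordinarity of $\rho_f$ at $p$ and local class field theory the cocycle is compelled into the unramified cohomology subgroup. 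Once that sharper congruence is secured, the Fitting ideal formula, and hence Theorem~\ref{t:ptt}, should follow.
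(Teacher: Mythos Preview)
Your proposal is correct and follows essentially the same approach as the paper: deduce Theorem~\ref{t:ptt} from the Kurihara Fitting-ideal formula via the chain $\Theta_{S,T}^\# \in \KS^T(H/F) = \Fitt(\Cl^T(H)^{\vee,-}) \subset \Ann(\Cl^T(H)^{\vee,-})$, then pass back from the dual using $\Ann(M^\vee)=\Ann(M)^\#$ and the fact that complex conjugation acts by $-1$ on $\Theta_{S,T}$. Your outline of the proof of the Fitting-ideal formula itself (Ribet's method with group-ring-valued Hilbert modular forms, the ramification-at-$p$ obstacle, and the extra congruences coming from trivial zeroes) also matches the paper's strategy, though the paper additionally routes through auxiliary Selmer and Ritter--Weiss modules $\Sel_\Sigma^{\Sigma'}$, $\nabla_\Sigma^{\Sigma'}$ and uses an analytic-class-number-formula argument to upgrade the one-sided Fitting-ideal inclusion to an equality.
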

 
Let us briefly describe the history of the Brumer--Stark conjecture as well as its significance.  In 1890, Stickelberger proved (\ref{e:bs}) when $F = \Q$ by computing the ideal factorization of Gauss sums in cyclotomic fields \cite{stickelberger}.
In the late 1960s, Brumer defined and studied the Stickelberger element $\Theta_S = \Theta_{S, \emptyset}$ for arbitrary totally real fields $F$, generalizing Stickelberger's  construction.
Brumer conjectured that any element of $(\Theta_S \cdot \Z[G]) \cap \Z[G]$  annihilates $\Cl(H)/\overline{\Cl(F)}$, where $\overline{\Cl(F)}$ denotes the image of $\Cl(F)$ in $\Cl(H)$ under the natural map induced by extension of ideals.  This conjecture was not published by Brumer, but was described in lectures and became well-known to researchers in the field \cite{coates}.  Brumer's conjecture is explicitly stated for real quadratic  $F$  in the 1970 Ph.D.~thesis of Rideout  \cite{rideout}*{Theorem 1.15}.  See also the paper of Coates--Sinnott, where Brumer's ideas are discussed \cite{cs}*{Pp.~254 and 256}.

Throughout the 1970's Stark conducted a series of deep investigations into refinements of the analytic class number formula.  His ``rank one abelian conjecture," stated in \cite{stark}, proposed the existence of units $u$ in abelian extensions $H/F$ whose absolute values at all conjugates of a given archimedean place $w$ of $H$ are described explicitly in terms of the first derivatives at 0 of the $L$-functions of the extension $H/F$.  In addition, Stark observed in the cases he studied the following interesting condition: if $e = \#\mu(H)$ denotes the number of roots of unity in $H$, then the extension $H(u^{1/e})/F$ is abelian.  See Stark's pleasant exposition \cite{starkpc} for a description 
of the origin of his work on these conjectures, and in particular his discovery of this ``abelian" condition (\S4).

Tate realized that Brumer's conjecture and Stark's conjecture could be stated simultaneously in the same notational framework using an arbitrary place $v$ of $F$ that splits completely in $H$; when $v$ is finite one recovers Brumer's conjecture, and when $v$ is infinite one recovers Stark's rank one abelian conjecture.  Tate introduced the smoothing set $T$ and noted that Stark's abelian condition  can be interpreted as the statement that $ \Theta_{S,T}$ annihilates $\Cl^T(H)$, and not just the class group $\Cl(H)$.  Because of the incorporation of Stark's abelian condition into the conjecture, he called the conjecture the Brumer--Stark conjecture \cite{tatebook}*{\S4.6}.

\medskip

The Brumer--Stark conjecture can be related to Hilbert's 12th problem as follows.  Let $\fp \not \in S \cup T$ denote a prime of $F$ that splits completely in $H$.  Pick a prime $\fP$ of $H$ above $F$ and define $\zeta_{S,T}(\sigma)$ by writing $\Theta_{S,T} = \sum_{\sigma \in G} \zeta_{S,T}(\sigma)[\sigma^{-1}].$
  Conjecture~\ref{c:bs} implies that the ideal
\begin{equation} \label{e:actual}
 \fP^{\Theta_{S,T}} = \prod_{\sigma \in G} \sigma^{-1}(\fP)^{\zeta_{S,T}(\sigma)} \end{equation}
is a principal ideal $(u)$ generated by an element $u \equiv 1 \pmod{\fp \cO_H}$ for all $\fp \in T$.  
A very mild refinement of Conjecture~\ref{c:bs}, which was the actual statement proposed by Tate, is that the generator $u$ can be chosen to satisfy $\overline{u} = u^{-1}$, where $\overline{u}$ denotes the image of $u$ under the complex conjugation of $H$.  (In any case the quotient $v = u/\overline{u}$ for any generator $u$ would satisfy $\overline{v} = v^{-1}$ and generate the ideal $\fP^{2\Theta_{S,T}}$, so this refinement only concerns a factor of 2.)
The element $u$ satisfying these properties is unique and is 
called a Brumer--Stark unit.   This is a canonical $\fp$-unit in $H$ with valuations at primes above $\fp$ determined by the $L$-functions of the extension $H/F$:
\[  \sum_{\sigma \in G} \chi(\sigma) \ord_{\sigma^{-1}(\fP)}(u) = L_{S, T}(\chi, 0)  \]
 for all $\chi \in \hat{G}.$  The conjectural existence of the elements $u \in H$  suggests the possibility of an explicit class field theory for the ground field $F$.  This perspective is explored further in our forthcoming work~\cite{dk}, where we prove an explicit $p$-adic analytic formula for Brumer--Stark units and give applications to Hilbert's 12th problem for $F$. 

\subsection{Main Result}

Kurihara stated a refinement of the prime-to-2 part of Conjecture~\ref{c:bs} known as the Strong Brumer--Stark conjecture.    Let 
 \[ \Cl^T(H)^\vee = \Hom_{\Z}(\Cl^T(H), \Q/\Z) \]
 denote the Pontryagin dual of $\Cl^T(H)$ endowed with the contragradient $G$-action: \[ \sigma(f)(c)= f(\sigma^{-1}c). \]
 Let $x \mapsto x^\#$ denote the involution on $\Z[G]$ induced by $g \mapsto g^{-1}$ for $g \in G$. 
 Finally, for a $\Z[\frac{1}{2}][G]$-module $M$, let 
 \[ M^- = M/(\sigma+1) \cong \{m \in M \colon \sigma m = -m \}, \]
 where $\sigma \in G$ denotes the unique complex conjugation of $H$.
 If $M$ is only a $\Z[G]$-module, we let $M^- = (M \otimes_\Z \Z[\frac{1}{2}])^-$.  In particular $\Z[G]^- = \Z[\frac{1}{2}][G]/(\sigma + 1)$.  The following is a corollary of our main result. 
 
 \begin{theorem}[``Strong Brumer--Stark", Conjecture of Kurihara] \label{t:sbs}  We have
\begin{equation} \label{e:sbs}
 \Theta_{S,T}^\# \in \Fitt_{\Z[G]^-}(\Cl^T(H)^{\vee,-}). \end{equation}
\end{theorem}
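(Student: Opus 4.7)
The plan is to prove the statement one odd prime at a time: since $2$ is inverted, it suffices to show, for every odd prime $p$, that $\Theta_{S,T}^\#$ lies in $\Fitt_{\Z_p[G]^-}(\Cl^T(H)^{\vee,-}\otimes\Z_p)$. After enlarging $S$ by a set of auxiliary primes $\fq$ with Frobenius lying in a prescribed coset of $G$ (whose contribution is removed at the end), I would pass to the Iwasawa-theoretic setup by replacing $H$ with the compositum $H_\infty = H\cdot F_\infty$, where $F_\infty/F$ is the cyclotomic $\Z_p$-extension. Over $H_\infty$ the Stickelberger element becomes the Deligne--Ribet $p$-adic $L$-function, which by the Iwasawa main conjecture for totally real fields (Wiles) generates the characteristic ideal of the minus part of the Iwasawa-theoretic dual $T$-smoothed class group. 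One half of the problem then becomes an algebraic descent: passing from ``characteristic ideal'' to ``Fitting ideal'' and from $H_\infty$ back to $H$, controlled via a Ritter--Weiss/Greither-type commutative algebra argument together with bounds on local cohomology at primes in $S$.

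The analytic half, which is the arithmetic heart of the paper, is the production of enough Galois cohomology classes to realize $\Theta_{S,T}^\#$ inside the Fitting ideal. Following the Ribet method in the form developed by Wiles, I would introduce group-ring-valued Hilbert modular Eisenstein series on $\GL_2/F$ whose constant terms encode the Stickelberger element $\Theta_{S,T}^\#$, and then modify these Eisenstein series to kill their constant terms modulo $\Theta_{S,T}^\#$, producing a cuspidal form congruent to the Eisenstein series modulo the Stickelberger element. Applying a Galois representations-to-cusp-forms construction and performing a careful lattice argument on the resulting Hecke eigensystem gives a two-dimensional representation $\rho \colon G_F \to \GL_2$ over a suitable quotient of $\Z_p[G]$ whose semisimplification is $1 \oplus \chi_{\cyc}^{-1}$; the upper-right entries of $\rho$ define cocycle classes in $H^1(G_F, \Z_p(1)(\psi))$ for appropriate characters $\psi$, which via Kummer theory and global class field theory land in $\Cl^T(H)^{\vee,-} \otimes \Z_p$. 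Varying the auxiliary data and tracking the resulting presentation shows that the span of these classes cuts out a submodule of the dual class group whose complementary Fitting ideal contains $\Theta_{S,T}^\#$.

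The main obstacle --- and the new input over prior Ribet-style arguments --- is showing that these cohomology classes are \emph{unramified everywhere}, and in particular at the primes above $p$. Away from $p$ the usual local analysis of Galois representations attached to Hilbert modular forms suffices, but at a $p$-adic prime $\fp$ the ordinary local Galois representation is only upper-triangular rather than unramified, so a naive construction produces classes that are ramified at $\fp$. The resolution, extending the strategy of the authors' prior work on Gross--Stark, is to prove that the Eisenstein/cusp-form congruence actually holds modulo a power of $\Theta_{S,T}^\#$ strictly larger than what dimension-counting predicts --- a ``stronger than expected'' congruence that reflects the trivial zeros of the associated $p$-adic $L$-function at primes above $p$. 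This extra divisibility is precisely what is needed to project the local class at $\fp$ into the unramified subspace. Once unramification is in hand, combining the resulting lower bound on the submodule generated by Ribet classes with the Iwasawa-theoretic upper bound from the main conjecture pins down the Fitting ideal and yields $\Theta_{S,T}^\# \in \Fitt_{\Z[G]^-}(\Cl^T(H)^{\vee,-})$, from which Theorem~\ref{t:ptt} follows as a corollary.
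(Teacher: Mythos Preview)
Your proposal captures the Ribet-method core and correctly identifies the ``extra congruences at $p$'' as the decisive new input, but it diverges from the paper's actual architecture in two substantive ways.

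First, the paper does \emph{not} pass to the cyclotomic $\Z_p$-extension or invoke the Iwasawa Main Conjecture. The previous approaches via IMC (Greither--Popescu, Burns) require auxiliary hypotheses such as $\mu_p(F)=0$; the point of this paper is to avoid that entirely by working directly at finite level. The ``upper bound'' side of the argument is supplied not by IMC but by the analytic class number formula (Lemma~\ref{l:cnf} and \S\ref{s:die}): Ribet's method produces the inclusion $\Fitt_R(\Sel_\Sigma^{\Sigma'}(H)_R)\subset(\Theta^\#)$, which is the \emph{reverse} of what Strong Brumer--Stark asks for, and a size count against $\prod_\psi L(\psi,0)$ upgrades this to an equality of principal ideals. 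It is this equality, not a one-sided Iwasawa bound, that delivers $\Theta^\#$ into the Fitting ideal. Relatedly, the paper does not work directly with $\Cl^T(H)^{\vee,-}$ but with the Burns--Kurihara--Sano Selmer module $\Sel_\Sigma^{\Sigma'}(H)$ and its Ritter--Weiss transpose $\nabla_\Sigma^{\Sigma'}(H)$, precisely because these are quadratically presented (so their Fitting ideals are principal) while $\Cl^T(H)^{\vee,-}$ generally is not; Strong Brumer--Stark is then deduced from the surjection $\Sel_\Sigma^T\twoheadrightarrow\Cl^T(H)^\vee$.

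Second, the paper explicitly rejects the Wiles auxiliary-prime trick you propose: introducing primes $\fq$ and twisting by their characters ``introduces certain error terms that destroy the delicate results'' needed here (see the discussion around (\ref{e:phix})). Instead, ramification at $p$ is handled head-on: one constructs a congruence modulo $x\Theta^\#$ (not a higher power of $\Theta$) where $x\in\Z_p[G]^-$ encodes the mod-$p$ trivial zeroes $\prod_{\fp\mid p}(1-\psi(\fp))$, then takes the quotient $\overline{B}=B/(IB,B(I_p))$ by the image of inertia at $p$, and finally proves the Fitting-ideal inclusion $(x)\Fitt(\overline{B})\subset(x\Theta^\#)$ via a new determinant computation (Theorem~\ref{t:fitbp}); the non-zerodivisor $x$ cancels. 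Your description of the mechanism---``project the local class at $\fp$ into the unramified subspace''---is morally right but the implementation is this explicit quotient-and-cancel, not a descent from $H_\infty$.
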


Here $\Fitt$ denotes the 0th Fitting ideal. 
The Fitting ideal of $\Cl(H)$ and its smoothed version $\Cl^T(H)$ have been the subject of significant study for many years.  Experts have noted that the inclusion $ \Theta_{S,T} \in \Fitt_{\Z[G]^-}(\Cl^T(H)^{-})$ holds in important special instances, but is {\em false} in general.
This is studied in detail in \cite{gk}, where it is suggested  that the Fitting ideal of the {\em Pontryagin dual} of the class group  is better behaved than the class group itself.  See also \cite{pop} for a discussion of these issues. 

 Theorem~\ref{t:sbs} is seen to imply the prime-to-2 part of the Brumer--Stark conjecture (Theorem~\ref{t:ptt})
by combining the following observations: (a) the Fitting ideal of a module is contained in its annihilator; (b) for a module $M$ with finitely many elements one has $\Ann(M^\vee) = \Ann(M)^\#$; (c) the complex conjugation $\sigma$ acts as $-1$ on $\Theta_{S,T}$, so the element $\Theta_{S,T}$ annihilates a $\Z[\frac{1}{2}][G]$-module $M$ if and only if it annhilates $M^{-}$.

Our main result is the proof of even stronger refinement of the prime-to-2 part of the Brumer--Stark conjecture, which was also originally conjectured by Kurihara.  This result gives an exact formula for \[ \Fitt_{\Z[G]^-}(\Cl^T(H)^{\vee, -}) \] in terms of Stickelberger elements, as follows.
Let $S = S_{\ram} \cup S_\infty$.  For  $v \in S_{\ram}$, let $I_v \subset G_v \subset G$ denote the inertia and decomposition groups, respectively, associated to $v$. Let
\[ e_v = \frac{1}{\# I_v} \N I_v  = \frac{1}{\# I_v} \sum_{\sigma \in I_v} \sigma \in \Q[G] \] denote the idempotent that represents projection onto the characters unramified at $v$.  Let $\sigma_v \in G_v$ denote any representative of the Frobenius coset of $v$.  The element  $1 - \sigma_v e_v \in \Q[G]$ is independent of choice of representative.
Following \cite{greither}, we define the Sinnott--Kurihara ideal, {\em a priori}  a fractional ideal of $\Z[G]$, by
\[ \KS^T(H/F) = (\Theta^\#_{S_\infty, T}) \prod_{v \in S_{\ram}} ( \N I_v, 1 - \sigma_v e_v ) . \]
Kurihara showed using the Deligne--Ribet/Cassou-Nogu\'es theorem that $\KS^T(H/F) \subset \Z[G]$ (see Lemma~\ref{l:sku} below).  The following is our main result.

\begin{theorem}[Kurihara {\cite[Conjecture 3.2]{kuriharaunp}}] \label{t:sk}
We have 
\[ \Fitt_{\Z[G]^-}(\Cl^T(H)^{\vee, -})  = \KS^T(H/F)^-. \]
\end{theorem}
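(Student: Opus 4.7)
The plan is to reduce Theorem~\ref{t:sk} to a prime-by-prime, character-by-character statement. After inverting $2$, the semilocal ring $\Z[\frac{1}{2}][G]^-$ decomposes as a product over odd primes $p$ and over $\Gal(\overline{\Q_p}/\Q_p)$-orbits of odd $p$-adic characters $\chi$ of $G$. I would fix such a $\chi$, work in the $\chi$-component, and view the $\chi$-part of $\Cl^T(H)^{\vee,-}$ as the bottom layer of a natural Iwasawa module along the cyclotomic $\Z_p$-extension of $H$. Since $\Fitt_{\Z[G]^-}$, $\KS^T(H/F)^-$, and Pontryagin duality all commute with these localizations, equality at each $\chi$-component of both sides will imply the global statement.

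For the containment $\Fitt_{\Z[G]^-}(\Cl^T(H)^{\vee,-}) \subseteq \KS^T(H/F)^-$, I would invoke an equivariant Iwasawa-theoretic description of the characteristic ideal of the $\chi$-component of the relevant Selmer/Iwasawa module in terms of an equivariant $p$-adic $L$-function (à la Deligne--Ribet and Cassou-Nogu\`es). Combined with Greither--Popescu-type duality arguments that pass from characteristic ideals up in the tower to Fitting ideals at the finite level, and with a careful analysis of the ramified Euler factors $(\N I_v, 1 - \sigma_v e_v)$ appearing in $\KS^T(H/F)$, this direction should fall out of the machinery already in the literature.

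The heart of the proof is the reverse inclusion $\KS^T(H/F)^- \subseteq \Fitt_{\Z[G]^-}(\Cl^T(H)^{\vee,-})$, for which I would use a Ribet-style construction built on group-ring valued Hilbert modular forms in the spirit of Wiles and our earlier work on Gross--Stark. The plan is to write down a $\Z_p[G]$-valued Hilbert Eisenstein family whose constant terms at the cusps are essentially $\Theta_{S_\infty, T}^\#$ times the Euler factors appearing in $\KS^T(H/F)$. One then constructs, via a Hecke-algebra argument, a cuspidal eigenform that is congruent to this Eisenstein series modulo an arbitrary element $x \in \KS^T(H/F)^-$. The attached two-dimensional Galois representation $\rho$ is reducible modulo $x$, and a carefully chosen lattice gives a non-split extension whose cocycle yields a class in $H^1(G_H, \chi^{-1}(1))$. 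Via global class field theory, such classes dualize to produce a surjection from $\Cl^T(H)^{\vee,-}$ onto a module of the appropriate size, thereby showing $x$ lies in the Fitting ideal. Running the construction for enough $x$'s recovers the full ideal $\KS^T(H/F)^-$.

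The main obstacle, and the step where the new technical input is required, is proving that the cohomology classes so constructed are \emph{unramified at the primes above $p$}. A naive Ribet construction only produces classes that are potentially ramified at $p$, which would place them in a $p$-ramified enlargement of $\Cl^T(H)^{\vee,-}$ rather than in the class group itself. To overcome this, I would strengthen the Eisenstein--cusp form congruence beyond what is naively forced by the constant term: the trivial zeroes of the relevant $p$-adic $L$-functions indicate that one extra factor of the uniformizer is present, and exploiting this extra divisibility should be enough to force the local behavior at $p$ of the associated extension class to be semistable of the correct shape, hence unramified after projection to the $\chi^{-1}$-component. Making this trivial-zero/congruence refinement precise and then combining it with local analysis of $\rho|_{G_{F_v}}$ for $v \mid p$ is the key obstacle; all other pieces of the argument should follow the template already developed in the Gross--Stark setting.
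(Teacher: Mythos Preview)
Your proposal has the direction of Ribet's method backwards, and this is a genuine gap rather than a cosmetic issue. A Ribet-style construction produces a cohomology class, hence a \emph{surjection} from (a module closely related to) the class group onto some module $\overline{B}$ built from the Galois representation. A surjection $N \twoheadrightarrow \overline{B}$ gives $\Fitt(N) \subset \Fitt(\overline{B})$, not the reverse; so what one actually obtains is $\Fitt_{\Z_p[G]^-}(\Sel_{\Sigma}^{\Sigma'}(H)_p^-) \subset (\Theta_{\Sigma,\Sigma'}^\#)$, which is the \emph{opposite} inclusion to the one you assign to Ribet's method. The paper is explicit about this (see the discussion after (\ref{e:fittinginclude})). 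Consequently your proposed division of labor---Iwasawa theory for $\Fitt \subset \KS$ and Ribet for $\KS \subset \Fitt$---cannot work as stated.

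The paper instead proves the single inclusion $\Fitt(\Sel_{\Sigma}^{\Sigma'}) \subset (\Theta_{\Sigma,\Sigma'}^\#)$ via Ribet's method and then upgrades it to an \emph{equality} using the analytic class number formula (\S\ref{s:die}); no Iwasawa main conjecture, $\mu=0$, or Gross--Kuz'min hypothesis is invoked at any point, whereas the ``Greither--Popescu-type'' input you propose is conditional on exactly such hypotheses. Two further structural points you are missing: (i) one cannot work directly with $\Cl^T(H)^{\vee,-}$ because it is not quadratically presented over $\Z_p[G]^-$ in general---the argument must pass through the Selmer module $\Sel_{\Sigma}^{\Sigma'}$ and its Ritter--Weiss transpose $\nabla_{\Sigma}^{\Sigma'}$, with the ramified primes not above $p$ moved from the depletion set into the smoothing set; and (ii) the full Kurihara formula is not obtained character-by-character from the Ribet construction but is bootstrapped in Appendix~\ref{s:kurihara} from the keystone equality $\Fitt(\Sel_{\Sigma}^{\Sigma'})=(\Theta_{\Sigma,\Sigma'}^\#)$ by an explicit computation with higher Fitting ideals of $\nabla_{S_\infty}^T(H)$ over the various subfields $H^{\overline{J}}$. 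Your identification of the trivial-zero ``extra congruence'' as the key to unramifiedness at $p$ is correct, but it feeds into the opposite inclusion from the one you describe.
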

Theorem~\ref{t:sk} implies Strong Brumer--Stark (Theorem~\ref{t:sbs}),
and hence the prime-to-2 part of Brumer--Stark  (Theorem~\ref{t:ptt}), since
\begin{equation} \label{e:tstd}
\Theta^\#_{S, T} = \Theta^\#_{S_\infty, T}  \prod_{v \in S_{\ram}} (1 - \sigma_v e_v) \in  \KS^T(H/F). 
\end{equation}
 Greither proved a version of Theorem~\ref{t:sk} under the assumption of the Equivariant Tamagawa Number Conjecture \cite{greither}.

The partial progress that had previously been obtained toward the Brumer--Stark conjecture applied the Iwasawa Main Conjecture for totally real fields proven by Wiles~\cite{wiles}. 
Greither proved some special cases  of the Brumer--Stark conjecture \cite{greithernice} using the techniques of horizontal Iwasawa theory introduced by Wiles \cite{wilesb} under the assumption that the Iwasawa $\mu$-invariant $\mu_p(F)$ vanishes for each odd prime $p$.
  Greither and Popescu   \cite{gp} proved the $p$-part of Theorem~\ref{t:sbs} for odd primes $p$ assuming that $\mu_p(F)=0$ and that $S$ contains all the primes above $p$.  Burns, Kurihara, and Sano refined the Greither--Popescu result \cite{bks2}.  
Recently, Burns proved the $p$-part of Theorem~\ref{t:sbs} assuming that $\mu_p(F)=0$  and that the Gross--Kuzmin Conjecture \cite[Conjecture 1.15]{gross} holds for $(H, p)$ (i.e.\ the non-vanishing of Gross's $p$-adic regulator) \cite{burns}.

\subsection{The Rubin--Stark Conjecture}

 Theorem~\ref{t:sk} has as an important corollary the prime-to-2 part of Rubin's higher rank generalization of the Brumer--Stark conjecture.  Let us recall Rubin's conjecture, stated originally in the beautiful paper \cite{rubin}. We define $H_T^*$ to be the set of elements $h \in H^*$ such that $\ord_w(h - 1) > 0$ for all $w \in T_H$. 

Next we choose $r$ finite primes $S = \{v_1, \dots, v_r\}$ of $F$ that split completely in $H$.  
Define \begin{equation} \label{e:udef}
U_{S,T} = \{ u \in H_T^*: |u|_w = 1 \text{ for all finite primes } w \not\in S_H \} \end{equation}
and write $\Q U_{S,T} = U_{S,T} \otimes_\Z \Q$.   
Choose a prime $w_j$ of $H$ above each $v_j$.
The map
\begin{equation} \label{e:ordg} \begin{tikzcd}
\ord_G \colon \bigwedge\nolimits^r_{\Q[G]}  \Q U_{S,T}^- \ar[r] &  \Q[G]^- \end{tikzcd}
\end{equation}
induced  by 
\begin{equation} \ord_G(u_1 \wedge \cdots \wedge u_r)  = \det\left( \sum_{\sigma \in G} [\sigma^{-1}] \ord_{w_j}(\sigma(u_i))\right)_{i,j = 1, \dotsc, r} \label{e:ordgdef} \end{equation}
is a $\Q[G]$-module isomorphism.

Define the Rubin--Brumer--Stark element \[ u_{\RBS} \in  \bigwedge\nolimits^r_{\Q[G]} \Q U_{S,T}^- \subset \bigwedge\nolimits^r_{\Q[G]} \Q U_{S,T}  \] by \[ \ord_G(u_{\RBS})  = \Theta_{S,T}.\] Rubin  conjectured that $u_{\RBS}$ lies in a certain $\Z[G]$-lattice that is nowadays called ``Rubin's lattice,"  whose definition we now recall.  For $i = 1, \dotsc, r$, consider $\Z[G]$-module homomorphisms $\varphi_i: U_{S,T} \longrightarrow \Z[G]$.
Let
\[ \begin{tikzcd}
 \varphi \colon \bigwedge\nolimits^r_{\Q[G]} \Q U_{S,T} \ar[r] &  \Q[G] 
\end{tikzcd} \]  be the map induced  by
\[ \varphi(u_1 \wedge \cdots \wedge u_r)  = \det(\varphi_i(u_j)). \]  
The {\em $r$th exterior power bidual} of $U_{S,T}$, denoted  $\bigcap_{\Z[G]}^r U_{S,T}$,  
 is the set of  $u \in  \bigwedge_{\Q[G]}^r \Q U_{S,T}$
 such that $\varphi(u) \in \Z[G]$ for all $r$-tuples $(\varphi_1, \dotsc, \varphi_r) \in \Hom_{\Z[G]}(U_{S,T}, \Z[G])^{\oplus r}$.  
 Rubin's lattice is defined by
 \[ \sL = \left(  \bigwedge\nolimits^r_{\Q[G]} \Q U_{S,T}^-  \right) \cap \bigcap\nolimits_{\Z[G]}^r U_{S,T}. \]
 The exterior power bidual terminology  was introduced by Burns and Sano \cite{burnssano},
 who studied and developed Rubin's construction in greater generality.

\begin{conjecture}[Rubin] \label{c:rubin}  We have $u_{\RBS} \in \sL.$
\end{conjecture}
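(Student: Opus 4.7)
The plan is to deduce Rubin's conjecture (after inverting $2$) from Theorem~\ref{t:sk}, in the tradition of Burns--Sano by which an exact Fitting-ideal identity for a class group forces the corresponding higher-rank Stark-type statement. I first interpret $\Theta_{S,T}$ in the definition of $u_{\RBS}$: since each $v_i$ splits completely in $H$, enlarging $S$ to $\Sigma := S \cup S_\infty \cup S_{\ram}$ introduces Euler factors $(1 - \N v_i^{-s})$ that vanish at $s=0$, so the element really wanted is the order-$r$ leading coefficient at $s=0$ of $\Theta_{\Sigma,T}(s)$, which lies in $\Q[G]^-$.

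The key structural input is a $4$-term exact sequence of $\Z[\tfrac{1}{2}][G]$-modules
$$
0 \longrightarrow U_{S,T}^{-} \longrightarrow P \xrightarrow{\ \psi\ } Q \longrightarrow \Cl^{T}_{\Sigma}(H)^{\vee,-} \longrightarrow 0,
$$
with $P, Q$ finitely generated and projective, obtained by Pontryagin-dualizing (``transposing'') a suitable Ritter--Weiss/Tate-style sequence for the pair $(\Sigma, T)$. Two standard facts are then combined: \emph{(a)} by the Fitting--Laplace principle, $\Fitt_{\Z[G]^-}\bigl(\Cl^{T}_{\Sigma}(H)^{\vee,-}\bigr)$ equals the ideal of maximal minors of a matrix for $\psi$; and \emph{(b)} Theorem~\ref{t:sk}, together with a direct comparison of $\Cl^{T}_\Sigma$ with $\Cl^{T}$, identifies this Fitting ideal explicitly. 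In the $(-)$-part each $v \in S_{\ram}$ contributes exactly the factor $(\N I_v,\, 1 - \sigma_v e_v)$ appearing in $\KS^T(H/F)$, while each split prime $v_i$ contributes trivially, and the calibration distinguishes $\Theta_{\Sigma,T}^{\#}$ (in its leading-term form) as a canonical element of the maximal-minor ideal of $\psi$.

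With this in hand, for any $r$-tuple of $\Z[G]$-homomorphisms $\varphi_1, \dotsc, \varphi_r \colon U_{S,T} \to \Z[G]$, I lift each $\varphi_i$ to a map $P \to \Z[G]$ and form the augmented square matrix obtained by adjoining the $r$ lifts to a matrix for $\psi$. By Laplace expansion along the top $r$ rows, its determinant is a $\Z[G]$-linear combination of maximal minors of $\psi$; by the Burns--Sano determinantal identity that already characterises $u_{\RBS}$ via $\ord_G(u_{\RBS}) = \Theta_{\Sigma,T}$, this determinant is also equal, up to sign, to $\varphi(u_{\RBS})$. Hence $\varphi(u_{\RBS}) \in \Z[\tfrac{1}{2}][G]$ for every such $r$-tuple, which is precisely the statement that $u_{\RBS}$ lies in Rubin's lattice $\sL$ after inverting $2$.

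The main obstacle is the construction and calibration of the exact sequence: one must choose $P, Q, \psi$ so that the Fitting ideal on the right appears without spurious factors, and one must track the contribution from each prime of $\Sigma \smallsetminus S_\infty$ so that it matches the corresponding factor of $\KS^T(H/F)$, with careful bookkeeping of the involution $\#$ and of the passage between $\Cl^T$ and $\Cl^T_\Sigma$. Once this is done, the transition from Theorem~\ref{t:sk} to Rubin's conjecture reduces to the formal linear-algebra argument sketched above.
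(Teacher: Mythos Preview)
Your approach is genuinely different from the paper's and considerably more elaborate than necessary. The paper deduces Rubin's conjecture (away from $2$) from Strong Brumer--Stark alone (Theorem~\ref{t:sbs}), not from the full Kurihara identity (Theorem~\ref{t:sk}); only the \emph{inclusion} $\Theta_{S,T}^{\#}\in\Fitt_{\Z[G]^-}(\Cl^T(H)^{\vee,-})$ is used. The argument runs through the elementary short exact sequence
\[
0\longrightarrow U_{S',T}^-\longrightarrow Y_{H,S'}^-\longrightarrow A\longrightarrow 0,
\]
where $S'=\{v_1,\dots,v_r\}$ and $A$ is the subgroup of $\Cl^T(H)^-$ generated by the classes of the split primes. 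Dualizing and applying a short lemma (Lemma~\ref{l:coker}: $\Fitt(M/N)$ annihilates the cokernel of $\bigwedge^r N\to\bigwedge^r M$, proved via higher adjugate matrices) shows that $\Theta_{S,T}^{\#}$ kills the cokernel of
\[
\bigwedge\nolimits^r\Hom_{\Z[G]}(Y_{H,S'}^-,\Z[G]^-)\longrightarrow\bigwedge\nolimits^r\Hom_{\Z[G]}(U_{S',T}^-,\Z[G]^-).
\]
One then computes directly that $\varphi(u_{\RBS})=(\Theta_{S,T}^{\#}\cdot\varphi)(w_1\wedge\cdots\wedge w_r)$, which lies in $\Z[G]^-$ precisely because $\Theta_{S,T}^{\#}\cdot\varphi$ is in the image of the map above. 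No Ritter--Weiss four-term sequence, no calibration against $\KS^T$, and no Burns--Sano determinantal identity is needed.

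Your proposal has two substantive weaknesses. First, the opening paragraph about reinterpreting $\Theta_{S,T}$ as a leading coefficient is off-track in this paper's conventions: here $\ord_G(u_{\RBS})=\Theta_{S,T}$ is a literal value-at-zero statement with the paper's depletion set, and no derivative enters. Second, the step where you assert that the determinant of the augmented square matrix equals $\varphi(u_{\RBS})$ ``by the Burns--Sano determinantal identity'' is exactly the non-formal part of your argument, and you have not explained why it holds in your setup; establishing that identification requires precisely the kind of calibration you flag as ``the main obstacle,'' and it is not clear you can carry it out without essentially reproducing the paper's computation anyway. The paper's route sidesteps all of this: Lemma~\ref{l:coker} replaces the determinantal identity with a two-line adjugate calculation, and the passage from $\Theta_{S,T}^{\#}\in\Fitt(A^\vee)$ to $\varphi(u_{\RBS})\in\Z[G]^-$ is then a direct evaluation.
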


Note that  $u_{\RBS}$ depends on the choice of the $w_j$ only up to multiplication by an element of $G$, and the validity of Conjecture~\ref{c:rubin} is independent of this choice.  The Brumer--Stark conjecture is easily seen to be equivalent to the  rank $r=1$ case of Rubin's conjecture. 
In \S\ref{s:rubin} we show that Theorem~\ref{t:sbs}  implies the prime-to-2 part of Rubin's Conjecture:

\begin{theorem}  \label{t:r}
We have $u_{RBS} \in \sL \otimes_\Z \Z[\frac{1}{2}].$
\end{theorem}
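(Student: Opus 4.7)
The plan is to derive Rubin's higher-rank conjecture from the Fitting-ideal form of Strong Brumer--Stark (Theorem~\ref{t:sbs}) via an exterior-algebra argument in the style of Burns--Sano \cite{burnssano}. All modules are implicitly tensored with $\Z[\tfrac{1}{2}]$ and we work throughout on minus parts, so all relevant torsion is concentrated at odd primes where the various modules are cohomologically well behaved.

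First, I would unravel the definition of Rubin's lattice: $u_{\RBS} \in \sL$ if and only if $\varphi(u_{\RBS}) \in \Z[G]^-$ for every $r$-tuple $\underline{\varphi} = (\varphi_1, \dotsc, \varphi_r) \in \Hom_{\Z[G]}(U_{S,T}, \Z[G])^{\oplus r}$. Because each $v_i$ splits completely in $H$, the chosen primes $w_1, \dotsc, w_r$ supply distinguished $\Z[G]$-homomorphisms $\ord_{w_i} \colon U_{S,T}^- \to \Z[G]^-$. The combined map $\bigoplus_i \ord_{w_i}$ fits into an exact sequence whose kernel consists of the $T$-smoothed units of $\cO_{H,S_\infty \cup S_{\ram}}$ and whose cokernel is a quotient of $\Cl^T(H)^-$ by the classes of the $w_i$. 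This presentation is the bridge that will allow me to translate Rubin's integrality condition into a Fitting-ideal condition for a modified class group.

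Second, using Cramer's rule applied to the regulator matrix in (\ref{e:ordgdef}) and the defining equation $\ord_G(u_{\RBS}) = \Theta_{S,T}$, I would express, for any $\underline{\varphi}$, the element $\varphi(u_{\RBS})$ as $\Theta^\#_{S,T}$ multiplied by a suitable minor of the matrix $(\varphi_i(u_j))$ formed from a pseudo-basis $(u_j)$ of $U_{S,T}^-$. The crucial point is that this minor, together with the Euler factors $(1-\sigma_{v_i}e_{v_i}) = (1-\sigma_{v_i})$ that distinguish $\Theta^\#_{S,T}$ from $\Theta^\#_{S_\infty \cup S_{\ram},T}$ in the spirit of (\ref{e:tstd}), assembles into the image under $\underline{\varphi}$ of a canonical element in the Fitting ideal of the quotient class group from step one.

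Third, I would apply Theorem~\ref{t:sbs}: since $\Theta^\#_{S_\infty \cup S_{\ram},T} \in \Fitt_{\Z[G]^-}(\Cl^T(H)^{\vee,-})$, and since Fitting ideals are compatible with the natural surjection from $\Cl^T(H)^{\vee,-}$ to the dual of the cokernel quotient introduced in step one, the calculation of step two yields $\varphi(u_{\RBS}) \in \Z[G]^-$ for every $\underline{\varphi}$. The main obstacle will be step two: the regulator computation must be performed with care so that the spurious factors arising from the ``trivial zeroes'' at the completely split primes $v_i$ cancel precisely against the determinantal cofactors of the $(\varphi_i(u_j))$ matrix. Setting this up rigorously requires invoking the Burns--Sano formalism of exterior power biduals together with the standard multiplicativity of Fitting ideals along the four-term exact sequence of step one.
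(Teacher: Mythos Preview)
Your overall strategy is correct and matches the paper's: reduce to showing $\varphi(u_{\RBS})\in\Z[G]^-$ for every $\underline{\varphi}$, use the exact sequence relating $U_{S',T}^-$ to $Y_{H,S'}^-$, and invoke Strong Brumer--Stark on the Fitting ideal of the resulting class-group piece. However, two of your details are off and would derail the argument as written.

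First, your step one misidentifies the cokernel. The map $U_{S',T}^- \to Y_{H,S'}^-$ given by $u\mapsto\sum_w\ord_w(u)\,w$ has cokernel equal to the \emph{subgroup} $A\subset\Cl^T(H)^-$ generated by the classes $[w_i]$, not the quotient $\Cl^T(H)^-/A$. This matters: Strong Brumer--Stark gives $\Theta_{S,T}^\#\in\Fitt(\Cl^T(H)^{\vee,-})$, and you need a \emph{surjection} onto the relevant module to push the Fitting ideal forward. The paper obtains this by dualising the sequence to get $0\to\Hom(Y_{H,S'}^-,\Z[G]^-)\to\Hom(U_{S',T}^-,\Z[G]^-)\to A^\vee\to 0$, so that $A^\vee$ appears as a cokernel and $\Cl^T(H)^{\vee,-}\twoheadrightarrow A^\vee$ gives $\Theta_{S,T}^\#\in\Fitt(A^\vee)$.

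Second, your step two introduces Euler factors $(1-\sigma_{v_i})$ that do not exist: since each $v_i$ splits completely in $H$, one has $\sigma_{v_i}=1$ in $G$, so these factors vanish identically. There is no cancellation of ``trivial zeroes'' to manage here; the same $\Theta_{S,T}$ (with $S\supset S_\infty\cup S_{\ram}$, not containing the $v_i$) appears both in Strong Brumer--Stark and in the definition $\ord_G(u_{\RBS})=\Theta_{S,T}$. The paper's computation is simply $\varphi(u_{\RBS})=(\Theta_{S,T}^\#\cdot\varphi)(w_1\wedge\cdots\wedge w_r)$, where the image of $u_{\RBS}$ in $\bigwedge^r\Q Y_{H,S'}^-$ is $\Theta_{S,T}\cdot(w_1\wedge\cdots\wedge w_r)$. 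The integrality then follows from the single clean lemma (Lemma~\ref{l:coker}) that $\Fitt(M/N)$ annihilates the cokernel of $\bigwedge^r N\to\bigwedge^r M$; applied with $N=\Hom(Y_{H,S'}^-,\Z[G]^-)$ and $M=\Hom(U_{S',T}^-,\Z[G]^-)$, this shows $\Theta_{S,T}^\#\cdot\varphi$ lifts integrally to $\bigwedge^r\Hom(Y_{H,S'}^-,\Z[G]^-)$, and evaluating at $w_1\wedge\cdots\wedge w_r$ lands in $\Z[G]^-$. Your ``Cramer's rule'' intuition is exactly what proves this lemma (via higher adjugates), so you have the right mechanism, but routed through the dual sequence it requires no Euler-factor bookkeeping at all.
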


\subsection{Summary of Proof}

We now sketch the proof of Theorem~\ref{t:sk}.  For simplicity we consider the case that $H/F$ is unramified at all finite primes (i.e.\ has conductor 1).
In this case the $\Z[G]^-$-module $\Cl^T(H)^-$ has a quadratic presentation, meaning that it has a finite $\Z[G]^-$-module presentation with the same number of generators and relations (see \S\ref{s:fitting}).  This implies that $\Fitt_{\Z[G]^-}(\Cl^T(H)^-)$  is principal.  Suppose  we can show that 
 \begin{equation} \label{e:fct}
  \Fitt_{\Z[G]^-}(\Cl^T(H)^-) \subset (\Theta_{S_\infty, T}). \end{equation}
The analytic class number formula implies that
\begin{equation} \label{e:cnf}
  \#\Cl^T(H)^- \doteq \prod_{\psi \text{ odd}} \psi(\Theta_{S_\infty, T}), \end{equation}
where $\doteq$ denotes equality up to a power of 2. In particular, the product in (\ref{e:cnf}) lies in $\Z$.  An elementary argument shows that (\ref{e:cnf}) implies that the inclusion (\ref{e:fct}) must be an equality (see \S\ref{s:fitting} for a description of this argument).
 Theorem~\ref{t:sk} follows from this since one can show that \[   \Fitt_{\Z[G]^-}(\Cl^T(H)^{\vee,-}) =  \Fitt_{\Z[G]^-}(\Cl^T(H)^{-})^\# \]
in this special setting ($H/F$ unramified at finite places).
 
 The inclusion (\ref{e:fct}) is proved using Ribet's method, which was originally invented by Ribet to prove the converse of Herbrand's Theorem in the seminal work \cite{ribet}.  
Our application of Ribet's Method owes a great debt to the techniques introduced by Wiles in \cite{wiles}.
We reintroduce the theory of {\em group ring valued Hilbert modular forms}. 
 These were considered by Wiles in \cite{wiles}, and this theory is developed further by Silliman in \cite{dks} and in this paper.
 
 Since $H/F$ has conductor 1, class field theory canonically identifies $G$ as a quotient of the narrow class group $\Cl^+(F)$.   Let \[ \bpsi\colon \begin{tikzcd}
  \Cl^+(F) \ar[r, two heads] &  G \ar[r] &  (\Z[G]^-)^*
  \end{tikzcd}
  \]
 denote the canonical character.  For a positive integer $k$, let $M_k$ denote the usual
 group of Hilbert modular forms for $F$ of level 1 and of weight $k$ with Fourier coefficients lying in $\Z$.  
 For $k$ odd, define $M_k(\bpsi)$ to be
 the $\Z[G]^-$-submodule of  $M_k \otimes \Q[G]^-$ consisting of those $f$ whose Fourier coefficients lie in $\Z[G]^-$ and such that for each odd character $\psi \in \hat{G}$, the specialization $\psi(f)$ is a classical form of nebentypus $\psi$.
The form $f$ can be viewed as encoding the ``family" of forms $\{\psi(f) \}$.  The fact that $f$ has integral Fourier coefficients implies that the forms $\psi(f)$ in this family satisfy certain congruences.
 
One of the few examples of group ring valued forms that one can write down explicitly are the Eisenstein series $E_k$.  Using these Eisenstein series along with an important auxiliary construction drawn from   \cite{dks}, we prove that for positive integers $k$ sufficiently large and close to 1 in $\hat{\Z}$, 
   there is a {\em cuspidal} group ring valued form $f$ such that:
\begin{equation} \label{e:fec}
 f \equiv E_k \pmod{\Theta_{S_\infty, T}}. \end{equation}
 Let  $\T$ denote the Hecke algebra over $\Z[G]^-$ of the module of weight $k$ cuspidal group ring valued forms. 
The congruence (\ref{e:fec}) implies that there is a surjective $\Z[G]^-$-algebra homomorphism 
\begin{equation} \label{e:feh}
 \begin{tikzcd}
  \varphi\colon \T \ar[r] & \Z[G]^-/(\Theta_{S_\infty, T}) 
  \end{tikzcd}
  \end{equation}
 such that for all primes $\fl \subset \cO_F$, we have \begin{equation} \label{e:phieisenstein}
 \varphi(T_\fl) = 1 + \bpsi(\fl). \end{equation}
  Let $I$ denote the kernel of $\varphi$ (the {\em Eisenstein ideal}). 
 
Let $p$ denote an odd prime, and replace $\T$ and $I$ by their $p$-adic completions.
 The Galois representations associated to cusp forms together with the congruence (\ref{e:fec}) allow for the construction of a faithful $\T$-module $B$
 along with a cohomology class
 \[  \kappa \in H^1(G_F, B/IB) \]   that is unramified at all primes not dividing $p$ or lying in $T$. 
Furthermore the image of $\kappa$ generates $B/IB$, and complex conjugation acts as $-1$ on this space. If $\kappa$ were unramified at all primes dividing $p$ as well, then $\kappa$ would cut out an extension of $H$
 unramified outside the primes of $T$ and tamely ramified at those primes. By 
  class field theory this yields a surjective homomorphism \[ \begin{tikzcd}
   \Cl^T(H)^- \ar[r, two heads] &  B/IB.
   \end{tikzcd} \]
 Since $\T/I \cong \Z_p[G]^-/(\Theta_{S_\infty, T})$ and $B$ is a faithful $\T$-module, general principles regarding Fitting ideals imply
 \[ \Fitt_{\Z_p[G]^-}(\Cl^T(H)^-) \subset \Fitt_{\Z_p[G]^-}(B/IB) \subset (\Theta_{S_\infty, T}). \]
 
This yields the desired inclusion (\ref{e:fct}). Unfortunately, it is simply not true that $\kappa$ is necessarily unramified at the primes above $p$.  Overcoming this obstacle is perhaps the central contribution to the theory of Ribet's method advanced by this paper.
 Previous works have employed the ingenious  method of Wiles \cite{wilesb} to introduce auxiliary primes into the set $S$ and twist by characters with conductor divisible by these primes.  However this technique introduces certain error terms that destroy the delicate results that we need to obtain here and hence give only partial results (see \cite{greithernice}).  Therefore, our new method deals with ramification at $p$ head-on.  We first show that the congruence (\ref{e:fec}) and corresponding homomorphism (\ref{e:feh}) can be strengthened.  There is a certain non-zerodivisor $x \in \Z_p[G]^-$ and a surjective $\Z_p[G]^-$-algebra homomorphism
 \begin{equation} \label{e:phix} 
 \begin{tikzcd}
  \varphi_x \colon \T  \ar[r] &  \Z_p[G]^-/(x \Theta_{S_\infty, T}) 
  \end{tikzcd}
   \end{equation}
that is Eisenstein in the sense that (\ref{e:phieisenstein}) holds.
   The element $x$ can be viewed as encoding the mod $p$ trivial zeroes of the characters $\psi \in \hat{G}$ at the primes $\fp \mid p$, i.e.\ such that $\psi(\fp) \equiv 1$ modulo a prime above $p$ (in which case $L_{S_\infty \cup \{\fp\}, T}(\psi, 0) \equiv 0$).  It is striking that these trivial zeroes play a crucial role even while considering the primitive Stickelberger element $\Theta_{S_\infty, T}$.
 
 Working as before, we let $I$ denote the kernel of $\varphi_x$ and construct a faithful $\T $-module $B$ together with a cohomology class \begin{equation} \label{e:kappadef0}
 \kappa \in H^1(G_F, B/IB) \end{equation} generating $B/IB$.
The class $\kappa$ is unramified at all primes not dividing $p$ or lying in $T$. To produce a class unramified at primes above $p$, we rather bluntly consider the image of the inertia groups at all primes above $p$ under
$\kappa$ and denote the $\T$-module that they generate in $B$ by $B(I_p)$.  We define $\overline{B} = B/(IB, B(I_p))$ and note that the image of $\kappa$ in $H^1(G_F, \overline{B})$ is now tautologically unramifed at the primes above $p$.  Hence we deduce a surjective homomorphism $\Cl^T(H)^- \longtwoheadrightarrow \overline{B}$ which yields
\begin{equation} \label{e:cbb}
 \Fitt_{\Z_p[G]^-}(\Cl^T(H)^-) \subset \Fitt_{\Z_p[G]^-}(\overline{B}). \end{equation}

The Galois representations used in the construction of $\kappa$ are {\em ordinary} at all primes dividing $p$.  Theorems of Hida and Wiles  precisely describe the shape of ordinary representations when restricted to the decomposition groups at these primes.  Using this we are able to relate the module $B(I_p)$ to the element $x \in \Z_p[G]^-$ and prove:
\begin{equation}
\label{e:fittinginclude}
 (x)\Fitt_{\Z_p[G]^-}(\overline{B}) \subset (x \Theta_{S_\infty, T}). 
 \end{equation}

Since $x$ is a non-zerodivisor, it can be canceled from the left and right sides of this inclusion; combining with (\ref{e:cbb}), we obtain the desired result
\[  \Fitt_{\Z_p[G]^-}(\Cl^T(H)^-) \subset (\Theta_{S_\infty, T}).
\]

Our calculation of Fitting ideals leading to the first inclusion in (\ref{e:fittinginclude}) is new (see Theorem~\ref{t:fitbp}), as is the idea to produce ``extra congruences" yielding the second inclusion;  we expect this technique to have further applications toward Bloch--Kato type results proved using Ribet's method.

This concludes our summary of the proof of Theorem~\ref{t:sk} in the case that $H/F$ is unramified at all finite primes.  It is  worth reflecting that the inclusion 
(\ref{e:fct}) deduced from Ribet's method is the {\em reverse} of that required by the Strong Brumer--Stark conjecture.  Combining this inclusion with the analytic argument of (\ref{e:cnf}) enables us to deduce that the inclusion is an equality, and hence to conclude that the desired  inclusion holds.  In the general case of conductor greater than 1, this equality does not hold and hence both sides must be replaced by generalizations.  

\bigskip

The paper is organized as follows. In \S\ref{s:algebraic}, we present some analytic and algebraic preliminaries, including some results on Fitting ideals.
In \S\ref{s:selmer} we recall the $\Z[G]$-module $\Sel_{\Sigma}^{\Sigma'}(H)$ of Burns, Kurihara, and Sano 
that plays the role of $\Cl^T(H)^\vee$ in the discussion above in the more general context
when there exist ramified primes for $H/F$.  Here $\Sigma$ and $\Sigma'$ denote arbitrary finite disjoint sets of places of $F$ such that $\Sigma \supset S_\infty$.  In order to prove the $p$-part of Theorem~\ref{t:sk} for an odd prime $p$ (that is, after tensoring with $\Z_p$), we choose the sets
\begin{equation} \Sigma = S_\infty \cup \{v \in S_{\ram}, v \mid p\}, \qquad \Sigma' = T \cup \{v \in S_{\ram}, v \nmid p\}. \label{e:ssi} \end{equation} 
The keystone result proven over the course of the paper using group ring valued Hilbert modular forms over $\Z_p[G]$, from which all our previously stated theorems are deduced, is the following.
 \begin{theorem} \label{t:main} The $\Z_p[G]^-$-module $\Sel_{\Sigma}^{\Sigma'}(H)_p^- = (\Sel_{\Sigma}^{\Sigma'}(H) \otimes_{\Z} \Z_p)^-$ is quadratically presented and we have
 \begin{equation} \label{e:isub}
  \Fitt_{\Z_p[G]^-}(\Sel_{\Sigma}^{\Sigma'}(H)_p^-) = (\Theta_{\Sigma, \Sigma'}^\#). 
  \end{equation}
   \end{theorem}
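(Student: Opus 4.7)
The plan is to establish the theorem for each odd prime $p$ separately and then assemble. For fixed odd $p$ the argument combines an algebraic inclusion $\Fitt_{\Z_p[G]^-}(\Sel_{\Sigma}^{\Sigma'}(H)_p^-) \subset (\Theta_{\Sigma, \Sigma'}^\#)$, obtained via a Ribet-style construction, with an analytic input---essentially the analytic class number formula---that upgrades inclusion to equality once quadratic presentability has been established. I would begin by exhibiting $\Sel_{\Sigma}^{\Sigma'}(H)$ through an explicit four-term exact sequence involving the depleted-smoothed class group $\Cl^{\Sigma'}_{\Sigma}(H)$ together with appropriate local and global unit groups. On each odd $\psi$-component the two free $\Z_p[G]^-$-modules appearing in the presentation must have equal $\Z_p$-rank, which I would verify by combining Dirichlet's unit theorem with the Deligne--Ribet integrality of $\Theta_{\Sigma,\Sigma'}$; this proves quadratic presentability and, in particular, that the Fitting ideal is principal on each odd isotypic component.

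For the Ribet step, I would work with group-ring-valued Hilbert modular forms of level divisible by the primes in $\Sigma \cup \Sigma'$ and nebentypus $\bpsi$, starting from explicit Eisenstein series $E_k$ whose constant terms reproduce $\Theta_{\Sigma,\Sigma'}$ as $k$ ranges over positive integers congruent to $1$ in $\hat{\Z}$. Building on the auxiliary construction of \cite{dks}, I would produce a \emph{cuspidal} group-ring-valued form $f$ satisfying a strengthened congruence
\[
f \equiv E_k \pmod{x \cdot \Theta_{\Sigma, \Sigma'}},
\]
where $x \in \Z_p[G]^-$ is a carefully chosen non-zerodivisor encoding the mod-$p$ trivial zeros at primes $\fp \mid p$, i.e.\ those characters $\psi$ for which $\psi(\fp) \equiv 1$. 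This congruence yields an Eisenstein $\Z_p[G]^-$-algebra surjection $\varphi_x \colon \T \twoheadrightarrow \Z_p[G]^-/(x\Theta_{\Sigma,\Sigma'})$ with $T_\fl \mapsto 1 + \bpsi(\fl)$; setting $I = \ker \varphi_x$, the pseudorepresentation attached to the cuspidal Hecke algebra $\T$ produces a faithful $\T$-module $B$ together with a cohomology class $\kappa \in H^1(G_F, B/IB)$ generating $B/IB$ and unramified at all primes outside $\Sigma \cup \Sigma'$.

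The main obstacle, and the technical heart of the argument, is that $\kappa$ is \emph{a priori} ramified at primes above $p$; Wiles' classical twisting trick is too coarse to preserve the sharp Fitting ideal needed here. I would handle this head-on by forming the quotient $\overline{B}$ of $B/IB$ by the $\T$-submodule generated by the inertia images at all $\fp \mid p$, so that $\kappa$ tautologically cuts out an unramified extension of $H$ with values in $\overline{B}$; class field theory (together with the dualization that introduces the involution $\#$) then yields a map whose Fitting ideal controls $\Fitt_{\Z_p[G]^-}(\Sel_{\Sigma}^{\Sigma'}(H)_p^-)$. To pin down $\Fitt_{\Z_p[G]^-}(\overline{B})$, I would combine the ordinary structure of the Galois representations attached to Hilbert cusp forms (Hida--Wiles) with a new Fitting-ideal calculation for modules modulo a distinguished submodule (Theorem~\ref{t:fitbp}) to prove
\[
(x)\,\Fitt_{\Z_p[G]^-}(\overline{B}) \subset (x\Theta_{\Sigma,\Sigma'}),
\]
so that cancellation of the non-zerodivisor $x$ yields $\Fitt_{\Z_p[G]^-}(\overline{B}) \subset (\Theta_{\Sigma,\Sigma'})$, and hence (after the dualization involution) the desired inclusion for $\Sel$. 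Finally, quadratic presentability forces the Fitting ideal to be principal on each odd $\psi$-component, and the analytic class number formula computes $\#\Sel_{\Sigma}^{\Sigma'}(H)_p^-$ on each component and matches it (up to $p$-adic units) with $|\psi(\Theta_{\Sigma,\Sigma'}^\#)|_p^{-1}$, forcing the inclusion of principal ideals to be an equality for each odd $p$, and hence proving (\ref{e:isub}).
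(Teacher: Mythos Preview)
Your outline captures the broad Ribet-method strategy, but there are two genuine gaps that the paper's argument must address and that your sketch does not.

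First, the analytic step is not as direct as you describe. The module $\Sel_{\Sigma}^{\Sigma'}(H)_p^-$ is \emph{not finite} whenever $\Sigma$ contains a prime $v$ above $p$ with $\chi(G_v')=1$: by Lemma~\ref{l:ysc} it contains a copy of $Y_{H,\Sigma}^-$, which has positive $\Z_p$-rank on such components. So you cannot simply ``count'' it against $|\psi(\Theta_{\Sigma,\Sigma'}^\#)|_p^{-1}$ character by character. The paper's \S\ref{s:die} handles this by a delicate reduction: one passes to the field $H_\psi$ cut out by each $\psi$, identifies a minimal inertia group $I$ among the primes in $\Sigma_p$, and works modulo $\N I$, where the Selmer module \emph{does} become finite and can be compared to a class group via Lemmas~\ref{l:selcl}--\ref{l:clsize}. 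The size-matching you propose fails precisely when trivial zeroes occur.

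Second, and more structurally, your sketch omits the Ritter--Weiss module $\nabla_{\Sigma}^{\Sigma'}(H)$, which is the bridge between $\kappa$ and $\Sel$. Class field theory only gives a surjection $\Cl_\Sigma^{\Sigma'}(H)^- \twoheadrightarrow \overline{B}_0$ onto the submodule generated by the values of $\kappa$. When $\Sigma$ contains primes above $p$, the module $\overline{B}_p$ is strictly larger: it also contains the change-of-basis elements $A_\fp/C_\fp$ coming from the ordinary local structure at $\fp\in\Sigma$. To obtain a surjection from something with the correct Fitting ideal onto all of $\overline{B}_p$, the paper invokes property~\ref{i:ext} (Lemma~\ref{l:ext}): the extension class of $\nabla_\Sigma^{\Sigma'}(H)^-$ in $\Ext^1(X_{H,\Sigma}^-,\Cl_\Sigma^{\Sigma'}(H)^-)$ matches, under a Galois-cohomological identification, exactly the local restrictions of $\kappa$. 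This yields the commutative diagram (\ref{e:commdiag}) and the surjection $\nabla \twoheadrightarrow \overline{B}_p$ of Theorem~\ref{t:beta}; only then does one get $\Fitt(\nabla)\subset(\Theta)$ and hence $\Fitt(\Sel)\subset(\Theta^\#)$ via the transpose. Quadratic presentability of $\Sel$ also comes through $\nabla$: the paper proves $(\nabla)_p$ has a projective presentation with equal numbers of generators and relations via cohomological triviality of local Weil-group modules (Lemma~\ref{l:projective}), and $\Sel_p$ is then its transpose (Corollary~\ref{c:selprin}); your proposed route via a four-term sequence and Deligne--Ribet integrality does not produce this.
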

The module $\Sel_{\Sigma}^{\Sigma'}(H)_p$ plays an important role in our argument since $\Cl^T(H)^\vee_p$ is in general  not quadratically presented.  Also in \S\ref{s:selmer}, we deduce a partial result towards Kurihara's conjecture for the Fitting ideal of $\Cl^T(H)^{\vee, -}_p$, namely, we compute
$\Fitt_{\Z_p[G]^-} \Sel_{\Sigma}^{T}(H)_p^{-}$ assuming Theorem~\ref{t:main}.  We show that this partial result is strong enough to imply  Strong Brumer--Stark  (Theorem~\ref{t:sbs}). 
The key point here is that $\Cl^T(H)^{\vee, -}_p$ is a quotient of $\Sel_{\Sigma}^{T}(H)_p^{-}$.
 We conclude \S\ref{s:selmer} by deducing the prime-to-2 part of Rubin's conjecture, i.e.\ Theorem~\ref{t:r}.

In \S\ref{s:sad} we make some technical modifications of the smoothing and depletion sets $\Sigma, \Sigma'$
that assist in later arguments.
 In \S\ref{s:die} we prove an analogue of the discussion surrounding (\ref{e:fct})--(\ref{e:cnf}) above to show that  an inclusion in (\ref{e:isub}) for all $H/F$ implies an equality---see Theorem~\ref{t:include} for a precise statement.
 This result is significantly more complicated than the situation in (\ref{e:cnf}).

In \S\ref{s:properties} we describe  a $\Z[G]$-module $\nabla_{\Sigma}^{\Sigma'}(H)$ that was essentially defined previously by Ritter and Weiss \cite{rw}.  Our contribution is the introduction of the smoothing set $\Sigma'$.  In \S\ref{s:properties} we state the salient properties of $\nabla_{\Sigma}^{\Sigma'}(H)$. The actual construction of  $\nabla_{\Sigma}^{\Sigma'}(H)$  and the proof of these properties is postponed to Appendix~\ref{s:rw}.
Under the appropriate assumptions, the $\Z[G]$-module $\nabla_{\Sigma}^{\Sigma'}(H)$ is locally quadratically presented and is a transpose of the module $\Sel_{\Sigma}^{\Sigma'}(H)$ in the sense of Jannsen \cite{jannsen}.  We remark that smoothing at $\Sigma'$ is essential toward deducing the quadratic presentation property.
It is likely that  $\nabla_{\Sigma}^{\Sigma'}(H)$ is isomorphic to the canonical transpose $\Sel_{\Sigma}^{\Sigma'}(H)^{\tr}$ defined by Burns--Kurihara--Sano in \cite{bks}, though we have not tried to prove this.

 Burns--Kurihara--Sano study their Selmer group and its transpose in detail under the assumption  
$\Sigma \supset S_{\ram}$.  For us, it is essential to relax this assumption as in (\ref{e:ssi}).  We also give an interpretation of the minus part $\nabla_{\Sigma}^{\Sigma'}(H)^-$ in terms of Galois cohomology (Lemma~\ref{l:ext}) that does not appear explicitly in  prior works.  However the essential content of this lemma (indeed, our proof of it) can be gleaned from the calculations of Ritter--Weiss.

 The remainder of the paper, which uses Ribet's method applied to group ring valued Hilbert modular forms, proves the inclusion that is the supposition of Theorem~\ref{t:include}. In \S\ref{s:hmf} we set our notations for classical Hilbert modular forms.  In \S\ref{s:hmf}--\ref{s:cusp} we define group ring valued Hilbert modular forms and construct a cusp form congruent to an Eisenstein series in this context.  We use this construction to define a homomorphism on the Hecke algebra generalizing (\ref{e:phix}).
 Our construction of a cusp form is a strong refinement of Wiles' construction of cusp forms in \cite{wiles} in two ways: (i) we work over a group ring rather than character by character, and more importantly (ii) we construct ``extra congruences" beyond those predicted by the Stickelberger element using trivial zeroes as discussed in (\ref{e:phix}) above.  One key difference that allows us to produce these congruences is that we calculate the constant terms of  relevant Eisenstein series at all cusps, rather than focusing exclusively on the cusps above $\infty$.  These calculations are contained in \cite{dka}.
 Furthermore, we apply  important results of Silliman that show the existence of group ring valued modular forms with certain prescribed constant terms \cite{dks}.
 
   We conclude in \S\ref{s:galrep} by exploiting the Galois representations associated to Hilbert modular cusp forms in order to construct the cohomology class $\kappa$ of (\ref{e:kappadef0}), and using this construction to deduce the desired inclusion of Fitting ideals.  Our new calculation of the Fitting ideal in Theorem~\ref{t:fitbp}  should have future applications; it is inspired by the calculation of Gross's regulator in our previous work \cite{dkv}*{\S5}.
 
 As mentioned above, Appendix~\ref{s:rw} contains the construction of the Ritter--Weiss modules 
 $\nabla_{\Sigma}^{\Sigma'}(H)$ and  proofs of their key properties.  Appendix~\ref{s:kurihara} contains the proof of Kurihara's Conjecture (Theorem~\ref{t:main}), bootstrapping from the partial result proved in \S\ref{s:selmer} and mentioned above (i.e.\ the computation of $\Fitt_{\Z_p[G]^-} \Sel_{\Sigma}^{T}(H)_p^{-}$).  This proof is included in an appendix because it requires the full details of the construction of $\nabla_{\Sigma}^{\Sigma'}(H)$, and not just the properties listed in \S\ref{s:properties}.
 
 \subsection{Acknowledgements}
 
 It is a pleasure to recognize the significant influence of the works of David Burns, Cornelius Greither, Cristian Popescu, Masato Kurihara, Kenneth Ribet, J\"urgen Ritter, Takamichi Sano,  Alfred Weiss, and Andrew Wiles on this project.  We would also like to thank David Burns, Henri Darmon, Henri Johnston, Masato Kurihara, Andreas Nickel, Cristian Popescu, Takamichi Sano, Jesse Silliman, and Jiuya Wang for very helpful discussions.  In particular we are indebted to Sano for introducing us to the paper \cite{rw} of Ritter--Weiss.
 
 The first named author was supported by NSF grants DMS-1600943 and DMS-1901939 while working on this project. The second author is supported by DST-SERB grant SB/SJF/2020-21/11, SERB SUPRA grant SPR/2019/000422 and SERB MATRICS grant MTR/2020/000215. 
 
\section{Algebraic and Analytic Preliminaries} \label{s:algebraic}

Throughout this paper we work with a totally real field $F$ and a finite abelian CM extension $H$.  We let $G = \Gal(H/F)$.  In this section we record some basic algebraic and analytic facts that will be used in the sequel.

\subsection{Analytic Class Number Formula}

Let $H^+$ denote the maximal totally real subfield of the CM field $H$, and let $\epsilon$ denote the nontrivial character of $\Gal(H/H^+)$.

\begin{lemma} \label{l:cnf}
We have $L_{S_\infty, T}(H/H^+, \epsilon, 0) \in \Z$ and
\begin{align}
\#\Cl^T(H)^{-} & \doteq L_{S_\infty, T}(H/H^+, \epsilon, 0) \label{e:cll} \\
& = \prod_{\psi \in \hat{G} \! \text{ odd}} L_{S_\infty, T}(H/F, \psi, 0). \label{e:cll2}
\end{align}
where $\doteq$ denotes equality up to a power of $2$.
\end{lemma}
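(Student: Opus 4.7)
My plan is to first address the factorization identity (\ref{e:cll2}) via Artin formalism. The induced representation $\Ind_{\Gal(H/H^+)}^{G}\epsilon$ decomposes as $\bigoplus_{\psi \in \hat G,\,\psi\text{ odd}}\psi$ (the odd characters of $G$ are exactly those whose restriction to $c$ equals $\epsilon$), whence $L(H/H^+, \epsilon, s) = \prod_{\psi\text{ odd}} L(H/F, \psi, s)$. The $T$-smoothing factors on the two sides agree prime-by-prime: for each $\fp \in T$,
\[ \prod_{\fq\mid\fp}(1-\epsilon(\fq)\N\fq^{1-s}) = \prod_{\psi\text{ odd}}(1-\psi(\fp)\N\fp^{1-s}), \]
where $\fq$ runs over primes of $H^+$ above $\fp$, by the same induction identity applied at argument $s-1$.

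Next, I would establish integrality via Deligne--Ribet applied to the CM extension $H/H^+$. Since $\epsilon=\epsilon^{-1}$, the specialization equals $\epsilon(\Theta_{S_\infty, T}^{H/H^+})$. The hypothesis (\ref{e:drcond}) on $T$ depends only on $\mu(H)$ and the primes of $H$ above $T$, so it is intrinsic to $H$ and holds equally for the pair $(H/H^+, T)$ as for $(H/F, T)$. Deligne--Ribet then gives $\Theta_{S_\infty, T}^{H/H^+} \in \Z[\Gal(H/H^+)]$, and $\epsilon$ of this element lies in $\Z$.

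For the main identity (\ref{e:cll}), the plan is to apply the analytic class number formula to $\zeta_H$ and $\zeta_{H^+}$ at $s=0$ (both have zero of the same order $n-1=[H^+:\Q]-1$) and take the ratio of leading coefficients. Using the Hasse regulator relation $R_H/R_{H^+} = 2^{n-1}/Q$ (with $Q\in\{1,2\}$ the Hasse unit index) and $w_{H^+}=2$, one obtains $L(H/H^+, \epsilon, 0) = (2^n/(Q w_H))\cdot h_H/h_{H^+}$. On the arithmetic side, I would use the exact sequence
\[ 1 \to \cO_H^{*,T} \to \cO_H^* \to \prod_{\fP \in T_H}(\cO_H/\fP)^* \to \Cl^T(H) \to \Cl(H) \to 1, \]
projected to the minus eigenspace tensored with $\Z[\tfrac{1}{2}]$. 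By (\ref{e:drcond}), $\cO_H^{*,T}$ is torsion-free, and since the free part of $\cO_H^*$ is $c$-invariant modulo torsion, $(\cO_H^{*,T})^-=0$; this yields $\#\Cl^T(H)^- = \#\Cl(H)^- \cdot \prod_{\fP}\#(\cO_H/\fP)^{*,-}/\#\mu(H)^-$.

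The final step is the matching of both sides. Combining the standard CM identity $\#\Cl(H)^- \doteq h_H/h_{H^+}$ with a prime-by-prime local analysis giving $\#(\cO_H/\fq\cO_H)^{*,-} = |1-\epsilon(\fq)\N\fq|$ for each $\fq \in T_{H^+}$ (case analysis: split yields $\F_{\N\fq}^*\oplus\F_{\N\fq}^*$ with $c$ swapping so minus part has order $\N\fq-1$; inert yields $\F_{\N\fq^2}^*$ with $c$ acting as Frobenius so minus part is the norm kernel of order $\N\fq+1$; ramified yields trivial minus part), and observing that the $\#\mu(H)^-$ contributions cancel between the two sides, the equality up to powers of 2 follows. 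The main obstacle is precisely this bookkeeping: tracking the 2-power factors arising from the regulator ratio ($2^{n-1}/Q$), the full root-of-unity count $w_H$, and the unit-term correction, and verifying they are all absorbed into the $\doteq$ qualifier, with the remaining odd contributions matching on both sides.
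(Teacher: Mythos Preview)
Your argument for (\ref{e:cll2}) and for integrality matches the paper's. For (\ref{e:cll}), however, the paper takes a shorter route: it applies the \emph{$T$-smoothed} analytic class number formula directly, writing
\[
L_{S_\infty, T}(H/H^+, \epsilon, 0) = \frac{\zeta_{H, S_\infty, T}^*(0)}{\zeta_{H^+, S_\infty, T}^*(0)} = \frac{\#\Cl^T(H)\, R_T(H)}{\#\Cl^T(H^+)\, R_T(H^+)}.
\]
Because condition (\ref{e:drcond}) forces $\cO_{H, S_\infty, T}^* = \cO_{H^+, S_\infty, T}^*$ on the nose (no roots of unity survive, and the free parts already coincide), the regulator ratio is exactly $2^{[H^+:\Q]-1}$ with no Hasse index $Q$ entering; the class-group ratio is then handled via $\Cl^T(H) \otimes \Z[\tfrac12] \cong (\Cl^T(H^+) \otimes \Z[\tfrac12]) \oplus \Cl^T(H)^-$. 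Your approach---applying the unsmoothed class number formula and then separately reconciling both sides with the $T$-smoothing via the exact sequence for $\Cl^T$ and the local case analysis of residue-field minus parts---is correct and reaches the same conclusion, but all of the bookkeeping you flag (the factors $2^n$, $Q$, $w_H$, $\#\mu(H)^-$) simply does not arise in the paper's version. One small imprecision: Deligne--Ribet applies to $\Theta^{H/H^+}_{S_\infty \cup S_{\ram}(H/H^+), T}$ rather than to $\Theta^{H/H^+}_{S_\infty, T}$, since the depletion set must contain the ramified primes; the paper notes that the two $\epsilon$-specializations agree (as $\epsilon$ vanishes at ramified primes), so $L_{S_\infty, T}(\epsilon, 0) \in \Z$ follows as you intend.
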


\begin{proof} This result is well-known, but we have not found a precise reference for it; the results (\ref{e:cll})--(\ref{e:cll2}) are proven in \cite{nickel}*{Proposition 2} without the $T$-smoothing.

 Since $\epsilon$ is $\pm 1$-valued, $L_{S_\infty, T}(H/H^+, \epsilon, 0)$ is rational by Klingen \cite{klingen} or Siegel \cite{siegel}.  It is actually an integer by Cassou-Nogu\`es \cite{cn} or Deligne--Ribet \cite{dr}  because of our assumption on the set $T$ made in the introduction, since
 \[ L_{S_\infty, T}(H/H^+, \epsilon, 0) = L_{S_{\ram}(H/H^+), T}(H/H^+, \epsilon, 0). \]

To prove (\ref{e:cll}), we note
\begin{align} 
L_{S_\infty, T}(H/H^+, \epsilon, 0) &= \frac{\zeta_{H, S_\infty, T}^*(0)}{\zeta_{H^+, S_\infty, T}^*(0)} \label{e:zs1} \\
&= \frac{\#\Cl^T(H) R_T(H)}{\#\Cl^T(H^+) R_T(H^+)} \label{e:zs2} \\
&\doteq \#\Cl^T(H)^-. \label{e:zs3}
\end{align} 
In (\ref{e:zs1}), $\zeta_{H, S_\infty, T}^*(0)$ denotes the leading term of the zeta function at $s=0$ (both this zeta function and $\zeta_{H^+, S_\infty, T}$ have order \[ \rank(\cO_H^*) = \rank(\cO_{H^+}^*) = [H^+:\Q] - 1 \] at $s=0$).  Equation (\ref{e:zs2}) is simply the $T$-smoothed Dedekind class number formula expressed at $s=0$; see for instance \cite{pcmi}*{(16)}.  The ``up to 2-power" equality (\ref{e:zs3}) follows from the following:
\begin{itemize}
\item $\Cl^T(H) \otimes \Z[\frac{1}{2}] \cong (\Cl^T(H^+) \otimes \Z[\frac{1}{2}]) \oplus \Cl^T(H)^-.$
\item $R_T(H) = 2^{[H^+:\Q] - 1} R_T(H^+)$ since $\cO_{H, S_\infty, T}^*= \cO_{H^+, S_\infty,T}^*$ by property (\ref{e:drcond}).
\end{itemize}
Finally (\ref{e:cll2}) follows from (\ref{e:cll}) by the Artin formalism for $L$-functions, as 
\[\Ind_{G_{H^+}}^{G_F} \epsilon = \bigoplus_{\psi \in \hat{G} \! \text{ odd}} \psi. \]
\end{proof}

\subsection{Character group rings} \label{s:cgr}

We fix an odd prime $p$ and a finite extension $\cO$ of $\Z_p$ 
that contains all the values of all characters $G \longrightarrow \overline{\Q}_p^*$. 
There is an $\cO$-algebra embedding \[ \cO[G] \hookrightarrow \prod_{\psi \in \hat{G}} \cO_\psi, \qquad x \mapsto (\psi(x))_{\psi \in \hat{G}}. \]
 Here $\cO_\psi$ denotes the ring $\cO$ endowed with the $G$-action in which
$g \in G$ acts by multiplication by $\psi(g)$.
More generally, given any subset of characters $\Psi \subset \hat{G}$, we define $R_\Psi$ to be the image of 
\[ \cO[G] \longrightarrow \prod_{\psi \in \Psi} \cO_{\psi},  \qquad x \mapsto (\psi(x))_{\psi \in \Psi}. \]
The quotients $R_\Psi$ of $\cO[G]$ defined in this way will be referred to as {\em character group rings}.  Each $R_\Psi$ is a finite index subring of a finite product of DVRs.

Write $G = G_p \times G'$, where $G_p$ is the $p$-Sylow subgroup of $G$, and $G'$ is the subgroup of elements with prime-to-$p$ order.  The ring $\cO[G]$ decomposes as a product of local rings
$R_\chi = \cO[G_p]_\chi$ indexed by the characters $\chi \in \hat{G'}$.  Here $\cO[G_p]_\chi$ denotes the $\cO$-algebra $\cO[G_p]$ endowed with the $G$-action in which $g \in G$ acts by $\chi(g) \overline{g}$, where $\overline{g}$ denotes the image of $g$ under the canonical projection $G \longrightarrow G_p$.
Each connected component $R_\chi$ of $\cO[G]$ is an example of  a character group ring, with associated set $\Psi =\{\psi\colon \psi|_{G'} = \chi\}$.  The characters $\psi \in \Psi$ are said to {\em belong to} $\chi$.

\begin{lemma} Let $I \subset G$ be a subgroup and let \[ \N I = \sum_{\sigma \in I} \sigma \in \cO[G]. \] The quotient $\cO[G]/\N I$ is  a character group ring.  More precisely, $\cO[G]/\N I \cong R_{\Psi}$ where
$\Psi = \{\psi \in \hat{G}\colon \psi(I) \neq 1 \}$.
\end{lemma}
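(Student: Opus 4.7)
The plan is to factor the surjection $\cO[G] \twoheadrightarrow R_\Psi$ (coming from the definition of a character group ring) through $\cO[G]/(\N I)$ and verify the induced map is an isomorphism. The starting point is the elementary computation $\psi(\N I) = \sum_{\sigma \in I}\psi(\sigma) = |I|$ when $\psi|_I = 1$ and $= 0$ otherwise, which is orthogonality of characters on the finite abelian group $I$. Consequently $\N I$ lies in the kernel of $\cO[G] \to \prod_{\psi \in \Psi} \cO_\psi$, and the projection descends to a surjection $\bar f\colon \cO[G]/(\N I) \twoheadrightarrow R_\Psi$.

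To establish injectivity of $\bar f$, I would first show that $\cO[G]/(\N I)$ is $\cO$-torsion-free. Because $G$ is abelian, the ideal $\cO[G]\cdot \N I$ is the $\cO$-span of the coset sums $\sigma \N I = \sum_{\tau \in \sigma I}\tau$, and coset sums from distinct $I$-cosets are $\cO$-linearly independent since they are supported on disjoint subsets of $G$. This yields the explicit description that $\sum_g c_g g \in \cO[G]\cdot \N I$ if and only if $c_g$ is constant on each $I$-coset, from which torsion-freeness is immediate (if $\lambda c_g$ is constant on cosets with $\lambda\neq 0$, then so is $c_g$). A byproduct is the rank formula $\rank_\cO \cO[G]/(\N I) = |G| - [G:I]$.

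Next I would match $\cO$-ranks and conclude. The characters of $G$ trivial on $I$ are precisely those factoring through $G/I$, so there are $[G:I]$ of them; hence $|\Psi| = |G| - [G:I]$, which equals $\rank_\cO R_\Psi$ since $R_\Psi$ is a finite-index subring of a product of $|\Psi|$ copies of DVRs. Because $\bar f$ is a surjection between $\cO$-torsion-free modules of equal finite $\cO$-rank, its kernel vanishes after tensoring with $K = \Frac(\cO)$, and being torsion-free it must already be zero. Hence $\bar f$ is an isomorphism.

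The only mildly subtle step is the torsion-freeness verification, which comes down to the bookkeeping observation about coset sums described above; I anticipate no serious obstacle.
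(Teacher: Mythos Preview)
Your proof is correct. Both you and the paper start the same way---observing that $\psi(\N I)=0$ for $\psi\in\Psi$, so the projection $\cO[G]\twoheadrightarrow R_\Psi$ factors through $\cO[G]/(\N I)$---but you diverge on how to show the induced map is injective.

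The paper computes the kernel of $\cO[G]\to R_\Psi$ directly: if $x$ lies in the kernel then for every $g\in I$ one has $\psi(gx)=\psi(x)$ for \emph{all} $\psi\in\hat G$ (trivially when $\psi(g)=1$, and because $\psi(x)=0$ when $\psi(g)\neq 1$), hence $gx=x$ for all $g\in I$, which forces $x\in(\N I)$. Your argument instead establishes that $\cO[G]/(\N I)$ is $\cO$-torsion-free via the coset-sum description of the ideal, then matches $\cO$-ranks on both sides (each equals $|G|-[G:I]$) and concludes by the standard observation that a surjection of torsion-free modules of equal finite rank over a DVR is an isomorphism. The paper's route is shorter and avoids any rank bookkeeping; your route is more hands-on but has the minor virtue of making the $\cO$-module structure of both sides explicit along the way.
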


\begin{proof} Consider the canonical surjective $\cO$-algebra homomorphism 
\[ \begin{tikzcd}\alpha\colon \cO[G] \ar[r,two heads] & R_{\Psi}. \end{tikzcd} \] It is clear that $\N I$ lies in the kernel of $\alpha$, since $\psi(\N I) = 0$ if $\psi(I) \neq 1$.  
Conversely if $x \in \ker \alpha$, then for all $g \in I$ we see that $\psi(gx) = \psi(x)$ for all $\psi \in \hat{G}$.  This is clear if $\psi(g) = 1$, and follows from $x \in \ker \alpha$ if $\psi(g) \neq 1$.  Hence $gx = x$ for all $g \in I$, which implies that $x \in (\N I)$.
\end{proof}

\begin{corollary} \label{c:rchi} Let $\chi \in \hat{G'}$ and let $I \subset G_p$.  Then $R_\chi / \N I \cong R_\Psi$, where \[ \Psi =\{\psi \in \hat{G} \colon \psi|_{G'} = \chi, \psi(I) \neq 1\}. \]
In particular, $R_\chi / \N I$ can be expressed as a finite index subring of a product of DVRs.
\end{corollary}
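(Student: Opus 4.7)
The plan is to deduce Corollary~\ref{c:rchi} from the preceding lemma by combining it with the isotypic decomposition $\cO[G] = \prod_{\chi' \in \hat{G'}} R_{\chi'}$. First I would apply the lemma directly to the subgroup $I \subset G$ (the lemma imposes no condition on $I$ beyond being a subgroup), obtaining $\cO[G]/\N I \cong R_{\Psi_0}$ where $\Psi_0 = \{\psi \in \hat{G} : \psi(I) \neq 1\}$.

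Next, since $|G'|$ is prime to $p$ and hence a unit in $\cO$, the orthogonal idempotents $e_{\chi'} = \frac{1}{|G'|}\sum_{g \in G'} \chi'^{-1}(g)\,g$ decompose both sides of this isomorphism in a compatible way. On the source side, $\cO[G]/\N I \cong \prod_{\chi'} R_{\chi'}/\overline{\N I}$, and the key observation is that the image $\overline{\N I}$ in $R_{\chi'} \cong \cO[G_p]$ is simply $\N I \in \cO[G_p]$: this is because every $\sigma \in I \subset G_p$ has trivial $G'$-component, so the projection $\cO[G] \twoheadrightarrow R_{\chi'}$ sends $\sigma \mapsto \chi'(1)\sigma = \sigma$. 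On the target side, the partition $\Psi_0 = \bigsqcup_{\chi'} \Psi_{0,\chi'}$ with $\Psi_{0,\chi'} = \{\psi : \psi|_{G'} = \chi',\ \psi(I) \neq 1\}$ induces $R_{\Psi_0} \cong \prod_{\chi'} R_{\Psi_{0,\chi'}}$, since $e_{\chi'} R_{\Psi_0}$ is precisely the image of $R_{\chi'}$ in $\prod_{\psi \in \Psi_{0,\chi'}} \cO_\psi$. Matching the $\chi$-components yields the claimed isomorphism $R_\chi/\N I \cong R_{\Psi_{0,\chi}} = R_\Psi$.

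The final sentence of the corollary then follows from the general property of character group rings recorded just after their definition, namely that each $R_\Psi$ is a finite-index subring of $\prod_{\psi \in \Psi} \cO_\psi$, a finite product of copies of the DVR $\cO$. (One sees the finite-index assertion by extending scalars to $K = \Frac(\cO)$: since all character values of $G$ lie in $K$, we have $K[G] \cong \prod_{\psi \in \hat{G}} K$ as $K$-algebras, so $K \otimes_\cO R_\Psi = \prod_{\psi \in \Psi} K$ and the two rings share the same $\cO$-rank.) I do not foresee any substantive obstacle; the only bookkeeping point is verifying the compatibility of $\N I$ with the idempotent decomposition, which is immediate from $I \subset G_p$.
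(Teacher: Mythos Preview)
Your proof is correct. The paper states the corollary without proof, and your deduction from the preceding lemma via the idempotent decomposition $\cO[G] = \prod_{\chi'} R_{\chi'}$ is one of the two natural routes; the other (arguably what the paper has in mind) is to apply the lemma directly with $G$ replaced by $G_p$, using the $\cO$-algebra identification $R_\chi \cong \cO[G_p]$, and then to identify the resulting character group ring of $G_p$ with $R_\Psi$ via the bijection $\psi_p \leftrightarrow \chi\cdot\psi_p$ between $\{\psi_p \in \hat{G_p}: \psi_p(I)\neq 1\}$ and $\Psi$. The two arguments are equivalent, and your bookkeeping (in particular that the image of $\N I$ in each $R_{\chi'}$ is again $\N I \in \cO[G_p]$ because $I\subset G_p$) is exactly the point that makes either version go through.
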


\subsection{Fitting ideals} \label{s:fitting}

In this section we collect some results---presumably well-known---about Fitting ideals. Let $R$ be a commutative ring.  An $R$-module $M$ is called {\em quadratically presented} over $R$ if there exists a positive integer $m$ and an exact sequence
\[ \begin{tikzcd}
  R^m \ar[r,"\varphi"] & 
 R^m \ar[r] & N \ar[r] & 0.
 \end{tikzcd} \]
In this case, $\Fitt_R(N)$ is principal and generated by the determinant of the map $\varphi$.

\begin{lemma} \label{l:size}  Let $B$ be a finite index subring of a finite product of PIDs (such as any character group ring $R_\Psi$ associated to a subset $\Psi \subset \hat{G}$).
Let $N$ be a quadratically presented $B$-module such that $\Fitt_B(N) = (x)$ for some non-zerodivisor $x \in B$.  Suppose that $B/(x)$ is finite.  Then $N$ is finite and \[ \#N = \#B/(x). \]
\end{lemma}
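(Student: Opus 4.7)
The plan is to write down a quadratic presentation of $N$, compute the size of the cokernel after extending scalars to the ambient product of PIDs $\tilde B$, and then use the snake lemma to transfer the count back to $B$. From the quadratic presentation $B^m \xrightarrow{\varphi} B^m \to N \to 0$, one has $(\det\varphi) = \Fitt_B(N) = (x)$, so $\det\varphi = ux$ for some $u \in B^*$. The adjugate identity $\adj(\varphi)\cdot\varphi = \det(\varphi)\cdot\Id$ then forces $\varphi$ to be injective, since $\varphi(v)=0$ implies $uxv = 0$ and $x$ is a non-zerodivisor. Thus the presentation upgrades to a short exact sequence $0 \to B^m \to B^m \to N \to 0$. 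Moreover, since $x \in \Fitt_B(N)$ annihilates $N$, the module $N$ is finitely generated over the finite ring $B/(x)$ and hence itself finite.

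Next I would pass to the ambient product of PIDs $\tilde B$ and set $C = \tilde B / B$, a finite $B$-module. A preliminary point is that $x$ remains a non-zerodivisor in $\tilde B$: in the character-group-ring case $\tilde B = \prod_{\psi\in\Psi}\cO_\psi$ is $\Z$-torsion free, so if $xy = 0$ in $\tilde B$ and $n$ is the exponent of $C$, then $ny \in B$ and $x(ny) = 0$ forces $ny = 0$, whence $y = 0$. Granted this, the base-changed matrix $\tilde\varphi\colon \tilde B^m \to \tilde B^m$ is injective, and applying Smith normal form in each PID factor of $\tilde B$ gives $\#\tilde N = \#\tilde B/(x)$, where $\tilde N = \coker(\tilde\varphi)$.

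Finally, I would relate $B$- and $\tilde B$-quantities by applying the snake lemma twice: once to the diagram with rows $0 \to B^m \to \tilde B^m \to C^m \to 0$ and vertical maps $\varphi$, and once to the diagram with rows $0 \to B \to \tilde B \to C \to 0$ and vertical maps multiplication by $x$. In both cases the vertical maps on the top two rows are injective, so the snake sequence collapses to a four-term exact sequence whose outer terms are the kernel and cokernel of an endomorphism of a \emph{finite} abelian group ($C^m$ or $C$), and therefore have equal order. Multiplicativity of orders in these exact sequences then yields $\#N = \#\tilde N$ and $\#B/(x) = \#\tilde B/(x)$, which combine with the Smith-normal-form computation to give $\#N = \#B/(x)$. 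The only genuinely substantive step in this program is verifying that $x$ remains a non-zerodivisor in $\tilde B$; everything else is formal homological algebra plus the classical structure theorem over a PID.
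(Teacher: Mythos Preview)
Your proof is correct and follows essentially the same strategy as the paper: reduce to the ambient product of PIDs via Smith normal form, then compare cardinalities between $B$ and $\tilde B$. Your snake-lemma argument is just a repackaging of the paper's direct index computations $\#(\tilde B/x\tilde B)/\#(B/xB) = \#(\tilde B/B)/\#(x\tilde B/xB)$ and the analogous identity for $\varphi$. One point where you are actually more careful than the paper: you explicitly isolate and verify that $x$ remains a non-zerodivisor in $\tilde B$ (in the torsion-free case), whereas the paper uses this implicitly when asserting that multiplication by $\det(A)$ induces isomorphisms in its index calculations.
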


\begin{proof}  Let $A$ be an $m \times m$ matrix representing the relations among the generators of $N$, so $N \cong B^m/A \cdot B^m$ and $\Fitt_B(N) = ( \det(A)) = (x)$. We must show that $B^m/A \cdot B^m$ is finite and $\#(B^m/A \cdot B^m) = \#B/\det(A)$.  This result is well-known for PIDs. Indeed, \[ N \cong B^m/A \cdot B^m \cong \bigoplus_{i=1}^m B/x_i B, \] where the Smith normal form of $A$ is  the diagonal matrix with diagonal entries $x_1, \ldots, x_m$. Then $(\det(A)) = (x) = (\prod_{i=1}^m x_i)$.  Each $B/x_iB$ is finite as it is a quotient of $B/xB$, and furthermore
\[ \#N = \prod_{i=1}^m \#B/x_i B = \#B/(x). \]
  From this, we can deduce the result more generally when $B$ is a finite index subring of a product of PIDs.
Indeed, it is clear that if the result holds for two rings $B, B'$, then it holds for $B\times B'$, since both sides of the desired equality factor as a product over the corresponding terms for $B$ and $B'$.  

Furthermore, if the result holds for a ring $B'$, then it holds for a finite index subring $B \subset B'$ as we now show.  We see that
\begin{equation} \label{e:rrp1}
 \frac{\#B'/\det(A)B'}{\#B/\det(A)B} = \frac{\#B'/B}{\#\det(A)B'/\#\det(A)B} = 1 
 \end{equation}
since multiplication by the non-zerodivisor $\det(A)$ is an isomorphism between the space in the numerator and in the denominator.
Similarly, one sees that
\begin{equation} \label{e:rrp2}
 \frac{\# (B')^m/A \cdot (B')^m}{\# B^m/A \cdot B^m} = \frac{\# (B')^m/B^m}{\# A \cdot (B')^m/ A \cdot B^m} = 1
 \end{equation}
since multiplication by $A$ induces an isomorphism between the space in the numerator and in the denominator in the middle of the equation.  
Only injectivity of this map is not obvious; for this, note that if $Av \in AB^m$ for some $v \in (B')^m$, then multiplying by the adjugate of $A$ we obtain
$\det(A)v \in \det(A) B^m$, whence $v \in B^m$ since $\det(A)$ is a non-zerodivisor.

Equations (\ref{e:rrp1}) and (\ref{e:rrp2}) imply that the lemma holds for any finite index subring $B$ of a ring $B'$ for which it holds; this gives the result.
\end{proof}

\begin{lemma} \label{l:size2}
Let $\Psi \subset \hat{G}$ and let $R_\Psi$ denote the associated character group ring over $\cO$. Let $x \in R_\Psi$ be a non-zerodivisor.  Then $\#R_\Psi/(x) =  \#\cO/(\prod_{\psi \in  \Psi}\psi(x))$.
\end{lemma}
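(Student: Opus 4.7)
The plan is to reduce the computation to the ambient product of DVRs $B' = \prod_{\psi \in \Psi} \cO_\psi$, of which $R_\Psi$ is a finite-index subring by the very definition of a character group ring. Under the embedding $R_\Psi \hookrightarrow B'$ the element $x$ corresponds to the tuple $(\psi(x))_{\psi \in \Psi}$, so
\[
B'/xB' \;\cong\; \prod_{\psi \in \Psi} \cO/\psi(x)\cO.
\]
Since $\cO$ is a DVR, additivity of the valuation gives $\#\cO/a\cO \cdot \#\cO/b\cO = \#\cO/ab\cO$ for nonzero $a, b$, and iterating yields $\#B'/xB' = \#\cO/(\prod_{\psi \in \Psi}\psi(x))\cO$, which is precisely the right-hand side of the claim.

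What remains is to show $\#R_\Psi/xR_\Psi = \#B'/xB'$, and for this I would apply verbatim the finite-index subring argument already present in the proof of Lemma~\ref{l:size} (equation (\ref{e:rrp1})). Granting that $x$ is a non-zerodivisor in $B'$ as well as in $R_\Psi$, multiplication by $x$ gives bijections $B' \xrightarrow{\sim} xB'$ and $R_\Psi \xrightarrow{\sim} xR_\Psi$ of finite-index inclusions, whence $\#B'/R_\Psi = \#xB'/xR_\Psi$. Expanding $\#B'/xR_\Psi$ via the two intermediate subrings $R_\Psi$ and $xB'$ and cancelling then gives $\#R_\Psi/xR_\Psi = \#B'/xB'$, completing the count.

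The one nonroutine step, which I expect to be the main obstacle, is promoting the non-zerodivisor hypothesis from $R_\Psi$ up to $B'$---equivalently, verifying that $\psi(x) \neq 0$ for every $\psi \in \Psi$. I would handle this by using the finite-index property: choose a positive integer $m$ with $mB' \subset R_\Psi$, and let $e_{\psi_0} \in B'$ denote the primitive idempotent supported at $\psi_0$. If $\psi_0(x) = 0$ for some $\psi_0 \in \Psi$, then $me_{\psi_0} \in mB' \subset R_\Psi$ is a nonzero element with $x \cdot m e_{\psi_0} = 0$ in $B'$, and hence in $R_\Psi$, contradicting the hypothesis that $x$ is a non-zerodivisor in $R_\Psi$. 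With this verified the two-step reduction above goes through and the lemma follows.
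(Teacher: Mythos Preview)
Your proof is correct and follows essentially the same route as the paper: embed $R_\Psi$ as a finite-index subring of $\cO_\Psi = \prod_{\psi \in \Psi} \cO$, compute $\#\cO_\Psi/x\cO_\Psi$ componentwise, and transfer the count to $R_\Psi$ via the multiplication-by-$x$ isomorphism $\cO_\Psi/R_\Psi \cong x\cO_\Psi/xR_\Psi$. The paper's proof is terser and does not explicitly justify that $x$ remains a non-zerodivisor in the ambient product (it tacitly uses this when asserting that multiplication by $x$ is an isomorphism), so your final paragraph actually fills a small gap the paper leaves implicit.
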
 

\begin{proof}
We proceed as in the proof of the previous lemma.  There is an injection 
\[ R_\Psi \hookrightarrow \cO_{\Psi}= \prod_{\psi \in \Psi} \cO, \qquad y \mapsto (\psi(y))_{\psi \in \Psi}\] with image of finite index.   
Then
\[ \#(\cO_\Psi / R_\Psi)= \#(x\cO_\Psi / xR_\Psi) \]
since multiplication by $x$ is an isomorphism between the two quotients.
It follows that
\[ \#(R_\Psi/  xR_\Psi) = \#(\cO_\Psi / x\cO_\Psi) = \prod_{\psi \in \Psi} \#(\cO/\psi(x)), \]
where the last equality holds since $\cO_\Psi$ is a product ring.  The result follows.
\end{proof}

We can now describe the ``elementary argument" mentioned in the introduction to show that (\ref{e:cnf}) implies that the inclusion (\ref{e:fct}) is an equality.  We work over $\cO[G]^-$.  In the case that $H/F$ is unramified at all finite primes, one can show that $\Cl^T(H)^{-}_\cO$, defined as $\Cl^T(H)^{-} \otimes {\cO}$, is quadratically presented as a module over $\cO[G]^-$. Therefore the inclusion (\ref{e:fct}) implies that
\[ \Fitt_{\cO[G]^-}(\Cl^T(H)^{-}_{\cO}) = (x \cdot \Theta_{S_\infty, T}) \]
for some $x \in \cO[G]^-$. Hence by Lemmas~\ref{l:size} and \ref{l:size2}, we have
\[ \# \Cl^T(H)^{-}_\cO = \#\cO[G]^-/(x  \Theta_{S_\infty, T}) = \#\cO/\prod_{\psi \text{ odd}} \psi(x  \Theta_{S_\infty, T}). \]
Therefore (\ref{e:cnf}) implies that $\psi(x) \in \cO^*$ for all $\psi$.  This implies that $x \in (\cO[G]^-)^*$, yielding the desired result.

We conclude with two more standard lemmas on Fitting ideals.

\begin{lemma} \label{l:fittmult} Let $R$ be a commutative ring, $C$ a quadratically presented $R$-module, and
\[ \begin{tikzcd}
 0 \ar[r] &  A \ar[r] & B \ar[r] & C \ar[r] & 0 
 \end{tikzcd}
  \]
a short exact sequence of $R$-modules.  Then
\[ \Fitt_R(B) = \Fitt_R(A)\Fitt_R(C). \]
Furthermore, if $A$ and $C$ are both quadratically presented, then $B$ is as well.
\end{lemma}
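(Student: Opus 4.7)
The plan is to build an explicit block-triangular presentation of $B$ and read off the Fitting ideal by computing its top-size minors. Since $C$ is quadratically presented, I fix a presentation $R^m \xrightarrow{\Phi} R^m \twoheadrightarrow C$ with $\Phi$ a square $m\times m$ matrix. The surjection $R^m\twoheadrightarrow C$ lifts to a map $\pi\colon R^m \to B$ because $R^m$ is free, and a short diagram chase (equivalently, passage through the fiber product $R^m \times_C B \cong R^m\oplus A$) yields an exact sequence
\[ \im\Phi \longrightarrow R^m \oplus A \longrightarrow B \longrightarrow 0, \]
where the first map sends $r \mapsto (r,-\pi(r))$ and the second sends $(r,a)\mapsto \pi(r)+a$.

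Next, I would pick any presentation $R^{n'}\xrightarrow{\Psi} R^n\twoheadrightarrow A$ and a lift $\Gamma\colon R^m\to R^n$ of the induced map $\pi\circ\Phi\colon R^m\to A$, and splice them with the above sequence to obtain a block presentation
\[ R^{n'+m}\xrightarrow{M} R^{n+m}\to B\to 0, \qquad M=\begin{pmatrix}\Psi & -\Gamma \\ 0 & \Phi\end{pmatrix}. \]
Because the lower-left $m\times n'$ block of $M$ vanishes, Laplace expansion along the bottom $m$ rows shows that any nonzero $(n+m)\times(n+m)$ minor of $M$ must include all $m$ columns from the $\Phi$-block (otherwise the $m$-size minor from the bottom rows is forced to vanish). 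The remaining $n$ columns are chosen from the $\Psi$-block and produce an $n\times n$ minor of $\Psi$, so every top minor of $M$ has the form $\pm\det(\Phi)\cdot\det(\Psi_S)$ for some $n$-subset $S$ of columns. Summing over $S$ gives
\[ \Fitt_R(B) \;=\; \det(\Phi)\cdot I_n(\Psi) \;=\; \Fitt_R(C)\cdot\Fitt_R(A). \]

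For the ``furthermore'' clause, if $A$ is also quadratically presented one may take $n'=n$; then $M$ is a square $(n+m)\times(n+m)$ matrix, so the resulting presentation of $B$ is itself quadratic. I do not anticipate a genuine obstacle: the construction of the block-triangular presentation is a short diagram chase, and the Fitting computation is a direct application of Laplace expansion. The one place where the quadratic presentation hypothesis on $C$ is truly essential is the collapse of the $(n+m)$-minors of $M$ onto those of the form $\det(\Phi)\det(\Psi_S)$; this argument breaks down if $\Phi$ is not square, which is precisely why the general containment $\Fitt_R(A)\Fitt_R(C)\subseteq\Fitt_R(B)$ can be strict without a quadratic hypothesis on $C$.
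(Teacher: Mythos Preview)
Your proof is correct and is the standard argument for this multiplicativity statement. The paper does not give its own proof but simply cites Northcott, \emph{Finite Free Resolutions}, Theorem~22; the block-triangular presentation of $B$ and the minor computation you describe is essentially the argument found there.
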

See~\cite{northcott}*{Theorem 22} for a proof.
\begin{lemma}\label{l:abab} Let $R$ be a commutative ring, and $B, B'$ two quadratically presented $R$-modules fitting into exact sequences\[
\begin{tikzcd}[row sep=tiny]
 0 \ar[r] & A \ar[r] & B \ar[r] & C \ar[r] & 0, \\
 0 \ar[r] & A' \ar[r] & B' \ar[r] & C \ar[r] & 0.
\end{tikzcd} \]
of $R$-modules.  Then \[ \Fitt_R(A) \Fitt_R(B') = \Fitt_R(A') \Fitt_R(B). \]
\end{lemma}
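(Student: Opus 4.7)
The natural strategy is to bridge the two exact sequences via their fiber product over $C$, which replaces the (possibly non-quadratically-presented) quotient $C$ by the (quadratically presented) modules $B$ and $B'$, so that Lemma~\ref{l:fittmult} becomes applicable.

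Concretely, I would form the pullback
\[ P = B \times_C B' = \{ (b,b') \in B \oplus B' : \pi(b) = \pi'(b') \}, \]
where $\pi \colon B \twoheadrightarrow C$ and $\pi' \colon B' \twoheadrightarrow C$ are the given surjections. The two projections $P \to B'$ and $P \to B$ are surjective, since $\pi$ and $\pi'$ are. A direct check shows that their kernels are respectively $A \cong \ker(\pi) \hookrightarrow P$ via $a \mapsto (a,0)$, and $A' \cong \ker(\pi') \hookrightarrow P$ via $a' \mapsto (0,a')$. This yields two short exact sequences of $R$-modules
\[ 0 \longrightarrow A \longrightarrow P \longrightarrow B' \longrightarrow 0, \qquad 0 \longrightarrow A' \longrightarrow P \longrightarrow B \longrightarrow 0. \]

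Now, the crucial point is that the rightmost term in each of these new sequences is quadratically presented by hypothesis. Therefore Lemma~\ref{l:fittmult} applies to both sequences and gives
\[ \Fitt_R(P) = \Fitt_R(A)\Fitt_R(B') \quad \text{and} \quad \Fitt_R(P) = \Fitt_R(A')\Fitt_R(B). \]
Equating the two expressions yields $\Fitt_R(A)\Fitt_R(B') = \Fitt_R(A')\Fitt_R(B)$, as desired.

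The only substantive point is recognising that one should not try to compare the two original sequences directly (since $C$ need not be quadratically presented, Lemma~\ref{l:fittmult} is unavailable there), and instead build the common fiber product $P$ so that $B$ and $B'$—which \emph{are} quadratically presented—become the quotients in the new sequences. Everything else is a routine diagram chase verifying the kernels of the projections from $P$.
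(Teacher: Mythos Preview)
Your proof is correct and essentially identical to the paper's own argument: the paper also forms the fiber product $M = B \times_C B'$, obtains the two short exact sequences with $B$ and $B'$ as quotients, and applies Lemma~\ref{l:fittmult} to each to conclude.
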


\begin{proof}
Let $M$ denote the fiber product of $B$ and $B'$ over $C$, i.e.\ the $R$-module of ordered pairs $(b,b')$ such that
$b$ and $b'$ have the same image in $C$.  Projection onto the first and second components yields two short exact sequences
\[
\begin{tikzcd}[row sep=tiny]
 0 \ar[r] & A' \ar[r] & M \ar[r] & B \ar[r] & 0, \\
 0 \ar[r] & A \ar[r] & M \ar[r] & B' \ar[r] & 0.
\end{tikzcd}
\]
Computing $\Fitt_R(M)$ in two ways using these exact sequences and Lemma~\ref{l:fittmult} yields the desired result.
\end{proof}

For a more general statement see \cite{kataoka}*{Theorem 2.6}.

\section{Main Results} \label{s:selmer}

\subsection{The Selmer module of Burns--Kurihara--Sano} \label{s:bks}

We recall the definition of the Selmer module defined by Burns--Kurihara--Sano in \cite{bks} and studied further by Burns in~\cite{burns}.  This $G$-module will play a central role in this paper.
For this, we fix finite disjoint sets of places $\Sigma, \Sigma'$ of $F$ such that $\Sigma \supset S_\infty$ and $\Sigma'$ satisfies condition (\ref{e:drcond}) from the introduction.  Let $H_{\Sigma'}^*$ denote the subgroup of $x \in H^*$ such 
that $\ord_w(x - 1) > 0$ for each prime $w \in \Sigma'_H$, where this latter set denotes the set of primes of $H$ lying above those in $\Sigma'$.  Define
\begin{equation} \label{e:selmerdef}
 \Sel_\Sigma^{\Sigma'}(H) = \Hom_{\Z}(H_{\Sigma'}^*, \Z)/\prod_{w \not\in \Sigma_H \cup \Sigma'_H} \Z \end{equation}
where the product ranges over the  primes $w \not \in \Sigma_H \cup \Sigma_H'$, and the implicit map sends a tuple $(x_w)$ to the function $\sum_w x_w \ord_w$.  As usual we give $ \Sel_\Sigma^{\Sigma'}(H)$ the contragredient $G$-action $(g \varphi)(x) = \varphi(g^{-1}x)$.

Let $Y_{H, \Sigma}$ denote the free abelian group on the places of $H$ above $\Sigma$, endowed with its canonical $G$-action. 

\begin{lemma} \label{l:ysc}
There is a canonical short exact sequence of $\Z[G]^-$-modules
\[ \begin{tikzcd}
0 \ar[r] &  Y_{H,\Sigma}^{-} \ar[r] & \Sel_\Sigma^{\Sigma'}(H)^{-} \ar[r] & \Cl^{\Sigma'}(H)^{\vee, -} \ar[r] & 0.
\end{tikzcd}
 \]
\end{lemma}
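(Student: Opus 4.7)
The plan is to construct the sequence by dualizing the fundamental four-term ``units--ideals--class group'' sequence, quotienting by the subgroup that appears in the definition of the Selmer module, and then passing to the minus part, where two convenient vanishings reduce the output to the three terms in question.

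Concretely, I would begin with the exact sequence of $\Z[G]$-modules
\[ 0 \to E_{\Sigma'} \to H^*_{\Sigma'} \xrightarrow{\ord} I^{\Sigma'}(H) \to \Cl^{\Sigma'}(H) \to 0, \]
where $E_{\Sigma'} = \{u \in \cO_H^* : u \equiv 1 \pmod{w}\ \text{for all}\ w \in \Sigma'_H\}$ and $I^{\Sigma'}(H) = \bigoplus_{w\ \text{finite},\, w \notin \Sigma'_H} \Z\, w$. Splitting at $P^{\Sigma'}(H) := \im(\ord)$ and applying $\Hom_\Z(-, \Z)$ to the two resulting short exact sequences gives
\[ 0 \to \Hom(P^{\Sigma'}(H), \Z) \to \Hom(H^*_{\Sigma'}, \Z) \to \Hom(E_{\Sigma'}, \Z) \to 0 \]
(exact because $P^{\Sigma'}(H) \hookrightarrow I^{\Sigma'}(H)$ is $\Z$-free) and
\[ 0 \to \Hom(I^{\Sigma'}(H), \Z) \to \Hom(P^{\Sigma'}(H), \Z) \to \Cl^{\Sigma'}(H)^\vee \to 0, \]
using $\Hom_\Z(\Cl^{\Sigma'}(H), \Z) = 0$ and $\Ext^1_\Z(\Cl^{\Sigma'}(H), \Z) \cong \Cl^{\Sigma'}(H)^\vee$ (as $\Cl^{\Sigma'}(H)$ is finite).

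Next I quotient by the subgroup $\prod_{w \notin \Sigma_H \cup \Sigma'_H} \Z$ appearing in the definition of $\Sel_\Sigma^{\Sigma'}(H)$, which sits naturally as the direct factor of $\Hom(I^{\Sigma'}(H), \Z) = \prod_{w\ \text{finite},\, w \notin \Sigma'_H} \Z$ complementary to $\prod_{w \in \Sigma_H^{\mathrm{fin}}} \Z \cong Y^{\mathrm{fin}}_{H, \Sigma}$ (the $G$-equivariant identification arising from the dual basis of places, since the dual of a permutation $\Z[G]$-module is canonically isomorphic to itself under the contragredient action). Writing $A$ for $\Hom(P^{\Sigma'}(H), \Z)$ modulo this product, the two dualized sequences above yield
\[ 0 \to Y^{\mathrm{fin}}_{H, \Sigma} \to A \to \Cl^{\Sigma'}(H)^\vee \to 0, \qquad 0 \to A \to \Sel_\Sigma^{\Sigma'}(H) \to \Hom(E_{\Sigma'}, \Z) \to 0. \]

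Finally, I pass to the minus part (which inverts $2$). Two vanishings do the work. First, $\Hom(E_{\Sigma'}, \Z)^- = 0$: condition (\ref{e:drcond}) gives $E_{\Sigma'} \cap \mu(H) = \{1\}$, while for any $u \in \cO_H^*$ the ratio $u/\sigma(u)$ has absolute value $1$ at every archimedean place (since $\sigma$ restricts to ordinary complex conjugation under each complex embedding of the CM field $H$), hence $u/\sigma(u) \in \mu(H)$ by Kronecker; specializing to $u \in E_{\Sigma'}$ shows $u/\sigma(u) \in E_{\Sigma'} \cap \mu(H) = \{1\}$, so $\sigma$ acts trivially on $E_{\Sigma'}$ and the contragredient action on $\Hom(E_{\Sigma'}, \Z)$ has $\sigma + 1$ acting as multiplication by $2$. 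Second, $Y^-_{H, S_\infty}$ vanishes after inverting $2$, because each archimedean place of the CM field $H$ is complex and hence fixed by $\sigma$ (the equivalence class $\{\tau, \bar\tau\}$ is preserved under precomposition with $\sigma$); consequently $Y_{H, \Sigma}^- = (Y^{\mathrm{fin}}_{H, \Sigma})^-$. The second of the two boxed sequences therefore collapses to $A^- \xrightarrow{\sim} \Sel_\Sigma^{\Sigma'}(H)^-$, and the first yields the desired sequence. The argument is essentially bookkeeping around these two vanishings, which are the one subtle point and depend squarely on the CM hypothesis together with (\ref{e:drcond}); I do not anticipate any other technical obstacle.
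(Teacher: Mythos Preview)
Your proof is correct and takes essentially the same approach as the paper: the paper cites the exact sequence $0 \to \Cl^{\Sigma'}(H)^{\vee} \to \Sel^{\Sigma'}_{S_\infty}(H) \to \Hom_\Z(\cO_{H, S_\infty, \Sigma'}^*, \Z) \to 0$ from \cite{burns} (which is your sequence $0 \to A \to \Sel \to \Hom(E_{\Sigma'},\Z) \to 0$ in the case $\Sigma = S_\infty$, derived there exactly as you do), and then invokes the same two vanishings $(\cO_{H,S_\infty,\Sigma'}^*)^- = 0$ and $Y_{H,S_\infty}^- = 0$. You simply unwind the cited sequence from first principles.
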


\begin{proof}  We have a canonical short exact sequence
\[ \begin{tikzcd}
0 \ar[r] & Y_{H,\Sigma \setminus S_\infty} \ar[r] & \Sel_{\Sigma}^{\Sigma'}(H) \ar[r] & \Sel_{S_\infty}^{\Sigma'}(H) \ar[r] & 0,
\end{tikzcd}
 \]
where the first nontrivial arrow is induced by $w \mapsto \ord_w$.  Note that $Y_{H,S_\infty}^- = 0$. To prove the result we must show that
\begin{equation} \label{e:selt}
\Sel^{\Sigma'}_{S_\infty}(H)^- \cong \Cl^{\Sigma'}(H)^{\vee, -}.
\end{equation}
  Yet the sequence (5) in \cite{burns} for $\Sigma = S_\infty$  reads
\[ \begin{tikzcd}
 0 \ar[r] & \Cl^{\Sigma'}(H)^{\vee} \ar[r] & \Sel^{\Sigma'}_{S_\infty}(H) \ar[r] & \Hom_\Z(\cO_{H, S_\infty, \Sigma'}^*, \Z) \ar[r] & 0. 
 \end{tikzcd} \]
Since $H$ is a CM field, $(\cO_{H, S_\infty, \Sigma'}^*)^-$ is trivial, yielding (\ref{e:selt}).  The result follows.
\end{proof}

It is convenient to provide an alternate presentation of $\Sel_\Sigma^{\Sigma'}(H)$ as follows. 
Let $S'$ be any finite set of places of $F$ containing $\Sigma$ and disjoint from $\Sigma'$.  Assume that $S'$ is chosen such that the class group $\Cl_{S'}^{\Sigma'}(H)$ is trivial.
As shown in \cite{bks}*{equation (12)}, there is a canonical isomorphism
\begin{equation} \label{e:sels}
 \Sel_\Sigma^{\Sigma'}(H) \cong \Hom_{\Z}(\cO_{H, S', \Sigma'}^*, \Z) /\prod_{w \in S'_H - \Sigma_H} \Z,
\end{equation}
with the implicit map as in (\ref{e:selmerdef}).

As a final note in this section, we show that the Fitting ideal of  $\Sel_\Sigma^{\Sigma'}(H)$ vanishes on any non-identity component $\psi$ with a trivial zero.  More precisely, let $\psi \in \hat{G}$, $\psi \neq 1$, such that
$\psi(G_v) = 1$ for some $v \in \Sigma$.  Here $G_v \subset G$ denotes the decomposition group at $v$.  Writing \[ \Sel_{\Sigma}^{\Sigma'}(H)_\psi = \Sel_{\Sigma}^{\Sigma'}(H) \otimes_{\Z[G]} \cO_\psi \]  with $\cO_\psi$ as in \S\ref{s:cgr}, we  claim that $\Fitt_{\cO_\psi}(\Sel_\Sigma^{\Sigma'}(H)_\psi) = 0$.
For this,  it suffices to show that the finitely generated $\cO_\psi$-module $\Sel_\Sigma^{\Sigma'}(H)_\psi$ is infinite. Thus it is enough to show that $\Sel_{\Sigma}^{\Sigma'}(H)_{\psi} \otimes_{\cO_{\psi}} K$, or equivalently $(\Sel_{\Sigma}^{\Sigma'}(H) \otimes K)^{\psi}$, is non-zero. By Lemma~\ref{l:ysc} above and as $\psi \neq 1$, we have
\[ (\Sel_{\Sigma}^{\Sigma'}(H) \otimes K)^\psi  \cong (Y_{H,\Sigma} \otimes K)^\psi \supset (Y_{H,\{v\}} \otimes K)^\psi = (\Ind^G_{G_v} K)^\psi \cong K
\]
 by Frobenius reciprocity because $\psi(G_v) = 1$.  The desired result $\Fitt_{\cO_\psi}(\Sel_\Sigma^{\Sigma'}(H)_\psi) = 0$ follows.
 
 \begin{lemma} \label{l:trivial} Let $\Psi \subset \hat{G}$ with $1 \not\in \Psi$ and let $R_\Psi$ denote the associated character group ring.  Suppose that for each $\psi \in \Psi$, there exists $v \in \Sigma$ such that $\psi(G_v) = 1$.  Then
 \[ \Fitt_{R_\Psi}( \Sel_\Sigma^{\Sigma'}(H) \otimes_{\Z[G]} R_\Psi) = 0 = \Theta_{\Sigma, \Sigma'}^{\#} R_\Psi. \]
 \end{lemma}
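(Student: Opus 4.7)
The plan is to reduce both equalities to componentwise claims using the definition of $R_\Psi$ as the image of $\cO[G]$ inside $\cO_\Psi := \prod_{\psi \in \Psi} \cO_\psi$, which is injective. Thus it suffices to verify each vanishing after base change to $\cO_\Psi$, and this reduces factor-by-factor to the corresponding statement over each $\cO_\psi$.

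For the equality $\Theta_{\Sigma, \Sigma'}^\# R_\Psi = 0$, I would fix $\psi \in \Psi$ and compute $\psi(\Theta_{\Sigma, \Sigma'}^\#) = L_{\Sigma, \Sigma'}(\psi, 0)$, then show this $L$-value vanishes. By hypothesis there exists $v \in \Sigma$ with $\psi(G_v) = 1$. If $v$ is a finite prime, then $\psi(I_v) = 1$ makes $\psi$ unramified at $v$ and $\psi(\frob_v) = 1$, so the depletion Euler factor $(1 - \psi(\fp_v)\N\fp_v^{-s})$ vanishes at $s = 0$. If $v \in S_\infty$, then since $F$ is totally real and $H$ is CM, the decomposition group $G_v$ is generated by the unique complex conjugation $c \in G$, so $\psi(c) = 1$; because $\psi \neq 1$, the classical trivial zero for even nontrivial characters of a totally real field yields $L(\psi, 0) = 0$. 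In either case the remaining depletion and smoothing factors at $s = 0$ are nonzero (the smoothing factors $1 - \psi(\fp)\N\fp$ being nonzero since $\N\fp > 1$), so $L_{\Sigma, \Sigma'}(\psi, 0) = 0$ as required.

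For the vanishing of the Fitting ideal, the paragraph immediately preceding the lemma already establishes $\Fitt_{\cO_\psi}(\Sel_\Sigma^{\Sigma'}(H)_\psi) = 0$ for each $\psi \in \Psi$. Since $\Sel_\Sigma^{\Sigma'}(H)$ is finitely generated over $\Z[G]$ by the alternate presentation (\ref{e:sels}), the $R_\Psi$-module $M := \Sel_\Sigma^{\Sigma'}(H) \otimes_{\Z[G]} R_\Psi$ is finitely presented. Using that Fitting ideals commute with base change, and that the Fitting ideal over a product ring splits as the product of the componentwise Fitting ideals, I would obtain
\[
\Fitt_{R_\Psi}(M) \cdot \cO_\Psi = \Fitt_{\cO_\Psi}(M \otimes_{R_\Psi} \cO_\Psi) = \prod_{\psi \in \Psi} \Fitt_{\cO_\psi}(\Sel_\Sigma^{\Sigma'}(H)_\psi) = 0,
\]
and injectivity of $R_\Psi \hookrightarrow \cO_\Psi$ then forces $\Fitt_{R_\Psi}(M) = 0$.

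I do not anticipate a substantive obstacle; the entire statement is a formal packaging of the single-character assertion immediately preceding it, combined with the classical vanishing of even nontrivial Artin $L$-values at $s = 0$. The mildest point requiring care is separating the archimedean and non-archimedean cases of $v \in \Sigma$, which is handled as above.
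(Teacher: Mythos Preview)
Your argument is correct and follows the paper's approach: use functoriality of Fitting ideals under the quotients $R_\Psi \to \cO_\psi$ together with the preceding paragraph's result $\Fitt_{\cO_\psi}(\Sel_\Sigma^{\Sigma'}(H)_\psi)=0$, and show $\psi(\Theta_{\Sigma,\Sigma'}^\#)=L_{\Sigma,\Sigma'}(\psi,0)=0$ via the Euler factor at $v$. Two minor remarks: your assertion that ``the remaining depletion factors at $s=0$ are nonzero'' need not hold (there may be several $v'\in\Sigma$ with $\psi(G_{v'})=1$), but this is harmless since a single zero factor already forces the product to vanish; and your separate treatment of archimedean $v$ is in fact more thorough than the paper's one-line proof, which writes $(1-\psi(v))$ and hence tacitly takes $v$ finite---as is automatic in every application, where $\psi$ is odd.
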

 \begin{proof} The first equality follows immediately from the fact that $\Fitt_{\cO_\psi}(\Sel_\Sigma^{\Sigma'}(H)_\psi) = 0$ since Fitting ideals are functorial with respect to quotients.
 Similarly the second equality follows since for all $\psi \in \Psi$ we have
 \[ \psi(\Theta_{\Sigma, \Sigma'}^{\#}) = L_{\Sigma, \Sigma'}(\psi, 0) = (1 - \psi(v))L_{\Sigma-\{v\}, \Sigma'}(\psi, 0) = 0. 
 \]
  \end{proof}
 
 \subsection{Keystone Result} \label{s:fmr}
 
Recall that $S_{\ram}$ denotes the set of finite primes of $F$ that are ramified in $H/F$.  As in the introduction, let $T$ denote a finite set of primes of $F$ that are unramified in $H$ and such that $T$ satisfies the condition (\ref{e:drcond}).
Let
\begin{equation}\label{e:sigmadef}
\begin{split}
\Sigma &=  \{v \in S_{\ram}\colon v \mid p\} \cup S_\infty,  \\
\Sigma' &= \{ v \in S_{\ram}\colon v \nmid p\} \cup T.
\end{split}
\end{equation}
 In other words, we  transfer the ramified primes not above $p$ from the depletion set to the smoothing set.
 The theorem whose proof occupies most of the paper, and from which all other results are deduced, is the following.
 \begin{theorem} \label{t:main0} The $\Z_p[G]^-$-module $\Sel_{\Sigma}^{\Sigma'}(H)_p^- = (\Sel_{\Sigma}^{\Sigma'}(H) \otimes_{\Z} \Z_p)^-$ is quadratically presented and we have
 \[ \Fitt_{\Z_p[G]^-}(\Sel_{\Sigma}^{\Sigma'}(H)_p^-) = (\Theta_{\Sigma, \Sigma'}^\#). \]
 \end{theorem}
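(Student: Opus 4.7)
The plan is to work one odd prime $p$ at a time, establishing the $p$-part of the theorem after tensoring with $\Z_p$. Because $\Sel_\Sigma^{\Sigma'}(H)_p^-$ does not admit a direct presentation of the required shape in general, I would route through the Ritter--Weiss module $\nabla_\Sigma^{\Sigma'}(H)$ of \S\ref{s:properties}, which is locally quadratically presented and which serves as a Jannsen-style transpose of the Selmer module. Under transposition, Fitting ideals are related by the involution $\#$ and the quadratic presentation transfers to the Selmer side, which already handles the quadratic presentation assertion and reduces the Fitting-ideal statement to a corresponding statement on the $\nabla$-side. The final general reduction is Theorem~\ref{t:include}: it suffices to establish just one of the two inclusions, namely
\[
\Fitt_{\Z_p[G]^-}(\Sel_\Sigma^{\Sigma'}(H)_p^-) \subset (\Theta_{\Sigma,\Sigma'}^\#),
\]
uniformly as $H$ varies over the CM abelian extensions of $F$; the reverse inclusion then follows by a size-counting argument along the lines of Lemmas~\ref{l:size}--\ref{l:size2} and the sketch around (\ref{e:cnf}), powered by the analytic class number formula (Lemma~\ref{l:cnf}).

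To prove the remaining inclusion, I would apply Ribet's method in the category of group ring valued Hilbert modular forms. The crucial first step is to construct, for each sufficiently large positive integer $k$ close to $1$ in $\hat{\Z}$, a cuspidal group ring valued Hilbert eigenform $f$ of weight $k$ whose Fourier expansion is congruent to that of the Eisenstein series $E_k$ modulo $x \cdot \Theta_{S_\infty, T}^\#$, where $x \in \Z_p[G]^-$ is a specified nonzerodivisor encoding the mod $p$ trivial zeroes $1 - \psi(\fp)$ of characters $\psi$ at primes $\fp \mid p$ ramified in $H$. Unlike Wiles's congruence modulo $\Theta_{S_\infty, T}^\#$ alone, this strengthened version requires computing the constant terms of $E_k$ at \emph{all} cusps (not only those above $\infty$) and invokes Silliman's existence results for group ring valued forms with prescribed constant terms. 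From the congruence I would extract a surjective $\Z_p[G]^-$-algebra homomorphism
\[
\varphi_x \colon \T \twoheadrightarrow \Z_p[G]^-/(x \Theta_{S_\infty, T}^\#)
\]
from the cuspidal Hecke algebra, sending each $T_\fl$ to $1 + \bpsi(\fl)$.

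Setting $I = \ker\varphi_x$, the Galois representations attached to Hilbert cusp eigenforms together with pseudorepresentation arguments should produce a faithful $\T$-module $B$ carrying a cohomology class $\kappa \in H^1(G_F, B/IB)$ whose image generates $B/IB$, on which complex conjugation acts as $-1$, and which is unramified outside $T \cup \{v : v \mid p\}$. The principal novelty lies in handling ramification at primes above $p$: Wiles's earlier device of adjoining auxiliary primes and twisting would introduce error terms incompatible with the exact Fitting-ideal formula sought here. Instead, let $B(I_p) \subset B/IB$ denote the $\T$-submodule generated by the images under $\kappa$ of the inertia groups at primes above $p$, and set $\bar B := (B/IB)/B(I_p)$. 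The image of $\kappa$ in $H^1(G_F, \bar B)$ is then tautologically unramified outside $T$, so by class field theory I obtain a surjection $\Cl^T(H)^- \twoheadrightarrow \bar B$ and hence $\Fitt_{\Z_p[G]^-}(\Cl^T(H)^-) \subset \Fitt_{\Z_p[G]^-}(\bar B)$. Exploiting ordinariness of the Galois representations together with the Hida--Wiles description of their restriction to decomposition groups at $p$, I would compute $\Fitt_{\Z_p[G]^-}(\bar B)$ and establish $(x)\cdot \Fitt_{\Z_p[G]^-}(\bar B) \subset (x\Theta_{S_\infty, T}^\#)$; cancelling the nonzerodivisor $x$ yields $\Fitt_{\Z_p[G]^-}(\Cl^T(H)^-) \subset (\Theta_{S_\infty, T}^\#)$, and translating through the Ritter--Weiss construction---which packages the appropriate Euler factors at primes in $S_{\ram}$---upgrades this to the target inclusion for $\Sel_\Sigma^{\Sigma'}(H)_p^-$.

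The hard part will be the construction of the strengthened congruence producing $\varphi_x$: this simultaneously requires a group-ring-valued refinement of Wiles's Eisenstein-cusp congruence, precise control of Eisenstein constant terms at cusps above the ramified primes dividing $p$, and Silliman's existence theorems. The extra factor of $x$ beyond the naive prediction $\Theta_{S_\infty, T}^\#$ should be interpreted as a shadow of trivial zeroes of $p$-adic $L$-functions, and it is exactly this factor that absorbs the uncontrolled ramification of $\kappa$ at primes above $p$. Removing both the $\mu$-invariant hypothesis and the restriction $S \supset \{v \mid p\}$ that appeared in earlier partial results hinges entirely on obtaining this refined congruence.
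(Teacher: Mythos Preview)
Your overall strategy matches the paper's: route through $\nabla_\Sigma^{\Sigma'}$ for the quadratic presentation and transpose (Corollary~\ref{c:selprin}), reduce via Theorem~\ref{t:include} to a single inclusion, and establish that inclusion by Ribet's method with a strengthened Eisenstein congruence. Two points, however, need correction.

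First, a factual slip: the nonzerodivisor $x$ encodes trivial zeroes at primes $\fp\mid p$ that are \emph{unramified} in $H$, not ramified. It appears only in Case~1 (no $p$-ramification in $H/F$); in Case~2 one takes $x=1$ (see Lemma~\ref{l:xdef} and the end of the proof of Theorem~\ref{t:yxt}).

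Second, and more seriously, your handling of the case where $H/F$ is ramified at some $\fp\mid p$ (so $\Sigma\supsetneq S_\infty$) has a real gap. You quotient $B/IB$ by the images of inertia at \emph{all} primes above $p$, obtain a surjection $\Cl^T(H)^-\twoheadrightarrow\bar B$, deduce $\Fitt(\Cl^T(H)^-)\subset(\Theta_{S_\infty,T}^\#)$, and then assert that ``translating through the Ritter--Weiss construction'' upgrades this to the target inclusion for $\Sel_\Sigma^{\Sigma'}(H)_p^-$. But this upgrade is not formal: by Lemma~\ref{l:ysc} the module $\Sel_\Sigma^{\Sigma'}(H)^-$ surjects onto $\Cl^{\Sigma'}(H)^{\vee,-}$ with nontrivial kernel $Y_{H,\Sigma}^-$, so a bound on the class-group side does not bound $\Fitt(\Sel_\Sigma^{\Sigma'})$ from above. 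The paper proceeds differently. For $\fp\in\Sigma$ it does \emph{not} quotient by inertia; instead it enlarges $B$ by the change-of-basis elements $A_\fp/C_\fp$ arising from the ordinary filtration (equations~(\ref{e:plocal})--(\ref{e:beq})), uses these to show $\kappa$ is \emph{locally trivial} (not merely unramified) at each $\fp\in\Sigma$ (Proposition~\ref{p:kappaunram}), and then builds a surjection directly from $\nabla_\Sigma^{\Sigma'}(H)_{R^\#}$ onto the enlarged module $\overline B_p$ (Theorem~\ref{t:beta}). The existence of this surjection is exactly where property~\ref{i:ext} (Lemma~\ref{l:ext}) enters: one must match the extension class of $\nabla_\Sigma^{\Sigma',-}$ with the Galois-cohomological classes coming from $\kappa$. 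Only after computing $\Fitt_R(\overline B_p)\subset(\Theta_{\Sigma,\Sigma'}^\#)$ for this enlarged module (Theorem~\ref{t:fitbp}) does the transpose give the desired inclusion for $\Sel_\Sigma^{\Sigma'}$. Your sketch collapses this into a bookkeeping step, but it is the structural heart of the argument in the $p$-ramified case.
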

 Implicit in the statement of Theorem~\ref{t:main0} is that $\Theta_{\Sigma, \Sigma'}^\# \in \Z_p[G]$, which follows from a lemma of Kurihara (see Lemma~\ref{l:sku} and Remark~\ref{r:sku} below).  
 
\subsection{Strong Brumer--Stark and Kurihara's Conjecture} \label{s:sk}

In Theorem~\ref{t:sk} we stated Kurihara's formula for the Fitting ideal of the $\Z[G]^-$module $\Cl^T(H)^{\vee,-}$, which he conjectured in \cite{kuriharaunp}  (see also \cite{greither}).  The following lemma shows that the statement is well-formed.

\begin{lemma}[Kurihara {\cite[Proposition 3.1]{kuriharaunp}}] \label{l:sku} $\KS^T(H/F)$ is contained in $\Z[G]$ and hence is an ideal of this ring. 
\end{lemma}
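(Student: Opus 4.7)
The plan is to show that every generator of the product ideal $\KS^T(H/F)$ lies in $\Z[G]$. By distributivity, a typical generator has the form
\[ \Theta^\#_{S_\infty, T} \cdot \prod_{v \in A} \N I_v \cdot \prod_{v \in B} (1 - \sigma_v e_v) \]
for some partition $S_{\ram} = A \sqcup B$. The character-by-character identity underlying (\ref{e:tstd}), applied to $B \subset S_{\ram}$ in place of $S_{\ram}$, gives $\prod_{v \in B}(1 - \sigma_v e_v) \cdot \Theta^\#_{S_\infty, T} = \Theta^\#_{S_\infty \cup B, T}$, so it suffices to prove
\[ \prod_{v \in A} \N I_v \cdot \Theta^\#_{S_\infty \cup B, T} \in \Z[G] \]
for every $A \subset S_{\ram}$. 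The case $A = \emptyset$ is exactly the Deligne--Ribet/Cassou-Nogu\`es theorem (\ref{e:dr}).

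For nonempty $A$, I would descend to the fixed field $H_A := H^{I_A}$, where $I_A := \langle I_v : v \in A \rangle \subset G$. The element $\prod_{v \in A} \N I_v$ is a product of $I_v$-invariants, and, since $\Q[G]$ is commutative, the entire product thus lies in $\Q[G]^{I_A}$. Let $\pi \colon \Q[G] \to \Q[G/I_A]$ denote the ring map induced by $G \twoheadrightarrow G/I_A$. Restricted to $\Q[G]^{I_A}$, the map $\pi$ is a $\Q$-linear isomorphism onto $\Q[G/I_A]$, and this isomorphism identifies the sublattice $\Z[G]^{I_A}$ with $\#I_A \cdot \Z[G/I_A]$; both statements follow from the explicit formula $\pi(\tilde g \cdot \N I_A) = \#I_A \cdot \bar g$ for any lift $\tilde g$ of $\bar g \in G/I_A$. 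Therefore it is enough to show that
\[ \pi\!\left(\prod_{v \in A} \N I_v \cdot \Theta^\#_{S_\infty \cup B, T}\right) = \prod_{v \in A} \#I_v \cdot \Theta^{H_A/F, \#}_{S_\infty \cup B, T} \]
lies in $\#I_A \cdot \Z[G/I_A]$; the displayed equality uses $\pi(\N I_v) = \#I_v$ (since $I_v \subset I_A$) together with the compatibility of Stickelberger elements with inflation.

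I would then apply the Deligne--Ribet/Cassou-Nogu\`es theorem to the extension $H_A/F$ to conclude that $\Theta^{H_A/F, \#}_{S_\infty \cup B, T} \in \Z[G/I_A]$. Its hypotheses are satisfied: a prime $w \in S_{\ram}$ is unramified in $H_A/F$ exactly when $I_w \subset I_A$, which is automatic for $w \in A$, so that $B \supset S_{\ram}(H_A/F)$; and the condition (\ref{e:drcond}) for $(H_A, T)$ is inherited from that for $(H, T)$ via the inclusion $\mu(H_A) \subset \mu(H)$. What then remains is the divisibility $\#I_A \mid \prod_{v \in A} \#I_v$, and this holds because the ratio is the order of the kernel of the natural surjection of finite abelian groups $\prod_{v \in A} I_v \twoheadrightarrow I_A$ given by multiplication in $G$. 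The main obstacle I anticipate is the bookkeeping for the first of these two inputs, namely keeping careful track of which ramified primes of $H/F$ survive in the quotient $H_A/F$ so that the set-theoretic hypothesis of Deligne--Ribet is met by the depletion set $S_\infty \cup B$; once this descent is in place, the lemma assembles as above.
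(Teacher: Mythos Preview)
Your proof is correct and follows essentially the same route as the paper: both expand the product defining $\KS^T(H/F)$ into generators indexed by partitions $S_{\ram} = A \sqcup B$ (the paper writes $J = S_\infty \cup B$, $\overline{J} = A$), descend to the subfield $H^{I_A}$, and invoke Deligne--Ribet/Cassou-Nogu\`es for that subextension. The only cosmetic difference is that the paper packages the descent via the lifting map $\Z[G/I_A] \to \Z[G]$, $x \mapsto \prod_{v \in A} \N I_v \cdot \tilde{x}$, which absorbs your divisibility $\#I_A \mid \prod_{v \in A} \#I_v$ into the well-definedness of that map; your projection-and-divisibility formulation is equivalent.
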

\begin{proof} The key input for this result is the  integrality statement (\ref{e:dr}) of Deligne--Ribet and Cassou-Nogu\`es.  For $S_\infty \subset J \subset S_\infty \cup S_{\ram}$, we write $\overline{J} = S_{\ram} \setminus J$.
Note that 
\[ \KS^T(H/F) = \left(\prod_{v \in \overline{J}} \N I_v \cdot  (\Theta_{J, T}^{H/F})^{\#} \colon  S_\infty \subset J \subset S_\infty \cup S_{\ram} \right).
 \]

Write  $H^{\overline{J}}$ for the maximal subextension of $H$ unramified at all primes in $\overline{J}$.  Then $H^{\overline{J}}$ is the subfield of $H$ fixed by the subgroup of $G$ generated by $I_v$ for all $v \in \overline{J}$. Multiplication by $\prod_{v \in \overline{J}} \N I_v$ defines a homomorphism 
\[
\Z[\Gal(H^{\overline{J}}/F)] \longrightarrow \Z[G],
\]
and we have \begin{equation} \label{e:nuj}
\prod_{v \in \overline{J}} \N I_v \cdot (\Theta_{J, T}^{H^{\overline{J}}/F})^{\#} = \prod_{v \in \overline{J}} \N I_v \cdot (\Theta_{J, T}^{H/F})^{\#}. \end{equation} 
Therefore
\begin{equation} \label{e:skalt}
 \KS^T(H/F) = \left(\prod_{v \in \overline{J}} \N I_v \cdot  (\Theta_{J, T}^{H^{\overline{J}}/F})^{\#} \colon  S_\infty \subset J \subset S_\infty \cup S_{\ram} \right).
 \end{equation}

By  (\ref{e:dr}), the element 
$(\Theta_{J, T}^{H^{\overline{J}}/F})^{\#}$ belongs to $\Z[\Gal(H^{\overline{J}}/F)]^-$ and hence (\ref{e:nuj}) lies in $\Z[G]$.
The result follows.
\end{proof}

The following is our main result.
\begin{theorem}[Conjecture of Kurihara] \label{t:sk0}
We have 
\[ \Fitt_{\Z[G]^-}(\Cl^T(H)^{\vee, -})  = \KS^T(H/F)^-. \]
\end{theorem}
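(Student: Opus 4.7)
The plan is to deduce Theorem~\ref{t:sk0} from the keystone Theorem~\ref{t:main0} by combining the partial result of \S\ref{s:selmer} computing $\Fitt_{\Z_p[G]^-}(\Sel_\Sigma^T(H)_p^-)$ with a careful analysis of the Ritter--Weiss module $\nabla_\Sigma^{\Sigma'}(H)$. The argument localizes at each odd prime $p$: it suffices to show, for every odd $p$, that
\[
\Fitt_{\Z_p[G]^-}(\Cl^T(H)^{\vee,-}_p) = \KS^T(H/F)^- \otimes_{\Z} \Z_p.
\]

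The point of departure is the computation of $\Fitt_{\Z_p[G]^-}(\Sel_\Sigma^T(H)_p^-)$ already carried out in \S\ref{s:selmer}: it follows from Theorem~\ref{t:main0} via the character group ring formalism of \S\ref{s:cgr}, combined with Lemma~\ref{l:trivial} at characters with trivial zeroes at primes in $\Sigma' \setminus T = \{v \in S_{\ram}: v\nmid p\}$, which allows the smoothing at those primes to be traded for depletion. The naive next step would be to feed this into the exact sequence of Lemma~\ref{l:ysc},
\[
0 \longrightarrow Y_{H,\Sigma}^- \longrightarrow \Sel_\Sigma^T(H)^- \longrightarrow \Cl^T(H)^{\vee,-} \longrightarrow 0,
\]
and apply Lemma~\ref{l:fittmult}. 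This fails because $\Cl^T(H)^{\vee,-}_p$ is generally not quadratically presented---precisely the reason the keystone result is stated for the Selmer group rather than directly for the class group.

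To bypass this obstruction, the plan (to be carried out in Appendix~\ref{s:kurihara}) is to work through the Ritter--Weiss module $\nabla_\Sigma^{\Sigma'}(H)$ described in \S\ref{s:properties} and constructed in Appendix~\ref{s:rw}. This module is locally quadratically presented and is a Jannsen transpose of $\Sel_\Sigma^{\Sigma'}(H)$; moreover its minus part fits in a canonical exact sequence that replaces Lemma~\ref{l:ysc} by quadratically presented pieces carrying explicit local data, namely the norms $\N I_v$ and the Euler-type elements $1-\sigma_v e_v$ for each $v \in S_{\ram}$. Combining the Jannsen transpose relation with Theorem~\ref{t:main0} controls $\Fitt_{\Z_p[G]^-}(\nabla_\Sigma^{\Sigma'}(H)_p^-)$, and repeated application of Lemmas~\ref{l:fittmult} and~\ref{l:abab} to the Ritter--Weiss sequences (handled component by component via Corollary~\ref{c:rchi}) transfers this Fitting ideal to $\Cl^T(H)^{\vee,-}_p$. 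Each $v \in S_{\ram}$ contributes exactly one factor $(\N I_v, 1-\sigma_v e_v)^-$, assembling, together with the $(\Theta_{S_\infty,T}^\#)$ contribution, precisely $\KS^T(H/F)^-$.

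The main obstacle is the precise local bookkeeping at the ramified primes: one must read off from the explicit Ritter--Weiss presentation matrices of Appendix~\ref{s:rw} that the Fitting ideal of the local transition module at each $v \in S_{\ram}$ is \emph{exactly} $(\N I_v, 1-\sigma_v e_v)^-$, not merely contained in or containing it. This is why the argument requires the full construction in Appendix~\ref{s:rw} rather than only the properties listed in \S\ref{s:properties}, and it is the reason the proof is relegated to Appendix~\ref{s:kurihara}.
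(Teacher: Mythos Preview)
Your proposal correctly identifies that the argument must pass through the Ritter--Weiss modules of Appendix~\ref{s:rw} and bootstrap from Theorem~\ref{t:ks}, but the mechanism you propose---iterating Lemmas~\ref{l:fittmult} and~\ref{l:abab} on exact sequences---does not work, and the paper's actual argument is structurally different.

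The gap is this: the only exact sequences available involving $\Cl^T(H)^{\vee,-}$ are of the shape in Lemma~\ref{l:ysc}, where $\Cl^T(H)^{\vee,-}$ appears as the \emph{quotient}, and Lemma~\ref{l:fittmult} requires the quotient to be quadratically presented. You acknowledge this and propose to circumvent it via $\nabla_\Sigma^{\Sigma'}$, but the Ritter--Weiss sequence (\ref{e:nablaext}) involves $\Cl_\Sigma^{\Sigma'}(H)$, not its Pontryagin dual, so it does not connect to $\Cl^T(H)^{\vee,-}$ by Fitting-ideal multiplicativity either. There is no ``local transition module'' with Fitting ideal $(\N I_v, 1-\sigma_v e_v)$ sitting in an exact sequence to which Lemma~\ref{l:fittmult} applies.

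What the paper actually does in Appendix~\ref{s:kurihara} is a \emph{higher Fitting ideal} computation. One takes $\Sigma = S_\infty$ (not the $\Sigma$ of (\ref{e:sigmadef})), so that $\Sel_{S_\infty}^T(H)^- \cong \Cl^T(H)^{\vee,-}$ on the nose by (\ref{e:selt}). The presentation (\ref{e:pp}) of $\nabla_{S_\infty}^T(H)$ is then \emph{rectangular}: $V^\theta$ has rank $t=\#S'-1$ while $B^\theta$ has rank $t+s$ with $s=\#S_{\ram}$. The transpose relation therefore gives
\[
\Fitt^0_{\Z[G]^-}(\Cl^T(H)^{\vee,-}) = \bigl(\Fitt^{\,s}_{\Z[G]^-}\nabla_{S_\infty}^T(H)^-\bigr)^\#,
\]
a higher Fitting ideal generated by $t\times t$ minors of the $t\times(t+s)$ presentation matrix. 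Each ramified $v$ contributes a \emph{pair} of columns $(\alpha_v,\beta_v)$; the key lemma is that any minor using \emph{both} columns from the same $v$ vanishes. Hence every nonzero minor selects exactly one column per ramified $v$, and pulling out the $\N I_v$ from each chosen $\beta_v$ column reduces to $\Fitt^0$ of $\nabla_{J}^T(H^{\overline{J}})$ for the corresponding subset $J$---which is precisely where Theorem~\ref{t:ks} enters. The factors $(\N I_v, 1-\sigma_v e_v)$ in $\KS^T(H/F)$ arise from this column selection, not from any exact-sequence factorization.
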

As noted in (\ref{e:tstd}), Theorem~\ref{t:sk0} implies Strong Brumer--Stark  (Theorem~\ref{t:sbs}).
In this section, we assume Theorem~\ref{t:main0} and prove a partial result toward Theorem~\ref{t:sk0} that still yields Strong Brumer--Stark.  In Appendix~\ref{s:kurihara} we bootstrap from this partial result to complete the proof of Theorem~\ref{t:sk0}.

For an odd prime $p$ we define the $p$-modified Sinnott--Kurihara ideal by
\[ \KS_p^T(H/F) = (\Theta^\#_{\Sigma,T}) \prod_{v \in S_{\ram},\ \! v\nmid p} \!\!\!\! ( \N I_v, 1 - \sigma_v e_v ) \subset \Z_p[G]\]
where $\Sigma$ is as in (\ref{e:sigmadef}).

\begin{remark}  \label{r:sku} The fact that $\KS_p^T(H/F) \subset \Z_p[G]$ follows directly from Lemma~\ref{l:sku}, since 
\[ \Theta^\#_{\Sigma,T} = \Theta^\#_{S_\infty, T} \prod_{v \in S_{\ram},\ \! v \mid p} (1 - \sigma_v e_v). \]
Moreover, for $v \nmid p$ the $p$-Sylow subgroup of $I_v$ is a quotient of 
$(\cO/v)^*$ and hence $\#I_v$ divides $\N v - 1$ in $\Z_p$.  Therefore for $\Sigma, \Sigma'$ as in Theorem~\ref{t:main0}, 
\begin{align*}
 \Theta^\#_{\Sigma,\Sigma'} &= \Theta^\#_{\Sigma, T} \prod_{v \in S_{\ram}, v \nmid p} (1 - \sigma_v e_v \N v)  \\
 &= \Theta^\#_{\Sigma, T} \prod_{v \in S_{\ram}, v \nmid p} \left[(1 - \sigma_v e_v ) + \left(\sigma_v \cdot \N I_v \frac{1- \N v}{\#I_v} \right) 
 \right]\\
 & \in \KS_p^T(H/F) \subset \Z_p[G].
\end{align*}

\end{remark}

The partial result toward Theorem~\ref{t:sk0} that we prove in this section is the following.

\begin{theorem} \label{t:ks} For every odd prime $p$ we have
\[ \Fitt_{\Z_p[G]^-}(\Sel_{\Sigma}^T(H)_p^-) = \KS_p^T(H/F)^-. \]
\end{theorem}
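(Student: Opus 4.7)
The plan is to deduce Theorem~\ref{t:ks} from Theorem~\ref{t:main0} by comparing $\Sel_\Sigma^T(H)_p^-$ and $\Sel_\Sigma^{\Sigma'}(H)_p^-$ through a short exact sequence whose third term is an explicit product of local modules at the primes in $R = \{v \in S_{\ram} : v \nmid p\}$; recall $\Sigma' = T \cup R$, so the two Selmer modules differ only in whether $R$ is smoothed.

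First I would construct the short exact sequence
\[ 0 \longrightarrow \Sel_\Sigma^T(H) \longrightarrow \Sel_\Sigma^{\Sigma'}(H) \longrightarrow C \longrightarrow 0, \qquad C := \Big(\textstyle\prod_{w \in R_H}(\cO_H/w)^*\Big)^{\!\vee}, \]
using the alternate presentation (\ref{e:sels}) with an auxiliary set $S' \supset \Sigma$, disjoint from $\Sigma'$, chosen so that $\Cl^{\Sigma'}_{S'}(H) = 1$. Weak approximation combined with triviality of $\Cl^{\Sigma'}_{S'}(H)$ identifies the finite cokernel of $\cO^*_{H,S',\Sigma'} \hookrightarrow \cO^*_{H,S',T}$ with $\prod_{w \in R_H}(\cO_H/w)^*$, and condition (\ref{e:drcond}) forces both $S'$-unit groups to be torsion-free, so the $\Ext$ sequence from $\Hom_\Z(-,\Z)$ collapses to the displayed sequence after dividing out the common denominator $\prod_{w \in S'_H \setminus \Sigma_H}\Z$. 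Tensoring with $\Z_p$ and taking minus parts preserves exactness.

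Next I would compute the Fitting ideal of $C_p^-$. For each $v \in R$, the group $(\cO_H/v_H)^*$ is cyclic as a $\Z[G_v]$-module with $I_v$ acting trivially and $\sigma_v$ acting by $\N v$, so after induction to $G$, Pontryagin dualization, and passage to the $p$-complete minus part, the local component $C_v$ is cyclic over $\Z_p[G]^-$ with annihilator $(\sigma_v^{-1} - \N v, \Aug(I_v))$. Using the orthogonality $(g-1)e_v = 0$ for $g \in I_v$, one computes $(g-1)(1 - \sigma_v e_v \N v) = g - 1$, so $\Aug(I_v) \subset (1 - \sigma_v e_v \N v)$; modulo $\Aug(I_v)$ the two principal generators differ by the unit $\sigma_v^{-1}$. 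Hence $\Fitt_{\Z_p[G]^-}(C_v) = (1 - \sigma_v e_v \N v)$ is principal and $C$ is quadratically presented with Fitting ideal $\prod_{v \in R}(1 - \sigma_v e_v \N v)$. Applying Lemma~\ref{l:fittmult} with Theorem~\ref{t:main0} and Remark~\ref{r:sku} and canceling the non-zerodivisor $\prod_v(1 - \sigma_v e_v \N v)$ (a character-wise check: no root of unity equals $\N v^{-1}$ in characteristic zero) yields $\Fitt_{\Z_p[G]^-}(\Sel_\Sigma^T(H)_p^-) = (\Theta_{\Sigma, T}^\#)$. A direct computation shows $\KS_p^T(H/F)^- = (\Theta_{\Sigma, T}^\#)$: each factor $(\N I_v, 1 - \sigma_v e_v)$ defining $\KS_p^T$ is the unit ideal in $\Z_p[G]^-$ since $|I_v|^{-1} \in \Z_p$ puts $e_v$, and hence $\sigma_v e_v$, into $(\N I_v)\Z_p[G]^-$.

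The main obstacle I expect is the collapse of the two-generator annihilator $(\sigma_v^{-1} - \N v, \Aug(I_v))$ of $C_v$ to the principal ideal $(1 - \sigma_v e_v \N v)$ integrally in $\Z_p[G]^-$. While character-by-character agreement is immediate (at $\psi$ unramified at $v$ both specialize to $(1 - \psi(\sigma_v)\N v)$, and at $\psi$ ramified both give the unit ideal since $\psi(g)-1$ is then a prime-to-$p$ root-of-unity minus one, hence a $p$-adic unit), proving the identification as an equality of ideals requires the specific identity $(g-1)e_v = 0$ to absorb $\Aug(I_v)$ into the principal generator, and the non-zerodivisor verification needed for cancellation similarly demands careful character-wise accounting.
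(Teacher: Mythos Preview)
Your overall architecture matches the paper's: construct the short exact sequence
\[ 0 \longrightarrow \Sel_\Sigma^T(H)_p^- \longrightarrow \Sel_\Sigma^{\Sigma'}(H)_p^- \longrightarrow C_p^- \longrightarrow 0 \]
with $C = \big(\prod_{w \in R_H}(\cO_H/w)^*\big)^\vee$, and then pass from Theorem~\ref{t:main0} to Theorem~\ref{t:ks} by computing the contribution of $C$. You even correctly identify the delicate point as the collapse of the two-generator annihilator of each $C_v$. But your proposed resolution of that point is wrong, and the error propagates to the final answer.

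The mistake is the assertion that $|I_v|^{-1} \in \Z_p$ for $v \in S_{\ram}$ with $v \nmid p$. Tame inertia at such $v$ is a quotient of $(\cO_F/v)^*$, so $\#I_v \mid \N v - 1$; nothing prevents $p \mid \N v - 1$, and then $I_{v,p}$ is nontrivial. In that case $e_v = \N I_v/\#I_v \notin \Z_p[G]$, your identity $(g-1)(1-\sigma_v e_v \N v) = g-1$ is not valid over $\Z_p[G]$, and the Fitting ideal $(\Aug(I_v), \sigma_v^{-1} - \N v)$ of $C_v$ is \emph{not} principal. For a concrete obstruction, pass to a component $R = \cO[G_p]_\chi$ with $\chi(I_v')=1$ and $G_p = I_{v,p}$ cyclic of order $p$: the ideal becomes $(\tau_v - 1,\, \chi(\sigma_v)^{-1} - \N v)$, which for suitable $\chi$ is the maximal ideal of the non-regular local ring $\cO[G_p]$, hence not principal. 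Your character-by-character check fails for the same reason: if $\psi$ is nontrivial on $I_{v,p}$ then $\psi(g)-1$ is a $p$-power root of unity minus $1$, which has positive $p$-adic valuation, not a unit. Consequently $C$ is not quadratically presented, Lemma~\ref{l:fittmult} does not apply, and the final claim $\KS_p^T(H/F)^- = (\Theta_{\Sigma,T}^\#)$ is simply false in general.

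The paper handles this by working component-by-component over $R = \cO[G_p]_\chi$ and replacing the direct use of Lemma~\ref{l:fittmult} with Lemma~\ref{l:abab}, applied to the given sequence together with the auxiliary exact sequence
\[ 0 \longrightarrow R/(\N I_{v,p}, \sigma_v^{-1} - \N v) \longrightarrow R/(\sigma_v^{-1} - \N v) \longrightarrow R/(\tau_v - 1, \sigma_v^{-1} - \N v) \longrightarrow 0, \]
whose middle term \emph{is} quadratically presented. After cancelling the non-zerodivisor $\sigma_v^{-1} - \N v$ in $\Frac(R)$, this produces the genuinely non-principal factor $(\N I_{v,p}, 1 - \sigma_v e_{v,p}) = (\N I_v, 1 - \sigma_v e_v)$, which is exactly what appears in $\KS_p^T$.
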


Before discussing the proof of Theorem~\ref{t:ks}, let us note that it is strong enough to imply Strong Brumer--Stark.

\begin{corollary} \label{t:sbsc} 
The Strong Brumer--Stark Conjecture is true:
\[ \Theta_{S,T}^\#(H/F) \in \Fitt_{\Z[G]^-}(\Cl^T(H)^{\vee, -}). \]
\end{corollary}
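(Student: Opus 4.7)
The plan is to prove the inclusion one odd prime $p$ at a time and then assemble. Fix an odd prime $p$, and with $\Sigma, \Sigma'$ as in (\ref{e:sigmadef}), specialize Lemma~\ref{l:ysc} to $\Sigma' = T$ to obtain a surjection $\Sel_\Sigma^T(H)^- \twoheadrightarrow \Cl^T(H)^{\vee,-}$. Tensoring with $\Z_p$ and applying the functoriality of Fitting ideals under surjections yields
\[
\Fitt_{\Z_p[G]^-}(\Sel_\Sigma^T(H)_p^-) \subset \Fitt_{\Z_p[G]^-}(\Cl^T(H)_p^{\vee,-}).
\]
By Theorem~\ref{t:ks} the left-hand side equals $\KS_p^T(H/F)^-$, so it suffices to show that $\Theta_{S,T}^\# \in \KS_p^T(H/F)$ for every odd prime $p$.

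For this, I decompose the product in (\ref{e:tstd}) along the two types of ramified primes:
\[
\Theta^\#_{S,T} \;=\; \Theta^\#_{S_\infty,T} \prod_{v \in S_{\ram}} (1 - \sigma_v e_v) \;=\; \Theta^\#_{\Sigma,T} \prod_{v \in S_{\ram},\ v \nmid p} (1 - \sigma_v e_v),
\]
where the second equality absorbs the factors with $v \mid p$ into $\Theta^\#_{\Sigma,T} = \Theta^\#_{S_\infty,T} \prod_{v \in S_{\ram},\ v \mid p}(1 - \sigma_v e_v)$ (as in Remark~\ref{r:sku}). Each remaining factor $(1 - \sigma_v e_v)$ with $v \nmid p$ tautologically lies in $(\N I_v, 1 - \sigma_v e_v)$, so by the definition of the $p$-modified Sinnott--Kurihara ideal the whole product lies in $\KS_p^T(H/F)$. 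Combined with the previous paragraph this gives $\Theta_{S,T}^\# \in \Fitt_{\Z_p[G]^-}(\Cl^T(H)_p^{\vee,-})$ for every odd $p$.

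To descend from these $p$-local statements to the integral statement over $\Z[G]^- = \Z[\tfrac{1}{2}][G]/(\sigma+1)$, I observe that $\Cl^T(H)^{\vee,-}$ is a finite, hence finitely presented, $\Z[G]^-$-module, so its zeroth Fitting ideal commutes with the base change $\Z[G]^- \to \Z_p[G]^-$ for every odd prime $p$. Furthermore, $R := \Z[G]^-$ is free of finite rank over $\Z[\tfrac{1}{2}]$, so the quotient $R/\Fitt_{\Z[G]^-}(\Cl^T(H)^{\vee,-})$ is a finitely generated $\Z[\tfrac{1}{2}]$-module; since $\Z[\tfrac{1}{2}]$ is a principal ideal domain, any such module embeds into the product of its completions at the odd primes. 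Membership in this ideal can therefore be tested $p$-locally for each odd $p$, and the proof is complete. I do not anticipate any serious obstacle: every step is essentially formal given Theorem~\ref{t:ks}, with the only modest piece of content being the decomposition of $\Theta^\#_{S,T}$ above.
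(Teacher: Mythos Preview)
Your proof is correct and follows essentially the same approach as the paper: reduce to one odd prime $p$ at a time, use the surjection $\Sel_\Sigma^T(H)^- \twoheadrightarrow \Cl^T(H)^{\vee,-}$ from Lemma~\ref{l:ysc} together with Theorem~\ref{t:ks} to get $\KS_p^T(H/F) \subset \Fitt_{\Z_p[G]^-}(\Cl^T(H)_p^{\vee,-})$, and then observe that $\Theta_{S,T}^\# = \Theta_{\Sigma,T}^\# \prod_{v \in S_{\ram},\, v\nmid p}(1-\sigma_v e_v) \in \KS_p^T(H/F)$. You have spelled out the local-to-global reduction more carefully than the paper (which simply asserts that it suffices to work prime by prime), but otherwise the arguments are identical.
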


\begin{proof}[Proof of Corollary~\ref{t:sbsc}]  It suffices to work prime by prime, i.e. to show that
\[
 \Theta_{S,T}^\#(H/F) \in \Fitt_{\Z_p[G]^-}(\Cl^T(H)_p^{\vee, -}) \]
 for each odd prime $p$.  By Lemma~\ref{l:ysc} there is a surjection $\Sel_{\Sigma}^T(H)^{-} \longtwoheadrightarrow \Cl^T(H)^{\vee, -}$ that together with
Theorem~\ref{t:ks} implies 
\begin{equation} \label{e:fitsupsk}
 \Fitt_{\Z_p[G]^-}(\Cl^T(H)_p^{\vee, -}) \supset \KS_p^T(H/F). \end{equation}
Since \[\Theta_{S,T}^\# =  \Theta_{\Sigma, T}^\#  \prod_{v \in S_{\ram}, \ \! v\nmid p} \!\!\!\! (1 - \sigma_v e_v) \in  \KS_p^T(H/F), \]
the result follows.
\end{proof}

We now prove Theorem~\ref{t:ks} assuming our keystone result, Theorem~\ref{t:main0}.

\begin{proof}[Proof of Theorem~\ref{t:ks}]  First note that it suffices to prove the result after extending scalars to $\cO$ and then  projecting to the connected component $R = \cO[G_p]_\chi$ of $\cO[G]^-$ associated to each odd character $\chi$ of $G'$. 
Theorem~\ref{t:main0} yields
\begin{equation} \label{e:pmr2}
 \Fitt_R(\Sel_{\Sigma}^{\Sigma'}(H)_R) = (\Theta_{\Sigma, \Sigma'}^\#),
 \end{equation}
where the right side denotes the principal ideal of $R$ generated by the projection of the element $\Theta_{\Sigma, \Sigma'}^\# \in \cO[G]$ to $R$.

 To prove the theorem, we must demonstrate the effect of removing the primes in \[ S' = \{v \in S_{\ram}, v \nmid p\} \] from the superscript of the Selmer group in (\ref{e:pmr2}).  
For this we first consider  the short exact sequence of $\Z[G]^-$-modules
\begin{equation} \label{e:ctt}
\begin{tikzcd}
 0 \ar[r] &   \Cl^T(H)^{\vee, -} \ar[r] &  \Cl^{\Sigma'}(H)^{\vee, -} \ar[r] & \prod_{w \in S'_H} ((\cO_{H}/w)^*)^{\vee, -} \ar[r] & 0. 
\end{tikzcd}
 \end{equation}

For each $v \in S'$, note that the inertia group $I_v \subset G$ acts trivially on $(\cO_{H}/w)^*$.
Decompose $I_v$ as a product $I_v = I_{v,p} \times I_v'$ of its subgroups of $p$-power order elements and prime-to-$p$ order elements, respectively.  For $\tau \in I_v'$, the element $\tau - 1 \in \cO[G]$ has image $\chi(\tau) -1$ in $R$.  This is a unit if $\chi(\tau) \neq 1$, and is 0 if $\chi(\tau)=1$.  Since $\tau - 1$ kills $(\cO_{H}/w)^*$, it follows that the base extension of $(\cO_H/w)^*$ to $R$ is trivial unless $\chi(I_v') = 1$.  And in this latter case $\tau - 1$ has vanishing image in $R$ for $\tau \in I_v'$.

Next note that by class field theory, $I_{v,p}$ is a quotient of $(\cO_F/v)^*$ since $v \nmid p$.  Hence $I_{v,p}$ is cyclic, and $\N v \equiv 1 \pmod{\# I_{v,p}}$.
Let $\tau_v$ be a generator of $I_{v,p}$.  Fixing a prime $w$ of $H$ above $v$ and a generator $u$ for $(\cO_H/w)^*$ yields an isomorphism
\[ \Z[G_v]/(\tau_v-1, \sigma_v - \N v, \tau - 1: \tau \in I_v') \cong (\cO_H/w)^*, \qquad x \mapsto  u^x, \]
where $\sigma_v$ is any element representing the Frobenius $I_v$-coset in $G_v$.
Inducing from $G_v$ to $G$, taking duals, and projecting to the $R$-component yields:
\begin{equation} \label{e:presento}
  \prod_{w \in S'_H} ((\cO_{H}/w)^*)^{\vee}_R \cong \prod_{v \in S', \chi(I_v') = 1} R/(\tau_v-1, \sigma_v^{-1} - \N v). \end{equation}

Next consider the commutative diagram:
\[ \begin{tikzcd}
0 \ar[r] & Y_{H,\Sigma}^- \ar[r]  \ar[d, "\id"] & \Sel_{\Sigma}^T(H)^- \ar[r] \ar[d] & \Cl^T(H)^{\vee, -} \ar[r] \ar[d] & 0 \\
0 \ar[r] & Y_{H,\Sigma}^- \ar[r] & \Sel_{\Sigma}^{\Sigma'}(H)^- \ar[r] & \Cl^{\Sigma'}(H)^{\vee, -} \ar[r] & 0.
\end{tikzcd}
\]
The snake lemma in conjunction with (\ref{e:ctt}) 
yields
a short exact sequence
\begin{equation} \label{e:sst0}
\begin{tikzcd}
 0 \ar[r] &   \Sel_{\Sigma}^T(H)^-  \ar[r] & \Sel_{\Sigma}^{\Sigma'}(H)^- \ar[r] & \prod_{w \in S'_H} ((\cO_{H}/w)^*)^{\vee, -}  \ar[r] & 0. 
 \end{tikzcd}
\end{equation}
Applying (\ref{e:presento}), this may be written 
\begin{equation} \label{e:sst}
\begin{tikzcd}
 0 \ar[r] &   \Sel_{\Sigma}^T(H)^-  \ar[r] & \Sel_{\Sigma}^{\Sigma'}(H)^- \ar[r] & \displaystyle\prod_
 {\substack{v \in S' \\ \chi(I_v')=1}} R/(\tau_v-1, \sigma_v^{-1} - \N v) \ar[r] & 0. 
 \end{tikzcd}
\end{equation}

Consider for each $v \in S'$ such that $\chi(I_v')=1$ the short exact sequence:
\begin{equation} \label{e:cok}
\begin{tikzcd}
 0 \ar[r] &  R/(\N I_{v,p}, \sigma_v^{-1} - \N v)  \ar[r] & R/(\sigma_v^{-1} - \N v)  \ar[r] &  R/(\tau_v-1, \sigma_v^{-1} - \N v) \ar[r] & 0,
\end{tikzcd}
\end{equation}
where the first non-trivial arrow is multiplication by $\tau_v - 1$ and the next arrow is projection.  Only the injectivity of this multiplication is unclear.  Suppose $x(\tau_v - 1) = y(\sigma_v^{-1} - \N v)$ for $x, y \in R$.
Then $y(\sigma_v^{-1} - \N v)$ vanishes in $R/(\tau_v - 1) \cong \cO[G_p/I_{v,p}]$.  But $\sigma_v^{-1} - \N v$ is a non-zerodivisor in this group ring, and hence the image of $y$ in this ring vanishes, i.e.\ $y = (\tau_v - 1)y'$ for some $y' \in R$.
Then $x - y'(\sigma_v - \N v)$ is annihilated by $\tau_v - 1$ and hence is a multiple of $\N I_{v,p}$.  Thus $x \in (\N I_{v,p}, \sigma_v^{-1} - \N v)$, proving the desired injectivity.

  Applying Lemma~\ref{l:abab} to (\ref{e:sst}) and the product of   (\ref{e:cok}) over the appropriate $v$  yields
\begin{equation} \label{e:fsfs}
 \Fitt_{R}(\Sel_{\Sigma}^{T}(H)_R) \!\!\!\prod_{v \in S', \chi(I_v')=1} (\sigma^{-1}_v - \N v) = \Fitt_{R}(\Sel_{\Sigma}^{\Sigma'}(H)_R) \!\!\!\prod_{v \in S', \chi(I_v')=1}(\N I_{v, p}, \sigma_v^{-1} - \N v). 
\end{equation}
A key point is that the terms $\sigma^{-1}_v - \N v$ are non-zerodivisors and hence can be inverted in $\Frac(R)$. Note also that if $\chi(I_{v}') \neq 1$ then the projection of $I_v$ to $R$ vanishes
and hence $e_v = 0$ in $\Frac(R)$. 
In particular
\begin{align*}
\Theta_{\Sigma, \Sigma'}^\# &= \Theta_{\Sigma, T}^\# \prod_{v \in S'} (1 - \sigma_v e_v \N v)  \\
&= \Theta_{\Sigma, T}^\# \prod_{v \in S', \chi(I_v') = 1} (1 - \sigma_v e_v \N v).
\end{align*}
Furthermore, if $\chi(I_{v}') = 1$ then $\N I_v = (\#I_v')\N I_{v,p}$, and the integer $\#I_v'$ is a $p$-adic unit.  Also in this case $e_v = e_{v,p} e_{v}' = e_{v,p}$ in $\Frac(R)$, 
where $e_{v,p} = \N I_{v,p}/\#I_{v,p}$ and $e_v' = \N I_v' /\#I_v' = 1$.

Therefore, applying (\ref{e:pmr2}) to (\ref{e:fsfs})  yields:
\begin{align*}
 \Fitt_{R}(\Sel_{\Sigma}^T(H)_R) &= (\Theta_{\Sigma, \Sigma'}^\#)  \prod_{v \in S', \chi(I_v')=1} (\N I_{v,p}, \sigma_v^{-1} - \N v)(\sigma^{-1}_v - \N v)^{-1} \\
 &= (\Theta_{\Sigma, T}^\#) \prod_{v \in S', \chi(I_v')=1} (\N I_{v,p}, \sigma_v^{-1} - \N v) (1 - \sigma_v e_{v,p} \N v)(\sigma^{-1}_v - \N v)^{-1} \\
 &= (\Theta_{\Sigma, T}^\#) \prod_{v \in S', \chi(I_v')=1} (\N I_{v,p}, 1- \sigma_v e_{v,p} \N v).
 \end{align*}
Finally we note that for $v \in S', \chi(I_v')=1$, since $\N v \equiv 1 \pmod{\# I_{v,p}}$ we have  \begin{align*}
 (\N I_{v,p}, 1- \sigma_v e_{v,p} \N v) &=  (\N I_{v,p}, 1- \sigma_v e_{v,p}) \\
 &= (\N I_v, 1 - \sigma_v e_v). \end{align*} 
 To conclude the proof, we note that for $v \in S'$ such that $\chi(I_v') \neq 1$, we have
 \[ (\N I_v, 1 - \sigma_v e_v) = (1) \text{ in } R. \]
We have therefore proven that
\[  \Fitt_{R}(\Sel_{\Sigma}^T(H)_R) = (\Theta_{\Sigma, T}^\#) \prod_{v \in S'} (\N I_{v}, 1- \sigma_v e_{v} \N v), \]
which is the projection to $R$ of the desired result.
\end{proof}

\subsection{Rubin's Conjecture} \label{s:rubin}

In this section we prove that  Strong Brumer--Stark  implies Rubin's conjecture away from $2$, namely, we prove Theorem \ref{t:r}.
This result is known by the experts, but since only a dual version of this appears in the literature (see~\cite{popcm}*{Corollary 2.4}), we 
give a proof here.

\begin{lemma} \label{l:coker}
Let $R$ be a commutative ring and let $N \subset M$ be $R$-modules with $N$ finitely generated and $M$ finitely presented.  For each positive integer $r$, the ideal $\Fitt(M/N)$ annihilates the cokernel of the canonical map
\[ \bigwedge\nolimits^r_R N \longrightarrow  \bigwedge\nolimits^r_R M. \]
\end{lemma}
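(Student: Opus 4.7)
The plan is to reduce first to the case where $M$ is free of finite rank.  Choose a surjection $\pi \colon F \twoheadrightarrow M$ from a free module $F = R^t$ of finite rank, and set $N' = \pi^{-1}(N) \subset F$, so that $F/N' \cong M/N$.  Since $M/N$ is finitely presented (being the quotient of the finitely presented $M$ by the finitely generated $N$) and $F$ is free of finite rank, the submodule $N'$ is finitely generated.  The functor $\bigwedge^r$ preserves surjections --- if $A \twoheadrightarrow B$ surjects, any $b_1 \wedge \cdots \wedge b_r$ lifts to $a_1 \wedge \cdots \wedge a_r$ --- so the commutative square
\[
\begin{tikzcd}
\bigwedge^r N' \ar[r] \ar[d, two heads] & \bigwedge^r F \ar[d, two heads] \\
\bigwedge^r N \ar[r] & \bigwedge^r M
\end{tikzcd}
\]
has surjective vertical arrows, and a short diagram chase produces a surjection $\coker(\bigwedge^r N' \to \bigwedge^r F) \twoheadrightarrow \coker(\bigwedge^r N \to \bigwedge^r M)$.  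Because $\Fitt(M/N) = \Fitt(F/N')$, it suffices to prove the statement in the free case.

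Now suppose $F = R^t$ and pick a finite generating set $n_1, \dotsc, n_s$ of $N'$; let $A$ be the $t \times s$ matrix with columns $n_1, \dotsc, n_s$.  Then $\Fitt(F/N')$ is generated by the $t \times t$ minors of $A$.  Fix such a minor $a = \det(B)$, where $B$ is the $t \times t$ submatrix with columns indexed by $\{j_1, \dotsc, j_t\}$, and fix a basis element $f_I = f_{i_1} \wedge \cdots \wedge f_{i_r}$ of $\bigwedge^r F$ with $I = \{i_1 < \cdots < i_r\}$.  The key claim is the explicit identity
\[
a \cdot f_I \;=\; \pm \sum_{K} \sgn(K)\, \det(B_{I^c,\, K^c})\, \bigl(n_{j_{k_1}} \wedge \cdots \wedge n_{j_{k_r}}\bigr),
\]
where $K = \{k_1 < \cdots < k_r\}$ ranges over $r$-subsets of $\{1, \dotsc, t\}$, $B_{I^c, K^c}$ denotes the submatrix of $B$ on rows $I^c = \{1, \dotsc, t\} \setminus I$ and columns $K^c$, and $\sgn(K)$ is the standard Laplace sign associated to the pair $(I, K)$.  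The right-hand side manifestly lies in the image of $\bigwedge^r N' \to \bigwedge^r F$.

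To verify the identity, expand each wedge $n_{j_{k_1}} \wedge \cdots \wedge n_{j_{k_r}} = \sum_{I'} \det(B_{I', K}) f_{I'}$ in the basis of $\bigwedge^r F$.  The coefficient of $f_{I'}$ on the right-hand side then equals $\pm \sum_{K} \sgn(K) \det(B_{I', K}) \det(B_{I^c, K^c})$, which is the Laplace expansion, along the first $r$ rows, of the determinant of the $t \times t$ matrix obtained by stacking the rows $I'$ of $B$ on top of the rows $I^c$ of $B$.  When $I' = I$ this recovers $\pm a$; when $I' \neq I$, the equality $|I'| = |I| = r$ forces $I' \cap I^c \neq \emptyset$, so the stacked matrix has a repeated row and its determinant vanishes.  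Thus $a \cdot f_I$ lies in the image of $\bigwedge^r N' \to \bigwedge^r F$, and since the $f_I$ generate $\bigwedge^r F$ and the $t \times t$ minors generate $\Fitt(F/N')$, this completes the proof.  The only genuine content is the combinatorial identity at the end, whose meaning is simply that a determinant with a repeated row vanishes; the bookkeeping of Laplace signs is the main nuisance, but is entirely routine.
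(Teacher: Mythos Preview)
Your proof is correct and follows essentially the same route as the paper's. Both arguments reduce to the case of a free target module and then show that each maximal minor times a basis element of $\bigwedge^r F$ lies in the image; the paper packages this step via the $r$th compound matrix $C_r(A)$ and the identity $\adj_r(A')\,C_r(A') = \det(A')\cdot I$ for the higher adjugate, whereas you unwind that identity into an explicit Laplace expansion. The reductions are also organized slightly differently---the paper replaces $N$ by a free module mapping to it, while you pull $N$ back to $N' = \pi^{-1}(N)\subset F$ and then choose generators---but these produce the same matrix $A$ and the same Fitting ideal, so the content is identical.
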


\begin{proof} We first reduce to the case that $M$ and $N$ are both finitely generated free $R$-modules.  By the assumptions on $M$ and $N$, we may fix a surjection $R^m \longrightarrow M$ and a finite presentation 
\[ \begin{tikzcd}
R^n \ar[r]  & R^m  \ar[r]  & M/N   \ar[r]  & 0
\end{tikzcd}
\]
This yields a commutative diagram 
\[
\begin{tikzcd}
 R^n \ar[r] \ar[d, dotted] & R^m \ar[r] \ar[d, two heads] & M/N \ar[d, equal] \\
N \ar[r] & M \ar[r] & M/N.
\end{tikzcd}
\]
The dotted arrow exists because $R^n$ is free. Using the right exactness of the exterior power functor we get a commutative diagram
\[
\begin{tikzcd}
\bigwedge^r_R R^n \ar[r] \ar[d] & \bigwedge^r_R R^m \ar[r, two heads] \ar[d, two heads] &
 C_2 \ar[d, two heads] \\ 
\bigwedge^r_R N \ar[r] & \bigwedge_R^r M \ar[r, two heads] & C_1.  
\end{tikzcd}
\]
Here $C_1$ and $C_2$ are cokernels of the obvious maps. It is also clear that the map $C_2 \longrightarrow C_1$ is surjective. Therefore it is enough to show that $\Fitt(M/N)$ annihilates $C_2$. Hence we may assume that $M \cong R^m$ and $N \cong R^n$ are both free $R$-modules. Without loss of generality we further assume that $n \geq m$. Let the map $R^n \longrightarrow R^m$ be given by an $m \times n$ matrix $A$. We fix an $m \times m$ submatrix, say $A'$ of $A$. We must show that $\det(A')$ annihilates $C_2$.

The map $\bigwedge^r_R R^n \longrightarrow \bigwedge^r_R R^m$ is given by the $r$th compound matrix $C_r(A)$---this is the $\binom{m}{r} \times \binom{n}{r}$ matrix whose entries are the $r \times r$ minors of $A$.       
Let $x \in \bigwedge^r_R R^m$. Denote by $\adj_r(A')$ the $r$th higher adjugate matrix of $A'$, so 
\[
 \adj_r(A') \cdot C_r(A') x = \det(A')x.
\]
Observe that  $C_r(A')$ is an $\binom{m}{r} \times \binom{m}{r}$ submatrix of $C_r(A)$ obtained by deleting $\binom{n}{r} - \binom{m}{r}$ columns. Let $\tilde{x}$ be the element of $\bigwedge^r_R R^n$ obtained from $x$ by inserting 0's in the entries corresponding to these deleted columns. Then $  C_r(A) \tilde{x} = C_r(A') x $, hence
\[
 \adj_r(A') \cdot C_r(A) \tilde{x} = \adj_r(A') \cdot C_r(A') x = \det(A')x.
\] 
This shows that $\det(A')x$ belongs to the image of $\bigwedge^r_R R^n \longrightarrow \bigwedge^r_R R^m$.  Hence $\det(A')$ annihilates $C_2$, as desired.
\end{proof}

For Rubin's conjecture, recall that we are given a set of $r$ prime ideals \[ S' = \{ v_1, \dots, v_r \} \] of $F$ that split completely in $H$.
Let $A \subset \Cl^T(H)^-$ denote the subgroup generated by the classes associated to the primes in $S'$.  By duality we obtain a surjection $\Cl^T(H)^{\vee, -} \longrightarrow A^\vee$.
The strong Brumer--Stark conjecture implies that
\begin{equation} \label{e:tav}
\Theta_{S,T}^\# \in \Fitt_{\Z[G]^-}(\Cl^T(H)^{\vee,-}) \subset \Fitt_{\Z[G]^-}(A^\vee). \end{equation}

The $\Z[G]^-$-module $A$ sits in a short exact sequence
\begin{equation} \label{e:aseq}
\begin{tikzcd}
 0 \ar[r] & U_{S',T}^- \ar[r] & Y_{H,S'}^- \ar[r] & A \ar[r] & 0. 
 \end{tikzcd}
 \end{equation}
Here $U_{S',T}$ is defined in (\ref{e:udef}).  The first nontrivial map in (\ref{e:aseq}) sends 
\[ u \mapsto \sum_{w \in S'_H} \ord_w(u) w = \sum_{i=1}^{r} \left(\sum_{\sigma \in G} \ord_{w_i}(\sigma(u))[\sigma^{-1}]\right)w_i,
\] where the $w_i$ are the chosen primes above the $v_i \in S'$ as in (\ref{e:ordg})--(\ref{e:ordgdef}).
The second nontrivial map in (\ref{e:aseq}) sends $w \in S'_H$ to its class in $A \subset \Cl^T(H)^-$.  Since $A$ is finite, the long exact sequence associated to the functor $\Hom_{\Z[\frac{1}{2}]}( -, \Z[\frac{1}{2}])$ applied to (\ref{e:aseq}) yields
\begin{equation} \label{e:extyua}
 \begin{tikzcd}
 0 \ar[r] & \Hom_{\Z[\frac{1}{2}]}(Y_{H,S'}^-, \Z[\textstyle{\frac{1}{2}}]) \ar[r] & \Hom_{\Z[\frac{1}{2}]}(U_{S',T}^-, \Z[\frac{1}{2}]) \ar[r] &  A^\vee \ar[r] & 0. 
 \end{tikzcd}
 \end{equation}
 To maintain $G$-equivariance of this sequence, all terms are given the contragradient $G$-action.
Note that by Shapiro's Lemma there is a canonical isomorphism of functors 
 \[ \Hom_{\Z[\frac{1}{2}]}(-, \Z[\textstyle{\frac{1}{2}}]) \cong \Hom_{\Z[G]}(-, \Z[G]^-) \]
  on the category of $\Z[G]^-$-modules.  We can therefore write (\ref{e:extyua}) as
  \begin{equation} \label{e:ext2} \begin{tikzcd}
 0 \ar[r] & \Hom_{\Z[G]}(Y_{H,S'}^-, \Z[G]^-)  \ar[r] & \Hom_{\Z[G]}(U_{S',T}^-, \Z[G]^-)  \ar[r] & A^\vee  \ar[r] & 0.  
 \end{tikzcd}
 \end{equation}
Using (\ref{e:tav}), Lemma~\ref{l:coker} implies that $\Theta_{S,T}^\# \in \Fitt_{\Z[G]^-}(A^\vee)$ annihilates the cokernel of the induced map
\begin{equation} \begin{tikzcd}
 \bigwedge\nolimits_{\Z[G]}^r \Hom_{\Z[G]}(Y_{H,S'}^-, \Z[G]^-) \ar[r] &  \bigwedge\nolimits_{\Z[G]}^r  \Hom_{\Z[G]}(U_{S',T}^-, \Z[G]^-). 
 \end{tikzcd}
\label{e:bigmap}
\end{equation}

Suppose now that we are given an element 
\[ 
\varphi \in \bigwedge\nolimits_{\Z[G]}^r  \Hom_{\Z[G]}(U_{S',T}^-, \Z[G]^-).
\]
We must prove that $\varphi(u_{\RBS}) \in \Z[G]^-$. Note that after tensoring with $\Q$ over $\Z[\frac{1}{2}]$, the first nontrivial map in (\ref{e:aseq}) and the map in (\ref{e:bigmap}) become isomorphisms. Consequently, $\varphi$ extends to an element of 
\[ 
\bigwedge\nolimits_{\Z[G]}^r \Hom_{\Z[G]}(Y_{H,S'}^-, \Z[G]^-) \otimes_{\Z\left[\frac{1}{2}\right]} \Q \cong \bigwedge\nolimits_{\Q[G]}^r \Hom_{\Q[G]}(Y_{H,S'}^- \otimes_{\Z\left[\frac{1}{2}\right]} \Q, \Q[G]^-). 
\]
We then note that
\begin{align} \varphi(u_{\RBS}) &= \varphi(\ord_G(u_{\RBS})(w_1 \wedge \cdots \wedge w_r)) \nonumber \\
&= (\ord_G(u_{\RBS})^\# \varphi)(w_1 \wedge \cdots \wedge w_r) \nonumber \\
&= (\Theta_{S,T}^\# \cdot\varphi)(w_1 \wedge \cdots \wedge w_r). \label{e:vw}
\end{align}
Here $\#$ appears because of the contragradient $G$-action.
Since $\Theta_{S,T}^\#$ annihilates the cokernel of (\ref{e:bigmap}), it follows that (\ref{e:vw}) lies in $\Z[G]^-$ as desired.
This concludes the proof that Theorem~\ref{t:sbs} implies Theorem~\ref{t:r}.

\section{On the smoothing and depletion sets}  \label{s:sad}

The goal of the rest of the paper is to prove the \emph{keystone result} Theorem~\ref{t:main0} from subsection \ref{s:fmr}.  After extending to $\cO$ and projecting onto the component $R = R_\chi = \cO[G_p]_\chi$ corresponding to a prime-to-$p$ order character $\chi$, this statement reads
 \begin{equation} \label{e:mainr}
   \Fitt_{R}(\Sel_{\Sigma}^{\Sigma'}(H)_R) = (\Theta_{\Sigma, \Sigma'}^\#). 
   \end{equation}

In this section, we alter some of the parameters in this equation.

\subsection{Removing primes above $p$ from the smoothing set} \label{s:nop}

The set $T$, and hence $\Sigma'$, may contain primes above $p$.  We show that it is safe to remove these primes from $T$ 
without altering the situation.  Note that by definition these primes are necessarily unramified in $H$.

\begin{lemma} Let $\Sigma'' = \Sigma' -  \{v \in T \colon v \mid p\}$.  We have 
\[ \Sel_{\Sigma}^{\Sigma'}(H)_R \cong \Sel_{\Sigma}^{\Sigma''}(H)_R \]
 and 
\[  (\Theta_{\Sigma, \Sigma'}^\#) =  (\Theta_{\Sigma, \Sigma''}^\#).
\]
\end{lemma}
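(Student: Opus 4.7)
The plan is to establish the two claims independently.

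For the Stickelberger equality, expanding the smoothing Euler factors at $s = 0$ yields
\[
\Theta_{\Sigma, \Sigma'}^\# \;=\; \Theta_{\Sigma, \Sigma''}^\# \cdot \prod_{v \in T,\, v \mid p} (1 - \sigma_v \N v).
\]
It therefore suffices to check that each factor $1 - \sigma_v \N v$ is a unit in the local ring $R = \cO[G_p]_\chi$. Its maximal ideal is generated by $\varpi$ together with the augmentation ideal of $\cO[G_p]$, so one only needs to check that the image of $1 - \sigma_v \N v$ in the residue field $\cO/\varpi$ is nonzero. Under the decomposition $G = G' \times G_p$, the image of $\sigma_v$ in $R$ is $\chi(\sigma_{v,G'}) \sigma_{v,G_p}$, whose further image under the augmentation $\cO[G_p] \to \cO$ is $\chi(\sigma_{v,G'}) \in \cO^\times$. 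Since $v \mid p$ forces $\N v \equiv 0 \pmod \varpi$, the image of $1 - \sigma_v \N v$ in $\cO/\varpi$ is $1$, and the factor is a unit. Hence the two principal ideals in $R$ coincide.

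For the Selmer isomorphism, I would choose a finite set $S'$ of primes of $F$ containing $\Sigma$, disjoint from $\Sigma'$, and large enough that $\Cl_{S'}^{\Sigma'}(H) = 0$ (which also forces $\Cl_{S'}^{\Sigma''}(H) = 0$), and then apply the $S'$-unit presentation (\ref{e:sels}) to both Selmer modules. The inclusion $\cO_{H, S', \Sigma'}^* \subset \cO_{H, S', \Sigma''}^*$ has finite quotient $Q$ embedding into $\prod_{w \mid v,\, v \in T,\, v \mid p} (\cO_H/w)^*$. Applying $\Hom_\Z(-, \Z)$ and quotienting by the common subgroup $\prod_{w \in S'_H \setminus \Sigma_H} \Z$ produces an exact sequence of $\Z[G]$-modules
\[
0 \longrightarrow \Sel_\Sigma^{\Sigma''}(H) \longrightarrow \Sel_\Sigma^{\Sigma'}(H) \longrightarrow Q^\vee,
\]
using $\Hom(Q, \Z) = 0$ and $\Ext^1(Q, \Z) \cong Q^\vee$ for finite $Q$. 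The crucial observation is that each residue field $\cO_H/w$ at $w \mid v \mid p$ has characteristic $p$, so $(\cO_H/w)^*$ has order coprime to $p$; hence $Q$ and $Q^\vee$ are $p'$-torsion. Tensoring with $\Z_p$ kills $Q^\vee$ (and the $\mathrm{Tor}^1$ term vanishes for the same reason), so the map $\Sel_\Sigma^{\Sigma''}(H) \otimes \Z_p \to \Sel_\Sigma^{\Sigma'}(H) \otimes \Z_p$ is an isomorphism. Taking minus parts and extending scalars to $R$ gives the required identification.

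Neither step presents a real obstacle: both reduce to the principle that primes above $p$ contribute only $p'$-torsion or trivial-modulo-$\varpi$ data once one localizes at the component $R$ of $\cO[G]^-$. The only minor care needed is ensuring $G$-equivariance of the connecting map into $Q^\vee$, which follows from the functoriality of $\Ext^1$ under the contragredient convention fixed in \S\ref{s:bks}.
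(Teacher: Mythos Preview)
Your proof is correct and follows essentially the same approach as the paper: both reduce the Selmer comparison to the vanishing after $\otimes\,\Z_p$ of a module controlled by $\prod_{w\mid v,\,v\mid p}(\cO_H/w)^*$ (prime-to-$p$ torsion), and both handle the Stickelberger side by observing that $1-\sigma_v\N v$ is a unit in $R$ since $\N v\equiv 0\pmod{\varpi}$. The only cosmetic difference is that you derive the exact sequence directly from the $S'$-unit presentation~(\ref{e:sels}), whereas the paper invokes the analogue of~(\ref{e:sst0}) via Lemma~\ref{l:ysc} and the class-group sequence; the content is the same.
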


\begin{proof}  As in (\ref{e:sst}), 
we have a  short exact sequence
\[ \begin{tikzcd}
 0 \ar[r] &  \Sel_{\Sigma}^{\Sigma''}(H)_R  \ar[r] &  \Sel_{\Sigma}^{\Sigma'}(H)_R  \ar[r] &  \Big[\displaystyle\prod_{\substack{v \in T \\ v \mid p}}\prod_{w \mid v} (\cO_H/w)^*\Big]_R^\vee  \ar[r] &  0. 
 \end{tikzcd}
  \]
The group on the right in brackets has prime-to-$p$ order, hence its tensor product with $R$ vanishes.  This proves the first result.
On the analytic side, we note that
 the factor $(1 - \sigma_v \N v)$ has  image in $R$  that is a unit when $v \mid p$ and hence the elements
\[  \Theta^\#_{\Sigma, \Sigma'} =  \Theta^\#_{\Sigma, \Sigma''} \prod_{v \in T, \ \! v \mid p} (1 - \sigma_v \N v) \]
and $ \Theta^\#_{\Sigma, \Sigma''} $ generate the same ideal under projection to $R$.
\end{proof}

Hereafter we replace $\Sigma'$ by $\Sigma''$ and therefore assume that $T$ and $\Sigma'$ contain no primes above $p$.
\subsection{Passing to the field cut out by $\chi$}

Next, we show that we can replace $H$ by the fixed field of the kernel of $\chi$ inside $G'$, which we denote $H_\chi$. We note that $p \nmid [H:H_{\chi}]$. 

\begin{lemma}   \label{l:descendchi}
Let $H_\chi \subset H$ denote the subfield of $H$ fixed by the kernel of $\chi$ inside $G'$.  Let $\Sigma \supset S_\infty$ and  $\Sigma'$ be  finite disjoint sets of places of $F$ whose union contains the set $S_{\ram}$ of finite primes  ramified in $H/F$.
There is a canonical isomorphism $\Sel_{\Sigma}^{\Sigma'}(H)_R \cong \Sel_{\Sigma}^{\Sigma'}(H_\chi)_R.$
\end{lemma}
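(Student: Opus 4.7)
The key observation is that $N := \ker(\chi) \subseteq G'$ has prime-to-$p$ order (so $|N| \in \cO^*$), and that the surjection $\cO[G] \twoheadrightarrow R$ factors through $\cO[G/N]$ because $\chi|_N = 1$. Using the idempotent $e_N = \frac{1}{|N|}\sum_{n \in N} n \in \cO[N]$, for any $\cO[G]$-module $M$ one has the canonical identification
\[
M \otimes_{\cO[G]} R \;=\; M^N \otimes_{\cO[G/N]} R.
\]
Thus the plan is to produce a natural $\cO[G/N]$-isomorphism
\[
\Sel_\Sigma^{\Sigma'}(H)^N \otimes_\Z \cO \;\xrightarrow{\;\sim\;}\; \Sel_\Sigma^{\Sigma'}(H_\chi) \otimes_\Z \cO;
\]
applying $\otimes_{\cO[G/N]} R$ to this will then yield the lemma.

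To construct the map, I would choose an auxiliary set $S' \supset \Sigma$ disjoint from $\Sigma'$ and large enough that both $\Cl_{S'}^{\Sigma'}(H)$ and $\Cl_{S'}^{\Sigma'}(H_\chi)$ vanish. Since $\Sigma \cup \Sigma' \supset S_{\ram}$, every $v \in S' \setminus \Sigma$ is unramified in $H/F$. Writing $U = \cO_{H, S', \Sigma'}^*$ (so $U^N = \cO_{H_\chi, S', \Sigma'}^*$), restriction of $\Z$-linear functionals $\Hom_\Z(U, \Z) \to \Hom_\Z(U^N, \Z)$, combined with the presentation~(\ref{e:sels}) applied to both fields, produces a natural $\cO[G/N]$-equivariant arrow $\Sel_\Sigma^{\Sigma'}(H) \to \Sel_\Sigma^{\Sigma'}(H_\chi)$; this is well-defined on the quotient because $\ord_w|_{H_\chi^*} = \ord_{w_\chi}$ for unramified $w \mid w_\chi$.

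It remains to verify this arrow becomes an isomorphism after applying $(-)^N \otimes_\Z \cO$; I would do this by the five lemma on the short exact sequences defining the two Selmer groups. This operation is legal because $|N| \in \cO^*$ makes $(-)^N$ exact on $\cO[N]$-modules (Tate cohomology of $N$ vanishes over $\cO$). The middle term is handled by the norm map $U_N \to U^N$, whose kernel and cokernel are killed by $|N|$ and so become isomorphisms over $\cO$; dualizing yields $\Hom_\Z(U, \Z)^N \otimes \cO \;\cong\; \Hom_\Z(U^N, \Z) \otimes \cO$. The main obstacle---indeed essentially the whole substance---is the ``divisor'' term: for each $w_\chi$ above a prime $v \in S' \setminus \Sigma$, the $N$-invariants of $\bigoplus_{w \mid w_\chi} \Z$ are generated by $\sum_{w \mid w_\chi} e_w$, which restriction sends to $[N:D_w] \cdot \ord_{w_\chi}$, where $D_w \subseteq N$ is the decomposition group of $w$. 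This is a unit multiple of the basis element at $w_\chi$ because $[N:D_w]$ divides $|N|$, and it is precisely the prime-to-$p$ hypothesis on $|N|$ that collapses all such factors into units of $\cO$, so that the $H$-presentation lines up cleanly with the $H_\chi$-presentation.
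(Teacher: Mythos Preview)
Your proof is correct and follows essentially the same approach as the paper: both use the presentation (\ref{e:sels}), identify the middle (unit-dual) terms via Galois theory, and handle the divisor terms using that every $v \in S' \setminus \Sigma$ is unramified in $H/F$ together with $|N| \in \cO^*$. The only difference is bookkeeping---the paper passes directly to the $R$-level by taking the $\chi$-eigenspace of $G'$ (so the middle arrow is visibly an isomorphism and only surjectivity of the left arrow is needed), whereas you work one level up at $N$-invariants over $\cO[G/N]$ and then tensor down, which requires your norm-map and $[N:D_w]$ computations to see both flanking arrows are isomorphisms; either route yields the same conclusion.
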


\begin{proof}  The inclusion $H_\chi \subset H$ induces a map $\Sel_{\Sigma}^{\Sigma'}(H) \longrightarrow  \Sel_{\Sigma}^{\Sigma'}(H_\chi)$, which upon passing to the $R$-component induces a map 
\[ \begin{tikzcd}
 \Sel_{\Sigma}^{\Sigma'}(H)_R \ar[r] &  \Sel_{\Sigma}^{\Sigma'}(H_\chi)_R. 
 \end{tikzcd}
 \]

To show that this map is an isomorphism, we use the presentation (\ref{e:sels}) for the Selmer groups.  Note that
\begin{align*}
 (\Hom_\Z(\cO_{H, S', \Sigma'}^*, \Z) \otimes_\Z \cO)_R &= \Hom_{\cO}(\cO_{H, S', \Sigma'}^* \otimes \cO, \cO)_R \\
&= \Hom_{\cO}( (\cO_{H, S', \Sigma'}^* \otimes \cO)^{G' = \chi}, \cO),
\end{align*} 
where the last equality follows since $\cO_{H, S', \Sigma'}^* \otimes \cO$ is a free $\cO$-module of finite rank.  Here the superscript denotes the sub-$\cO$-module on which $g \in G'$ acts by multiplication by $\chi(g)$.
We therefore obtain a commutative diagram
\begin{equation} \label{e:seldiagram}
\begin{gathered}
\begin{tikzcd}
 (Y_{H,S' - \Sigma})_R \ar[r] \ar[d] & \Hom_{\cO}( (\cO_{H, S', \Sigma'}^* \otimes \cO)^{G' = \chi}, \cO) \ar[r] \isoarrow{d} & 
\Sel_{\Sigma}^{\Sigma'}(H)_R \ar[d] \ar[r] & 0 \\
 (Y_{H_\chi,S' - \Sigma})_R \ar[r] & \Hom_{\cO}( (\cO_{H_\chi, S', \Sigma'}^* \otimes \cO)^{G'=\chi}, \cO) \ar[r] & 
\Sel_{\Sigma}^{\Sigma'}(H_\chi)_R \ar[r] & 0.
\end{tikzcd}
\end{gathered}
 \end{equation}
As indicated, the middle vertical arrow is clearly an isomorphism (Galois theory).  Therefore the right vertical arrow is surjective.
 To prove that it is also injective, it suffices to prove that the left vertical arrow is surjective. 
 This follows since the primes in $S' - \Sigma$ are unramified in $H_\chi$. 
\end{proof}

It is clear from the definitions that the images of $\Theta_{\Sigma,\Sigma'}^{H/F}$ and $\Theta_{\Sigma,\Sigma'}^{H_\chi/F}$ in $R$ are equal.  
Lemma~\ref{l:descendchi} therefore shows that it suffices to prove equation (\ref{e:mainr}) with $H$ replaced by $H_\chi$.
Next we show that the primes ramified in $H$ but not ramified in $H_\chi$ can be excluded from the depletion and smoothing sets.
In other words, we let 
\begin{align*}
\Sigma(\chi) &= \{ v \mid p \colon v \text{ is ramified  in } H_\chi \} \cup S_\infty, \\
\Sigma'(\chi) &=  \{ v \nmid p \colon v \text{ is ramified in } H_\chi\} \cup T.
\end{align*}
Note that
\begin{equation} \label{e:thetachi}
 \Theta_{\Sigma, \Sigma'}(H_\chi/F)^\# = \Theta_{\Sigma(\chi), \Sigma'(\chi)}(H_\chi/F)^\# \prod_{v \in \Sigma - \Sigma(\chi)} (1 - \sigma_v) \prod_{v \in \Sigma' - \Sigma'(\chi)} (1 - \sigma_v \N v). \end{equation}
The fact that the Selmer group also behaves nicely with respect to the addition of unramified primes to the depletion and smoothing sets is well known:
\begin{lemma} \label{l:selchi} Suppose that the $R$-module $\Sel_{\Sigma(\chi)}^{\Sigma'(\chi)}(H_\chi)_R$ is quadratically presented.  Then $\Sel_{\Sigma}^{\Sigma'}(H_\chi)_R$ is quadratically presented as well, and we have
\[ \Fitt_R(\Sel_{\Sigma}^{\Sigma'}(H_\chi)_R) = \Fitt_R(\Sel_{\Sigma(\chi)}^{\Sigma'(\chi)}(H_\chi)_R) \prod_{v \in \Sigma - \Sigma(\chi)} (1 - \sigma_v) \prod_{v \in \Sigma' - \Sigma'(\chi)} (1 - \sigma_v \N v). \]
\end{lemma}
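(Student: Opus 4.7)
The plan is to deduce the lemma in two steps: first enlarge the depletion set from $\Sigma(\chi)$ to $\Sigma$, then enlarge the smoothing set from $\Sigma'(\chi)$ to $\Sigma'$, fitting each enlargement into a short exact sequence to which Lemma~\ref{l:fittmult} applies.

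For the depletion step, the defining formula (\ref{e:selmerdef}) immediately yields a canonical short exact sequence
\[ 0 \to Y_{H_\chi, \Sigma \setminus \Sigma(\chi)} \to \Sel_\Sigma^{\Sigma'(\chi)}(H_\chi) \to \Sel_{\Sigma(\chi)}^{\Sigma'(\chi)}(H_\chi) \to 0, \]
in which the first nontrivial map is induced by $w \mapsto \ord_w$; this is the natural generalization of the sequence used in the proof of Lemma~\ref{l:ysc}. Every $v \in \Sigma \setminus \Sigma(\chi)$ is by construction unramified in $H_\chi/F$, so $G_v = \langle \sigma_v \rangle$ and $Y_{H_\chi, \{v\}} \cong \Z[G]/(\sigma_v - 1)$. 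Projecting to $R$ yields the cyclic module $R/(1 - \sigma_v)$, which is trivially quadratically presented with Fitting ideal $(1 - \sigma_v)$. Applying Lemma~\ref{l:fittmult} (with both the left and right terms of the exact sequence quadratically presented) gives that $\Sel_\Sigma^{\Sigma'(\chi)}(H_\chi)_R$ is quadratically presented and
\[ \Fitt_R(\Sel_\Sigma^{\Sigma'(\chi)}(H_\chi)_R) = \Fitt_R(\Sel_{\Sigma(\chi)}^{\Sigma'(\chi)}(H_\chi)_R) \prod_{v \in \Sigma \setminus \Sigma(\chi)} (1 - \sigma_v). \]

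For the smoothing step, the same argument that produced (\ref{e:sst0}) (snake lemma applied to the pair of standard sequences of Lemma~\ref{l:ysc} together with the analog of (\ref{e:ctt}) for the pair $\Sigma'(\chi) \subset \Sigma'$) supplies
\[ 0 \to \Sel_\Sigma^{\Sigma'(\chi)}(H_\chi)^- \to \Sel_\Sigma^{\Sigma'}(H_\chi)^- \to \prod_{v \in \Sigma' \setminus \Sigma'(\chi)} \prod_{w \mid v} ((\cO_{H_\chi}/w)^*)^{\vee,-} \to 0. \]
For each such $v$, which is unramified in $H_\chi$, the computation in the proof of Theorem~\ref{t:ks} specialized to $I_v = 1$ simplifies to $\prod_{w \mid v} ((\cO_{H_\chi}/w)^*)^\vee_R \cong R/(\sigma_v^{-1} - \N v) = R/(1 - \sigma_v \N v)$, the last equality holding because $\sigma_v$ is a unit in $R$. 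This is again cyclic, hence quadratically presented with Fitting ideal $(1 - \sigma_v \N v)$. A second application of Lemma~\ref{l:fittmult} shows that $\Sel_\Sigma^{\Sigma'}(H_\chi)_R$ is quadratically presented and
\[ \Fitt_R(\Sel_\Sigma^{\Sigma'}(H_\chi)_R) = \Fitt_R(\Sel_\Sigma^{\Sigma'(\chi)}(H_\chi)_R) \prod_{v \in \Sigma' \setminus \Sigma'(\chi)} (1 - \sigma_v \N v). \]
Combining the two displayed Fitting-ideal equalities yields the lemma.

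I do not expect any step to present a genuine obstacle: both short exact sequences are routine variants of constructions already used in the paper, and the cyclic $R$-modules appearing as the non-Selmer terms have transparent Fitting ideals. The only mild technicalities are the injectivity of the first map in the depletion sequence (which reduces to the linear independence of the valuations $\ord_w$ over $\Z$) and the analog of (\ref{e:ctt}) for $\Sigma'(\chi) \subset \Sigma'$, whose exactness on the left uses that $\Sigma'(\chi) \supset T$ satisfies condition (\ref{e:drcond}) and hence kills the minus part of the relevant unit group.
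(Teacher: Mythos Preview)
Your proof is correct and follows essentially the same approach as the paper's: both enlarge the depletion and smoothing sets one at a time, identifying the extra piece in each step as a product of cyclic $R$-modules with the expected Fitting ideals, and then apply Lemma~\ref{l:fittmult}. The only cosmetic difference is that the paper performs the smoothing enlargement first and the depletion enlargement second (and phrases both via the snake lemma applied to the $Y$--$\Sel$--$\Cl^\vee$ diagrams of Lemma~\ref{l:ysc}), whereas you reverse the order and read off the depletion sequence directly from the definition~(\ref{e:selmerdef}); either order works.
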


\begin{proof} We expand the smoothing and depletion sets one by one, using Lemma~\ref{l:ysc} in both instances.
We have the commutative diagram 
\begin{equation} \label{e:selmermaps}
\begin{tikzcd}
(Y_{H_\chi, \Sigma(\chi)})_R \ar[r,hook] \ar[d,equal] & \Sel_{\Sigma(\chi)}^{\Sigma'(\chi)}(H_{\chi})_R  \ar[r,two heads] \ar[d] & 
\Cl^{\Sigma'(\chi)}(H_\chi)^\vee_R \ar[d]  \\
 (Y_{H_\chi, \Sigma(\chi)})_R \ar[r,hook] & \Sel_{\Sigma(\chi)}^{\Sigma'}(H_\chi)_R \ar[r,two heads] & 
\Cl^{\Sigma'}(H_\chi)^\vee_R.
\end{tikzcd}
\end{equation}
 The right vertical arrow is injective with cokernel isomorphic to \begin{equation} \label{e:owd}
  \prod_{v \in \Sigma' -\Sigma'(\chi)} \prod_{w \mid v} ((\cO_H/w)^*)^\vee_R. \end{equation}
Since the primes in $\Sigma'-\Sigma'(\chi)$ are unramified in $H_\chi$, the module (\ref{e:owd}) is quadratically presented over $R$ and has Fitting ideal generated by $ \prod_{v \in \Sigma' - \Sigma'(\chi)} (1 - \sigma_v \N v)$.
Applying the snake lemma and Lemma~\ref{l:fittmult}, we find that $\Sel_{\Sigma(\chi)}^{\Sigma'}(H_\chi)_R$ is quadratically presented over $R$ with Fitting ideal equal to
\[ \Fitt_R(\Sel_{\Sigma(\chi)}^{\Sigma'(\chi)}(H_\chi)_R) \prod_{v \in \Sigma' - \Sigma'(\chi)} (1 - \sigma_v \N v). \]
Similarly, we expand depletion set using the diagram
\begin{equation} \label{e:selmermaps2}
\begin{tikzcd}
(Y_{H_\chi, \Sigma})_R \ar[r,hook] \ar[d] & \Sel_{\Sigma}^{\Sigma'}(H_{\chi})_R  \ar[r,two heads] \ar[d] & 
\Cl^{\Sigma'}(H_\chi)^\vee_R \ar[d,equal]  \\
 (Y_{H_\chi, \Sigma(\chi)})_R \ar[r,hook] & \Sel_{\Sigma(\chi)}^{\Sigma'}(H_\chi)_R \ar[r,two heads] & 
\Cl^{\Sigma'}(H_\chi)^\vee_R.
\end{tikzcd}
\end{equation}

Here the left vertical arrow is the projection associated to the canonical decomposition $Y_{H_\chi, \Sigma} = Y_{H_\chi, \Sigma(\chi)} \oplus Y_{H_\chi, \Sigma - \Sigma(\chi)}$.  The left and middle vertical arrows of (\ref{e:selmermaps2}) are therefore surjective, with kernel equal to $(Y_{H_\chi, \Sigma - \Sigma(\chi)})_R$.  Since the primes in $\Sigma - \Sigma(\chi)$ are unramified in $H_\chi$, this latter module is quadratically presented over $R$ with Fitting ideal generated by $\prod_{v \in \Sigma - \Sigma(\chi)} (1 - \sigma_v)$.  
The desired result now follows from Lemma~\ref{l:fittmult}.
\end{proof}

In view of (\ref{e:thetachi}) and Lemma~\ref{l:selchi}, in order to prove (\ref{e:mainr}) it suffices to prove that $\Sel_{\Sigma(\chi)}^{\Sigma'(\chi)}(H_\chi)_R$ is quadratically presented over $R$ and that
 \begin{equation} \label{e:mainrchi}
   \Fitt_{R}(\Sel_{\Sigma(\chi)}^{\Sigma'(\chi)}(H_\chi)_R) = (\Theta_{\Sigma(\chi), \Sigma'(\chi)}^\#). 
   \end{equation}

To recapitulate, by the results of \S\ref{s:sad}, it remains to prove that the module $\Sel_{\Sigma}^{\Sigma'}(H)_R$ is quadratically presented over $R$ and that
\begin{equation} \label{e:myfitt}
   \Fitt_{R}(\Sel_{\Sigma}^{\Sigma'}(H)_R) = (\Theta_{\Sigma, \Sigma'}^\#) \end{equation}
when:
\begin{itemize}
\item $H/F$ is such that $\chi$ is a {\em faithful} odd character of the maximal prime-to-$p$ subgroup $G' \subset G$;
\item the sets $\Sigma, \Sigma'$ are defined as in the beginning of \S\ref{s:fmr} for this extension $H/F$;
\item the set $T$ contains no primes above $p$.
\end{itemize}
The results of this section show that (\ref{e:myfitt}) in this setting implies Theorem~\ref{t:main0}.

\section{Divisibility Implies Equality} \label{s:die}

In this section we prove an analogue in the general setting of the ``elementary argument" mentioned in the introduction and described in \S\ref{s:fitting} for the case where $H/F$ is unramified at all finite primes.  First, this argument will replace $\Cl^T(H)^-$ with an appropriate Selmer module since the former is not in general quadratically presented.  Second, the analytic argument will be quite a bit more complicated for two reasons. (i) The Selmer module and Stickelberger element will have ``trivial zeroes" at any  character $\psi$ for which there exists $v \in \Sigma$ such that $\psi(G_v) = 1$, hence any  generalization of (\ref{e:cnf}) must account for trivial zeroes.  (ii) The class number formula relates the size of class groups to $L$-values, and the exact sequences relating these class groups to Selmer modules are in general not split; appropriate quotients must be taken on which the size of  class groups and Selmer modules can be related.

Recall the notation $G = \Gal(H/F) = G_p \times G'$, with $G_p$ of $p$-power order and $G'$ of prime-to-$p$ order.  Let $R = \cO[G_p]_\chi$ be a connected component of $\cO[G]$ corresponding to an odd character $\chi$ of $G'$.  
Let $H_p$ denote the fixed field of $G'$ in $H$, so $\Gal(H_p/F) \cong G_p$.
By  our earlier reductions we can  assume that $\chi$ is a faithful character of $G'$.

We recall the sets $\Sigma, \Sigma'$ defined in \S\ref{s:sad} and introduce the notation $\Sigma_p$.  As usual $S_{\ram}$ denotes the set of finite primes of $F$ ramified in $H$.
\begin{align}
\Sigma & = \{ v  \in S_{\ram}  \colon v \mid p \} \cup S_\infty, \label{e:bigsigmadef} \\
\Sigma_p &= \{ v \in S_{\ram} \colon v \mid p  \text{ and } \chi(G_v') =1\} \subset \Sigma, \label{e:sigmasub} \\
\Sigma' &=  \{ v \in S_{\ram} \colon  v \nmid p \} \cup T. \label{e:sigmapdef}
\end{align}

In (\ref{e:sigmasub}), $G_v' = G' \cap G_v$.  Since $\chi$ is faithful, the condition $\chi(G_v') = 1$ is equivalent to $G_v' = 1$, i.e.\ that $G_v$ is a $p$-group.
 The goal of this section is to prove the following:

\begin{theorem}  \label{t:include} Suppose that in every situation with notation as above, we have that the $R$-module $\Sel_{\Sigma}^{\Sigma'}(H)_R$ is quadratically presented and that  
 \begin{equation} \label{e:include}
\Fitt_R(\Sel_{\Sigma}^{\Sigma'}(H)_R) \subset (\Theta_{\Sigma, \Sigma'}^\#). 
\end{equation}  
Then each such inclusion is an equality.
\end{theorem}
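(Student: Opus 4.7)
Proof proposal. The plan is to generalize the size-comparison argument sketched in \S\ref{s:fitting} for the conductor-1 case. By the quadratic presentation assumption, $\Fitt_R(\Sel_\Sigma^{\Sigma'}(H)_R)$ is principal, say $(y)$, and the hypothesis gives $y = x \, \Theta_{\Sigma,\Sigma'}^\#$ for some $x \in R$. The task is to show $x \in R^\times$. The key structural observation is that $R = \cO[G_p]_\chi$ is a local ring with residue field $k = \cO/\mathfrak{m}_\cO$, and every character $\psi \in \Psi := \{\psi \in \hat G : \psi|_{G'} = \chi\}$ reduces modulo $\mathfrak{m}_\cO$ to the same map $R \twoheadrightarrow k$:  prime-to-$p$ roots of unity in $\cO^\times$ lift uniquely from $k^\times$, while any $p$-power root of unity is $\equiv 1 \pmod{\mathfrak{m}_\cO}$. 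Hence $x \in R^\times$ if and only if $\psi(x) \in \cO^\times$ for some single $\psi \in \Psi$.

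The argument splits according to the set $\Psi_0 = \{\psi \in \Psi : \psi(G_v) \neq 1 \text{ for all } v \in \Sigma\}$. If $\Psi_0 = \emptyset$ then Lemma~\ref{l:trivial} forces both sides of (\ref{e:include}) to vanish in $R$ and the equality holds automatically. Otherwise fix $\psi_0 \in \Psi_0$, so that $\psi_0(\Theta_{\Sigma,\Sigma'}^\#) \neq 0$. By functoriality of Fitting ideals under base change, $\psi_0(y)$ generates $\Fitt_\cO(\Sel_\Sigma^{\Sigma'}(H)_R \otimes_R \cO_{\psi_0})$, a finite $\cO$-module. The inclusion yields $v_p(\psi_0(y)) \geq v_p(\psi_0(\Theta_{\Sigma,\Sigma'}^\#))$ and it suffices to prove the reverse inequality. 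Since $v_p(\psi(x)) \geq 0$ for every $\psi \in \Psi_0$, it is enough to establish the sum equality
\[ \sum_{\psi \in \Psi_0} v_p(\psi(y)) \; = \; \sum_{\psi \in \Psi_0} v_p(\psi(\Theta_{\Sigma,\Sigma'}^\#)), \]
which by Lemmas~\ref{l:size} and~\ref{l:size2} applied over the character group ring $R_{\Psi_0}$ (in which $\Theta_{\Sigma,\Sigma'}^\#$ becomes a non-zerodivisor) translates into the cardinality equality $\#(\Sel_\Sigma^{\Sigma'}(H)_R \otimes_R R_{\Psi_0}) = \#(R_{\Psi_0}/(\Theta_{\Sigma,\Sigma'}^\#))$.

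To prove this cardinality equality I would chain the exact sequence of Lemma~\ref{l:ysc} (relating $\Sel^-$ to $Y_{H,\Sigma}^-$ and $\Cl^{\Sigma'}(H)^{\vee,-}$) with the short exact sequence
\[ 0 \longrightarrow \Cl^{\Sigma'}(H)^{\vee,-} \longrightarrow \Cl^T(H)^{\vee,-} \longrightarrow \prod_{v \in S'} ((\cO_H/w)^*)^{\vee,-} \longrightarrow 0 \]
appearing in the proof of Theorem~\ref{t:ks}, and then invoke Lemma~\ref{l:cnf} to express $\#\Cl^T(H)^-_R$ in terms of $\prod_{\psi \in \Psi} \psi(\Theta_{S_\infty,T}^\#)$ up to a power of $2$ (harmless since $p$ is odd). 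The local contributions from $Y_{H,\{v\}, R_{\Psi_0}}^-$ for $v \in \Sigma \setminus S_\infty$ and from the cokernel terms for $v \in S'$ must then be matched factor-by-factor against the Euler factors appearing in the factorization
\[ \Theta_{\Sigma,\Sigma'}^\# \; = \; \Theta_{S_\infty,T}^\# \cdot \prod_{v \in \Sigma \setminus S_\infty}(1 - \sigma_v^{-1} e_v) \cdot \prod_{v \in \Sigma' \setminus T}(1 - \sigma_v^{-1} e_v \N v). \]
The main obstacle is this local matching at primes $v \in \Sigma_p$ (where $G_v \subset G_p$), for which $Y_{H,\{v\}}^-$ contributes a genuinely nontrivial finite $\cO$-module, rather than vanishing as in the simpler unramified setting, and whose cardinality must be tracked precisely against the product of the factors $(1 - \psi(\sigma_v))$ over the $\psi \in \Psi_0$ unramified at $v$. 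This mirrors the computation in the proof of Theorem~\ref{t:ks} but carried out at the level of $\cO$-cardinalities rather than ideals of $R$. A related technical point is verifying that the base change $- \otimes_R R_{\Psi_0}$ preserves the exactness of the short exact sequences above (equivalently, that the relevant $\Tor^1_R$ groups vanish); this reduces to a local check at each character in $\Psi_0$ that uses precisely the defining non-triviality condition $\psi(G_v) \neq 1$.
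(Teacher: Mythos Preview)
Your opening reductions are correct and match the paper: writing $\Fitt_R(\Sel_\Sigma^{\Sigma'}(H)_R)=(x\,\Theta_{\Sigma,\Sigma'}^\#)$, observing that $x\in R^\times$ iff $\psi(x)\in\cO^\times$ for a single $\psi$, and disposing of the case $\Psi_0=\emptyset$ via Lemma~\ref{l:trivial}. The gap is in the proposed cardinality computation over $R_{\Psi_0}$.

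The factor-by-factor matching you describe fails precisely at primes $v\in\Sigma_p$ where $\psi$ is \emph{ramified}. For such $\psi$ one has $\psi(e_v)=0$, so the Euler factor $\psi(1-\sigma_v^{-1}e_v)=1$; but $Y_{H,\{v\}}\otimes_{\Z[G]}\cO_\psi\cong\cO/(\psi(g)-1:g\in G_v)$, and since $G_v\subset G_p$ (this is what $v\in\Sigma_p$ means) the element $\psi(g)-1$ lies in $\mathfrak{m}_\cO$ for every $g\in G_v$, giving a genuinely nontrivial finite module. There is no compensating term on the class-group side: the only input you have there is Lemma~\ref{l:gptriv}, which computes $\#(\Cl^{\Sigma'}(H)^\vee)_R$ over \emph{all} of $R$, not over $R_{\Psi_0}$, and there is no mechanism to isolate the $\Psi_0$-part. (The proof of Theorem~\ref{t:ks} you invoke as a model takes Theorem~\ref{t:main0} as input, so cannot be used here.)

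The paper circumvents exactly this obstruction by \emph{descending} to $H_\psi$ for each fixed $\psi$: here the extension is cyclic, so the inertia groups $\{I_v:v\in\Sigma_p\}$ are totally ordered and there is a minimal one $I$. One then passes to $R_I=R/\N I$ and proves (Lemma~\ref{l:selcl}) that the image of $Y_{H_\psi,\Sigma_p}$ in $\Sel_\Sigma^{\Sigma'}(H_\psi)_R/\N I$ \emph{vanishes}, so the troublesome $Y$-contribution disappears and one is left with a class group whose size modulo $\N I$ can be computed (Lemma~\ref{l:clsize}) via the class number formula together with the surjectivity of a norm map. This descent step crucially uses the hypothesis of the theorem applied to the smaller field $H_\psi$, which is why the statement quantifies over ``every situation.'' Your approach, working at the level of $H$ throughout, never exploits this and cannot match the ramified-$\psi$ contributions.
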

Note that our proof proceeds by replacing $H$ by the subfield $H_{\psi}$, the subfield of $H$ fixed by the kernel of $\psi$, for every character $\psi$. Therefore we do not show directly that a single inclusion as in (\ref{e:include}) is necessarily an equality; we show that if {\em every} such inclusion holds, then they are {\em all} equalities.  For the remainder of this section, we assume that (\ref{e:include}) always holds.

\bigskip

Recall the following exact sequence of $\cO[G]$-modules (Lemma~\ref{l:ysc}):
\begin{equation} \label{e:selmerex}
 \begin{tikzcd}
 0  \ar[r] &  Y_{H,\Sigma}^-  \ar[r] &  \Sel_{\Sigma}^{\Sigma'}(H)^-  \ar[r] &  \Cl^{\Sigma'}(H)^{\vee,-}  \ar[r] &  0. 
 \end{tikzcd}
  \end{equation}
Note that $(Y_{H,\Sigma})_R \cong (Y_{H,\Sigma_p})_R$ since $(Y_{H,\{v\}})_R = 0$ when $\chi(G'_v) \neq 1$.  In particular:
\begin{equation} \text{if }  \Sigma_p = \emptyset, \text{ then } \Sel_{\Sigma}^{\Sigma'}(H)_R \cong (\Cl^{\Sigma'}(H)^\vee)_R. \label{e:selcl}
\end{equation}

\begin{lemma} \label{l:alpha}
Let $\alpha$ be any character of $G'$.  Denote by $\cO_\alpha$ the ring $\cO$ endowed with the $G'$-action in which $G'$ acts via $\alpha$.  Write \[  \Cl^{\Sigma'}(H)^\vee_{\cO_\alpha} =    
\Cl^{\Sigma'}(H)^\vee \otimes_{\Z[G']} {\cO_\alpha}. \]  Let $H_\alpha$ denote the fixed field of the kernel of $\alpha$ in $G'$.
Then
\[ \Cl^{\Sigma'}(H_\alpha)^\vee_{\cO_\alpha} \cong  \Cl^{\Sigma'}(H)^\vee_{\cO_\alpha} \]
\end{lemma}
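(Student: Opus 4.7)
The plan is to use the extension-of-ideals map $\iota\colon \Cl^{\Sigma'}(H_\alpha) \to \Cl^{\Sigma'}(H)$ together with the norm map $N\colon \Cl^{\Sigma'}(H) \to \Cl^{\Sigma'}(H_\alpha)$, both of which are $\Z[G]$-linear. Since $H_\alpha$ is by definition the fixed field of $\ker\alpha \subset G' \subset G$, one has $\Gal(H/H_\alpha) = \ker\alpha$, and the standard composition identities from class field theory read
\[ N \circ \iota = [H:H_\alpha]\cdot\id = \#\ker\alpha \cdot \id, \qquad \iota \circ N = \sum_{g \in \ker\alpha} g. \]

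I would next dualize to obtain $\Z[G']$-linear maps $\iota^\vee$ and $N^\vee$ on Pontryagin duals, equipped with the contragredient $G$-action. The dualized compositions take the same form (noting that $\ker\alpha$ is closed under inversion, so the sum is unchanged when translated to the contragredient action). The key step is then to tensor with $\cO_\alpha$ over $\Z[G']$: by the very definition of $\cO_\alpha$, every $g \in G'$ acts on $M \otimes_{\Z[G']} \cO_\alpha$ by the scalar $\alpha(g)$, and in particular every $g \in \ker\alpha$ acts trivially. Consequently the operator $\sum_{g \in \ker\alpha} g$ becomes multiplication by the integer $\#\ker\alpha$.

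Finally, $\#\ker\alpha$ divides $\#G'$, which is a unit in $\cO$ because $G'$ has prime-to-$p$ order and $\cO$ is a finite extension of $\Z_p$. Both compositions therefore become multiplication by a unit after tensoring with $\cO_\alpha$, so $\iota^\vee \otimes \id$ (with inverse proportional to $N^\vee \otimes \id$) will be the desired isomorphism $\Cl^{\Sigma'}(H)^\vee_{\cO_\alpha} \xrightarrow{\sim} \Cl^{\Sigma'}(H_\alpha)^\vee_{\cO_\alpha}$. There is no genuine obstacle; the only subtle point is the bookkeeping of the contragredient action under duality, which is entirely routine.
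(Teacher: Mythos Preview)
Your proposal is correct and follows essentially the same approach as the paper: the paper exhibits the extension-of-ideals map and the norm divided by $[H:H_\alpha]$ as explicit mutual inverses on $\Cl^{\Sigma'}(H_\alpha)_{\cO_\alpha}$ and $\Cl^{\Sigma'}(H)_{\cO_\alpha}$, then dualizes (with $\alpha$ replaced by $\alpha^{-1}$). Your version simply unpacks why these are inverses --- via the identities $N\circ\iota = [H:H_\alpha]$ and $\iota\circ N = \sum_{g\in\ker\alpha} g$, both becoming multiplication by a $p$-adic unit after tensoring --- and dualizes first rather than last, which is only a cosmetic difference.
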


\begin{proof}
This follows because $[H:H_\alpha]$ is relatively prime to $p$.  The maps 
\[
\fa \mapsto \fa\cO_H \qquad \text{ and } \qquad \fa \mapsto \frac{1}{[H:H_\alpha]} \N_{H/H_\alpha} \fa
\]
are explicit mutually inverse isomorphisms between $\Cl^{\Sigma'}(H_\alpha)_{\cO_\alpha}$ and $\Cl^{\Sigma'}(H)_{\cO_\alpha}$. The isomorphism in the lemma is  the Pontryagin dual of this, with $\alpha$ replaced by $\alpha^{-1}$.
\end{proof}

  The proof of Theorem~\ref{t:include} relies  on the analytic class number formula, which manifests itself in the following lemma.

\begin{lemma}  \label{l:gptriv}
We have
\[ \# (\Cl^{\Sigma'}(H)^\vee)_R = \#\cO/ L, \]
where 
\[ L = L_{S_\infty, \Sigma'}(H/H_p, \chi, 0) = \prod_{\psi|_{G'} = \chi} L_{S_\infty, \Sigma'}(H/F, \psi,0). \]
Here the product runs over the characters $\psi$ of $G = \Gal(H/F)$ that belong to $\chi$.
\end{lemma}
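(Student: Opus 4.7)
The second equality in the definition of $L$ is Artin formalism. Since $G = G_p \times G'$, the induced representation $\Ind_{G'}^G \chi$ decomposes as $\bigoplus_{\eta \in \hat{G_p}} \eta \otimes \chi$, and these characters $\eta \otimes \chi$ are precisely the $\psi \in \hat{G}$ satisfying $\psi|_{G'} = \chi$. The multiplicativity of Artin $L$-functions under induction then yields $L(H/H_p, \chi, s) = \prod_{\psi|_{G'} = \chi} L(H/F, \psi, s)$.

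For the cardinality statement, the starting point is Lemma~\ref{l:cnf} applied to the CM extension $H/H^+$:
\[
\#\Cl^{\Sigma'}(H)^- \;\doteq\; \prod_{\psi \in \hat{G},\,\psi \text{ odd}} L_{S_\infty, \Sigma'}(H/F, \psi, 0),
\]
where $\doteq$ ignores powers of $2$, which is harmless since $p$ is odd. Because $p \nmid |G'|$ and $\cO$ contains the values of all characters of $G'$, the minus part $\cO[G']^-$ is a product of copies of $\cO$ indexed by the odd characters of $G'$, inducing an orthogonal decomposition
\[
\Cl^{\Sigma'}(H)^{\vee, -} \otimes_{\Z} \cO \;\cong\; \bigoplus_{\chi' \in \hat{G'},\,\chi' \text{ odd}} (\Cl^{\Sigma'}(H)^\vee)_{R_{\chi'}},
\]
in which each summand carries an action of $R_{\chi'} = \cO[G_p]_{\chi'}$. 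The total cardinality therefore factors as a product over the odd characters of $G'$.

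To isolate the $\chi$-summand, I would invoke Lemma~\ref{l:alpha} to identify each $(\Cl^{\Sigma'}(H)^\vee)_{R_{\chi'}}$ with the analogous module for the subfield $H_{\chi'} \subseteq H$, on which $\chi'$ descends to a faithful character. Applying Lemma~\ref{l:cnf} to each CM subextension $H_{\chi'}/H_{\chi'}^+$, together with the analogous character decomposition, produces for every character $\alpha$ of $G'$ a product identity of the form
\[
\prod_{i\,:\, \alpha^i \text{ odd}} \#(\Cl^{\Sigma'}(H)^\vee)_{R_{\alpha^i}} \;=\; \prod_{i\,:\, \alpha^i \text{ odd}} \#\cO/L_{\alpha^i}\cO,
\]
where $L_\beta = \prod_{\psi|_{G'} = \beta} L(H/F, \psi, 0)$, and where the bookkeeping of cardinalities uses the two basic identities $\#(N \otimes_{\Z_p} \cO) = (\#N)^{[\cO:\Z_p]}$ for a finite $\Z_p$-module $N$ and $\#\cO/x\cO = q^{v_\pi(x)}$. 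M\"obius inversion over the lattice of characters of $G'$ (ordered by divisibility of order) then extracts the factor attached to the single faithful character $\chi$, yielding $\#(\Cl^{\Sigma'}(H)^\vee)_R = \#\cO/L\cO$.

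The main obstacle is this final combinatorial step. The analytic class number formula provides only a total product identity over odd characters, so extracting the contribution of a single faithful character $\chi$ requires applying Lemma~\ref{l:cnf} to every proper subextension $H_{\chi'}$ and M\"obius-inverting over the character lattice, rather than reading it off a single direct decomposition.
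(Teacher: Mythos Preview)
Your proposal has a genuine gap in the M\"obius inversion step, and it misses the key input that the paper's proof uses.

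The equations you obtain from Lemma~\ref{l:cnf} applied to the subfields $H_\alpha$ depend only on $\ker\alpha$, not on $\alpha$ itself: two odd characters $\alpha_1,\alpha_2$ of $G'$ with the same kernel yield the \emph{same} product identity. Thus your system is indexed by cyclic quotients $G'/K$ (with $c \notin K$), while the unknowns $a_\beta = \#(\Cl^{\Sigma'}(H)^\vee)_{R_\beta}$ and $b_\beta = \#\cO/L_\beta$ are indexed by odd characters $\beta$. When several odd characters share a kernel $K$, M\"obius inversion over the subgroup lattice only yields $\prod_{\ker\beta = K} a_\beta = \prod_{\ker\beta = K} b_\beta$, not term-by-term equality. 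You cannot recover the individual factors from this: characters with the same kernel are $\Q$-Galois conjugate but need not be $\Q_p$-Galois conjugate, so neither the $a_\beta$ nor the $b_\beta$ need coincide within such a packet. Concretely, if $p \equiv 1 \pmod{|G'/K|}$ then all faithful characters of $G'/K$ lie in distinct $\Q_p$-orbits, and there is no automorphism of $\cO$ relating $(\Cl^{\Sigma'}(H)^\vee)_{R_{\beta_1}}$ to $(\Cl^{\Sigma'}(H)^\vee)_{R_{\beta_2}}$.

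The paper's proof takes a different route that avoids this obstruction. Working over $H_p$ (so that $G_p$ becomes trivial and $R$ becomes $\cO_\chi$), it invokes the \emph{running hypothesis} (\ref{e:include}) of Theorem~\ref{t:include}---the inclusion $\Fitt_R(\Sel_\Sigma^{\Sigma'}) \subset (\Theta^\#)$---applied to each extension $H_\alpha/H_p$. This yields, for every odd $\alpha$, a one-sided divisibility $\#\cO/L_\alpha \mid a_\alpha$. Taking the product over all odd $\alpha$ and comparing with the single equality from Lemma~\ref{l:cnf} forces every divisibility to be an equality. The point is that a collection of inequalities summing to an equality must each be an equality; a collection of partial-sum equalities does not give this. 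Lemma~\ref{l:gptriv} is not a standalone counting statement but genuinely uses the hypothesis of the theorem in which it sits.
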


\begin{proof}
For the purposes of the first equality, we can work entirely over $H_p$, i.e.\ we can replace $F$ by $H_p$.  Note that $H_p$ is totally real since its degree over $F$ is odd.  For the extension $H/H_p$, the associated set $\Sigma_p$ is empty, since $G_v = G_v'$ and $\chi$ is faithful, so $\chi(G_v') = 1$ implies that $G_v' = 1$ and hence $v$ is unramified (in fact totally split).
In this setting the ring $R$ is just $\cO_\chi$, i.e.\ the ring $\cO$ in which the group $\Gal(H/H_p)$ acts via $\chi$.
Therefore the running assumption (\ref{e:include}) together with the isomorphism (\ref{e:selcl}) yield
 \[ \Fitt_{\cO_\chi}(\Cl^{\Sigma'}(H)^\vee_{\cO_\chi}) \subset ( L_{S_\infty, \Sigma'}(H/H_p, \chi, 0)), \]
 which says simply
 \[ \#\cO/L_{S_\infty, \Sigma'}(H/H_p, \chi, 0) \mid \#\Cl^{\Sigma'}(H)^\vee_{\cO_\chi}. \]
 We apply the same result to all odd characters $\alpha$ of $H/H_p$, to obtain
 \begin{equation} \label{e:alpha}
  \#\cO/L_{S_\infty, \Sigma'}(H/H_p, \alpha, 0) \mid \#\Cl^{\Sigma'}(H_\alpha)^\vee_{\cO_\alpha} =  \#\Cl^{\Sigma'}(H)^\vee_{\cO_\alpha},  
  \end{equation}
where the last equality uses Lemma~\ref{l:alpha}.  
Taking the product over all $\alpha$ gives
\begin{equation} \label{e:divprod}
  \#\cO/L_{S_\infty, \Sigma'}(H/H^+, \epsilon, 0) \mid  \#\Cl^{\Sigma'}(H)^{\vee, -}_\cO, \end{equation}
where $H^+$ is the maximal totally real subfield of $H$, and $\epsilon$ is the nontrivial character of $\Gal(H/H^+)$.
The left side of (\ref{e:divprod}) uses the Artin formalism for $L$-functions, and the right side uses the fact that $\cO[\Gal(H/H_p)]^-$ is the direct product of the $\cO_\alpha$.  Now, (\ref{e:divprod}) is actually an equality by the analytic class number formula (Lemma~\ref{l:cnf}).  It follows that each divisibility (\ref{e:alpha}) is an equality as well.  This yields the first equality of the lemma, with $\alpha = \chi$.  The second equality follows from the Artin formalism for $L$-functions.
\end{proof}

\subsection{Case: $\Sigma_p$ is empty} \label{s:base}

We first handle the case that $\Sigma_p$ is empty.  Note that in this case, the image of $\Theta_{\Sigma, \Sigma'}$ is a non-zerodivisor in $R$.
The fact that  $\Sel_{\Sigma}^{\Sigma'}(H)_R$ is quadratically presented together with the inclusion (\ref{e:include}) implies that we may write
\[ \Fitt_R(\Sel_{\Sigma}^{\Sigma'}(H)_R) = (x \cdot \Theta_{\Sigma, \Sigma'}^\#) \]
for some $x \in R$.  By (\ref{e:selcl}), which applies since $\Sigma_p = \emptyset$, this reads
\[ \Fitt_R(\Cl^{\Sigma'}(H)^\vee_R) =(x \cdot \Theta_{\Sigma, \Sigma'}^\#). \]
Lemmas~\ref{l:size} and~\ref{l:size2} imply that
\begin{equation} \#\Cl^{\Sigma'}(H)^\vee_R = \#\cO / \prod_{\psi|_{G'} = \chi} \psi(x) L_{\Sigma, \Sigma'}(H/F, \psi, 0). \label{e:clxl}
\end{equation}
Yet  
\begin{equation} \label{e:lnos}
 L_{\Sigma, \Sigma'}(H/F, \psi, 0) = L_{S_\infty, \Sigma'}(H/F, \psi, 0) \prod_{v \in \Sigma - S_\infty} (1 - \psi(v)). \end{equation}
Since $\Sigma_p=\emptyset$, any $v \in \Sigma - S_\infty$ satisfies
\[ \begin{cases} \psi(v) = 0 & \text{if } \psi \text{ is ramified at } v, \\
\psi(v) \equiv \chi(v) \not\equiv 1 \pmod{\pi_E} & \text{if } \psi \text{ is unramified at } v.
\end{cases}
\]
Here $\pi_E \in \cO$ is a uniformizer.  It follows that
the product on the right in (\ref{e:lnos}) is a $p$-adic unit.
Hence
Lemma~\ref{l:gptriv} and (\ref{e:clxl}) imply that
 $\prod \psi(x) \in \cO^*$.  Therefore each $\psi(x) \in \cO^*$, which implies that $x \in R^*$ since the $\cO$-algebra maps $R \longrightarrow \cO$ induced by each character $\psi$ are local homomorphisms of local rings.  
 This is the desired result.
 
\subsection{Case: $\Sigma_p$ is not empty}

Now consider the case of $\Sigma_p$ nonempty.  
As in \S\ref{s:base}, the inclusion (\ref{e:include}) implies that the principal ideal $ \Fitt_R(\Sel_{\Sigma}^{\Sigma'}(H)_R)$ is generated by an element of the form $x \cdot \Theta_{\Sigma, \Sigma'}^\#$ for some $x \in R$.  We must show that $x$ is a unit in $R$.

The equality $ \Fitt_R(\Sel_{\Sigma}^{\Sigma'}(H)_R) = (x \cdot \Theta_{\Sigma, \Sigma'}^\#)$ implies that for all characters $\psi$ of $G$ that belong to $\chi$, we have
\begin{equation} \label{e:fitpsi}
 \Fitt_\cO(\Sel_{\Sigma}^{\Sigma'}(H)_\psi) = (\psi(x) \cdot L_{\Sigma, \Sigma'}(\psi, 0)) \subset 
 (L_{\Sigma, \Sigma'}(\psi, 0)). \end{equation}
Note here that 
\begin{align*}
\Sel_{\Sigma}^{\Sigma'}(H)_\psi  = & \ \Sel_{\Sigma}^{\Sigma'}(H) \otimes_{\Z[G]} \cO_\psi \\
 = & \ (\Sel_{\Sigma}^{\Sigma'}(H) \otimes_{\Z} \cO) / \langle g - \psi(g)\colon g \in G \rangle 
\end{align*}
denotes the $\psi$-coinvariants of $\Sel_{\Sigma}^{\Sigma'}(H)$. 
Suppose we can prove that the inclusion in (\ref{e:fitpsi}) is an equality for some $\psi$ that belongs to $\chi$ satisfying $L_{\Sigma, \Sigma'}(\psi, 0) \neq 0$.  This implies that $\psi(x) \in \cO^*$, which implies $x \in R^*$, giving the desired result
\[ \Fitt_R(\Sel_{\Sigma}^{\Sigma'}(H)_R) = (\Theta_{\Sigma, \Sigma'}^\#). \]
Now if {\em every} character $\psi$ belonging to $\chi$ has a trivial zero (i.e.\ if for each $\psi$ there exists $v \in \Sigma$ with $\psi(G_v)=1$, so $L_{\Sigma, \Sigma'}(\psi, 0) = 0$) then
Lemma~\ref{l:trivial} shows that
\[  \Fitt_R(\Sel_{\Sigma}^{\Sigma'}(H)_R) = 0 =  (\Theta_{\Sigma, \Sigma'}^\#),\]
again giving the desired result.
It therefore suffices to prove that 
\begin{equation} \label{e:fitpsieq}
\Fitt_\cO(\Sel_{\Sigma}^{\Sigma'}(H)_\psi) = (L_{\Sigma, \Sigma'}(\psi, 0)) \end{equation}
for every character $\psi$ belonging to $\chi$. We need the following lemma.

\begin{lemma}   \label{l:descend}
Let $H_\psi \subset H$ denote the subfield of $H$ fixed by the kernel of $\psi$.
There is a canonical isomorphism $\Sel_{\Sigma}^{\Sigma'}(H)_\psi \cong \Sel_{\Sigma}^{\Sigma'}(H_\psi)_\psi.$
\end{lemma}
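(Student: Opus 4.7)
The plan is to adapt the proof of Lemma~\ref{l:descendchi}, replacing the projection onto the $R$-component by the $\psi$-coinvariants functor $(\cdot)_\psi = (\cdot)\otimes_{\Z[G]}\cO_\psi$. Setting $\chi = \psi|_{G'}$, I would first apply Lemma~\ref{l:descendchi} and tensor with $\cO_\psi$ over $R = \cO[G_p]_\chi$ to reduce to the case $H = H_\chi$, in which $\chi$ is a faithful character of $G'$. In this reduced setting $H_\psi \subseteq H_\chi$, because for $g' \in \ker(\chi|_{G'})$ we have $\psi(g') = \chi(g') = 1$, so $\ker(\chi|_{G'}) \subseteq \ker\psi$ as subgroups of $G$.

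Next, I would enlarge $S'$ to a finite set of places of $F$ containing $\Sigma$ and disjoint from $\Sigma'$, sufficiently large that $\Cl_{S'}^{\Sigma'}(H_\chi)$ and $\Cl_{S'}^{\Sigma'}(H_\psi)$ both vanish. The presentation (\ref{e:sels}) then yields a commutative diagram of right-exact sequences with top row
\[ Y_{H_\chi, S' - \Sigma} \longrightarrow \Hom_\Z(\cO_{H_\chi, S', \Sigma'}^*, \Z) \longrightarrow \Sel_\Sigma^{\Sigma'}(H_\chi) \longrightarrow 0, \]
and the analogous bottom row with $H_\psi$ in place of $H_\chi$, with vertical arrows induced by the inclusion $\cO_{H_\psi, S', \Sigma'}^* \hookrightarrow \cO_{H_\chi, S', \Sigma'}^*$ and by the restriction $w \mapsto w|_{H_\psi}$ on places. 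Applying $(\cdot)_\psi$ and invoking the snake lemma, the right vertical arrow will be an isomorphism provided the left and middle ones are.

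For the left vertical arrow, Frobenius reciprocity identifies $(Y_{H_\chi, \{v\}})_\psi \cong \cO/(\psi(g)-1 : g \in G_v)\cO$ and $(Y_{H_\psi, \{v\}})_\psi \cong \cO/(\psi(\bar g)-1 : \bar g \in \overline{G}_v)\cO$, where $\overline{G}_v$ is the image of $G_v$ in $\overline{G} := G/\ker\psi$; since $\psi$ factors through $\overline{G}$, the two ideals coincide, so the left arrow is an isomorphism. For the middle arrow, following the strategy of Lemma~\ref{l:descendchi}, I would use the semisimplicity of the $G'$-action (valid since $p \nmid |G'|$ and differences of distinct $|G'|$-th roots of unity are units in $\cO$) to identify the relevant $R$-component with $\Hom_\cO((\cO_{\bullet, S', \Sigma'}^* \otimes \cO)^{G'=\chi^{-1}}, \cO)$ for $\bullet \in \{H_\chi, H_\psi\}$, and then tensor further with $\cO_\psi$ over $R$. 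The key Galois-theoretic input is that any element of $(\cO_{H_\chi, S', \Sigma'}^* \otimes \cO)^{G=\psi}$ is fixed by $\ker\psi$, and hence already lies in $\cO_{H_\psi, S', \Sigma'}^* \otimes \cO$.

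The main obstacle is establishing the middle arrow at the $\psi$ level: because $G_p$ is a $p$-group and $\cO[G_p]$ is non-semisimple, one cannot isolate a $\psi|_{G_p}$-eigenspace as an $\cO$-direct summand the way Lemma~\ref{l:descendchi} does for $G'$. The resolution is precisely the two-stage approach just described: first decompose under the semisimple $G'$-action to land in the category of $R$-modules, and only then pass to $\cO_\psi$ by base change over $R$. Each stage is a well-behaved operation on finitely generated modules, ultimately yielding the desired isomorphism.
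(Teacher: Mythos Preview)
Your overall strategy---running the diagram argument of Lemma~\ref{l:descendchi} with $(H_\psi,\cO_\psi)$ in place of $(H_\chi,R)$---is exactly what the paper's one-line proof indicates, and you have correctly located the essential difficulty: the identification $(\Hom_\cO(M\otimes\cO,\cO))_R\cong\Hom_\cO((M\otimes\cO)^{G'=\chi},\cO)$ used in Lemma~\ref{l:descendchi} rests on the semisimplicity of $\cO[G']$, which has no analogue over $\cO_\psi$.

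However, your proposed resolution does not close this gap. Factoring $(\cdot)_\psi$ as $(\cdot)_R$ followed by $\otimes_R\cO_\psi$ and first descending from $H$ to $H_\chi$ is fine, but at the second stage you must still show that the restriction map
\[
\Hom_\cO(N,\cO)\otimes_R\cO_\psi\;\longrightarrow\;\Hom_\cO(N^{\Gamma},\cO)\otimes_R\cO_\psi
\]
is an isomorphism, where $N=(\cO_{H_\chi,S',\Sigma'}^{*}\otimes\cO)^{G'=\chi^{\pm1}}$ and $\Gamma=\ker\psi\subset G_p$. This is the same shape of question over the non-semisimple local ring $R=\cO[G_p]_\chi$, and your ``Galois-theoretic input'' about the $G=\psi$-eigenspace is no longer applicable once one has passed to $\psi$-\emph{coinvariants} (a quotient, not a subspace). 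The displayed map can indeed fail for an arbitrary $\cO[G_p]$-module $N$: with $G_p=A\times B$ for $A\cong B\cong\Z/p$, $\psi|_{G_p}$ nontrivial only on $A$ (so $\Gamma=B$), and $N=I_B\subset\cO[B]$ with $A$ acting trivially and $B$ by left multiplication, one has $N^\Gamma=0$ but $\Hom_\cO(N,\cO)_\psi\neq 0$. Thus some property particular to the $(S',\Sigma')$-unit module---for instance a cohomological-triviality statement over $\Gamma$, or an argument via the projective presentation of $\nabla_\Sigma^{\Sigma'}$ and its functoriality---is genuinely needed here, and you have not supplied it.
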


\begin{proof} The proof is nearly identical to Lemma~\ref{l:descendchi}, replacing $(H_\chi, R)$ with $(H_\psi, \cO_\psi)$.
We omit the details.
\end{proof}

Since it remains only to prove (\ref{e:fitpsieq}), Lemma~\ref{l:descend} implies that we may replace $H$ by $H_\psi$ and hence assume $H=H_\psi$ for the remainder of the proof. Note that in view of equation (\ref{e:thetachi}) and Lemma \ref{l:selchi} we can also replace $\Sigma$ and $\Sigma'$ by $\Sigma(H_{\psi})$ and $\Sigma'(H_{\psi})$ respectively. Note that $\psi$ is ramified at all primes in $\Sigma_p$. By subsection \ref{s:base} we assume that $\Sigma_p$ is not empty. 

The extension $H_\psi/F$ is cyclic.  Each $v \in \Sigma_p$ satisfies $\psi(G_v') = \chi(G_v') = 1$, hence the decomposition group of $v$ in $\Gal(H_\psi/F)$ is a $p$-group.
Therefore there exists a $v \in \Sigma_p$ whose inertia group $I_v$ is minimal in the sense that $I_v \subset I_w$ for all $w \in \Sigma_p$, since the subgroups of a cyclic $p$-group are linearly ordered by inclusion.  We write $I$ for this minimal $I_v$.  The fact that $\psi$ is ramifed at all $v \in \Sigma_p$ and $\Sigma_p$ is nonempty implies that $I \neq 1$. 

\begin{lemma} \label{l:selcl}
With notation as above, we have $\Sel_{\Sigma}^{\Sigma'}(H_\psi)_{R}/\N I \cong (\Cl^{\Sigma'}(H_\psi)^\vee)_{R}/\N I $. \end{lemma}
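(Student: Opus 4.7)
The plan is to promote the canonical surjection
\[
\Sel_\Sigma^{\Sigma'}(H_\psi)_R/\N I \twoheadrightarrow \Cl^{\Sigma'}(H_\psi)^\vee_R/\N I,
\]
obtained by tensoring the short exact sequence of Lemma~\ref{l:ysc} with $R/\N I$ (which is meaningful because $R$ is a direct summand of $\cO[G]^-$, so the sequence remains exact in the $R$-component, and because $(Y_{H_\psi, \Sigma})_R = (Y_{H_\psi, \Sigma_p})_R$), into an isomorphism.  Its kernel is the image of $(Y_{H_\psi, \Sigma_p})_R$ inside $\Sel_\Sigma^{\Sigma'}(H_\psi)_R/\N I$, so the task is to show, for each $v \in \Sigma_p$ and each prime $w$ of $H_\psi$ above $v$, that the functional $\ord_w$ (the image of $[w]$ in $\Sel_R$) lies in $\N I \cdot \Sel_\Sigma^{\Sigma'}(H_\psi)_R$.

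I would work in the presentation $\Sel_\Sigma^{\Sigma'}(H_\psi) \cong \Hom_\Z(\cO_{H_\psi, S', \Sigma'}^*, \Z)/\prod_{w' \in S'_H \setminus \Sigma_H} \Z$ from \eqref{e:sels}, taking $S' \supseteq \Sigma$ large enough to trivialize $\Cl_{S'}^{\Sigma'}(H_\psi)$.  Under the contragredient action, $(\N I \cdot \phi)(u) = \phi\bigl(N_{H_\psi/H_\psi^I}(u)\bigr)$ for any functional $\phi$ and any unit $u$, where $H_\psi^I$ denotes the fixed field of $I$.  The central observation is that $\ord_w$ already factors through $N := N_{H_\psi/H_\psi^I}$.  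By Hilbert 90 applied to the cyclic extension $H_\psi/H_\psi^I$ with Galois group $I$, every element of $\ker N$ has the form $\sigma(v')/v'$ for some $\sigma \in I$ and $v' \in H_\psi^*$.  The minimality of $I$ gives $I \subset I_v$, so $\sigma \in I$ fixes $w$, and
\[
\ord_w\!\bigl(\sigma(v')/v'\bigr) = \ord_{\sigma^{-1}(w)}(v') - \ord_w(v') = 0.
\]
Hence $\ord_w = \phi_0 \circ N$ for a uniquely determined functional $\phi_0 : N(\cO_{H_\psi, S', \Sigma'}^*) \to \Z$.

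The main remaining step---and in my view the principal obstacle---is to extend $\phi_0$ to a functional $\phi$ on $\cO_{H_\psi, S', \Sigma'}^*$ that represents an element of the $R$-component of $\Sel$.  Existence of such an extension is controlled by a class in $\Ext^1_\Z\!\bigl(\cO_{H_\psi, S', \Sigma'}^*/N(\cO_{H_\psi, S', \Sigma'}^*),\,\Z\bigr)$, which is Pontryagin-dual to the finite torsion of the cokernel and is a priori non-zero.  To kill this obstruction I plan to enlarge $S'$ by auxiliary primes $v^*$ of $F$ whose Frobenius generates $I$ (these exist in abundance by Chebotarev's density theorem).  Any prime $w^*$ of $H_\psi$ above such a $v^*$ has decomposition group $I$, hence is fixed by $I$, and by the same Hilbert 90 argument $\ord_{w^*}$ also factors through $N$, yielding an extra functional $\phi_0^{w^*}$ on $N(\cO^*)$.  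Since $\ord_{w^*}$ itself lies in $\prod_{w' \in S'_H \setminus \Sigma_H} \Z$ and is therefore trivial in $\Sel$, we may modify $\phi_0$ by any $\Z$-linear combination of the $\phi_0^{w^*}$ without altering its image; adjusting by the appropriate combination cancels the $\Ext^1$-obstruction, and after projecting to the $R$-component we obtain $\phi \in \Sel_\Sigma^{\Sigma'}(H_\psi)_R$ with $\N I \cdot \phi = \ord_w$, as required.
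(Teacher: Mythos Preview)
Your reduction is correct up to the point where you identify the $\Ext^1$ obstruction: you correctly observe that $\ord_w$ factors through the norm (since $I\subset G_w$ forces $\ord_w(N(u))=\#I\cdot\ord_w(u)$, so $\phi_0=\ord_{w_0}|_{N(\cO^*)}$ where $w_0$ lies below $w$ in $H_\psi^I$), and that the remaining problem is to extend $\phi_0$ to an integer-valued functional on all of $\cO^*_{H_\psi,S',\Sigma'}$.  The gap is precisely your proposed resolution of that obstruction.  You assert that by adjoining auxiliary primes $v^*$ with Frobenius generating $I$, the resulting functionals $\phi_0^{w^*}$ allow you to ``cancel the $\Ext^1$-obstruction,'' but you give no argument that the obstruction classes of the $\phi_0^{w^*}$ span (or even hit) the class of $\phi_0$ in $\Ext^1(\cO^*/N(\cO^*),\Z)$.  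There is no a priori reason they should: each $\phi_0^{w^*}$ is $\frac{1}{\#I}\ord_{w^*}$ restricted to the norm image, and its obstruction class encodes how $\ord_{w^*}\bmod \#I$ sits inside the torsion of $\cO^*/N(\cO^*)$, which is a rather rigid piece of data.  Enlarging $S'$ simultaneously changes $\cO^*$, $N(\cO^*)$, the quotient, and the target $\Ext$ group, so you cannot even compare obstruction classes across different choices of $S'$ in the way your sketch suggests.  This is the principal obstacle you named, and you have not actually overcome it.

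The paper's proof sidesteps this extension problem entirely by working on the minus side over $\Z[\tfrac12]$ and replacing the unit group with the free $\Z[\tfrac12]$-module $I_{\Sigma'}(H_\psi)^-$ of fractional ideals.  The arithmetic input is a \emph{linear independence modulo $e=\#I$} statement for the classes $\fP/\overline{\fP}$ in $M=\Cl^{\Sigma'}(H_\psi)^-/\N I$: if $\prod(\fP/\overline{\fP})^{a_\fP}$ is trivial in $M$ then $e\mid a_\fP$ for every $\fP$.  This is proved using that $H_\psi/H_\psi^I$ is \emph{totally ramified} at every prime in $(\Sigma_p)_{H_\psi}$ (a consequence of the minimality of $I$) together with the uniqueness of minus-side generators of principal ideals.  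From this independence one builds $\tilde\phi\colon M\to\Q/\Z[\tfrac12]$ with $\tilde\phi(\fP-\overline{\fP})=1/e$ and $\tilde\phi(\fP'-\overline{\fP}')=0$ for the other primes, lifts it to a $\Q$-valued functional $\phi$ on the free ideal group with the same prescribed values, and observes that $\phi$ is automatically $\Z[\tfrac12]$-valued on principal ideals.  The resulting element $\Phi\in\Sel_{\Sigma_p}^{\Sigma'}(H_\psi)^-$ satisfies $\N I\cdot\Phi=\ord_\fP-\ord_{\overline{\fP}}$ by a direct computation.  In short, the paper trades your extension problem on units for a divisibility statement in the class group, and that statement is exactly where the total-ramification hypothesis and the minus-side structure are used---neither of which appears in your argument.
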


\begin{proof} Denote by $\Sigma_{H_\psi}$ the set of places of $H_\psi$ above those in $\Sigma$.
 First note that from the short exact sequence
 \[ \begin{tikzcd}
  0  \ar[r] &  Y_{H_\psi, \Sigma - \Sigma_p}  \ar[r] &  \Sel_{\Sigma}^{\Sigma'}(H_\psi)  \ar[r] &  \Sel_{\Sigma_p}^{\Sigma'}(H_\psi)  \ar[r] &  0 
  \end{tikzcd}
   \]
 it follows that 
\[ \Sel_{\Sigma}^{\Sigma'}(H_\psi)_{R} \cong \Sel_{\Sigma_p}^{\Sigma'}(H_\psi)_{R}. \]
Indeed, any place $w \in (\Sigma - \Sigma_p)_{H_\psi}$ has image in $(Y_{H_\psi,\Sigma-\Sigma_p})_R$ that vanishes
 (if $\sigma \in G_v'$ with $\chi(\sigma) \neq 1$, then $1 - \sigma$ acts trivially on the image of $w$ in $Y_{H_\psi,\Sigma - \Sigma_p}$ and has image in $R$ that is a unit).

It therefore suffices to prove the  result with $\Sigma$ replaced by $\Sigma_p$.
For this, we tensor the exact sequence (\ref{e:selmerex}) with $R/\N I$ over $R$.  We need to show that the image of 
\[ \begin{tikzcd}
 (Y_{H_\psi,\Sigma_p})_R/\N I \ar[r] &  \Sel_{\Sigma_p}^{\Sigma'}(H_\psi)_{R}/\N I 
 \end{tikzcd} 
 \]
  vanishes.   
We will show that this already holds on the full minus side over $\Z[1/2]$ (without passing to the $R$-component), i.e. that
\begin{equation} \label{e:xs}
\begin{tikzcd}
 Y_{H_\psi,\Sigma_p}^-/\N I  \ar[r] &  \Sel_{\Sigma_p}^{\Sigma'}(H_\psi)^-/\N I  
 \end{tikzcd}
\end{equation}
vanishes.
 
Define $M = \Cl^{\Sigma'}(H_\psi)^-/\N I$.  The primes $\fP \in (\Sigma_p)_{H_\psi}$ come in pairs $(\fP, \overline{\fP})$ that are associated by complex conjugation, with $\fP \neq \overline{\fP}$ since $\chi(G_v') = 1$ while $\chi$ is odd. We choose a representative $\fP$ for each pair and denote this set of  representatives by $J$.  Let $e = \#I$.
We claim that the images of $\fP/\overline{\fP}$ are ``linearly independent modulo $e$" in $M$, in the following  sense:
 \[ \text{ if } \prod_{\fP \in J} (\fP/\overline{\fP})^{a_\fP} \text{ has trivial image in } M, \text{ then }  e \mid a_{\fP} \text{ for all } \fP. \]
 To see this, suppose that
\begin{equation} \label{e:fP}
  \prod_{\fP \in J} (\fP/\overline{\fP})^{a_\fP}  = (x) \fa^{\N I} \end{equation}
for some $x \in H_{\psi, \Sigma'}^{*, -}$ and some fractional ideal $\fa \in I_{\Sigma'}(H_\psi)^-$.
Then all items in (\ref{e:fP}) are invariant under all $\sigma \in I$ except possibly the fractional ideal $(x)$, which implies that $(x)$ is invariant as well; since the generator in
$H_{\psi, \Sigma'}^{*, -}$ of a principal ideal on the minus side (i.e. in $I_{\Sigma'}(H_\psi)^-$) is unique, this implies that $x \in (H_{\psi}^I)_{\Sigma'}^*$.
But the ideals $\fP$ are totally ramified over $H_{\psi}^I$, and hence the valuations of $x$ at these primes must be multiples of $e$; it follows that the $a_\fP$ are multiples of $e$ as well.

Now fix one of the $\fP \in J$.  We will show that the image of $\ord_\fP - \ord_{\overline{\fP}}$ in $\Sel_{\Sigma_p}^{\Sigma'}(H_\psi)^-$ is a multiple of $\N I$; this is precisely the desired result that (\ref{e:xs}) vanishes.
The claim just proven implies that there is a group homomorphism $\tilde{\phi}\colon M \longrightarrow \Q/\Z[\frac{1}{2}]$ such that
$\tilde{\phi}(\fP - \overline{\fP}) = 1/e$ and $\tilde{\phi}(\fP' -  \overline{\fP}') =0  $ for all $\fP'  \in J, \fP' \neq \fP$. 
Considering $M$ as the quotient:
\[ M = I_{\Sigma'}(H_\psi)^{-}/\langle H_{\psi,\Sigma'}^{*, -}, \N I \cdot I_{\Sigma'}(H_\psi)^{-} \rangle, \]
we can lift $\tilde{\phi}$ to a $\Z[\frac{1}{2}]$-module homomorphism $\phi\colon  I_{\Sigma'}(H_\psi)^{-} \longrightarrow \Q$, since $I_{\Sigma'}(H_\psi)^{-}$ is free as a $\Z[\frac{1}{2}]$-module.  Furthermore we  can choose  this lift to satisfy $\phi(\fP - \overline{\fP}) = 1/e$ and $\phi(\fP'- \overline{\fP}') = 0 $ for all $\fP'  \in J, \fP' \neq \fP$.
The restriction of $\phi$ to $H_{\psi,\Sigma'}^{*, -}$ is $\Z[\frac{1}{2}]$-valued (since this group has trivial image in $M$), and hence yields a class  
$\Phi \in \Sel_{\Sigma_p}^{\Sigma'}(H_\psi)^-$  defined explicitly by  
\[ \Phi = \sum_{w \not\in \Sigma'_{H_\psi}} \phi(w) \ord_w. \]

To conclude the proof, we will show that $\ord_\fP -  \ord_{\overline{\fP}}$ and $\N I  \cdot\Phi$ are equal in $\Sel_{\Sigma}^{\Sigma'}(H_\psi)^-.$ From the  construction of $\phi$, we see that   
\[ \Phi = \frac{1}{e}(\ord_\fP - \ord_{\overline{\fP}}) + \sum_{w\not\in (\Sigma_p \cup \Sigma')_{H_\psi}} \phi(w) \ord_w \]
and hence
\begin{align}
 \N I \cdot \Phi & = (\ord_\fP - \ord_{\overline{\fP}}) + \N I\sum_{w\not\in (\Sigma_p \cup \Sigma')_{H_\psi}} \phi(w) \ord_w \nonumber \\
 & =  (\ord_\fP - \ord_{\overline{\fP}}) + \sum_{w\not\in (\Sigma_p \cup \Sigma')_{H_\psi}} \phi(\N I \cdot w) \ord_w. \label{e:nip}
 \end{align}
 Since $\phi \circ \N I$ is $\Z[\frac{1}{2}]$-valued by the definition of $M$, the sum on the right in (\ref{e:nip}) has trivial image in $\Sel_{\Sigma_p}^{\Sigma'}(H_\psi)^-$, by the definition of this group.  The result follows.
\end{proof}

\begin{lemma} \label{l:clsize}
The size of the group  $(\Cl^{\Sigma'}(H_\psi)_{R})^\vee/\N I $ is $\#\cO/L_I$, where
\[ L_I = \prod_{\substack{\alpha(I) \neq 1 \\ \alpha|_{G'} = \chi}} L_{\Sigma, \Sigma'}(H_\psi/F, \alpha, 0). \] Here the product ranges over all characters $\alpha$ of $\Gal(H_\psi/F)$ that belong to $\chi$ such that $\alpha(I) \neq 1$.
\end{lemma}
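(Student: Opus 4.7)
The plan is to mirror the proof of Lemma~\ref{l:gptriv}, now applied to the quotient $M := (\Cl^{\Sigma'}(H_\psi))_R^\vee/\N I$. Lemma~\ref{l:selcl} identifies $M$ with $\Sel_\Sigma^{\Sigma'}(H_\psi)_R/\N I$, which is finite as a quotient of the finite class group dual; by Corollary~\ref{c:rchi}, $R/\N I$ is the character group ring $R_\Psi$.

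Under the standing hypothesis of Theorem~\ref{t:include}, write $\Fitt_R(\Sel_R) = (x \Theta_{\Sigma,\Sigma'}^\#)$ for some $x \in R$, and reduce modulo $\N I$ to obtain $\Fitt_{R_\Psi}(M) = (\overline{x \Theta_{\Sigma,\Sigma'}^\#})$. The finiteness of $M$ combined with the quadratic presentation inherited from $\Sel_R$ forces this generator to be a non-zerodivisor in $R_\Psi$, so Lemmas~\ref{l:size} and~\ref{l:size2} give
\[
 \#M = \#\cO\Big/\prod_{\alpha \in \Psi} \alpha(x)\,L_{\Sigma,\Sigma'}(\alpha,0).
\]
For each $\alpha \in \Psi$ and each $v \in \Sigma - S_\infty$, the Euler factor $(1 - \alpha(v))$ lies in $\cO^*$: either $\alpha$ is ramified at $v$ (so $\alpha(v) = 0$), or $\alpha(v) \not\equiv 1 \pmod{\pi_E}$, using the minimality of $I = I_{v_0}$ to handle $v \in \Sigma_p$ and the faithfulness of $\chi$ on $G'$ to handle $v \in (\Sigma - S_\infty)\setminus\Sigma_p$. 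Hence $\prod_{\alpha \in \Psi} L_{\Sigma,\Sigma'}(\alpha,0) = u\,L_I$ for some $u \in \cO^*$, and the lemma reduces to proving $\prod_{\alpha \in \Psi} \alpha(x) \in \cO^*$.

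For this, I would compute $\#M$ independently via the analytic class number formula. Since $p$ is odd and $I \subset G_p$ does not contain complex conjugation, the intermediate field $L = H_\psi^I$ is CM. Apply Lemma~\ref{l:gptriv} to both $H_\psi/F$ and $L/F$: characters of $\Gal(L/F)$ restricting to $\chi$ correspond bijectively to $\alpha \in \Psi'$, and the smoothing set $\Sigma'$ coincides for the two fields (inertia at primes $v \nmid p$ is prime-to-$p$, while $I \subset G_p$), so the $L$-functions match. The ratio then satisfies
\[
 \#\bigl((\Cl^{\Sigma'}(H_\psi))^\vee_R\bigr) \Big/ \#\bigl((\Cl^{\Sigma'}(L))^\vee_{R_L}\bigr) = \#\cO\Big/L_I
\]
up to units, using that $L_{\Sigma,\Sigma'} \sim L_{S_\infty,\Sigma'}$ on $\Psi$ as above.

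The main obstacle will be the class-field-theoretic identification $\#\N I \cdot (\Cl^{\Sigma'}(H_\psi))^\vee_R = \#((\Cl^{\Sigma'}(L))^\vee_{R_L})$, which together with the elementary $\#M = \#J/\#\N I \cdot J$ (from the exact sequence $0 \to J[\N I] \to J \xrightarrow{\N I} J \to M \to 0$ and $\#J[\N I] = \#M$) identifies $\#M$ with the above ratio. This identification should follow from the Tate cohomology exact sequence for the cyclic group $I$ acting on $\Cl^{\Sigma'}(H_\psi)$, combined with the norm and extension-of-scalars maps between $\Cl^{\Sigma'}(H_\psi)$ and $\Cl^{\Sigma'}(L)$ and Pontryagin duality. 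Once established, comparing the two expressions for $\#M$ forces $\prod_{\alpha \in \Psi} \alpha(x) \in \cO^*$, completing the proof.
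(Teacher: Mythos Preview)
The second half of your proposal --- computing $\#M$ independently via the analytic class number formula applied to both $H_\psi$ and $L = H_\psi^I$, then taking the ratio --- is precisely the paper's approach, and it makes the entire first half (the Fitting-ideal reduction to $\prod_\alpha \alpha(x) \in \cO^*$) redundant: once you establish $\#M = \#\cO/L_I$ via the ratio of class numbers, that \emph{is} the lemma. The Fitting-ideal detour you describe actually proves Lemma~\ref{l:ri}, not Lemma~\ref{l:clsize}, and is logically downstream of it.

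The genuine gap is in your treatment of the ``main obstacle.'' Tate cohomology for cyclic $I$ by itself does not yield the identification $\#(\N I \cdot M) = \#N$ with $N = \Cl^{\Sigma'}(H_\psi^I)_R$; you need two concrete inputs that you do not supply. First, the extension-of-ideals map $N \to M^I$ is injective: the paper extracts this from the snake lemma applied to the diagram comparing $(H_{\psi,\Sigma'}^{I,*})_R \hookrightarrow I_{\Sigma'}(H_\psi^I)_R \twoheadrightarrow N$ with the $I$-invariants of the analogous sequence for $H_\psi$. Second, and more importantly, the norm map $M \to N$ is \emph{surjective}. This is where the minimality of $I$ is used a second time: since $I \subset I_v$ for every $v \in \Sigma_p$, the extension $H_\psi/H_\psi^I$ is \emph{totally ramified} at each such $v$, so by class field theory the restriction map on the corresponding ray class field Galois groups --- equivalently, the norm on class groups --- is onto. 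Without this total-ramification observation, which your sketch omits, the identification cannot be completed. Once both are in hand, $N \cong \N I \cdot M$ and the lemma follows from Lemma~\ref{l:gptriv} applied to $H_\psi$ and $H_\psi^I$.
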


\begin{proof} Note  that in the definition of $L_I$, each character $\alpha$ is ramified at every $v \in\Sigma_p$, while the Euler factor $(1 - \alpha(v))$ is a $p$-adic unit
for each finite $v \in \Sigma - \Sigma_p$, hence
\[ L_{\Sigma, \Sigma'}(H_\psi/F, \alpha, 0) = L_{S_\infty, \Sigma'}(H_\psi/F, \alpha, 0). \]

For notational simplicity, write $M= \Cl^{\Sigma'}(H_\psi)_R$.
 By Lemma~\ref{l:gptriv}, we have $\#M^\vee = \cO/L$, where
\[  L = \prod_{\alpha|_{G'} = \chi} L_{S_\infty, \Sigma'}(H_\psi/F, \alpha, 0). \]
We therefore need to prove that 
\begin{equation} \label{e:nimv}
  \# (\N I  \cdot M^\vee) = \# \cO/L_I', \end{equation}
where 
\begin{align}
L_I' &= \prod_{\substack{\alpha(I) = 1 \\ \alpha|_{G'} = \chi}} L_{S_\infty, \Sigma'}(H_\psi/F, \alpha, 0) \\
& = \prod_{\substack{\alpha \in  \Gal(H_\psi^I/F)\hat{\ } \\  \alpha|_{G'} = \chi}} L_{S_\infty, \Sigma'}(H_\psi^I/F, \alpha, 0). \label{e:lip}
\end{align}
First note that we can replace $M^\vee$ by $M$ in (\ref{e:nimv}) since $M$ is finite; indeed, from the exact sequence
\[ \begin{tikzcd}
 0 \ar[r] & M^\vee[\N I] \ar[r] & M^\vee \ar[r,"\N I"] & M^\vee \ar[r] & M^\vee/\N I \ar[r] & 0 
\end{tikzcd}
 \]
we see that
\[ \#M^\vee/\N I = \# M^\vee[\N I]  = \# (M/\N I)^\vee = \#(M/\N I) \]
and hence $\#(\N I \cdot M^\vee) = \#(\N I \cdot M).$ Our goal is therefore to prove that
\begin{equation} \label{e:nim}
  \# (\N I  \cdot M)= \cO/L_I'. \end{equation}

Next note that if $N =  \Cl^{\Sigma'}(H_\psi^I)_R$, then Lemma~\ref{l:gptriv} and (\ref{e:lip}) imply that
\begin{equation} \label{e:nin}
\#N = \# \cO/L_I'.
\end{equation}
In view of (\ref{e:nim}) and (\ref{e:nin}), it suffices to prove that the canonical map $N \longrightarrow M^I$ given by extension of ideals is an injection that identifies $N$ with $\N I \cdot M$.

For the injectivity one applies the snake lemma to the commutative diagram
\[  
\begin{tikzcd}
 0 \ar[r]  & (H_{\psi, \Sigma'}^{I,*})_R \ar[r] \ar[d] & I_{\Sigma'}(H_\psi^I)_R \ar[r] \ar[d,"\fa \mapsto \fa \cO_{H_\psi}"] & N \ar[r] \ar[d]& 0 \\
0 \ar[r] & (H_{\psi, \Sigma'}^{*})^I_R \ar[r] & I_{\Sigma'}(H_\psi)^I_R \ar[r] & M^I \ar[r] & 0
\end{tikzcd}
\]
(Note that the 0 on the bottom right comes from Hilbert's Theorem 90, though it is not necessary here.)   The left vertical arrow is an isomorphism by Galois theory, and the middle vertical arrow is clearly an injection.  It follows that $N \longrightarrow M^I$ is injective.

To conclude we must show that the norm map $M \longrightarrow N$ is surjective. By class field theory the group $M$ and $N$ are Galois groups of extensions of $H_{\psi}$ and $H_{\psi}^I$, respectively, unramified outside $\Sigma'$. The norm map is identified with the natural restriction on Galois groups. This  restriction map is surjective as the extension $H_{\psi}/H_{\psi}^I$ is totally ramified at primes in $\Sigma_p$ by the choice of $I$.    
\end{proof}

We can now apply an analytic argument similar to \S\ref{s:base} to conclude this case.

\begin{lemma} \label{l:ri}  Let $R_I = R/\N I$. We have $\Fitt_{R_I}(\Sel_{\Sigma}^{\Sigma'}(H_\psi)_{R_I}) = (\Theta_{\Sigma, \Sigma'}^\#).$
\end{lemma}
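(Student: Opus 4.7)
The plan is to mimic the base case argument of \S\ref{s:base}, but now over the ring $R_I = R/\N I$ rather than $R$, using the class number computations already established as Lemmas~\ref{l:selcl} and~\ref{l:clsize}. First, I note that by Corollary~\ref{c:rchi}, $R_I$ is a character group ring associated to the set
\[ \Psi_I = \{ \alpha \in \hat{G} \colon \alpha|_{G'} = \chi, \ \alpha(I) \neq 1 \}. \]
Since $\Sel_{\Sigma}^{\Sigma'}(H_\psi)_R$ is quadratically presented over $R$, base change shows that $\Sel_{\Sigma}^{\Sigma'}(H_\psi)_{R_I}$ is quadratically presented over $R_I$; moreover the running hypothesis (\ref{e:include}) passes to $R_I$, yielding
\[ \Fitt_{R_I}(\Sel_{\Sigma}^{\Sigma'}(H_\psi)_{R_I}) \subset (\Theta_{\Sigma,\Sigma'}^\#) R_I. \]
Hence the Fitting ideal is generated by $x \cdot \Theta_{\Sigma,\Sigma'}^\#$ for some $x \in R_I$, and the goal is to show $x \in R_I^*$.

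Next I verify that $\Theta_{\Sigma,\Sigma'}^\#$ is a non-zerodivisor in $R_I$, i.e.\ that $\alpha(\Theta_{\Sigma,\Sigma'}^\#) = L_{\Sigma,\Sigma'}(\alpha,0) \neq 0$ for every $\alpha \in \Psi_I$. For $\alpha \in \Psi_I$, minimality of $I$ among $\{I_v : v \in \Sigma_p\}$ together with $\alpha(I) \neq 1$ forces $\alpha(I_v) \neq 1$ for all $v \in \Sigma_p$, so $\alpha$ is ramified at such $v$. For $v \in \Sigma - \Sigma_p$ we have $\chi(G_v') \neq 1$, and since $\alpha|_{G'} = \chi$, this gives $\alpha(G_v) \neq 1$. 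Infinite places are handled by oddness. Thus no $v \in \Sigma$ contributes a trivial zero, so $L_{\Sigma,\Sigma'}(\alpha,0) \neq 0$, establishing that $\Theta_{\Sigma,\Sigma'}^\#$ is a non-zerodivisor in $R_I$.

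Now I compare two sizes. On the one hand, Lemma~\ref{l:size2} applied to the character group ring $R_I$ yields
\[ \#R_I/(\Theta_{\Sigma,\Sigma'}^\#) = \#\cO\Big/\prod_{\alpha \in \Psi_I} L_{\Sigma,\Sigma'}(\alpha,0); \]
since Euler factors at $v \in \Sigma - \Sigma_p$ are $p$-adic units and $\alpha$ is ramified at every $v \in \Sigma_p$, this product equals $L_I = \prod_{\alpha \in \Psi_I} L_{S_\infty,\Sigma'}(\alpha,0)$. On the other hand, Lemma~\ref{l:selcl} gives $\Sel_{\Sigma}^{\Sigma'}(H_\psi)_{R_I} \cong (\Cl^{\Sigma'}(H_\psi)^\vee)_R/\N I$, whose cardinality is $\#\cO/L_I$ by Lemma~\ref{l:clsize}. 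Combined, $\#\Sel_{\Sigma}^{\Sigma'}(H_\psi)_{R_I} = \#R_I/(\Theta_{\Sigma,\Sigma'}^\#)$.

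Finally, applying Lemma~\ref{l:size} to the quadratically presented $R_I$-module $\Sel_{\Sigma}^{\Sigma'}(H_\psi)_{R_I}$ gives $\#R_I/(x\Theta_{\Sigma,\Sigma'}^\#) = \#\Sel_{\Sigma}^{\Sigma'}(H_\psi)_{R_I} = \#R_I/(\Theta_{\Sigma,\Sigma'}^\#)$. Since $\Theta_{\Sigma,\Sigma'}^\#$ is a non-zerodivisor in $R_I$, this forces $\alpha(x) \in \cO^*$ for every $\alpha \in \Psi_I$; as $R_I$ is semilocal with residue maps the $\alpha$'s, we conclude $x \in R_I^*$, proving $\Fitt_{R_I}(\Sel_{\Sigma}^{\Sigma'}(H_\psi)_{R_I}) = (\Theta_{\Sigma,\Sigma'}^\#)$. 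The only delicate step is step two, where one must make sure that the minimality of $I$ really does rule out trivial zeroes for all characters in $\Psi_I$ at every prime of $\Sigma$, including those in $\Sigma - \Sigma_p$ (which is ensured precisely because $\chi$ is faithful on $G'$).
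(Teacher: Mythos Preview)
Your proof is correct and follows essentially the same approach as the paper's: project the running inclusion to $R_I$, identify $R_I$ as a character group ring via Corollary~\ref{c:rchi}, and then compare $\#\Sel_{\Sigma}^{\Sigma'}(H_\psi)_{R_I}$ (computed via Lemmas~\ref{l:selcl} and~\ref{l:clsize}) with $\#R_I/(x\Theta^\#)$ (via Lemmas~\ref{l:size} and~\ref{l:size2}) to force $x\in R_I^*$. Your explicit verification that $\Theta_{\Sigma,\Sigma'}^\#$ is a non-zerodivisor in $R_I$ is a useful addition that the paper leaves implicit; one small point you could also make explicit is that $x\Theta^\#$ is a non-zerodivisor (needed to invoke Lemma~\ref{l:size}), which follows because the Selmer module you have already shown to be finite has a quadratic presentation over $R_I$.
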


\begin{proof}  Projecting (\ref{e:include}) from $R$ to $R_I$ we obtain an inclusion 
\[ \Fitt_{R_I}(\Sel_{\Sigma}^{\Sigma'}(H_\psi)_{R_I}) \subset (\Theta_{\Sigma, \Sigma'}^\#). \]
Note that by Corollary~\ref{c:rchi}, the ring $R_I$ is a character group ring and hence we may apply Lemmas~\ref{l:size} and \ref{l:size2}.
 If we write 
$\Fitt_{R_I}(\Sel_{\Sigma}^{\Sigma'}(H_\psi)_{R_I})= (x \cdot \Theta_{\Sigma, \Sigma'}^\#)$ for some $x \in R_I$, then
these lemmas imply
that \[ \# \Sel_{\Sigma}^{\Sigma'}(H_\psi)_{R_I} = \# \cO/\prod_{\substack{\alpha(I) \neq 1 \\ \alpha|_{G'} = \chi}} \alpha(x) L_{\Sigma, \Sigma'}(H_\psi/F, \alpha, 0). \]
Combining this equality with Lemmas~\ref{l:selcl} and~\ref{l:clsize} we find that \[ \prod_{\substack{\alpha(I) \neq 1 \\ \alpha|_{G'} = \chi}} \alpha(x)  \in \cO^*,\] and therefore  each $\alpha(x) \in \cO^*$.  This implies  $x \in R_I^*$ as desired.
\end{proof}

Projecting the equality of Lemma~\ref{l:ri} to $\cO_\psi$, we obtain (\ref{e:fitpsieq}).  We have now completed the proof 
of Theorem~\ref{t:include}.

\begin{remark} The remainder of the  paper is  dedicated to proving  the desired inclusion
\[ \Fitt_R(\Sel_{\Sigma}^{\Sigma'}(H)_R) \subset (\Theta_{\Sigma, \Sigma'}^\#). \]
We assume from here on that the image of $\Theta_{\Sigma, \Sigma'}^\#$ in $R$ lies in the maximal ideal $\fm_R$.  Otherwise, it is a unit in $R$ and the desired inclusion 
holds trivially.
\end{remark}

\section{The module $\nabla$ and its key properties} \label{s:properties}

The module that appears in our constructions with Hilbert modular forms is not the Selmer module $\Sel_{\Sigma}^{\Sigma'}(H)$ but a certain canonical transpose  in the sense of Jannsen \cite{jannsen}.  In this section we state the salient properties
of this module, denoted $\nabla_{\Sigma}^{\Sigma'} = \nabla_{\Sigma}^{\Sigma'}(H)$.  The actual construction of $\nabla_{\Sigma}^{\Sigma'}$
and details of the proofs are relegated to the appendix.

\smallskip
In the appendix, we work with general disjoint finite sets $\Sigma$, $\Sigma'$ of places of $F$  such that $\Sigma \supset S_\infty$ and $\Sigma'$ satisfies condition (\ref{e:drcond}) of the introduction.
In this section we specialize to the sets $\Sigma$ and $\Sigma'$ defined in (\ref{e:bigsigmadef}) and (\ref{e:sigmapdef}).  The Ritter--Weiss module  $\nabla_{\Sigma}^{\Sigma'}$ associated to these sets $\Sigma, \Sigma'$ satisfies the following properties.
\begin{enumerate}[label={(P\arabic*)}]
\item \label{i:exact}
There is a short exact sequence of $\Z[G]$-modules
\begin{equation} \label{e:nablaext}
 \begin{tikzcd}
 0 \ar[r] &  \Cl_\Sigma^{\Sigma'}(H) \ar[r] &  \nabla_{\Sigma}^{\Sigma'} \ar[r] &  X_{H,\Sigma} \ar[r] &  0.
 \end{tikzcd}
 \end{equation}
\item \label{i:ext}
 After tensoring with $\Z[\frac{1}{2}]$ and passing to minus parts, the extension class associated to 
\begin{equation} \label{e:nablaextminus}
\begin{tikzcd}
 0 \ar[r] &  \Cl_\Sigma^{\Sigma'}(H)^{-} \ar[r] &  \nabla_{\Sigma}^{\Sigma', -} \ar[r] &  X_{H,\Sigma}^{-} \ar[r] &  0 
 \end{tikzcd}
 \end{equation}
  in
\begin{equation} \label{e:ext0}
\Ext^1_{\Z[G]^-}(X_{H,\Sigma}^-,  \Cl_\Sigma^{\Sigma'}(H)^-) \cong \bigoplus_{v \in \Sigma} H^1(G_v, \Cl_\Sigma^{\Sigma'}(H)^-) \end{equation}
is equal to a certain tuple of canonical Galois cohomology classes $(\lambda_v)_{v \in \Sigma}$ defined below using class field theory (the isomorphism (\ref{e:ext0}) is explained in (\ref{e:ext}) below).
\end{enumerate}

To obtain further desired properties, we must base change to $\Z_p$ and consider \[ (\nabla_{\Sigma}^{\Sigma'})_p = \nabla_{\Sigma}^{\Sigma'} \otimes \Z_p. \]

\begin{enumerate}[label={(P\arabic*)},resume]
\item \label{i:transpose} The $\Z_p[G]$-module   $(\nabla_\Sigma^{\Sigma'})_p$ has  a canonical transpose $(\nabla_\Sigma^{\Sigma'})^{\tr}_p$ that is isomorphic to the Selmer module  $\Sel_{\Sigma}^{\Sigma'}(H)_p$ defined in \S\ref{s:bks}.
\item \label{i:qp}
The $\Z_p[G]$-module $(\nabla_\Sigma^{\Sigma'})_p$ is quadratically presented. 
 \end{enumerate}

While most of the content of our construction is contained in the work of Ritter--Weiss \cite{rw} and Burns--Kurihara--Sano \cite{bks}, the construction of our precise module $\nabla_\Sigma^{\Sigma'}$ satisfying properties \ref{i:exact}--\ref{i:qp} does not seem to be present in the literature.  For instance, Ritter and Weiss do not consider the ``smoothing" set $\Sigma'$.  As a result they obtain a
 presentation $P_1 \longrightarrow P_0 \longrightarrow \nabla_{\Sigma} \longrightarrow 0$
 where $P_1$ is projective, but $P_0$ is only cohomologically trivial.  Furthermore, they do not consider properties \ref{i:ext} and \ref{i:transpose} in the form that we need.  Meanwhile Burns--Kurihara--Sano  define a Selmer module $\Sel_\Sigma^{\Sigma'}(H)^{\tr}$ satisfying properties \ref{i:exact} and \ref{i:transpose}, however \ref{i:qp} is proved only in the case  $\Sigma \supset S_{\ram}$, and property \ref{i:ext} is not considered.  
 
For these reasons, we describe the construction of $\nabla_{\Sigma}^{\Sigma'}$ and the proof of properties \ref{i:exact}--\ref{i:qp} in detail.  This construction, which draws heavily from \cite{rw}, is described in the appendix and may be of independent interest beyond our applications in this paper. 
Our construction is closely related to that of Nickel in \cite{nickel}*{\S2.3}.
 In the remainder of this section we elaborate on the statement of properties \ref{i:ext} and \ref{i:transpose}.

\subsection{Transpose}

In this section we describe property \ref{i:transpose}.  
For any $\Z[G]$-module $M$, we endow the dual $M^* := \Hom_{\Z[G]}(M, \Z[G])$  with the contragradient action
\begin{equation} \label{e:rpd}
(r \cdot \varphi)(x) =   \varphi(r^{\#} \cdot x), \qquad \text{ for } r \in \Z[G], \varphi \in M^*, x \in M. \end{equation}
Suppose that $M$ has a presentation by projective $\Z[G]$-modules of finite rank
\begin{equation} \label{e:pr} \begin{tikzcd}
P_0 \ar[r] &  P_1 \ar[r] &  M \ar[r] &  0. 
\end{tikzcd}
 \end{equation}
Then each of the modules $P_i^*$ is also projective, and following Jannsen \cite{jannsen} we call the cokernel of the induced map $P_1^* \longrightarrow P_0^*$ a {\em transpose} of the module $M$.  Transpose is only well-defined up to homotopy: if $M'$ and $M''$ are transposes of $M$ arising from different presentations, then there exist projective modules $P$ and $Q$ such that $M' \oplus P \cong M'' \oplus Q$.

Let $R = R_\Psi$ be a character group ring associated to a set $\Psi \subset \hat{G}$.  We define $R^\# = R_{\Psi^\#},$ where $\Psi^\# = \{\psi^{-1}\colon \psi \in \Psi\}$.
The involution $\#$ on $\cO[G]$ induces mutually inverse $\cO$-algebra maps 
$\#\colon  R \longrightarrow R^\#, R^\# \longrightarrow R$.
If $M$ is an $R$-module, it is then natural to view $M^*$ as an $R^\#$-module via the rule (\ref{e:rpd}).  The transpose of $M$ with respect to a projective presentation (\ref{e:pr}) also naturally has the structure of an $R^\#$-module.

\begin{lemma} \label{l:trfitt}  Let $R$ be a character group ring and suppose that $M$ is a quadratically presented $R$-module.  Let $M^{\tr}$ be the transpose of $M$ associated with any quadratic presentation of $M$.  Then $M^{\tr}$ is quadratically presented and $\Fitt_{R^\#}(M^{\tr}) = \Fitt_R(M)^\#$.
\end{lemma}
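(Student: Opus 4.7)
The plan is to work directly with a quadratic $R$-presentation and its dualization, and then identify how the resulting $R^{\#}$-structure on the transpose interacts with determinants. I would start by fixing a quadratic presentation
\[ R^m \xrightarrow{A} R^m \longrightarrow M \longrightarrow 0 \]
given by an $m\times m$ matrix $A$ with entries in $R$, so that $\Fitt_R(M) = (\det A)$. Applying the dualizing functor $\Hom_R(-,R)$ produces an exact sequence whose last two terms are $R^m \xrightarrow{A^T} R^m \longrightarrow M^{\tr} \longrightarrow 0$, which already exhibits $M^{\tr}$ as quadratically presented as an $R$-module; the only remaining issue is to re-express this presentation over $R^{\#}$.

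The key step is to verify that the contragredient action (\ref{e:rpd}) agrees with the transport of the natural $R$-module structure along the $\#$-isomorphism $R \to R^{\#}$. Under the identification $\Hom_R(R^m, R) \cong R^m$ sending $\varphi$ to $(\varphi(e_i))_i$, a direct unwinding yields $(r\cdot \varphi)(e_i) = \varphi(r^{\#} e_i) = r^{\#}\varphi(e_i)$; that is, $r \in R$ acts by componentwise multiplication by $r^{\#}$, which is precisely the transport of structure. Consequently, under the bijection $R^m \to R^{\#,m}$ sending $v \mapsto (v_i^{\#})_i$, any matrix $B$ over $R$ corresponds to the matrix $B^{\#}$ (with $\#$ applied entrywise) over $R^{\#}$. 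This converts the dualized presentation into the quadratic $R^{\#}$-presentation
\[ R^{\#,m} \xrightarrow{A^{T,\#}} R^{\#,m} \longrightarrow M^{\tr} \longrightarrow 0, \]
proving the first assertion of the lemma.

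From here the Fitting ideal formula reduces to a one-line determinant computation: since $\#$ is a ring homomorphism and so commutes with taking determinants,
\[ \Fitt_{R^{\#}}(M^{\tr}) = (\det A^{T,\#}) = \bigl((\det A^T)^{\#}\bigr) = \bigl((\det A)^{\#}\bigr) = \Fitt_R(M)^{\#}. \]
I expect the main obstacle to be notational rather than mathematical: keeping careful track of the distinction between $R$ and $R^{\#}$, and in particular checking cleanly that the $R^{\#}$-structure coming from the definition (\ref{e:rpd}) coincides with the transport-of-structure description used above. Once that identification is in place, the quadratic presentation and the Fitting ideal equality both follow by direct manipulation of the dual presentation.
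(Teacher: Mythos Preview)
Your proof is correct and follows the same approach as the paper: both arguments take a square matrix $A$ presenting $M$, observe that the transpose $M^{\tr}$ is presented over $R^{\#}$ by the matrix $A^{T,\#}$ (i.e.\ $(a_{ji}^{\#})$), and conclude by noting that $\det(A^{T,\#}) = (\det A)^{\#}$. The paper's proof is a terse two-sentence version of exactly what you wrote out in detail.
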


\begin{proof}  If $(a_{ij})$ is the square matrix representing a quadratic presentation of $M$ over $R$, then the matrix representing the corresponding quadratic presentation of $M^{\tr}$ over $R^\#$ is $(a_{ji}^\#)$.  The result follows.
\end{proof}

In view of \ref{i:transpose} and \ref{i:qp}, if $R$ is any character group ring quotient of $\cO[G]$, we have:

\begin{corollary} \label{c:selprin}
The $R$-module $\Sel_{\Sigma}^{\Sigma'}(H)_R$ has a quadratic presentation.   Its Fitting ideal over $R$ is principal and satisfies
\[ \Fitt_R(\Sel_{\Sigma}^{\Sigma'}(H)_R) = \Fitt_{R^\#}(\nabla_{\Sigma}^{\Sigma'}(H)_{R^\#})^\#.  \]
\end{corollary}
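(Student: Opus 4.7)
\noindent The plan is to combine properties \ref{i:transpose} and \ref{i:qp} with the general fact (Lemma~\ref{l:trfitt}) that taking a transpose of a quadratically presented module over a character group ring swaps the Fitting ideal with its $\#$-image. The base change from $\Z_p[G]$ to the character group ring $R$ (or $R^\#$) commutes with both taking matrix-theoretic presentations and forming determinants, so the entire computation can be reduced to a single $n\times n$ matrix chosen once at the $\Z_p[G]$ level.

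\smallskip

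First I would invoke property \ref{i:qp} to fix a quadratic presentation
\[
\begin{tikzcd}
\Z_p[G]^n \ar[r,"A"] & \Z_p[G]^n \ar[r] & (\nabla_\Sigma^{\Sigma'})_p \ar[r] & 0
\end{tikzcd}
\]
over $\Z_p[G]$, encoded by an $n\times n$ matrix $A$ with entries in $\Z_p[G]$. Applying $\Hom_{\Z_p[G]}(-,\Z_p[G])$ and using the identification provided by property \ref{i:transpose}, one obtains the dual presentation
\[
\begin{tikzcd}
\Z_p[G]^n \ar[r,"A^\vee"] & \Z_p[G]^n \ar[r] & \Sel_{\Sigma}^{\Sigma'}(H)_p \ar[r] & 0,
\end{tikzcd}
\]
where $A^\vee$ is the matrix whose $(i,j)$-entry is $a_{ji}^\#$, the contragredient action accounting for the $\#$ on the right-hand side of (\ref{e:rpd}). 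This realizes $\Sel_{\Sigma}^{\Sigma'}(H)_p$ as a quadratically presented $\Z_p[G]$-module.

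\smallskip

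Next I would base change: tensoring the first presentation with $R^\#$ over $\Z_p[G]$ yields a quadratic presentation of $\nabla_{\Sigma}^{\Sigma'}(H)_{R^\#}$ by the image $\bar A$ of $A$ in the matrix ring $M_n(R^\#)$, while tensoring the dualized presentation with $R$ yields a quadratic presentation of $\Sel_{\Sigma}^{\Sigma'}(H)_R$ by the image $\overline{A^\vee}$ of $A^\vee$ in $M_n(R)$. Hence both modules are quadratically presented, their Fitting ideals are principal, and we may compute
\[
\Fitt_R(\Sel_{\Sigma}^{\Sigma'}(H)_R) = (\det\overline{A^\vee}) = (\overline{\det(A)^\#}), \qquad
\Fitt_{R^\#}(\nabla_{\Sigma}^{\Sigma'}(H)_{R^\#}) = (\overline{\det(A)}),
\]
since for an $n\times n$ matrix $B = (b_{ij})$ over a commutative ring with involution, $\det(b_{ji}^\#) = \det(B)^\#$. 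The involution $\#\colon R^\#\to R$ sends $\overline{\det(A)}$ to $\overline{\det(A)^\#}$, giving the asserted identity $\Fitt_R(\Sel_{\Sigma}^{\Sigma'}(H)_R) = \Fitt_{R^\#}(\nabla_{\Sigma}^{\Sigma'}(H)_{R^\#})^\#$.

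\smallskip

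The only real point of caution is keeping the bookkeeping of the contragredient $G$-action consistent with the base change to $R$ versus $R^\#$; that is, one must check that the dualized presentation of $\Sel_{\Sigma}^{\Sigma'}(H)_p$ provided by property \ref{i:transpose} is $\Z_p[G]$-linear with respect to the twisted action described in (\ref{e:rpd}), so that tensoring with the canonical quotient $\Z_p[G]\twoheadrightarrow R$ (rather than $\Z_p[G]\twoheadrightarrow R^\#$) produces $\Sel_{\Sigma}^{\Sigma'}(H)_R$ on the right-hand side. Once this bookkeeping is verified, the rest is a formal consequence of the determinantal description of Fitting ideals of quadratically presented modules, exactly as in the proof of Lemma~\ref{l:trfitt}.
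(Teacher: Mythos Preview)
Your proof is correct and follows the same approach as the paper: the corollary is presented there without an explicit proof, simply as an immediate consequence of properties \ref{i:transpose} and \ref{i:qp} together with Lemma~\ref{l:trfitt}, and what you have written is precisely the expected unwinding of that deduction. Your handling of the $\#$-bookkeeping under base change from $\Z_p[G]$ to $R$ versus $R^\#$ is accurate and matches the matrix computation sketched in the proof of Lemma~\ref{l:trfitt}.
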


Note that Corollary~\ref{c:selprin} was proved in \cite{bks}*{Lemma 2.8} in the case that $\Sigma \supset S_{\ram}$.

\subsection{Extension class via Galois cohomology} \label{s:ext}

In this section we describe property \ref{i:ext}. This is a description of the  module $\nabla_{\Sigma}^{\Sigma'}(H)$, when projected to the minus side, in terms of a certain canonical Galois cohomology class arising from class field theory.  For the remainder of this section we therefore work over $\Z[\frac{1}{2}]$.  Let $M = \Cl_{\Sigma}^{\Sigma'}(H)^{-}$, and let $L/H$ denote the abelian extension corresponding via class field theory to the group $M$.  This is the maximal abelian extension of $H$ of odd degree that is  unramified outside places in $\Sigma'_H$ and at most tamely ramified at $\Sigma'_H$, such that the primes in $\Sigma_H$ split completely, and such that the conjugation action of complex conjugation on $\Gal(L/H)$ is inversion.
The extension $L/F$ is Galois, as can be seen from this description since the action of any $\sigma \in G_F$ sends $L$ to another field with these properties.  The lemma below shows that the short exact sequence of groups
\[ \begin{tikzcd}
 1 \ar[r] &  M \ar[r] &  \Gal(L/F) \ar[r] &  G \ar[r] &  1 
 \end{tikzcd} \]
splits (i.e.\ is a semi-direct product).  For this, it is crucial that we are working on the minus side.

\begin{lemma} \label{l:splitting}   Let $N$ be any $\Z[G]^-$-module, e.g.\ the module $M$ above.
 The restriction map \[ \begin{tikzcd}  \res_{G_H}^{G_F}\colon H^1(G_F, N) \ar[r] &  H^1(G_H, N)^G 
 \end{tikzcd}  \] is an isomorphism. 
\end{lemma}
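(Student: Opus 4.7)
The plan is to invoke the Hochschild--Serre inflation-restriction sequence for the closed normal subgroup $G_H \triangleleft G_F$ with quotient $G$. Since the $G_F$-action on $N$ factors through $G$ we have $N^{G_H} = N$, so the five-term exact sequence reads
\[
0 \longrightarrow H^1(G, N) \longrightarrow H^1(G_F, N) \stackrel{\res}{\longrightarrow} H^1(G_H, N)^G \longrightarrow H^2(G, N).
\]
Hence the lemma will follow once I verify that $H^i(G, N) = 0$ for $i = 1, 2$ (and in fact for every $i \geq 1$).

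My approach to this vanishing will exploit the two defining properties of a $\Z[G]^-$-module: (i) the complex conjugation $\sigma \in G$ acts on $N$ as $-1$, and (ii) $2$ is invertible on $N$. By functoriality of cohomology in the coefficient module, property (i) gives that the endomorphism of $N$ by $\sigma$, equal to $-\id$, induces multiplication by $-1$ on $H^i(G, N)$. On the other hand, because $H/F$ is abelian the group $G$ is abelian, so $\sigma$ is a central element of $G$; the classical principle that inner automorphisms act trivially on group cohomology then forces the module-action of any central element of $G$ to be the identity on $H^i(G, N)$. Comparing the two descriptions gives $2 \cdot H^i(G, N) = 0$, and since $2$ is invertible on $N$ it is invertible on $H^i(G, N)$ as well, forcing $H^i(G, N) = 0$ for every $i \geq 1$.

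Feeding this vanishing back into the five-term sequence above yields the desired isomorphism. I expect no serious technical obstacle; the only point worth flagging is the invocation of the ``central element acts trivially on cohomology'' principle. If preferred, it can be replaced by an application of the Lyndon--Hochschild--Serre spectral sequence for the central subgroup $\langle \sigma \rangle \triangleleft G$ with quotient $\Gal(H^+/F)$, in which every $E_2^{p,q} = H^p(\Gal(H^+/F), H^q(\langle\sigma\rangle, N))$ vanishes: for $q = 0$ because $N^\sigma = 0$ (as $\sigma$ acts by $-1$ and $2$ is invertible), and for $q \geq 1$ because the order $\#\langle\sigma\rangle = 2$ is invertible on $N$. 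Either route gives $H^i(G,N)=0$ for $i\geq 1$ and hence the lemma.
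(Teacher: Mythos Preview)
Your proof is correct and follows essentially the same approach as the paper: both invoke the inflation--restriction sequence and reduce to showing $H^i(G,N)=0$, which is deduced from the fact that the action of any group element (in particular complex conjugation) induces the identity on $H^i(G,N)$ while $\sigma$ acts as $-1$ on $N$, forcing $2$-torsion that vanishes since $2$ is invertible.
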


\begin{proof}   The terms preceding and following the map $\res_{G_H}^{G_F}$ in the inflation-restriction sequence are  $H^i(G, N)$ for $i=1,2$. 
Yet $H^i(G, N) = 0$ for all $i$.  To see this  vanishing, note that the action of any $g \in G$ gives a $G$-module map $N \longrightarrow N$
that induces the identity on cohomology (see \cite{cf}*{Proposition 3, pg. 99}); but complex conjugation acts on $N$ as multiplication by $-1$. Since 2 has been inverted, this implies that  $H^i(G, N) = 0$ as claimed.
\end{proof}

Let \[ \begin{tikzcd}
 \rec_{L/H}\colon M \ar[r,"\sim"] &  \Gal(L/H) 
\end{tikzcd}
 \] denote the Artin reciprocity isomorphism.
Lemma~\ref{l:splitting} implies that there is a unique cohomology class \[ \lambda \in H^1(G_F, M) \] whose restriction to $H^1(G_H,M) = \Hom_{\cont}(G_H, M)$ is equal to 
 the canonical homomorphism 
 \[ \begin{tikzcd}
 \varpi \colon G_H \ar[r] & \Gal(L/H) \ar[r,"\rec_{L/H}^{-1}"] & M.
 \end{tikzcd}\]  An explicit formula for a cocycle representing $\lambda$ is given in \S\ref{s:extgal}.

Let $v \in \Sigma$ and denote by $G_{F, v} \subset G_F$ the decomposition group  of $v$ associated to some embedding $\overline{F} \subset \overline{F}_v.$  The restriction of $\lambda$ to $G_{H,v} = G_{F,v} \cap G_H$ is the restriction of $\varpi$ to a decomposition group of a prime of $H$ above $v$, and hence trivial by the definition of $M$.  It follows from the inflation-restriction sequence that $\res_{G_{F,v}}^{G_F} \lambda$ is the inflation of a unique class \begin{equation} \label{e:lvdef}
 \lambda_v \in H^1(G_v, M). \end{equation}

Next we note that \[ X_{H,\Sigma}^{-} \cong Y_{H,\Sigma}^{-} =  \bigoplus_{v \in \Sigma} (\Ind_{G_v}^G \Z)^-. \]
Therefore \begin{align}
\Ext_{\Z[G]^{-}}^{1}( X_{H,\Sigma}^{-} , M) & \cong  \bigoplus_{v \in \Sigma} \Ext_{\Z[G]^{-}}^{1}( (\Ind_{G_v}^G \Z)^-, M)  \nonumber \\
& \cong   \bigoplus_{v \in \Sigma} \Ext_{\Z[{1}/{2}][G_v]}^1(\Z[\tfrac{1}{2}], M) \nonumber \\
& \cong   \bigoplus_{v \in \Sigma} H^1(G_v, M). \label{e:ext}
\end{align}
Let us make explicit how one associates a class in $H^1(G_v, M)$ to the extension $\nabla_{\Sigma}^{\Sigma', -}$ using the chain of isomorphisms (\ref{e:ext}).  Let $w \in \Sigma_H$ lie over the place $v \in \Sigma$, and consider the element $\frac{1}{2}(w - \overline{w}) \in X_{H,\Sigma}^-$, where $\overline{w}$ denotes the image of $w$ under  complex conjugation.  Let $x$ denote a lift of this element to $\nabla_{\Sigma}^{\Sigma', -}$ under the surjection given by (\ref{e:nablaextminus}).  For any $g \in G_v$ we define 
\begin{equation}
\gamma_v(g) = gx - x \in M.
\label{e:alphadef}
\end{equation}
  This defines a cocycle representing a class in $H^1(G_v, M)$ that does not depend on the choice of $x$.  The tuple $(\gamma_v)_{v \in \Sigma}$ is  associated to $\nabla_{\Sigma}^{\Sigma', -}$ under (\ref{e:ext}).  

\bigskip
In \S\ref{s:extgal} we prove the following characterization of the Selmer module $\nabla_{\Sigma}^{\Sigma',^-}$.

\begin{lemma} \label{l:ext}  Under the isomorphism (\ref{e:ext}), the extension class in $\Ext_{\Z[G]^{-}}^{1}( X_{H,\Sigma}^{-} , M)$ determined by $\nabla_{\Sigma}^{\Sigma',-}$ corresponding to the minus part of the exact sequence (\ref{e:nablaext}) is equal to the tuple of canonical classes $(\lambda_v)_{v \in \Sigma}$ defined in (\ref{e:lvdef}).
\end{lemma}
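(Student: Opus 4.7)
The plan is to compute explicit cocycle representatives for both sides of the claimed equality---namely $\gamma_v$ from formula (\ref{e:alphadef}) and a canonical cocycle for $\lambda_v$ coming from $\varpi$---and then match them. Since $\Ext^1_{\Z[G]^-}(X_{H,\Sigma}^-, M)$ decomposes into a direct sum indexed by $v\in\Sigma$ as in (\ref{e:ext}), it suffices to work one place at a time; and by Lemma~\ref{l:splitting} any equality of classes in $H^1(G_F, M)$ can be checked after restriction to $G_H$. These two reductions should handle all ambiguities coming from non-canonical choices.

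First I would unwind the construction of $\nabla_{\Sigma}^{\Sigma'}$ given in Appendix~\ref{s:rw}. The expected shape is a Ritter--Weiss style pushout/extension in which $\nabla_{\Sigma}^{\Sigma'}$ is glued from the global class group $\Cl_\Sigma^{\Sigma'}(H)$ and local summands at places in $\Sigma$ via local reciprocity. In particular, a prime $w\in\Sigma_H$ should have a distinguished lift $\tilde w\in\nabla_{\Sigma}^{\Sigma'}$ corresponding to a chosen local uniformizer (or to the archimedean generator). Fixing $v\in\Sigma$ and $w\mid v$, take $x=\tfrac{1}{2}(\tilde w-\tilde{\overline w})\in\nabla_{\Sigma}^{\Sigma',-}$. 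For $g\in G_v$, the element $gx-x$ lies in $\Cl_\Sigma^{\Sigma'}(H)^-$ because $g$ fixes the places $w$ and $\overline w$, and by construction it is represented by the idèle supported at $\{w,\overline w\}$ whose components encode the action of $g$ on the chosen uniformizers. Global reciprocity then identifies the class of this idèle with $\varpi(\tilde g)$ for any lift $\tilde g\in G_H$ of $g$ via a decomposition subgroup at $w$.

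Second, I would produce a parallel cocycle for $\lambda_v$. Choose a set-theoretic section $s\colon G\to G_F$ of the projection $G_F\twoheadrightarrow G$; then by Lemma~\ref{l:splitting} the unique class $\lambda\in H^1(G_F,M)$ restricting to $\varpi$ on $G_H$ is represented by the cocycle $\sigma\mapsto \varpi(s(\bar\sigma)^{-1}\sigma)$, where $\bar\sigma$ is the image of $\sigma$ in $G$. Restricting to $G_{F,v}$ and descending to $G_v$ (which is possible because this restriction vanishes on $G_{H,v}$, using that the primes in $\Sigma_H$ split completely in $L/H$) gives an explicit cocycle for $\lambda_v$. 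Comparing with the formula for $\gamma_v$ obtained above, both cocycles send $g\in G_v$ to the reciprocity-theoretic image of (a local lift of) $g$ at $w$ minus the corresponding contribution at $\overline w$, and the factor $\tfrac{1}{2}$ together with the complex conjugation symmetry line up correctly on the minus part.

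The main obstacle will be the first step: translating the construction of $\nabla_{\Sigma}^{\Sigma'}$ from the appendix into a form where the $G_v$-action on a lift of $w\in X_{H,\Sigma}^-$ is visibly given by local reciprocity. Subsidiary bookkeeping---coherent choices of $w\mid v$, of uniformizers at $w$ and $\overline w$, of the section $s$, and of the normalization of the connecting map in (\ref{e:ext})---has to be done carefully so that signs and the factor $\tfrac{1}{2}$ match, but after the local-global cocycle identification is established these become routine.
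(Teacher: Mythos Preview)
Your overall plan---compute explicit cocycles for both $\gamma_v$ and $\lambda_v$ and match them---is exactly what the paper does, but your proposed execution of the $\gamma_v$ side contains a genuine gap.

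You write that $gx-x$ ``is represented by the id\`ele supported at $\{w,\overline w\}$ whose components encode the action of $g$ on the chosen uniformizers,'' and that global reciprocity then gives $\varpi(\tilde g)$. This cannot be right: the primes $w,\overline w$ lie in $\Sigma_H$, and by definition of $\Cl_\Sigma^{\Sigma'}(H)$ the classes of primes in $\Sigma_H$ are \emph{trivial} there. An id\`ele supported only at $\{w,\overline w\}$ would map to zero in $M$, whereas $\gamma_v$ is generally nonzero. Relatedly, there is no ``distinguished lift $\tilde w$ corresponding to a chosen local uniformizer'' in the Ritter--Weiss construction: for $v\in\Sigma$ the local factor in $W$ is $\Delta G_w$, not a module built from uniformizers, and the lift must be taken in the free module $B^\theta$, not via id\`eles.

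What the paper actually does is the following. One lifts $\tfrac12(w-\overline w)$ to an explicit element $\tilde x\in B^{\theta,-}$; to arrange that $\theta_B(\tilde x)=0$ one must introduce a nonzero coordinate at an \emph{auxiliary} place $v'\in S'\setminus\Sigma$ (chosen so that its Frobenius is the complex conjugation $c$). Then $(g-1)\tilde x$ lands in $W^{\theta,-}$ with nonzero components at both $v$ and $v'$. The image in $M$ is computed via the Ritter--Weiss explicit description of the snake map $W^\theta\to M$ (their Theorem~5), which passes through the augmentation-ideal sequence for $\tilde G=\Gal(L/F)$ rather than through id\`eles. The output is the commutator: $\gamma_v(g)=(\tilde g\,\tilde c\,\tilde g^{-1}\tilde c^{-1})^{1/2}$ in $M$. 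On the other side, the paper uses the explicit cocycle $\lambda(g)=\varphi(gcg^{-1}c^{-1})^{1/2}$ (rather than one built from a section $s\colon G\to G_F$); this formula is tailored precisely so that the two computations visibly agree. Your section-based cocycle for $\lambda$ would represent the same class, but would not match the snake-map output on the nose without further cocycle manipulation.

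In short: the missing ingredients are the auxiliary place $v'$ needed to produce a lift inside $B^\theta$, and the Ritter--Weiss Theorem~5 identification of the snake map, which yields a commutator with $c$ rather than a local contribution at $w$.
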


\section{Group ring valued Hilbert Modular Forms}
 \label{s:hmf}

In the remainder of the paper, we will use Ribet's method in the context of group ring valued Hilbert modular forms to prove the inclusion
\[ \Fitt_R(\Sel_{\Sigma}^{\Sigma'}(H)_R) \subset (\Theta^\#), \qquad \Theta = \Theta_{\Sigma, \Sigma'}, \]
in Theorem~\ref{t:include} from which all of our main theorems were deduced.  Here $R = \cO[G_p]_\chi$ is the component of $\cO[G]$ corresponding to the totally odd character $\chi$. 

\subsection{Replacing $R$ by its trivial zero free quotient} \label{s:replacer}

 In our constructions it will be convenient if $\Theta^\#$ is a non-zerodivisor in $R$.  In the present context, this may not be the case.  Indeed, if there is a character $\psi$ of $G$ belonging to $\chi$ and an element $v \in \Sigma$ such that $\psi(v)=1$, then the associated $L$-function has a trivial zero: $L_{\Sigma, \Sigma'}(\psi, 0) = 0$.  To deal with this, we will replace the component $\cO[G_p]_\chi$ with its quotient $R_\Psi$, the character group ring corresponding to characters $\psi$ without a trivial zero:
\[ \Psi = \{ \psi \in \hat{G} \colon \psi|_{G'} = \chi, \psi(v) \neq 1 \text{ for all } v \in \Sigma\}. \]
We show it suffices to consider this quotient.

\begin{lemma}  Let $R = \cO[G_p]_\chi$, and let $R_\Psi$ be the character group ring quotient of $R$ associated to the set $\Psi$ above.
Suppose that 
\begin{equation} \label{e:rpsi}
 \Fitt_{R_\Psi}(\Sel_{\Sigma}^{\Sigma'}(H)_{R_\Psi}) \subset (\Theta^\#). 
 \end{equation}
Then \begin{equation} \label{e:rpsipsi}
 \Fitt_R(\Sel_{\Sigma}^{\Sigma'}(H)_R) \subset (\Theta^\#). \end{equation}
\end{lemma}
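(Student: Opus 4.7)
The plan is to decompose the set $\{\psi \in \hat G : \psi|_{G'} = \chi\}$ as a disjoint union $\Psi \sqcup \Psi'$, where $\Psi'$ consists of those characters with a trivial zero, and to check (\ref{e:rpsipsi}) after applying the injective $\cO$-algebra map
\[ \iota \colon R \hookrightarrow R_\Psi \times R_{\Psi'} \]
induced by the embedding $R \hookrightarrow \prod_{\psi|_{G'} = \chi} \cO_\psi$ of \S\ref{s:cgr}; its two coordinate maps $\pi_\Psi \colon R \twoheadrightarrow R_\Psi$ and $\pi_{\Psi'} \colon R \twoheadrightarrow R_{\Psi'}$ are both surjective by the very definition of character group rings.

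The $\Psi'$-side is handled entirely by Lemma~\ref{l:trivial}: every $\psi \in \Psi'$ is nontrivial (since $\chi \neq 1$) and satisfies $\psi(G_v) = 1$ for some $v \in \Sigma$, so the lemma gives
\[ \pi_{\Psi'}(\Theta^\#) = 0 \qquad \text{and} \qquad \Fitt_{R_{\Psi'}}\!\bigl(\Sel_{\Sigma}^{\Sigma'}(H) \otimes_{\Z[G]} R_{\Psi'}\bigr) = 0. \]
On the $\Psi$-side, the standard compatibility of Fitting ideals under surjective base change together with the hypothesis (\ref{e:rpsi}) yields
\[ \pi_\Psi\bigl(\Fitt_R(\Sel_{\Sigma}^{\Sigma'}(H)_R)\bigr) = \Fitt_{R_\Psi}(\Sel_{\Sigma}^{\Sigma'}(H)_{R_\Psi}) \subset \pi_\Psi(\Theta^\#) \, R_\Psi. \]

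To conclude, take any $r \in \Fitt_R(\Sel_{\Sigma}^{\Sigma'}(H)_R)$ and write $\pi_\Psi(r) = u \cdot \pi_\Psi(\Theta^\#)$ for some $u \in R_\Psi$; lift $u$ to $\tilde u \in R$ via the surjection $\pi_\Psi$. Then $r - \tilde u \Theta^\#$ has trivial image under $\pi_\Psi$ by construction and trivial image under $\pi_{\Psi'}$ because both $\pi_{\Psi'}(r)$ and $\pi_{\Psi'}(\Theta^\#)$ vanish. Injectivity of $\iota$ forces $r = \tilde u \Theta^\#$, which is exactly the desired inclusion (\ref{e:rpsipsi}).

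There is no genuine obstacle: the content of the lemma is merely that any trivial zeros of $\Theta^\#$ at characters $\psi \in \Psi'$ are matched on the algebraic side by corresponding trivial zeros of the Selmer Fitting ideal, so no information relevant to divisibility by $\Theta^\#$ is lost upon passing from $R$ to its trivial-zero-free quotient $R_\Psi$.
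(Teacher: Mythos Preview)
Your proof is correct and follows essentially the same route as the paper's: decompose into $\Psi$ and $\Psi'$, use Lemma~\ref{l:trivial} on the trivial-zero side, the hypothesis on the $\Psi$-side, and the injectivity of $\iota$ to conclude. The only cosmetic difference is that the paper first invokes Corollary~\ref{c:selprin} to know the Fitting ideal is principal and then argues with a single generator, whereas you treat an arbitrary element $r$ directly---your version is slightly more self-contained since it avoids that appeal.
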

\begin{proof}  Let $R_{\Psi'}$ be the character group ring quotient of $R$ associated to the set of characters with trivial zeroes:
\[ \Psi' = \{ \psi \in \hat{G} \colon \psi|_{G'} = \chi, \psi(v) = 1 \text{ for some } v \in \Sigma\}. \]
There is a canonical injection \[ \iota \colon R \longrightarrow R_\Psi \times R_{\Psi'}, \qquad \text{ denoted } \iota(x) = (\iota_1(x), \iota_2(x)).\]
By Corollary~\ref{c:selprin}, we can write $\Fitt_R(\Sel_{\Sigma}^{\Sigma'}(H)_R) = (x)$ for some $x \in R$. By Lemma~\ref{l:trivial}, we have
\[ \iota_2(x) = 0 = \iota_2(\Theta^\#). \]

The given inclusion (\ref{e:rpsi}) implies that there exists $y \in R_{\Psi}$ such that $\iota_1(x) = y \cdot \iota_1(\Theta^\#).$  Let $\tilde{y}$ be any lift of $y$ to $R$.  We then have that $x - \tilde{y} \cdot\Theta^\#$ vanishes under both $\iota_1$ and $\iota_2$.  It follows that $x = \tilde{y} \cdot\Theta^\#$, giving the desired result (\ref{e:rpsipsi}).
\end{proof}

For the rest of the paper, we will work with the ``trivial zero free character group ring quotient" $R_\Psi$ of the component $\cO[G_p]_\chi$.  For notational simplicity, we will simply write $R$ for this ring $R_\Psi$.  The image of $\Theta^\#$ is a non-zerodivisor in $R$.

\subsection{Definitions and notations on Hilbert modular forms} 

The rest of this section sets up required notation of Hilbert modular forms. The reader familiar with it from \cite{dka} or \cite{dkv} may skip this and move to section \ref{s:cusp}. We follow the definitions of Shimura \cite{shim} for the space of classical Hilbert modular forms over the totally real field $F$ (see also \cite{ddp}*{\S2.1}). Here we recall certain aspects of this definition and set up notation.

\subsubsection{Hilbert modular forms} Let $\cH$ denote the complex upper half plane endowed with the usual action of $\GL_2^+(\R)$ via linear fractional transformations, where $\GL_2^+$ denotes the group of matrices with positive determinant.  We fix an ordering of the $n$ embeddings $F \hookrightarrow \R$, which yields an embedding $\GL_2^+(F) \hookrightarrow \GL_2^+(\R)^n$ and hence an action of $\GL_2^+(F)$ on $\cH^n$.  Here $\GL_2^+(F)$ denotes the group of matrices with totally positive determinant.

For each class $\lambda$ in the narrow class group $\Cl^+(F)$, we choose a representative fractional ideal $\ft_\lambda$.  Let $\fn \subset \cO_F$ be an ideal. Define 
\begin{align*}
 \Gamma_{\lambda}(\fn) 
= & \left\{  \mat{a }{ b}{ c }{ d } \in \GL_2^+(F): a,d \in \cO_F, c \in \ft_{\lambda} \fd \fn,  \right.  \\
& \left. \ \ \ \ \ \ \   b \in (\ft_{\lambda}\fd)^{-1}, ad-bc \in \cO_F^*, \ d \equiv 1 \!\!\!\! \pmod{\fn} \right\}.
 \end{align*}
Here $\fd$ denotes the different of $F$. 

Let $k$ be a positive integer.  We denote by $M_k(\fn)$ the space of Hilbert modular forms for $F$ of level $\fn$ and weight $k$.  Each element $f \in M_k(\fn)$ is a tuple $f = (f_\lambda)_{\lambda \in \Cl^+(F)}$ of holomorphic functions $f_\lambda\colon \cH^n \longrightarrow \C$ 
such that $f_\lambda|_{\alpha,k } = f_\lambda$ for all $\lambda \in \Cl^+(F)$ and $\alpha \in \Gamma_{\lambda}(\fn)$.  Here the weight $k$ slash action is defined in the usual way:
\[ f_\lambda|_{\alpha,k }(z_1, \dotsc, z_n) = \N(\det(\alpha))^{k/2} \prod_{i=1}^n (c_i z_i + d_i)^{-k}  f_\lambda\left( \frac{a_1 z_1 + b_1}{c_1 z_1 + d_1}, \dotsc, \frac{a_n z_n + b_n}{c_n z_n + d_n}\right), \]
where $a_i$ denotes the image of $a$ under the $i$th real embedding of $F$ and similarly for $b_i, c_i, d_i$.

\subsubsection{Hecke Operators}

The space $M_k(\fn)$ is endowed with the action of a Hecke algebra generated by the following operators:
\begin{itemize}
\item $T_\fq$ for primes $\fq \nmid \fn$.
\item $U_\fq$ for primes $\fq \mid \fn$.
\item The ``diamond operators" $S(\fm)$ for  each class $\fm \in G_\fn^+ = $ narrow ray class group of $F$ of conductor $\fn$.
\end{itemize}
We refer to \cite{shim}*{\S2} for the definition of these Hecke operators.

\subsubsection{Cusps, $q$-expansions, and cusp forms} The set of cusps of $\Gamma_{\lambda}(\fn)$
 is by definition the finite set
\begin{equation} \label{e:cuspsdef}
\cusps(\Gamma_{\lambda}(\fn)) = \Gamma_{\lambda}(\fn) \backslash \GL_2^+(F) / \left\{ \mat{a}{b}{0}{d} \in \GL_2^+(F) \right\} \leftrightarrow \Gamma_{\lambda}(\fn) \backslash \PP^1(F). 
\end{equation}
The bijection in (\ref{e:cuspsdef}) is $\begin{psmallmatrix}a & b\\ c & d \end{psmallmatrix} \mapsto (a:c)$.
We define 
\begin{equation} \label{e:disjoint} \cusps(\fn) = \bigsqcup_{\lambda} \cusps(\Gamma_{\lambda}(\fn)). 
\end{equation}
A pair $\cA = (A, \lambda)$ with  $A \in \GL_2^+(F)$ and $\lambda \in \Cl^+(F)$ therefore gives rise to a cusp that we denote
$[\cA] \in \cusps(\fn)$, corresponding to the image of the matrix $A$ in $\cusps(\Gamma_{\lambda}(\fn))$ in the $\lambda$-component of the disjoint union (\ref{e:disjoint}).

Given $f = (f_\lambda) \in M_k(\fn)$ and a pair $\cA = (A, \lambda)$, the function $f_\lambda|_{A, k}$ has a Fourier expansion
\begin{equation} \label{e:fourier}
 f_\lambda|_{A, k}(z) = a_{\cA}(0) + \sum_{\stack{b \in \fa}{b \gg 0}} a_{\cA}(b) e_F(bz), \end{equation}
where $\fa$ is a certain lattice in $F$ depending on $\cA$, the notation $b \gg 0$ means that $b$ is totally positive, and
\[ e_F(bz) = \exp(2 \pi i (b_1 z_1 + \cdots +  b_n z_n)). \]
Here $b_i$ is the image in $\R$ of $b$ under the $i$th real embedding of $F$.

We normalize these Fourier coefficients as follows.  Write $A =\begin{psmallmatrix}a & *\\ c & * \end{psmallmatrix}$ and define the fractional ideal
\[ \fb_\cA = a \cO_F + c(\ft_\lambda \fd)^{-1}. \]  Define

\[
 c_{\cA}(b, f) = a_{\cA}(b) (\N \ft_\lambda)^{-k/2}(\N \fb_\cA)^{-k} (\N (\det A))^{k/2}. 
 \]

The subspace of {\em cusp forms} $S_k(\fn) \subset M_k(\fn)$ is the space of $f = (f_\lambda) \in M_k(\fn)$ such that 
$c_{\cA}(0, f) = 0$ for all pairs $\cA = (A, \lambda)$.
Note that the definition of this subspace does not depend on the choice of ideal class representatives $\ft_\lambda$.

When $k$ is even, the normalized constant term $c_{\cA}(0, f)$ depends only on the cusp $[\cA] \in \cusps(\fn)$ determined by $\cA$ (this motivates our normalizations).  When $k$ is odd, this is {\em almost} true---it holds up to sign.  In this case $c_\cA(0, f)$ is still invariant if $A$ is multiplied on the left by an element of $\Gamma_\lambda(\fn)$.  But if $A' = \begin{psmallmatrix}a & b\\ 0 & d \end{psmallmatrix} \in \GL_2^+(F)$ then
\[ c_{(AA', \lambda)}(0, f) = \sgn(\Norm_{F/\Q}(a)) \cdot c_{(A, \lambda)}(0, f). \]

\subsubsection{$q$-expansions}

When $A = 1$ we write simply
\begin{equation} \label{e:constantdef}
 c_\lambda(0, f) = a_{(1,\lambda)}(0) \cdot  (\N \ft_\lambda)^{-k/2}. \end{equation} 
 Furthermore in this case, the lattice $\fa$ appearing in (\ref{e:fourier}) is the ideal $\ft_\lambda$.
Any nonzero integral ideal $\fm$ may be written $\fm = (b)\ft_\lambda^{-1}$ with $b  \in \ft_\lambda$ totally positive for a unique 
$\lambda \in \Cl^+(F)$. We define the normalized Fourier coefficient
\begin{equation} \label{e:positivedef}
 c(\fm, f) = a_{(1,\lambda)}(b) (\N\ft_\lambda)^{-k/2}. \end{equation}
The collection of normalized Fourier coefficients $\{c_\lambda(0, f), c(\fm, f)\}$ is called the {\em $q$-expansion} of $f$ and determines the form $f$.

\subsubsection{Cusps above infinity and zero}

We recall some notation from \cite{dka}.  Given a pair $\cA = (A, \lambda)$ with $A = \begin{psmallmatrix}a & b\\ c & d \end{psmallmatrix}$, we define the integral ideal 
\[ \fc_\cA = (c)(\ft_\lambda \fd \fb_\cA)^{-1} \subset \cO_F.  \]
The ideal $\fc_\cA$ depends only on the cusp $[\cA]$ associated to $\cA$.
As intuition for this definition, consider the case $F=\Q$.  If $\cA$ represents the cusp $a/c \in \PP^1(\Q)$ where $a$ and $c$ are relatively prime integers, then $\fc_\cA \subset \Z$ is the ideal generated by $c$.

We denote by $C_\infty(\fn) \subset \cusps(\fn)$  the set of cusps  $[\cA]$ such that $\fn \mid \fc_\cA$ and more generally for $\fb \mid \fn$ we define 
\[ C_\infty(\fb, \fn) = \{ [\cA] \in \cusps(\fn) \colon \fb \mid \fc_\cA \}. \]

Similarly, we let $C_0(\fn)$ denote the set of cusps $[\cA] \in \cusps(\fn)$ such that $\gcd(\fc_\cA, \fn) = 1$ and more generally for $\fb \mid \fn$ we define
\[  C_0(\fb, \fn) = \{ [\cA] \in \cusps(\fn) \colon \gcd(\fb, \fc_\cA)=1 \}. \] 
 The sets $C_\infty(\fb, \fn)$ and $C_0(\fb, \fn)$ are stable under the action of the diamond operators $S(\fm)$.  These sets are enumerated in \cite{dka}.
 
\subsubsection{Forms with Nebentypus}

Recall that $G_\fn^+$ denotes the narrow ray class group of $F$ attached to the conductor $\fn$.  Write $h_\fn^+ = \# G_\fn^+$.
Let $\psi$ denote a character $ G_\fn^+ \longrightarrow \C^*$ whose associated sign is congruent to $(k, k, \dotsc, k)$ in $(\Z/2\Z)^n$, i.e.\ such that if $\alpha \in \cO_F$ with $\alpha \equiv 1 \pmod{\fn}$, we have \[ \psi((\alpha)) = \sgn(\Norm_{F/\Q}(\alpha))^k. \]  A form $f \in M_k(\fn)$ is said to have nebentypus $\psi$ if
\[  f|_{S(\fa)} = \psi(\fa) f \] 
for all $\fa \in G_\fn^+$.  The space of forms with nebentypus $\psi$ is denoted $M_k(\fn, \psi)$, and we let $S_k(\fn, \psi) = M_k(\fn, \psi) \cap S_k(\fn)$. We have decompositions
\[  
M_k(\fn) = \bigoplus_{\psi} M_k(\fn, \psi), \qquad S_k(\fn) = \bigoplus_\psi S_k(\fn, \psi). 
\]

\subsubsection{Raising the level} \label{sss:level}

For a Hilbert modular form $f \in M_k(\fn)$ and an integral ideal $\fq$ of $F$, there is a form \[ f|\fq \in M_k(\fn \fq) \] characterized by the fact that for nonzero integral ideals $\fa$ we have
\[ c(\fa, f|\fq) = \begin{cases}
c(\fa/\fq, f) & \text{if } \fq \mid \fa \\
0 & \text{if } \fq \nmid \fa
\end{cases} \]
and 
\begin{equation} \label{eq:levelconstant} 
c_\lambda(0, f|\fq) = c_{\lambda \fq}(0, f)
\end{equation}
for all $\lambda \in \Cl^+(F)$.  For the construction of $f|\fq$ see \cite{shim}*{Prop 2.3}. 

\subsubsection{Group ring valued Hilbert modular forms} 

Define $M_k(\fn, \Z) \subset M_k(\fn)$ to be the subgroup of forms $f$ such that
\[ c(\fm, f) \in \Z \text{ for all nonzero } \fm \subset \cO_F, \qquad c_\lambda(0, f) \in \Z \text{ for all } \lambda \in \Cl^+(F). \]
For any abelian group $A$, define
\[ M_k(\fn, A) = M_k(\fn, \Z) \otimes A. \]
Now suppose that $A$ is a ring and that $\psi \colon G_\fn^+ \longrightarrow A^*$ is a character.  We define the forms of nebentypus $\psi$ by
\[ M_k(\fn, A, \psi) = \{ f \in M_k(\fn, A) \colon f|_{S(\fa)} = \psi(\fa) f \text{ for all } \fa \in G_\fn^+ \}. \]

These definitions generalize in the obvious way to yield $S_k(\fn, A)$ and $S_k(\fn, A, \psi).$
We are particularly interested in the case where $A$ is the ring $R = R_\Psi$ as in \S\ref{s:replacer}.
If the extension $H/F$ has conductor dividing $\fn$, then $G = \Gal(H/F)$ is canonically a quotient of the narrow ray class group $G_\fn^+$. We define
\[ \begin{tikzcd}
\bpsi \colon G_\fn^+ \ar[r] & G \ar[r] & R^* 
\end{tikzcd}
\]
to be the canonical character. 
  The space of ``group ring valued Hilbert modular forms" $M_k(\fn, R, \bpsi)$ 
was first considered by Wiles \cite{wiles}.   In practice, we will define such forms by specifying their Fourier coefficients, as described by the following lemma.

 \begin{lemma}  Let $c(\fm) \in R$ for  $\fm \subset \cO_F, \fm \neq 0$ and $c_\lambda(0) \in R$ for  $\lambda \in \Cl^+(F)$ be a collection of elements of $R$ such that for all $\psi \in \Psi$, there exists a form $f_\psi \in M_k(\fn, \cO, \psi)$ with
 \[ c(\fm, f_\psi) = \psi(c(\fm)), \qquad c_\lambda(0, f_\psi) = \psi(c_\lambda(0)). \]
 Then there exists a unique $f \in M_k(\fn, R, \bpsi)$ such that $\psi(f) = f_\psi$ for all $\psi \in \Psi$.
 \end{lemma}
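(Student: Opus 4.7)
The plan is to use the injection $\iota\colon R \hookrightarrow \cO_\Psi := \prod_{\psi \in \Psi} \cO$ given by $x \mapsto (\psi(x))_\psi$, which realizes $R$ as a finite-index $\cO$-subalgebra of the product ring. Since $M_k(\fn,\Z)$ is a finitely generated free $\Z$-module by Shimura's integrality results, tensoring with $\iota$ yields an inclusion
\[ M_k(\fn,R) \;\hookrightarrow\; M_k(\fn,\cO_\Psi) \;=\; \prod_{\psi \in \Psi} M_k(\fn,\cO), \]
whose $\psi$-component is precisely the homomorphism $\psi\colon M_k(\fn,R) \to M_k(\fn,\cO)$ appearing in the statement. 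Uniqueness of $f$ is then immediate: any two candidates have the same projections $f_\psi$ for each $\psi \in \Psi$, and hence coincide as elements of $M_k(\fn,R)$.

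For existence, I form the tuple $\tilde f := (f_\psi)_\psi \in M_k(\fn,\cO_\Psi)$ and must verify that $\tilde f$ already lies in the subspace $M_k(\fn,R)$. By hypothesis, the $q$-expansion coefficients of $\tilde f$ are $(\psi(c(\fm)))_\psi$ and $(\psi(c_\lambda(0)))_\psi$, i.e.\ the images under $\iota$ of the elements $c(\fm),\,c_\lambda(0) \in R$. The key point is the integral $q$-expansion principle: a form in $M_k(\fn,\cO_\Psi)$ whose Fourier coefficients all lie in the subring $R \subset \cO_\Psi$ itself lies in $M_k(\fn,R)$. To make this explicit, fix a $\Z$-basis $e_1,\dotsc,e_n$ of $M_k(\fn,\Z)$ and write $\tilde f = \sum_i x_i e_i$ with uniquely determined $x_i \in \cO_\Psi$; the goal is to show $x_i \in R$. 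Invoking the mod-$p$ $q$-expansion principle --- injectivity of $M_k(\fn,\Z) \otimes \F_p \to \cF(\F_p)$ --- I select indices $j_1,\dotsc,j_n$ (drawn from integral ideals $\fm$ and narrow ideal classes $\lambda$) such that the integer matrix $A = (c(j_k,e_i))_{k,i}$ satisfies $\det A \not\equiv 0 \pmod{p}$, so that $\det A \in \Z_p^\times \subset R^\times$. Inverting the identity
\[ \bigl(c(j_k,\tilde f)\bigr)_k \;=\; A \cdot (x_i)_i \]
over $R$, and using that each $c(j_k,\tilde f)$ lies in $\iota(R)$ by hypothesis, realizes each $x_i$ as an $R$-linear combination of elements of $R$, hence $x_i \in R$.

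With $\tilde f$ thereby realized as an element $f \in M_k(\fn,R)$, the nebentypus condition $f|_{S(\fa)} = \bpsi(\fa)\,f$ follows componentwise from the given $f_\psi|_{S(\fa)} = \psi(\fa) f_\psi$, since $\psi(\bpsi(\fa)) = \psi(\fa)$ for each $\psi \in \Psi$. The main technical input is the mod-$p$ $q$-expansion principle for Hilbert modular forms, which is non-trivial when $[F:\Q] > 1$ but is classical by work of Deligne--Ribet and Rapoport; the group-ring-valued bookkeeping used here is developed systematically in \cite{dks}.
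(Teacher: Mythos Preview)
Your proof is correct and follows the paper's approach: both form the tuple $(f_\psi)_\psi$ in $M_k(\fn,\cO_\Psi)$, check its Fourier coefficients lie in $R$, and conclude it descends to $M_k(\fn,R)$; the paper black-boxes this last step as a citation to Silliman \cite{dks}, while you unpack it directly via a $\Z$-basis of $M_k(\fn,\Z)$ and an invertible submatrix of the coefficient map.

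One small correction on the framing: the injectivity of $M_k(\fn,\Z)\otimes\F_p \to \cF(\F_p)$ that you invoke is actually \emph{tautological} in this setup, since the paper defines $M_k(\fn,\Z)$ as the set of classical forms with integral $q$-expansion---so a form with all coefficients in $p\Z$ is trivially $p$ times a form in $M_k(\fn,\Z)$. The deep geometric mod-$p$ $q$-expansion principle of Deligne--Ribet and Katz is not needed here; the only genuine input is Shimura's result that $M_k(\fn,\Q)$ spans $M_k(\fn,\C)$, which gives finite generation of $M_k(\fn,\Z)$ and hence the existence of your basis $e_1,\dotsc,e_n$.
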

 
\begin{proof} Recall that there is an embedding
\begin{equation} \label{e:rembed}
\begin{tikzcd}
R \ar[r,hook] & \displaystyle\prod_{\psi \in \Psi} \cO,
\end{tikzcd}
 \qquad x \mapsto (\psi(x))_{\psi \in \Psi}. \end{equation}
The lemma follows from an important result of Silliman \cite{dks}*{Corollary 7.28 and Remark 7.29}, which implies that
\begin{equation} \label{e:jinject}
 M_k(\fn, R) = \{ f \in M_k(\fn, \prod_{\psi \in \Psi} \cO) \colon c(\fm, f), c_\lambda(0, f) \in R \text{ for all } \fm, \lambda\}. 
 \end{equation}
Now
\[ M_k(\fn, \prod_{\psi \in \Psi} \cO) = \prod_{\psi \in \Psi} M_k(\fn, \cO), \]
and we can define $f \in M_k(\fn, \prod_{\psi \in \Psi} \cO)$ to be the form corresponding to the tuple $(f_\psi)$ under this identification.  Then:
\begin{align}
c(\fm, f) &=  (c(\fm, f_\psi))_{\psi} = (\psi( c(\fm)))_\psi  \label{e:cmdef} \\
c_\lambda(0, f) &= (c_\lambda(0, f_\psi))_{\psi} = (\psi(c_\lambda(0)))_\psi. \label{e:cldef}
\end{align}
The elements on the right side of (\ref{e:cmdef}) and (\ref{e:cldef}) are the images of $c(\fm)$ and $c_\lambda(0)$ under the embedding (\ref{e:rembed}), respectively. By (\ref{e:jinject}), it follows that $f \in M_k(\fn, R)$.  The fact that $f \in M_k(\fn, R, \bpsi)$ now follows since $\psi(f) = f_\psi \in M_k(\fn, \cO, \psi)$.
\end{proof}

\begin{remark}  As this proof shows, a group ring valued modular form $f$ over  $R = R_\Psi$ can be viewed as encoding the {\em family} of modular forms $\{\psi(f)\}$ indexed by the characters $\psi \in \Psi$.   The fact that the Fourier coefficients of $f$ lie in $R$, rather than just $\prod_{\psi \in \Psi} \cO$, implies that the forms $\psi(f)$ satisfy certain $p$-adic congruences.
\end{remark}

The Hecke operators $T_\fq$ for prime $\fq \nmid \fn$, $U_\fq$ for prime $\fq \mid \fn$, and $S(\fm)$ for $(\fm, \fn) =1 $ preserve the space $M_k(\fn, R, \bpsi)$.  To see this, note first that $S(\fm)$ acts by $\bpsi(\fm) \in R^*$.  For prime $\fq \nmid \fn$, we have the formulas:
\begin{align}
c(\fm, f|_{T_\fq}) &= \sum_{ \fa \mid (\fm, \fq)} \bpsi(\fa) \N\fa^{k-1} c(\fm \fq/\fa^2, f), \label{e:heckec} \\
c_\lambda(0, f|_{T_\fq}) &= c_{\lambda \fq^{-1}}(0, f) + \bpsi(\fq) \N\fq^{k-1} c_{\lambda \fq}(0, f), \nonumber
\end{align}
which show that $T_\fq$ preserves $M_k(\fn, R, \bpsi)$.  In fact the same formulas hold for $U_\fq$ when $\fq \mid \fn$ with the convention that $\bpsi(\fq) = 0$, implying that $U_\fq$ preserves $M_k(\fn, R, \bpsi)$ as well.

\subsubsection{Ordinary forms}

The ring $R = R_\Psi$ is a complete local $\Z_p$-algebra. Let $\fP = \gcd(p^\infty, \fn)$ denote the $p$-part of $\fn$.
Let $\fp \mid \fP$.  Following Hida, we define the ordinary operators
\[ e_\fp^{\ord} = \lim_{n \rightarrow \infty} U_\fp^{n!}, \qquad e_\fP^{\ord} = \prod_{\fp \mid \fP} e_{\fp}, \qquad e_p^{\ord} = \prod_{\fp \mid p} e_{\fp}. \]
For any character $\psi\colon G_\fn^+ \longrightarrow R^*$ we have the spaces of $p$-ordinary forms:
\[ M_k(\fn, R, \bpsi)^{p\text{-ord}} = e_{p}^{\ord}  M_k(\fn, R, \bpsi), \qquad S_k(\fn, R, \bpsi)^{p\text{-ord}} = e_p^{\ord}  S_k(\fn, R, \bpsi). \]
By construction, the operator $U_\fp$ acts invertibly on the space of $p$-ordinary modular forms for each $\fp \mid p$.

\subsection{Eisenstein series} \label{s:eisenstein}

Let $k \ge 1$ be an odd integer and let $\psi\colon G \longrightarrow \cO^*$ be a totally odd character. Let $S$ be a  finite set of places of   $F$.  We denote by $\psi_S$ the character $\psi$ viewed as having modulus divisible by all finite primes in $S$, i.e.\ $\psi(\fa) = 0$ if $\fa$ is divisible by a prime in $S$.  If $\fn$ is the product of $\cond(\psi)$ and the  primes in $S$ not dividing $\cond(\psi)$, then there is an ``$S$-stabilized" Eisenstein series  $E_k(\psi_S, 1) \in M_k(\fn, \Frac\cO, \psi)$
with Fourier coefficients given by 
\[ c(\fm, E_k(\psi_S, 1)) = \sum_{\fr \mid \fm} \psi_S\left(\frac{\fm}{\fr}\right) \N\fr^{k-1}. \]
If $k > 1$ and $\fn \neq 1$, we have $c_\lambda(0, E_k(\psi_S, 1) ) = 0$.
If $k > 1$ and $\fn = 1$, we have
\[ c_\lambda(0, E_k(\psi_S, 1) ) = \frac{1}{2^n}\psi^{-1}(\lambda) L(\psi^{-1}, 1-k). \]
If $k=1$, then
\begin{equation} 
c_{\lambda}(0, E_1(\psi_S, 1)) = 2^{-n} \cdot \begin{cases}  L(\psi_S, 0) & \text{ if } \fn \neq 1, \\
L(\psi, 0) + \psi^{-1}(\lambda) L( \psi^{-1}, 0) & \text{ if } \fn = 1.
\end{cases}
\end{equation}
The Eisenstein series $E_k(\psi_S,1)$ is an eigenvector for the Hecke operators with eigenvalues given by the corresponding Fourier coefficients, i.e.
\begin{itemize}
\item $T_\fl$ acts as $\psi(\fl)+ \N\fl^{k-1}$ for $\fl \nmid \fn$ 
\item $U_\fl$ acts as $\N\fl^{k-1}$ for $\fl \mid \fn$.
\end{itemize}

These Eisenstein series nearly fit into group ring families: the non-constant coefficients belong to the group ring but the constant terms only lie in the fraction field. Let $R$ denote a character group ring associated to $G$, and let $\bpsi\colon G_\fn^+ \longrightarrow G \longrightarrow R$ denote the canonical character.  Let $S$ denote the set of primes dividing $\fn$.
There is an Eisenstein series $E_k(\bpsi, 1)$ whose specialization at a character $\psi$  is $E_k(\psi_S, 1)$. 
The group ring form $E_k(\bpsi, 1)$ has $q$-expansion coefficients
\begin{equation} \label{e:ekgp}
c(\fm, E_k(\bpsi, 1)) = \sum_{\substack{\fr \mid \fm \\ (\fm/\fr, \fn)=1}} \bpsi\left( \frac{\fm}{\fr}\right) \N\fr^{k-1} \in R.
\end{equation}
Let $H_{\fn}$ denote the narrow ray class field of conductor $\fn$, so $\Gal(H_\fn/F) \cong G_\fn^+$. 
Let $\Theta_S$ denote the image in $\Frac(R)$ of $\Theta^{H_{\fn}/F}_{S} \in \Q[G_\fn^+]$, the $S$-depleted Stickelberger element for the extension $H_{\fn}/F$.
The constant terms of $E_k(\bpsi, 1)$ lie in $\Frac(R)$ and are given by
\[ c_\lambda(0, E_k(\bpsi, 1)) = 
2^{-n} \cdot \begin{cases} 
0 & \text{if } k > 1 \text{ and } \fn \neq 1 \\
\bpsi^{-1}(\lambda) \Theta_S(1-k) & \text{if } k > 1 \text{ and } \fn = 1 \\
 \Theta_S^\#(0) & \text{if } k=1 \text{ and  } \fn \neq 1, \\
\Theta_S^\#(0) + \bpsi^{-1}(\lambda) \Theta_S(0) & \text{if } k = 1 \text{ and } \fn = 1.
\end{cases}
\]

\section{Construction of  cusp forms} \label{s:cusp}

In this section we  apply certain results appearing in the papers \cite{dka}, \cite{dks} to construct a group ring valued cusp form congruent to an Eisenstein series.  First we note the following elementary lemma.
Following the notation of \S\ref{s:hmf}, for the rest of the paper an unadorned $\Theta$ denotes $\Theta_{\Sigma, \Sigma'}$.

\begin{lemma} \label{l:largem}
 For sufficiently large positive integers $m$, the Stickelberger element $\Theta^\#$ divides $p^m$ in $R$.
\end{lemma}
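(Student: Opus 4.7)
The plan is to prove that $R/(\Theta^\#)$ is finite (as a $\Z_p$-module), from which the statement follows by taking $m$ to be the exponent of this finite $p$-group, since the cokernel being killed by $p^m$ means precisely that $p^m \in \Theta^\# R$.

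First I would exploit the fact that $R = R_\Psi$ is a finite-index $\Z_p$-subalgebra of $\prod_{\psi \in \Psi} \cO$. Since the cokernel of $R \hookrightarrow \prod_\psi \cO$ is a finite $\Z_p$-module, it is killed by some power of $p$, so after inverting $p$ we obtain an identification
\[
R[1/p] \;\cong\; \prod_{\psi \in \Psi} K, \qquad K = \Frac(\cO),
\]
which in particular is a finite product of fields.

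Next I would verify that $\Theta^\#$ maps to a unit in $R[1/p]$. Under the embedding above, the $\psi$-component of $\Theta^\#$ is $\psi(\Theta^\#) = L_{\Sigma,\Sigma'}(\psi, 0)$, and it suffices to check that each such value is nonzero. Writing
\[
L_{\Sigma, \Sigma'}(\psi, 0) \;=\; L(\psi, 0) \!\!\!\prod_{v \in \Sigma \setminus S_\infty} \!\!\! (1 - \psi(v)) \prod_{v \in \Sigma'} (1 - \psi(v)\N v),
\]
the first family of Euler factors is nonzero by the very definition of the trivial zero free quotient $\Psi$ from \S\ref{s:replacer} (which ensures $\psi(v) \neq 1$ for $v \in \Sigma$); the second family is nonzero because $\psi(v)$ is either $0$ (if ramified) or a root of unity, and $\N v > 1$; and $L(\psi, 0) \neq 0$ for each odd character $\psi \in \Psi$ since the product of these values over all odd $\psi$ computes, up to $T$-smoothing and an Artin induction, the nonzero quantity $\#\Cl^T(H)^-$ appearing in Lemma~\ref{l:cnf}. (Alternatively, one simply recalls that $\Theta^\#$ is a non-zerodivisor in $R$, as was noted in \S\ref{s:replacer}, which is equivalent to the same unit statement in $R[1/p]$.)

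From unit-ness in $R[1/p]$, we deduce $R/(\Theta^\#) \otimes_{\Z_p} \Q_p = 0$, so $R/(\Theta^\#)$ is $p$-power torsion. Since $\cO$ is a finitely generated $\Z_p$-module, so is $\prod_\psi \cO$ and hence its finite-index subring $R$; therefore $R/(\Theta^\#)$ is a finitely generated $\Z_p$-module that is $p$-power torsion, hence finite, and killed by $p^m$ for some $m \geq 0$. This gives $p^m \in \Theta^\# R$ as required. There is no real obstacle here — the content of the argument is entirely packaged in the passage to the trivial zero free quotient $R$ made in \S\ref{s:replacer}, which guarantees that $\Theta^\#$ avoids the non-unit components.
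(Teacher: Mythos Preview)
Your proof is correct and follows essentially the same approach as the paper: both arguments reduce to the fact that $\Theta^\#$ is a non-zerodivisor in $R$ (established in \S\ref{s:replacer}) together with the observation that $R$ is finite over $\Z_p$, so that $\Theta^\#$ becomes a unit after inverting $p$. The paper phrases this more tersely by noting $\Frac(R) = R \otimes_{\Z_p} \Q_p$, so $p^m(\Theta^\#)^{-1} \in R$ for $m$ large; your finiteness-of-$R/(\Theta^\#)$ formulation is equivalent, and your detailed verification of the nonvanishing of each $L_{\Sigma,\Sigma'}(\psi,0)$ is unnecessary once you invoke the non-zerodivisor property already recorded in \S\ref{s:replacer}.
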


\begin{proof}  Recall that in \S\ref{s:replacer} we replaced $R$ by a character group ring quotient in which $\Theta^\#$ is not a zerodivisor.
Therefore we can consider $(\Theta^\#)^{-1} \in \Frac(R).$  For sufficiently large $m$, we have $z = p^m (\Theta^\#)^{-1} \in R$, since $\Frac(R) = R \otimes_{\Z_p} \Q_p$.  Therefore $\Theta^\# \cdot z = p^m$ with $z \in R$ as desired.
\end{proof}

For the remainder of the paper, we work modulo $p^m$ where $m$ satisfies Lemma~\ref{l:largem}.  We will later require $m$ to be sufficiently large to satisfy other conditions that will arise.
We also choose a positive integer $k$ such that $k \equiv 1\pmod{(p-1)p^N}$ for a sufficiently large integer $N > m$.  These notions of ``sufficiently large" will become apparent as we use them in our proofs.

\subsection{Construction of modified Eisenstein series} \label{s:modified}

We introduce some notation.
Let $\psi$ be a totally odd character of $G_F$ and $k \ge 1$ an odd integer. Write 
 $\fc_0 = \cond(\psi)$. Let $T$ be a finite nonempty set of primes not dividing $\fc_0$. Write \[  \ft = \prod_{\fl \in T} \fl. \]
  Let $\fP$ be an integral ideal coprime to $\ft$. Put \[\fc = \lcm(\fc_0, \fP), \qquad \fn = \fc \ft. \]

The following construction is of central importance in this paper.  We define a certain linear combination $W_k(\psi_\fP, 1)$ of Eisenstein series that satisfies the following:
\begin{itemize}
\item The $T$-smoothed $L$-function $L_{S_\fP, T}(\psi, 0)$ appears in the constant terms at infinity $c_\lambda(0)$. 
\item  The constant terms at all $p$-unramified cusps vary nicely with respect to the weight.  More precisely, there is a single constant $c_0 = L(\psi^{-1}, 1-k)/L(\psi^{-1}, 0)$, independent of cusp, such that the ratio of the normalized constant terms at these cusps for $W_k$ and $W_1$ is $p$-adically very close to $c_0$. 
\item The forms $W_k$ interpolate into a group ring family.
\end{itemize}

In a fixed level $\fn$, the Fourier coefficients (and constant terms at non-infinite cusps) of Eisenstein series behave differently for characters of different conductor dividing $\fn$.  One miracle regarding the forms $W_k(\psi_\fP, 1)$ is that there is a single group ring form that interpolates all of these forms regardless of the conductor of $\psi$.  For example, this is not the case for the unmodifed Eisenstein series $E_k(\psi_\fP, 1)$---note that the group ring form defined in (\ref{e:ekgp}) interpolates the $S$-depleted forms $E_k(\psi_S, 1)$ rather than the primitive forms $E_k(\psi, 1)$.  Our construction is only robust enough to handle primes in $S$ not dividing $p$, which is why we still deplete at $\fP$.

\begin{definition}
With notation as above, let
\[ 
{W}_k(\psi_{\fP}, 1) = \sum_{\fm \mid \ft}  \mu(\fm)\psi(\fm) \N\fm^k E_k(\psi_{\fP},1)|_{\fm}  \in M_k(\fn, \psi).
\]
\end{definition}

The goal of the remainder of this section is to compute the constant terms of ${W}_k(\psi_{\fP}, 1)$ at all cusps for  odd $k \ge 1$.
 Let $\cA = (A, \lambda)$ with $A =\begin{psmallmatrix}a & b\\ c & d \end{psmallmatrix}  \in \GL_2^+(F)$ and $\lambda \in \Cl^+(F)$.
 
 \begin{definition}
   If  $[\cA] \in C_0(\fc_0, \fn)$ and $\fm\mid \ft \fP$, we put $J_{\fm}$ (respectively, $J_{\fm}^c$) for the set of  prime divisors $\fl \mid \fm$ such that $[\cA] \in C_0(\fl, \fn)$ (respectively $[\cA] \in C_\infty(\fl, \fn)$). \end{definition}

\medskip

The following result is proved in \cite{dka}*{Theorem 4.7}.  Here $\tau(\psi)$ denotes the Gauss sum defined in \cite{dka}*{Definition 4.1}.

\begin{prop} \label{p:enop} Let $\fm$ be a divisor of $\ft$. The normalized constant terms $c_{\cA}(0)$ of  $E_k(\psi_{\fP}, 1)|_{\fm}$ as an element of $M_k(\fn, \psi)$ are as follows.
\begin{itemize}
\item Suppose that $k > 1$.
\begin{itemize}
\item The constant term at $\cA$ is zero if $[\cA] \not\in C_0(\fc, \fn)$.
\item  If    $[\cA] \in C_0(\fc, \fn)$, the normalized constant term at $\cA$  is 
\begin{equation} \label{e:ctepsi}
\frac{\tau(\psi)}{{\N\fc_0}^{k}} \sgn(\N(-c)) \psi(\fc_\cA) \frac{L(\psi^{-1}, 1-k)}{2^n}  \prod_{\fp \mid \fP} \left(1- \frac{\psi(\fp)}{\N\fp^k} \right)  \prod_{\fl \in J_\fm} \N\fl^{-k} \prod_{\fl \in J_{\fm}^c} \psi^{-1}(\fl). 
\end{equation}
\end{itemize}
\item Suppose $k=1$. 
\begin{itemize}
\item The constant term at $\cA$ is zero if $[\cA] \notin C_0(\fc, \fn) \cup C_{\infty}(\fc_0, \fn)$.
\item If $[\cA] \in C_0(\fc, \fn) \cap C_{\infty}(\fc_0, \fn)$ (note that this can happen only when $\fc_0 =1$), then the constant term at $\cA$ is 
\begin{align*}
& \tau(\psi) \psi^{-1}(\fd \ft_{\lambda} \fb_\cA) \frac{L(\psi^{-1}, 0)}{2^n}\prod_{\fp \mid \fP} \left( 1 - \frac{\psi(\fp)}{\N\fp}\right)\prod_{\fl \in J_{\fm}} \N \fl^{-1} \prod_{\fl \in J_{\fm}^c} \psi^{-1}(\fl) \\
+ & \psi(\fb_{\cA}) \frac{L(\psi, 0)}{2^n} \prod_{\fp \mid \fP} (1 - \N\fp^{-1}) \prod_{\fl \in J_{\fm}}(\psi(\fl) \N\fl)^{-1}.
\end{align*}
\item If $[\cA] \in C_\infty(\fc_0, \fn) \setminus C_0(\fc, \fn)$, the normalized constant term at $\cA$ is
\[
 \psi(\fb_{\cA}) \frac{L(\psi, 0)}{2^n} \prod_{\fp \mid \fP} (1 - \N\fp^{-1}) \prod_{\fl \in J_{\fm}}(\psi(\fl) \N\fl)^{-1}.
\]
\item If $[\cA] \in C_0(\fc, \fn) \setminus C_{\infty}(\fc_0, \fn)$, then the normalized constant term at $\cA$ is 
\[
\frac{\tau(\psi)}{\N \fc_0} \sgn(N(-c)) \psi(c_{\cA}) \frac{L(\psi^{-1}, 0)}{2^n} \prod_{\fp \mid \fP} \left(1 - \frac{\psi(\fp)}{\N\fp} \right) \prod_{\fl \in J_{\fm}} \N\fl ^{-1} \prod_{\fl \in J_{\fm}^c} \psi^{-1}(\fl). 
\]

\end{itemize}
\end{itemize}
\end{prop}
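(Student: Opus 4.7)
The plan is to compute the constant terms in two stages: first for the raw Eisenstein series $E_k(\psi_\fP, 1)$ at an arbitrary cusp, and then track how the level-raising operator $|\fm$ transforms these constants. For the first stage, I would start from the adelic or matrix-coset description of $E_k(\psi_\fP, 1)$ and compute the weight-$k$ slash action $E_k(\psi_\fP,1)|_{A,k}$ by unfolding the sum and extracting the Fourier coefficient of $e_F(0 \cdot z)$. The constant-term vanishing when $[\cA] \notin C_0(\fc,\fn)$ is then a direct consequence of the conductor condition on $\psi$: terms in the Eisenstein sum are annihilated at $\cA$ unless every prime dividing $\fc$ is prime to $\fc_\cA$. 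When the cusp lies in $C_0(\fc,\fn)$, the standard formula expresses the result as a product of a Gauss sum $\tau(\psi)$, a normalization $\N\fc_0^{-k}$, a sign $\sgn(\N(-c))$, a twist $\psi(\fc_\cA)$, the completed $L$-value $L(\psi^{-1}, 1-k)/2^n$, and an Euler product over $\fp \mid \fP$ that reflects the $\fP$-depletion.

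For the second stage, the operator $|\fm$ is defined on $q$-expansions at the infinity cusp by $c(\fa, f|\fm) = c(\fa/\fm, f)$ when $\fm \mid \fa$ (and zero otherwise), with the shift $c_\lambda(0, f|\fm) = c_{\lambda\fm}(0, f)$ on constant terms. To transfer this to a general cusp $\cA$, I would write the matrix identity $A \cdot \diag(\fm, 1)$ in the form $\gamma \cdot A' \cdot \diag(\mu_1,\mu_2)$ with $\gamma \in \Gamma_{\lambda'}(\fn)$ and $A'$ a representative of a cusp at the appropriate level; this is a direct Bruhat-type computation used systematically in Shimura's formalism. The key local invariant is the prime-by-prime interaction: for each $\fl \mid \fm$, the cusp $[\cA]$ lies in exactly one of $C_0(\fl, \fn)$ or $C_\infty(\fl, \fn)$ since $\fl \in T$ appears in $\fn$ to the first power. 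The calculation gives that the effect on the constant term at $\cA$ is multiplication by $\N\fl^{-k}$ in the $C_0(\fl,\fn)$ case and by $\psi^{-1}(\fl)$ in the $C_\infty(\fl,\fn)$ case, producing the $J_\fm, J_\fm^c$ products in the statement. Combining the two stages yields the $k>1$ formula.

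The main obstacle is the weight $1$ case, and here is where I expect to spend most of the care. In weight $1$, the Eisenstein series $E_1(\psi_\fP,1)$ has a functional-equation-induced ``dual'' contribution to its constant terms, arising from the fact that $L(\psi^{-1}, s)$ and $L(\psi, s)$ are on equal footing at $s=0$. This manifests as the appearance of $L(\psi, 0)$ constant terms at cusps in $C_\infty(\fc_0, \fn)$, in addition to the familiar $L(\psi^{-1}, 0)$ contribution at $C_0(\fc, \fn)$. I would handle this by decomposing $E_1(\psi_\fP,1)$ as the sum of the analytic continuation of the usual Eisenstein sum and the dual Eisenstein sum, computing the constant terms of each separately at every cusp class, and then observing that the two contributions can both be nonzero only when $\fc_0 = 1$ (accounting for the overlap case $C_0(\fc,\fn) \cap C_\infty(\fc_0,\fn)$). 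Tracking how the $|\fm$ operator scales each of these two contributions gives the $\psi(\fl)^{-1}\N\fl^{-1}$ factor in the $C_\infty$-part (versus $\N\fl^{-1}$ in the $C_0$-part), yielding the asymmetric-looking products in the weight $1$ formulas.

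The secondary obstacles are bookkeeping: keeping the sign $\sgn(\N(-c))$ consistent with the odd-weight slash action under the cusp equivalence $A \sim \gamma A \diag(a,d)$, and maintaining the precise normalization of $\tau(\psi)$ and of $c_\cA(0, -)$ throughout the level change. These are mechanical but error-prone; the cleanest organization is to first prove invariance of both sides under cusp-equivalence, then verify the formulas for a canonical set of cusp representatives indexed by the divisors of $\fc$ and $\ft$ as enumerated in \cite{dka}.
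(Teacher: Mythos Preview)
Your approach is essentially correct and is the standard one, but note that the paper does not prove this proposition at all: it simply quotes the result from \cite{dka}*{Theorem~4.7}, where the detailed computation is carried out. The two-stage strategy you outline---first computing the constant terms of $E_k(\psi_\fP,1)$ at all cusps via the adelic or explicit-matrix description, then tracking the effect of the level-raising operator $|_\fm$ prime by prime according to whether $[\cA]$ lies in $C_0(\fl,\fn)$ or $C_\infty(\fl,\fn)$---is exactly the structure of the argument in that reference, and your identification of the weight-$1$ subtlety (the dual contribution from $L(\psi,0)$ at cusps in $C_\infty(\fc_0,\fn)$) is the correct diagnosis of why that case splits into four subcases.
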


\begin{remark}  When considering the expression $\psi(\fc_\cA)$, note that $[\cA] \in C_0(\fc, \fn)$ implies that $\gcd(\fc_\cA, \fc) = 1$.
Note also that one can only have $c = 0$ with $[\cA] \in C_0(\fc, \fn)$ if $\fc = 1$.  In this case, by convention the expression
$\sgn(\N c) \psi(\fc_\cA)$ in (\ref{e:ctepsi}) denotes $\psi^{-1}(\ft_\lambda \fd \fb_\cA) = \psi^{-1}(\ft_\lambda \fd (a))$, which is the value obtained if one replaces $A$ by a left $\Gamma_{1,\lambda}(\fn)$-equivalent matrix for which $c \neq 0$.  This convention will remain in force in the sequel.  More generally, if $\psi$ is a totally odd character of conductor 1, any expression $\sgn(\N x) \psi(x \fm)$ should be interpreted as $\psi(\fm)$ even if $x=0$.
\end{remark}

\begin{prop} \label{p:nop} Suppose that $k > 1$ is odd. The modular form ${W}_k(\psi_{\fP}, 1)$
has constant terms 0 outside the cusps in $C_0(\fc, \fn)$. For a cusp $[\cA] \in C_0(\fc, \fn)$,  the normalized constant term $c_{\cA}(0, {W}_k(\psi_\fP, 1))$ 
equals \[ \ft
\frac{\tau(\psi)}{{\N\fc_0}^{k}} \sgn(\N(-c)) \psi(\fc_\cA)  \frac{L(\psi^{-1}, 1-k)}{2^n}  \prod_{\fp \mid \fP}\left(1 - \frac{\psi(\fp)}{\N\fp^k} \right) \prod_{\fl \in J_{\ft}} (1- \psi(\fl)) \prod_{\fl \in J_{\ft}^c} (1 - \N\fl^k).
\]
\end{prop}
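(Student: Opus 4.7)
The plan is that this is essentially a bookkeeping computation: expand $W_k(\psi_{\fP},1)$ by its definition, apply Proposition~\ref{p:enop} to each summand $E_k(\psi_\fP,1)|_\fm$, and then evaluate the resulting sum over $\fm \mid \ft$ using the fact that $\ft$ is squarefree. First I would dispose of the vanishing statement: Proposition~\ref{p:enop} (case $k>1$) asserts that every summand $E_k(\psi_\fP,1)|_\fm$, viewed in $M_k(\fn,\psi)$, has normalized constant term $0$ at cusps $[\cA] \notin C_0(\fc,\fn)$. Since $W_k(\psi_\fP,1)$ is a linear combination of these forms at the common level $\fn = \fc\ft$, its constant term at such a cusp vanishes as well.

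Next, fix $[\cA] \in C_0(\fc,\fn)$. Proposition~\ref{p:enop} gives
\[
c_\cA(0, E_k(\psi_\fP,1)|_\fm) \;=\; C \cdot \prod_{\fl \in J_\fm} \N\fl^{-k}\prod_{\fl \in J_\fm^c}\psi^{-1}(\fl),
\]
where
\[
C \;=\; \frac{\tau(\psi)}{\N\fc_0^{\,k}}\,\sgn(\N(-c))\,\psi(\fc_\cA)\,\frac{L(\psi^{-1},1-k)}{2^n}\prod_{\fp \mid \fP}\Bigl(1-\frac{\psi(\fp)}{\N\fp^k}\Bigr)
\]
is visibly independent of $\fm$, and where $J_\fm$ (resp.\ $J_\fm^c$) consists of the prime divisors $\fl \mid \fm$ with $[\cA] \in C_0(\fl,\fn)$ (resp.\ $[\cA] \in C_\infty(\fl,\fn)$). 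The key observation is that $J_\fm = \{\fl \in J_\ft : \fl \mid \fm\}$ and $J_\fm^c = \{\fl \in J_\ft^c : \fl \mid \fm\}$, so both products are multiplicative functions of the squarefree argument $\fm$.

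Plugging into the definition of $W_k(\psi_\fP,1)$ yields
\[
c_\cA(0, W_k(\psi_\fP,1)) \;=\; C \cdot \sum_{\fm \mid \ft} \mu(\fm)\psi(\fm)\N\fm^{k}\prod_{\fl \in J_\fm} \N\fl^{-k}\prod_{\fl \in J_\fm^c}\psi^{-1}(\fl).
\]
Because $\ft$ is squarefree and every factor inside the sum is multiplicative in $\fm$, the sum factors as an Euler-type product over the primes $\fl \mid \ft$; for $\fl \in J_\ft$ the local factor is $1 - \psi(\fl)\N\fl^{k}\cdot \N\fl^{-k} = 1 - \psi(\fl)$, while for $\fl \in J_\ft^c$ it is $1 - \psi(\fl)\N\fl^{k}\cdot \psi^{-1}(\fl) = 1 - \N\fl^{k}$. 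Collecting everything gives precisely the claimed formula.

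There is no real obstacle here beyond careful bookkeeping. The one point that deserves care is the reindexing $J_\fm \leftrightarrow J_\ft$ (and $J_\fm^c \leftrightarrow J_\ft^c$), which is what allows the Möbius sum to split as an Euler product; this relies crucially on $\ft$ being squarefree and on each prime $\fl \mid \ft$ being either in $J_\ft$ or $J_\ft^c$ (which follows because $\fl$ is prime and $\fc_\cA$ is either divisible by $\fl$ or coprime to $\fl$). The sign convention for $\sgn(\N(-c))\psi(\fc_\cA)$ when $c=0$ discussed in the remark after Proposition~\ref{p:enop} is needed to ensure $C$ is well-defined across all cusps.
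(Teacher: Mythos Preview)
Your proof is correct and follows essentially the same approach as the paper's own proof: apply Proposition~\ref{p:enop} termwise to the defining linear combination, pull out the common factor independent of $\fm$, and evaluate the remaining M\"obius sum as an Euler product over the primes $\fl \mid \ft$. The paper's write-up is terser (it simply records the sum $\sum_{\fm \mid \ft} \mu(\fm) \prod_{\fl \in J_\fm^c} \N\fl^k \prod_{\fl \in J_\fm} \psi(\fl)$ and states its factored form), but the computation is identical to yours.
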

\begin{proof} This is an application of Proposition~\ref{p:enop}.  It is clear that the constant terms of $W_k(\psi_{\fP}, 1)$ are 0 outside $C_0(\fc, \fn)$. Consider $[\cA] \in C_0(\fc, \fn)$. The normalized constant term of $W_k(\psi_{\fP}, 1)$ at $\cA$ is 
\[
\frac{\tau(\psi)}{{\N\fc_0}^{k}} \sgn(\N(-c)) \psi(\fc_\cA) \frac{L(\psi^{-1}, 1-k)}{2^n} \prod_{\fp \mid \fP} \left( 1 - \frac{\psi(\fp)}{\N\fp^k}\right)  \sum_{\fm \mid \ft} \mu(\fm) \prod_{\fl \in J_{\fm}^c} \N\fl^k \prod_{\fl \in J_{\fm}} \psi(\fl).
\]
The result follows from the observation
\[
\sum_{\fm \mid \ft} \mu(\fm) \prod_{\fl \in J_{\fm}^c} \N\fl^k \prod_{\fl \in J_{\fm}} \psi(\fl) = \prod_{\fl \in J_{\ft}} (1- \psi(\fl)) \prod_{\fl \in J_{\ft}^c} (1 - \N\fl^k).
\]
\end{proof}

For $k=1$, the results of Propositions~\ref{p:enop} and~\ref{p:nop} must be slightly modified.
Even though we will only require the constant terms of $W_1(\psi_{\fP}, 1)$ at $C_\infty(\fP, \fn)$, for completeness we calculate its constant terms at all cusps.  The proof of the following proposition is another direct application of \cite{dka}*{Theorem 4.7}, similar to that of Proposition~\ref{p:nop}.

\begin{prop} \label{p:wwt1}
The normalized constant terms of 
$ {W}_1(\psi_{\fP}, 1) \in M_1(\fn, \psi)$ are as follows.
\begin{itemize}
\item Assume $\fc_0 = 1$.  
\begin{itemize}
\item If $[\cA] \in C_0(\fP, \fn) \cap C_{\infty}(\ft, \fn)$,  the normalized constant term at $\cA$ is
\begin{align*}
& \psi(\fb_\cA)\frac{L_{S_{\infty}, T}(\psi, 0)}{2^n}  \prod_{\fp \mid \fP}(1- \N\fp^{-1})  \\
&+ \tau(\psi) \psi^{-1}(\fd \ft_{\lambda} \fb_\cA) \frac{L(\psi^{-1}, 0)}{2^n}  \prod_{\fp \mid \fP} \left(1 - \frac{\psi(\fp)}{\N\fp} \right) \prod_{\fl \in T} (1- \N\fl).
\end{align*}
\item If $[\cA] \in C_0(\fP, \fn)$ but  $[\cA] \not\in C_{\infty}(\ft, \fn)$,  the normalized constant term at $\cA$ is 
\[
\tau(\psi) \psi^{-1}(\fd \ft_{\lambda} \fb_\cA) \frac{L(\psi^{-1}, 0)}{2^n}   \prod_{\fp \mid \fP} \left(1 - \frac{\psi(\fp)}{\N\fp} \right)\prod_{\fl \in J_{\ft}} (1- \psi(\fl)) \prod_{\fl \in J_{\ft}^c} (1- \N\fl).
\]
\item If $[\cA] \not\in C_0(\fP, \fn)$ and $[\cA] \in C_{\infty}(\ft, \fn)$,  the normalized constant term at $\cA$ is 
\[
\psi(\fb_\cA)\frac{L_{S_{\infty}, T}(\psi, 0)}{2^n}  \prod_{\fp \in J_{\fP}}(1- \N\fp^{-1}) \prod_{\fp \in J_{\fP}^c} (1- \psi(\fp)).
\]
\item If $[\cA] \not \in C_0(\fP, \fn)$ and $[\cA] \not\in C_\infty(\ft, \fn)$,  the normalized constant term at $\cA$ is 0.
\end{itemize}
\item Assume $\fc_0 \neq 1$. 
\begin{itemize}
\item The  constant terms are 0 outside the cusps in $C_{\infty}(\fc_0 \ft, \fn) \cup C_0(\fc, \fn)$.
\item If $[\cA] \in C_{\infty}(\fc_0\ft, \fn)$, the normalized constant term at $\cA$  is 
\[
\sgn(\N a)\psi^{-1}(a \fb_\cA^{-1})\frac{L_{S_{\infty},T}(\psi, 0)}{2^n}  \prod_{\fp \in J_{\fP}}(1- \N\fp^{-1}) \prod_{\fp \in J_{\fP}^c} (1 - \psi(\fp)).
\]
\item If $[\cA] \in C_0(\fc, \fn)$,  the normalized constant term at $\cA$ is 
\[
\frac{\tau(\psi)}{\N\fc_0} \sgn(\N(-c)) \psi(\fc_\cA)  \frac{L(\psi^{-1}, 0)}{2^n} \prod_{\fp \mid \fP} \left(1 - \frac{\psi(\fp)}{\N\fp}\right) \prod_{\fl \in J_{\ft}} (1- \psi(\fl))\prod_{\fl \in J_{\ft}^c} (1- \N\fl).
\]
\end{itemize}
\end{itemize}
\end{prop}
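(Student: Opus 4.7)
The proof will proceed by directly applying Proposition~\ref{p:enop} to the defining expansion
\[
W_1(\psi_\fP, 1) = \sum_{\fm \mid \ft} \mu(\fm) \psi(\fm) \N\fm \cdot E_1(\psi_\fP, 1)|_\fm
\]
and then collecting contributions cusp by cusp, in essentially the same fashion as Proposition~\ref{p:nop} was deduced in the case $k > 1$. The new phenomenon at $k = 1$ is that $E_1(\psi_\fP, 1)|_\fm$ has nonzero constant terms at cusps in \emph{both} $C_0(\fc, \fn)$ and $C_\infty(\fc_0, \fn)$, and both contributions must be tracked simultaneously.

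Fixing a cusp $[\cA]$, I would split the constant term of each summand $E_1(\psi_\fP, 1)|_\fm$ into its \emph{$C_0$-part} (the Gauss-sum term with leading factor $L(\psi^{-1}, 0)$) and its \emph{$C_\infty$-part} (the term with leading factor $L(\psi, 0)$). As a function of $\fm$, each part is a product of local factors indexed by the prime divisors $\fl \mid \fm$, where the shape of the local factor at $\fl$ depends on whether $\fl \in J_\fm$ or $\fl \in J_\fm^c$, i.e. on whether $\fl \nmid \fc_\cA$ or $\fl \mid \fc_\cA$. By multiplicativity of the M\"obius sum, the expression $\sum_{\fm \mid \ft} \mu(\fm)\psi(\fm)\N\fm \cdot (\text{local factors})$ factors as a product over primes $\fl \in T$ of Euler-type expressions of the form $1 - \psi(\fl)\N\fl \cdot a_\fl$.

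Evaluating these local expressions yields the two desired product formulas. On the $C_0$-side the computation reproduces $\prod_{\fl \in J_\ft}(1 - \psi(\fl)) \prod_{\fl \in J_\ft^c}(1 - \N\fl)$, exactly as in the proof of Proposition~\ref{p:nop}. On the $C_\infty$-side the local factor at $\fl \in J_\fm$ is $(\psi(\fl)\N\fl)^{-1}$, which produces $1 - \psi(\fl)\N\fl \cdot (\psi(\fl)\N\fl)^{-1} = 0$; hence the $C_\infty$ contribution vanishes unless $J_\ft = \emptyset$, that is, unless $\ft \mid \fc_\cA$, which is $[\cA] \in C_\infty(\ft, \fn)$. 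In the surviving case the product collapses to $\prod_{\fl \in T}(1 - \psi(\fl)\N\fl)$, which combines with the $L(\psi, 0)$ prefactor to give precisely $L_{S_\infty, T}(\psi, 0)$. Finally, assembling the pieces and distinguishing the case $\fc_0 = 1$ (in which $C_\infty(\fc_0, \fn) = \cusps(\fn)$, so the $C_0$- and $C_\infty$-supports can overlap) from the case $\fc_0 \neq 1$ (in which the supports are disjoint and the $C_\infty$-support further shrinks to $C_\infty(\fc_0\ft, \fn)$) yields the four-case, respectively three-case, statement.

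The main obstacle is careful bookkeeping rather than any new idea: one must track (a) the $\sgn(\N(-c))$ and Gauss-sum prefactors appearing in the $C_0$-contribution, (b) the local $\fp$-factors at primes $\fp \mid \fP$, which according to Proposition~\ref{p:enop} take the form $(1 - \N\fp^{-1})$ or $(1 - \psi(\fp))$ according as $\fp \in J_\fP$ or $\fp \in J_\fP^c$ for $[\cA]$, and (c) the convention from the remark preceding Proposition~\ref{p:nop} for interpreting $\sgn(\N c)\psi(\fc_\cA)$ when $c = 0$, which is precisely the situation arising in the overlap case $\fc_0 = 1$, $[\cA] \in C_0(\fP, \fn) \cap C_\infty(\ft, \fn)$.
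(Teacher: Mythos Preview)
Your proposal is correct and takes essentially the same approach as the paper: the paper's proof of Proposition~\ref{p:wwt1} is literally the one-line remark that it is ``another direct application of \cite{dka}*{Theorem 4.7}, similar to that of Proposition~\ref{p:nop},'' and your sketch fleshes out exactly that computation, correctly isolating the new weight-one feature (simultaneous $C_0$- and $C_\infty$-contributions) and the M\"obius-sum vanishing on the $C_\infty$-side unless $[\cA] \in C_\infty(\ft,\fn)$. One small caveat: the $J_\fP$-dependent $\fp$-factors you invoke in your point (b) are not quite visible in Proposition~\ref{p:enop} as transcribed in this paper (where the $C_\infty$-term carries a uniform $\prod_{\fp\mid\fP}(1-\N\fp^{-1})$), so for that case-split you will want to appeal to the full statement of \cite{dka}*{Theorem~4.7} rather than the restatement here.
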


\subsection{Linear combinations cuspidal modulo high powers of $p$}

A result of Hida (see \cite{wilesrep}*{Lemma 1.4.2}) states the existence of a Hilbert modular form congruent to 1 modulo $p$.  In 
\cite{dks}*{Theorem 10.7} Silliman proves the following slightly refined version of this result.

\begin{theorem} \label{t:hida}  For positive integers $k \equiv 0 \pmod{(p-1)p^N}$ with $N$ sufficiently large, 
there is a modular form $V_{k} \in M_k(1, \Z_p, 1)$ such that 
$V_{k} \equiv 1 \pmod{p^m}$, and such that the normalized constant term  $c_{\cA}(0,V_k)$
 for each cusp $[\cA] \in \cusps(1)$ is congruent to $1 \pmod{p^m}$.
\end{theorem}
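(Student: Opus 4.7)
The plan exploits a symmetry peculiar to level $\fn = 1$ with trivial character. By the constant-term formula recorded in Section \ref{s:eisenstein} applied with $\psi = 1$, $S = \emptyset$, $\fn = 1$, the Eisenstein series $E_k(1,1)$ has normalized constant term $2^{-n}\zeta_F(1-k)$ at the cusp $(I,\lambda)$ for every $\lambda \in \Cl^+(F)$; Proposition \ref{p:enop} applied at the remaining cusps of $\cusps(1)$ shows that the normalized constant term differs only by a factor $\sgn(\N(-c))$, which is controllable (and can be killed by squaring) for even weight. The resulting uniformity of the magnitude of the normalized constant term across every cusp in level $1$ is what makes the all-cusps statement achievable; this feature fails badly in higher levels or for non-trivial nebentypus.

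First I would pick a small integer $k_0$ that is a suitable multiple of $p-1$ and form the scalar-normalized Eisenstein series
\[ V_{k_0}' := \frac{2^n}{\zeta_F(1-k_0)} E_{k_0}(1,1), \]
which satisfies $c_\cA(0, V_{k_0}') = \pm 1$ at every cusp, with non-constant Fourier coefficients $2^n \sigma_{k_0-1}(\fm)/\zeta_F(1-k_0)$. By the Deligne--Ribet analogue of von Staudt--Clausen---a consequence of the pole at $s=1$ of the $p$-adic zeta function $\zeta_{F,p}$---one can arrange that $\zeta_F(1-k_0)$ has $p$-adic valuation at most $-m_0$ for some $m_0 \ge 1$, so that these non-constant coefficients are $p$-integral and divisible by $p^{m_0}$. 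Taking $V_{k_0} := (V_{k_0}')^2 \in M_{2k_0}(1, \Z_p, 1)$ eliminates the sign ambiguity and yields $V_{k_0} \equiv 1 \pmod{p^{m_0}}$ with $c_\cA(0, V_{k_0}) = 1$ exactly at every cusp.

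Finally, I would take $V_k := V_{k_0}^{k/(2k_0)}$ whenever $2k_0 \mid k$ (the remaining cases $k \equiv 0 \pmod{(p-1)p^N}$ are handled by auxiliary multiplication with a small-weight form constructed analogously). Since normalized Fourier expansions at each cusp multiply, $c_\cA(0, V_k) = c_\cA(0, V_{k_0})^{k/(2k_0)} = 1$ exactly for every $[\cA] \in \cusps(1)$; and iterating $(1 + p^a x)^p \equiv 1 \pmod{p^{a+1}}$ shows that the non-constant Fourier coefficients of $V_{k_0}^{p^N}$ are divisible by $p^{m_0 + N}$, so choosing $N$ with $m_0 + N \geq m$ produces the required $V_k \equiv 1 \pmod{p^m}$. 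The main obstacle is the initial $V_{k_0}$-step: securing simultaneously the negative $p$-adic valuation of $\zeta_F(1-k_0)$ and the cusp-uniformity of constant terms. Both are handled in \cite{dks} by combining the explicit constant-term formulas at all cusps (rather than only the cusps above infinity, which is all that Wiles's classical lemma \cite{wilesrep}*{Lemma 1.4.2} controls) with the Deligne--Ribet $p$-adic interpolation of Hecke $L$-values.
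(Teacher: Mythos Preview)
The paper does not give a proof here; the result is quoted from \cite{dks}*{Theorem 10.7}. Your outline follows the classical normalized-Eisenstein route, and the later steps---multiplicativity of normalized constant terms at every cusp under products of forms, and amplifying $V_{k_0}\equiv 1\pmod{p^{m_0}}$ to $V_{k_0}^{p^r}\equiv 1\pmod{p^{m_0+r}}$---are correct.

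The gap is in the analytic input. You justify $\ord_p(\zeta_F(1-k_0))<0$ by invoking ``the pole at $s=1$ of the $p$-adic zeta function $\zeta_{F,p}$'', but for a general totally real $F$ the existence of that pole is exactly Leopoldt's conjecture: by Colmez's residue formula the residue equals the $p$-adic regulator times nonzero constants, and if the regulator vanishes then in the Iwasawa presentation $\zeta_{F,p}=G/H$ (with $G\in\Z_p[[T]]$ and $H$ vanishing simply at the point $s=1$) one has $H\mid G$ in $\Z_p[[T]]$, so $\zeta_{F,p}\in\Z_p[[T]]$ and every special value $\zeta_F(1-k)$ for $k>1$, $(p-1)\mid k$ lies in $\Z_p$. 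In that scenario no $k_0$ with your property exists and the construction collapses. The unconditional argument underlying Hida's lemma bypasses zeta-values: the Hasse invariant is a geometrically defined mod-$p$ form of weight $p-1$ and level $1$ whose $q$-expansion at every cusp is $1$ (it trivializes on the Tate object at each boundary component), and any characteristic-zero lift supplies $V_{p-1}\equiv 1\pmod p$ with the correct behaviour at all cusps; your power-raising step then finishes. A secondary point: Proposition~\ref{p:enop} is stated for odd $k$ and level $\fn=\fc\ft$ with $T$ nonempty, so it does not literally cover even weight at level $1$; the fact you need is correct but must be taken from \cite{dka} directly rather than from that proposition.
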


The congruence $V_k \equiv 1 \pmod{p^m}$ means that $c(\fm, V_{k}) \equiv 0 \pmod{p^m}$ for all nonzero ideals $\fm$ and $c_\lambda(0, V_k) \equiv 1 \pmod{p^m}$ for all $\lambda \in \Cl^+(F)$.

\bigskip

Let $\fP$ denote the $p$-part of the ideal $\fn$, i.e. $\fP = \gcd(p^\infty, \fn)$. By establishing the surjectivity of the total constant term map, Silliman proved the following result in 
 \cite{dks}*{Theorem 10.9}.

\begin{theorem}  \label{t:auxforms} 
 For sufficiently large odd positive integers $k$, there exists a group ring valued form $G_{k}(\bpsi) \in M_k(\fn, R, \bpsi)$ with normalized constant term at $\cA$  for $[\cA] \in C_{\infty}(\fn)$ equal to $\sgn(\N a)\bpsi^{-1}(a \fb_A^{-1})$, and constant term at cusps  $[\cA] \in C_\infty(\fP, \fn) \setminus C_\infty(\fn)$ equal to $0$. 
\end{theorem}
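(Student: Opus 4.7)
The plan is to construct $G_k(\bpsi)$ by direct appeal to Silliman's surjectivity theorem for the total constant term map on group ring valued Hilbert modular forms of sufficiently large weight.  That is, one first identifies the prescribed cusp data as an element of a certain $R$-module of admissible tuples of constant terms, and then lifts it to a form in $M_k(\fn, R, \bpsi)$ by invoking the surjectivity.

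First I would check that the prescription $[\cA] \mapsto \sgn(\N a)\bpsi^{-1}(a \fb_\cA^{-1})$ on $C_\infty(\fn)$, extended by zero to the rest of $C_\infty(\fP, \fn)$, is well defined on cusp classes.  Invariance under left multiplication by $\Gamma_\lambda(\fn)$ and the required transformation under the stabilizer of $\infty$ (governed by the nebentypus $\bpsi$) both follow from a direct calculation, using the fact that for $[\cA] \in C_\infty(\fn)$ the entry $a$ is a unit modulo $\fn$, so that $\bpsi(a)$ is well defined.  The quantity $\sgn(\N a)\bpsi^{-1}(a\fb_\cA^{-1})$ is visibly an element of $R$ (and not just of $\prod_{\psi \in \Psi} \cO$), which is essential below.

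The main step, and the principal obstacle, is to realize this $R$-valued boundary function as the tuple of constant terms of a genuine group ring valued form.  Character by character this is classical: for each $\psi \in \Psi$ one can cook up a form in $M_k(\fn, \cO, \psi)$ whose constant terms agree with the $\psi$-specialization of the prescribed data by combining Eisenstein series $E_k(\psi_\fP, 1)|\fm$ via M\"obius inversion along divisors $\fm \mid \fn/\cond(\psi)$, exactly as in the construction of $W_k(\psi_\fP, 1)$ in \S\ref{s:modified} and with constant terms computed by the analogue of Proposition~\ref{p:enop}.  These pieces, however, a priori only assemble into a form in $\prod_{\psi \in \Psi} M_k(\fn, \cO, \psi)$, whereas we need $R$-integral Fourier coefficients.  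Passing from $\prod_\psi \cO$-integrality to $R$-integrality is the nontrivial content of Silliman's surjectivity theorem \cite{dks}*{Corollary 7.28}, which I would invoke as a black box: since the prescribed constant-term data is $R$-valued, surjectivity produces a lift to a form in $M_k(\fn, R, \bpsi)$.

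The hypothesis that $k$ be sufficiently large is imposed precisely to guarantee this surjectivity (for small $k$ there are low-weight obstructions coming from relations among Eisenstein series) and to ensure that the auxiliary form $V_k$ of Theorem~\ref{t:hida}, used inside Silliman's framework to rigidify integrality modulo $p^m$, is available.  With these ingredients in hand the theorem follows immediately.
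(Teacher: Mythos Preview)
Your proposal is correct and matches the paper's treatment: the paper does not give its own proof but simply cites this as Silliman's result \cite{dks}*{Theorem~10.9}, proved there precisely by establishing surjectivity of the total constant term map on group ring valued Hilbert modular forms of sufficiently large weight. One small point: the result you want from \cite{dks} is the surjectivity statement (Theorem~10.9 there), whereas Corollary~7.28 is the characterization of $R$-integral forms used for (\ref{e:jinject}); otherwise your outline is on target.
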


\begin{remark}  We repeat our convention that if $\fn = 1$ the expression $\sgn(\N a)\bpsi^{-1}(a \fb_A^{-1})$ is understood to equal $\bpsi(\fb_\cA)$ even if $a = 0$.
\end{remark}

\subsubsection{Case 1: $\cond(H/F)$ not divisible by primes above $p$}

For the remainder of the article, we impose the condition $k >1$. 

\begin{prop} \label{p:wnop} 
 Suppose that $\fn$ is not divisible by any primes above $p$.
  Fix a positive integer $m$.  For positive $k \equiv 1 \pmod{(p-1)p^N}$ with $N$ sufficiently large, and each character $\psi$ of $R$, the form
 \[ f_k(\psi) = {W}_1(\psi,1)V_{k-1} - \frac{L(\psi^{-1}, 0)}{L(\psi^{-1}, 1-k)}{W}_k(\psi, 1) - \frac{L_{S_\infty, T}(\psi, 0)}{2^n}  G_k(\psi)
\]
has normalized constant terms at all cusps divisible by $p^{m}$.  Here $G_k(\psi)$ denotes the specialization of the group ring form $G_k(\bpsi)$ in Theorem~\ref{t:auxforms} at the character $\psi$.
\end{prop}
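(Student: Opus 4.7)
The plan is a direct cusp-by-cusp computation: at every $[\cA]\in\cusps(\fn)$, I would compute $c_{\cA}(0,f_k(\psi))$ using Propositions~\ref{p:nop} and~\ref{p:wwt1} for the constant terms of $W_k(\psi,1)$ and $W_1(\psi,1)$, Theorem~\ref{t:hida} for $V_{k-1}$, and Theorem~\ref{t:auxforms} for $G_k(\psi)$, and verify $p^m$-divisibility. Two structural simplifications are in force from the outset: since $\fn$ has no prime factors above $p$, we have $\fP=1$ in the notation of \S\ref{s:modified}, so $\fc=\fc_0$ and every ``Euler factor at $\fP$'' in the cited propositions is an empty product; and since the reduction in \S\ref{s:nop} has removed primes above $p$ from $T$, the norms $\N\fc_0$ and $\N\fl$ (for $\fl\in T$) all lie in $\Z_p^{\times}$.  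Choosing $k\equiv 1\pmod{(p-1)p^N}$ with $N$ much larger than $m$, Fermat--Euler then gives $\N\fc_0^{k-1}\equiv 1$ and $\N\fl^{k-1}\equiv 1\pmod{p^{N+1}}$.

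I would split $\cusps(\fn)$ according to membership in $C_\infty(\fn)$ and in $C_0(\fc,\fn)$.  Cusps outside $C_\infty(\fn)\cup C_0(\fc,\fn)$ contribute zero to all three terms of $f_k(\psi)$ by the support statements in the cited results, so there is nothing to check.  On $C_\infty(\fn)\setminus C_0(\fc,\fn)$ (nonempty only if $\fc_0\neq 1$), $W_k(\psi,1)$ vanishes and comparing Proposition~\ref{p:wwt1} with Theorem~\ref{t:auxforms} shows that the constant term of $W_1(\psi,1)$ at $\cA$ equals the constant term of $\frac{L_{S_\infty,T}(\psi,0)}{2^n}G_k(\psi)$ on the nose (both equal $\sgn(\N a)\psi^{-1}(a\fb_\cA^{-1})\frac{L_{S_\infty,T}(\psi,0)}{2^n}$); together with $c_\cA(0,V_{k-1})\equiv 1\pmod{p^m}$ from Theorem~\ref{t:hida}, the three contributions cancel modulo~$p^m$.

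On $C_0(\fc,\fn)\setminus C_\infty(\fn)$, $G_k(\psi)$ vanishes, and Propositions~\ref{p:nop} and~\ref{p:wwt1} produce parallel constant-term formulas for $W_k$ and $W_1$ differing only by the replacements $\N\fc_0\mapsto\N\fc_0^k$, $L(\psi^{-1},0)\mapsto L(\psi^{-1},1-k)$, and $(1-\N\fl)\mapsto(1-\N\fl^k)$ for $\fl\in J_\ft^c$. After multiplying $W_k$ by the prefactor $L(\psi^{-1},0)/L(\psi^{-1},1-k)$ the $L$-values match, and what remains is the ratio
\[
 \N\fc_0^{k-1}\prod_{\fl\in J_\ft^c}\frac{1-\N\fl^k}{1-\N\fl}=\N\fc_0^{k-1}\prod_{\fl\in J_\ft^c}\sum_{i=0}^{k-1}\N\fl^i.
\]
Each inner sum $\sum_{i=0}^{k-1}\N\fl^i$ equals $k+\sum_{j\geq 1}\binom{k}{j+1}(\N\fl-1)^{j-1}\cdot(\N\fl-1)$, and so by the Fermat--Euler congruences on $k$ and on the binomial coefficients $\binom{k}{j+1}$ (which for $j\ge 1$ are divisible by $(p-1)p^N/(j+1)!$), is congruent to $k\equiv 1\pmod{p^{N}}$ regardless of whether $\N\fl\equiv 1\pmod p$. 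Combined with $\N\fc_0^{k-1}\equiv 1\pmod{p^{N+1}}$, the displayed ratio is $\equiv 1\pmod{p^m}$, so the $W_1V_{k-1}$ and $\frac{L(\psi^{-1},0)}{L(\psi^{-1},1-k)}W_k$ contributions cancel modulo $p^m$.  Finally, the overlap case $[\cA]\in C_\infty(\fn)\cap C_0(\fc,\fn)$, which forces $\fc_0=1$, uses the two-term $W_1$ formula from Proposition~\ref{p:wwt1}; its first summand cancels against the $G_k(\psi)$ contribution exactly as in the previous paragraph, while its second summand cancels against $W_k$ by the argument just given.

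The main obstacle, beyond the cusp bookkeeping, is ensuring $p$-integrality of the relevant normalized constant terms so that the congruence $V_{k-1}\equiv 1\pmod{p^m}$ indeed propagates to a $p^m$-divisible multiplicative error after multiplication by the constant term of $W_1(\psi,1)$. This $p$-integrality follows term by term from the Deligne--Ribet/Cassou-Nogu\`es integrality of the $T$-smoothed Stickelberger elements paired with the explicit $(1-\N\fl)$ factors appearing in the constant-term formulas of \S\ref{s:modified}, which play the role of the missing $T$-smoothing factors on the relevant $L$-values.
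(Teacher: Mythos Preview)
Your approach is essentially the same as the paper's: a cusp-by-cusp comparison of the constant-term formulas from Propositions~\ref{p:nop} and~\ref{p:wwt1}, Theorem~\ref{t:hida}, and Theorem~\ref{t:auxforms}, partitioning according to membership in $C_\infty(\fn)$ and $C_0(\fc,\fn)$. Your case analysis and cancellations match the paper's.

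One comment on your final paragraph. At the cusps in $C_\infty(\fn)$, you do need $p$-integrality of $L_{S_\infty,T}(\psi,0)/2^n$, and your appeal to Deligne--Ribet is correct there. But at the cusps in $C_0(\fc,\fn)$ your claimed $p$-integrality of the prefactor is neither needed nor cleanly justified: the prefactor involves the unsmoothed value $L(\psi^{-1},0)$, and the factors $(1-\psi(\fl))$, $(1-\N\fl)$ that appear are \emph{not} the $T$-smoothing factors $(1-\psi^{-1}(\fl)\N\fl)$ required by Deligne--Ribet, so your ``play the role of the missing $T$-smoothing factors'' is not quite right. The paper sidesteps this entirely: the prefactor is \emph{fixed} (independent of $k$), while the bracket $c_\cA(0,V_{k-1})-\N\fc_0^{1-k}\prod_{\fl\in J_\ft^c}(1-\N\fl^k)/(1-\N\fl)$ tends to $0$ $p$-adically as $N\to\infty$; hence the product is divisible by $p^m$ once $N$ is large enough. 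You can simply replace your last paragraph with this observation.

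Also, your parenthetical that $\binom{k}{j+1}$ is ``divisible by $(p-1)p^N/(j+1)!$'' is imprecise as stated; the clean argument is that $\sum_{i=0}^{k-1}\N\fl^i-1=\N\fl\cdot(\N\fl^{k-1}-1)/(\N\fl-1)$, and then either $1-\N\fl\in\Z_p^\times$ and $\N\fl^{k-1}\equiv 1\pmod{p^{N+1}}$, or $p\mid\N\fl-1$ and lifting-the-exponent gives $v_p\!\left((\N\fl^{k-1}-1)/(\N\fl-1)\right)=v_p(k-1)\ge N$.
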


\begin{proof}
The constant terms of $W_k(\psi, 1), W_1(\psi, 1), V_{k-1},$ and $G_k(\psi)$ are given explicitly by Proposition~\ref{p:nop}, Proposition~\ref{p:wwt1}, Theorem~\ref{t:hida}, and Theorem~\ref{t:auxforms}, respectively.

Since $\fP = 1$, the form $G_k(\psi)$ has constant term equal to  $\sgn(\N a)\psi^{-1}(a \fb_\cA^{-1})$  for $[\cA] \in C_{\infty}(\fn)$ and equal to 0 if $[\cA] \not \in C_{\infty}(\fn)$.

 With $\fc = \fc_0 = \cond(\psi)$, it is clear that the constant terms of $f_k(\psi)$ are 0 outside $C_0(\fc, \fn) \cup C_{\infty}(\fc, \fn)$, since the same is true for
${W}_1(\psi, 1)$,  ${W}_k(\psi, 1)$, and $ G_k(\psi)$. 

To evaluate the constant terms at other cusps, we first assume that $\fc \neq 1$. If $[\cA] \in C_{\infty}(\fc, \fn) \setminus C_\infty(\fn)$, then  ${W}_1 (\psi, 1)$,  ${W}_k(\psi, 1)$, and $ G_k(\psi)$ all have constant term 0 at $\cA$, hence $f_k(\psi)$ does as well.  If $[\cA] \in C_\infty(\fn)$, the constant term of $W_k(\psi, 1)$ at $\cA$ is 0.  Meanwhile, the constant term of $W_1(\psi, 1)$ at $\cA$ is \[
\sgn(\N a)\psi^{-1}(a \fb_\cA^{-1})\frac{L_{S_{\infty},T}(\psi, 0)}{2^n} 
\]
 and the constant term of $V_{k-1}$ is 1  modulo $p^{m}$ for positive $k \equiv 1 \pmod{(p-1)p^N}$ with $N$ sufficiently large. The constant term of $G_k(\psi)$ at $\cA$ is $\sgn(\N a)\psi^{-1}(a \fb_\cA^{-1})$.  It follows that the constant term of $f_k(\psi)$ at $\cA$ is 0 mod $p^{m}$. Therefore the constant term of  $f_k(\psi)$ is 0 mod $p^{m}$ at all cusps in $C_{\infty}(\fc, \fn)$. 

Next we consider the case  $[\cA] \in C_0(\fc, \fn)$, still maintaining the assumption $\fc \neq 1$. 
As in \S\ref{s:modified} let $J \subset T$ denote the set of primes $\fl \in T$ such that $[\cA]$ belongs to $C_0(\fl, \fn)$. The normalized constant term of  $f_k(\psi)$ at $\cA$ is 
\begin{equation} \label{e:jprod}
\begin{aligned}
\tau(\psi)\sgn(\N(-c))\psi(\fc_\cA) & \frac{L(\psi^{-1}, 0)}{2^n}\prod_{\fl \in J} (1- \psi(\fl)) \times \frac{\prod_{\fl \notin J} (1 - \N \fl)}{\N \fc_0}\\
& \left[c_{\cA}(0, V_{k-1}) - \N\fc_0^{1-k} \prod_{\fl \not \in J} \frac{1 - \N\fl^k}{1 - \N\fl}\right].
\end{aligned}
\end{equation}
The expression in brackets in (\ref{e:jprod}) $p$-adically approaches $0$: indeed for positive $k \equiv 1 \pmod{(p-1)p^N}$ with $N$ increasing the terms $c_{\cA}(0, V_{k-1})$ and $\N\fc_0^{1-k} \prod_{\fl \not \in J} \frac{1 - \N\fl^k}{1 - \N\fl}$ both approach 1. It follows that for $N$ sufficiently large,  (\ref{e:jprod}) is divisible by $p^{m}$.

Next we consider the case $\fc = 1$, so $\fn = \ft$.  
If $[\cA] \not\in C_\infty(\fn)$, the constant term of $G_k(\fn)$ at $\cA$ is 0, and 
 the normalized constant term of $f_k(\psi)$  at $\cA$ is again given by (\ref{e:jprod}). 
 If $[\cA] \in C_\infty(\fn)$, the normalized constant term of $f_k(\psi)$  at $\cA$ is (\ref{e:jprod}) plus the expression
 \[ \psi(\fb_A)\frac{L_{S_\infty, T}(\psi, 0)}{2^n} \left[ c_{\cA}(0, V_{k-1}) - 1 \right]. \]
  The result follows.
\end{proof}

\subsubsection{Case 2: $\cond(H/F)$ is divisible by some primes above $p$}

Let $\psi$ be a character of conductor $\fc_0$, with $\fc_0$ possibly divisible by some primes above $p$. Let $\fl_1, \ldots, \fl_d$ be distinct primes not dividing $\fc_0 p$. Let $\fn = \fc_0 \fl_1 \cdots \fl_d \fP'$,  with $\fP'$ the product of powers of some (but not necessarily all) primes dividing $p$. Let $\fP$ denote the $p$-part of $\fn$ i.e. $\fP = \gcd(\fn, p^{\infty})$ and put $\fc = \lcm(\fc_0, \fP)$.  We assume in this section that $\fP \neq 1$. Let $S_\fP$ denote the union of $S_\infty$ and the set of primes dividing $\fP$.

\begin{prop}  \label{p:cuspw}  For positive integers $k \equiv 1 \pmod{(p-1)p^N}$ with $N$ sufficiently large,
the form
\[  
{W}_1(\psi_\fP, 1)V_{k-1} - \frac{L_{S_\fP, T}(\psi, 0)}{2^n}  G_k(\psi)
\]
has normalized constant terms at all cusps in $C_\infty(\fP, \fn)$ divisible by $p^m$. \end{prop}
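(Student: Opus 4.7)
My plan is to mimic the proof of Proposition~\ref{p:wnop} but restricted to cusps in $C_\infty(\fP, \fn)$, invoking the explicit constant-term computations of Proposition~\ref{p:wwt1} for $W_1(\psi_\fP, 1)$, Theorem~\ref{t:hida} for $V_{k-1}$, and Theorem~\ref{t:auxforms} for $G_k(\psi)$. I would first record two clean structural facts: (i) because $\fP \mid \fc$ and $\fP \neq 1$, a cusp cannot satisfy both $\fP \mid \fc_\cA$ and $\gcd(\fc_\cA, \fc) = 1$, so $C_\infty(\fP, \fn) \cap C_0(\fc, \fn) = \varnothing$; (ii) from the factorization $\fn = \fc_0 \ft \fP'$ and the identity $\fP' \mid \fP$, one has $C_\infty(\fP,\fn) \cap C_\infty(\fc_0\ft,\fn) = C_\infty(\fn)$. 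Moreover, for any $[\cA] \in C_\infty(\fP,\fn)$, every prime $\fp \mid \fP$ divides $\fc_\cA$, so the set $J_\fP$ of Proposition~\ref{p:wwt1} is empty; thus the Euler-type product $\prod_{\fp \in J_\fP^c}(1 - \psi(\fp))$ appearing there simplifies to $\prod_{\fp \mid \fP}(1 - \psi(\fp))$, which converts the $L_{S_\infty, T}(\psi, 0)$ factor into $L_{S_\fP, T}(\psi, 0)$.

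Next I would split the cusps $[\cA] \in C_\infty(\fP, \fn)$ into two cases. In Case (a), $[\cA] \in C_\infty(\fn)$: the relevant branch of Proposition~\ref{p:wwt1} is $C_\infty(\fc_0 \ft, \fn)$ when $\fc_0 \neq 1$, and the branch $C_0(\fP,\fn)^c \cap C_\infty(\ft,\fn)$ when $\fc_0 = 1$. In both situations, after applying the observation about $J_\fP$, the normalized constant term of $W_1(\psi_\fP, 1)$ at $\cA$ equals
\[
\sgn(\N a)\,\psi^{-1}(a\fb_\cA^{-1})\,\frac{L_{S_\fP, T}(\psi, 0)}{2^n},
\]
where the conductor-$1$ convention $\sgn(\N a)\psi^{-1}(x\fb_\cA^{-1})=\psi(\fb_\cA)$ reconciles the two sub-cases. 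On the other hand, the constant term of $G_k(\psi)$ at $\cA \in C_\infty(\fn)$ is exactly $\sgn(\N a)\psi^{-1}(a\fb_\cA^{-1})$ by Theorem~\ref{t:auxforms}. Since the product $W_1(\psi_\fP, 1) V_{k-1}$ has constant term at $\cA$ equal to the constant term of $W_1(\psi_\fP, 1)$ multiplied by $c_\cA(0, V_{k-1})$, the constant term of the form in the proposition at $\cA$ equals
\[
\sgn(\N a)\,\psi^{-1}(a\fb_\cA^{-1})\,\frac{L_{S_\fP, T}(\psi, 0)}{2^n}\bigl(c_\cA(0, V_{k-1}) - 1\bigr),
\]
which lies in $p^m \cO$ for $k \equiv 1 \pmod{(p-1)p^N}$ with $N$ large, by Theorem~\ref{t:hida}.

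In Case (b), $[\cA] \in C_\infty(\fP, \fn) \setminus C_\infty(\fn)$: by observation (ii) above this forces $[\cA] \notin C_\infty(\fc_0 \ft, \fn)$, and by observation (i) it also forces $[\cA] \notin C_0(\fc, \fn)$. Hence Proposition~\ref{p:wwt1} gives that the constant term of $W_1(\psi_\fP, 1)$ at $\cA$ vanishes outright, and Theorem~\ref{t:auxforms} gives that the constant term of $G_k(\psi)$ at $\cA$ vanishes as well since $[\cA] \notin C_\infty(\fn)$. The combination therefore has constant term $0$ at $\cA$, with no congruence needed. The main bookkeeping obstacle is purely organizational: one must track four sub-cases of Proposition~\ref{p:wwt1} (two depending on whether $\fc_0 = 1$, and two depending on whether the cusp lies in $C_\infty(\fn)$), verify that the simplifications from $J_\fP = \varnothing$ yield a single unified expression for the constant term of $W_1(\psi_\fP, 1)$, and finally check that this expression exactly matches $L_{S_\fP, T}(\psi, 0)/2^n$ times the constant term of $G_k(\psi)$, so that only the factor $c_\cA(0, V_{k-1}) - 1$ remains.
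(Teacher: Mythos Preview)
Your proposal is correct and follows essentially the same approach as the paper's proof: the paper compresses your observations (i) and (ii) into the single statement $(C_{\infty}(\fc_0\ft, \fn) \cup C_0(\fc, \fn)) \cap C_\infty(\fP, \fn) = C_{\infty}(\fn)$, and then computes the constant term of $W_1(\psi_\fP,1)$ at $[\cA]\in C_\infty(\fn)$ directly as $\sgn(\N a)\,\psi^{-1}(a\fb_\cA^{-1})\,L_{S_\fP,T}(\psi,0)/2^n$, just as you derive via $J_\fP=\varnothing$. Your more explicit case-splitting on whether $\fc_0=1$ is unnecessary but not wrong, since the conductor-$1$ convention unifies the formulas exactly as you note.
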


\begin{proof} By Proposition \ref{p:wwt1}, the constant terms of $W_1(\psi_{\fP}, 1)$ are supported on $C_{\infty}(\fc_0\ft, \fn) \cup C_0(\fc, \fn)$. As $\fP \neq 1$, we have 
\[
(C_{\infty}(\fc_0\ft, \fn) \cup C_0(\fc, \fn) ) \cap C_\infty(\fP, \fn) = C_{\infty}(\fn).
\]
By definition, the constant terms of $G_k(\psi)$ are also 0 at cusps in $C_\infty(\fP, \fn) \setminus C_{\infty}(\fn)$.

Suppose now that $[\cA] \in C_{\infty}(\fn)$.  By definition, the normalized constant term of $G_k(\psi)$ at $\cA$ is $\sgn(\N a) \psi^{-1}(a \fb_{\cA}^{-1})$.
The normalized constant term of $V_{k-1}$ is congruent to 1 modulo $p^m$ for $N$ sufficiently large.  By Proposition \ref{p:wwt1}, the normalized constant term of $W_1(\psi_{\fP}, 1)$ at $\cA$ is
\[
\sgn(\N a) \psi^{-1}(a \fb_{\cA}^{-1}) \frac{L_{S_{\fP}, T}(\psi, 0)}{2^n}.
\]
The result follows.
\end{proof}

\subsection{Group ring valued forms}

We now interpolate the construction of the previous section into a group ring family.  Recall our ring $R$ defined in \S\ref{s:replacer}, a quotient of a connected component $\cO[G_p]_\chi$ associated to a totally odd faithful character $\chi$ of $G'$.  
The level of our forms will be
\[ \fn = \cond(H/F) \prod_{\fl \in T} \fl \]
and as above we let \[ \fP = p\text{-part of } \fn = \gcd(p^\infty, \fn). \]

\begin{lemma} \label{l:conductor}
Let $\psi$ be a character of $R$, and let $\fc_0 = \cond(\psi)$.  Then we can write \[ \fn = \fc_0 \fl_1 \cdots \fl_d \fP' \] where
the $\fl_i$ are distinct primes not dividing $\fc_0 p$ and $\fP'$ is divisible only by primes above $p$. 
\end{lemma}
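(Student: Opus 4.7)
The plan is to unpack the definition of $\fn$ and $R$, and then invoke the standard fact that tame inertia lives inside the prime-to-$p$ part $G'$ of $G = \Gal(H/F)$ to control the prime-to-$p$ part of $\cond(\psi)$.

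First I would recall: by construction $\fn = \cond(H/F)\prod_{\fl \in T}\fl$, and after the reduction in \S\ref{s:nop} no prime in $T$ lies above $p$; moreover, primes in $T$ are unramified in $H/F$ and so are coprime to $\cond(H/F)$. Also, $R$ is a quotient of $\cO[G_p]_\chi$, so every character $\psi$ of $R$ lifts to a character of $G$ with $\psi|_{G'} = \chi$.

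The key observation is that for any prime $\fq \nmid p$ of $F$, the inertia subgroup $I_\fq \subset G$ is tamely ramified, hence has prime-to-$p$ order, hence is contained in $G'$. Since $\psi|_{G'} = \chi$, we have $\psi|_{I_\fq} = \chi|_{I_\fq}$. Thus $\psi$ is ramified at $\fq$ if and only if $\chi$ is, and in that case the conductor exponent of $\psi$ at $\fq$ equals that of $\chi$, which equals $1$ by tameness. In particular, the conductor exponent of $\psi$ at any prime $\fq \nmid p$ agrees with the $\fq$-exponent of $\cond(H/F)$ (since the latter is the maximum of the $\fq$-exponents over all characters of $G$). Consequently, writing $\fc_0 = \fc_0^{(p)} \fc_0^{(\bar p)}$ and $\cond(H/F) = \cond(H/F)^{(p)} \cdot \cond(H/F)^{(\bar p)}$ for the $p$ and prime-to-$p$ parts, we have $\fc_0^{(\bar p)} = \cond(H/F)^{(\bar p)}$.

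With this in hand, define $\fP' := \cond(H/F)^{(p)}/\fc_0^{(p)}$, which is a genuine integral ideal divisible only by primes above $p$ (since $\fc_0 \mid \cond(H/F)$ forces $\fc_0^{(p)} \mid \cond(H/F)^{(p)}$). Then $\cond(H/F)/\fc_0 = \fP'$, so $\fn = \fc_0\,\fP' \prod_{\fl \in T} \fl$. Listing the elements of $T$ as $\fl_1,\ldots,\fl_d$ completes the proof: the $\fl_i$ are distinct and coprime to $p$ by construction of $T$, and coprime to $\fc_0$ because $\fc_0 \mid \cond(H/F)$ while primes in $T$ are coprime to $\cond(H/F)$.

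There is no real obstacle here — the lemma is pure bookkeeping, and the only nontrivial ingredient is the tame-inertia remark comparing the prime-to-$p$ parts of $\cond(\psi)$ and $\cond(H/F)$.
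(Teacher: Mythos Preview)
Your argument has a genuine error at the ``key observation.'' You claim that for $\fq \nmid p$ the inertia group $I_\fq \subset G$ has prime-to-$p$ order and hence lies in $G'$. This is false: tame inertia at $\fq$ has order prime to the residue characteristic $\ell$ of $\fq$, not prime to $p$. Since $\ell \neq p$, the tame quotient can perfectly well contain $p$-power elements (e.g.\ $F=\Q$, $\ell \equiv 1 \pmod p$, $H$ the degree-$p$ subfield of $\Q(\zeta_\ell)$: then $I_\ell = G \cong \Z/p \subset G_p$). So $\psi|_{I_\fq}$ need not equal $\chi|_{I_\fq}$, and your asserted equality $\fc_0^{(\bar p)} = \cond(H/F)^{(\bar p)}$ can fail: a prime $\fl \nmid p$ with $I_\fl \subset G_p$ is ramified in $H$ but $\chi$ is unramified there, and $\psi$ may or may not be. Your subsidiary claim that the conductor exponent of $\chi$ at $\fq \nmid p$ is always $1$ is also wrong, since the residue characteristic $\ell$ of $\fq$ may divide $\#G'$.

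What actually needs to be shown is only the weaker statement that the prime-to-$p$ part of $\fn/\fc_0$ is squarefree, i.e.\ if $\fl \nmid p$ and $\fl^n \mid \cond(H/F)$ with $n \ge 2$, then $\fl^n \mid \fc_0$. The paper does this via the decomposition $\psi = \psi_p \chi$: since $\Gal(H_p/F) = G_p$ is a $p$-group and $\fl \nmid p$, the prime $\fl$ is tamely ramified in $H_p$, so $\cond_\fl(H_p/F) \mid \fl$; hence $\fl^n \mid \cond(H'/F)$, and since $\chi$ is faithful on $G' = \Gal(H'/F)$ this forces $\fl^n \mid \cond(\chi)$. As $\cond_\fl(\psi_p) \mid \fl$ for the same reason, $\fl^n \mid \cond(\psi)$. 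The faithfulness of $\chi$ (from the reduction in \S4) is the ingredient your argument is missing.
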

\begin{proof}
We must show that if $\fl$ is a prime not above $p$ such that $\fl^n \mid \cond(H/F)$ with $n \ge 2$ then $\fl^n \mid \cond(\psi)$.  
Let $H_p \subset H$ denote the fixed field of $G'$ and $H'$ the fixed field of $G_p$, so \[ 
\Gal(H_p/F) = G_p \text{ and  } \Gal(H'/F) = G'. 
\]  
The field $H$ is the compositum of $H_p$ and $H'$.
Since $G_p$ is a $p$-group and $\fl \nmid p$, the prime $\fl$ is at most tamely ramified in $H_p$.  Therefore if $\fl^n \mid \cond(H/F)$ with $n \ge 2$ then  $H'/F$ must have conductor divisible by $\fl^n$.  Since $\chi$ is a faithful character of $G'$, it follows that
$\fl^n \mid \cond(\chi)$.
Any character $\psi$ of $R$ can be written $\psi = \psi_p \chi$ where $\psi_p$ is a character of $G_p$.  As already noted, the $\fl$-part of the conductor of $\psi_p$ is at most $\fl$.  Therefore $\fl^n \mid \cond(\psi)$ as desired.
\end{proof}

\begin{prop} For all odd $k \geq 1$, the unique form $W_k(\bpsi, 1)  \in M_k(\fn, \Frac(R), \bpsi)$ that specializes to $W_k(\psi_{\fP}, 1)$ for all characters $\psi$ of $R$ has non-constant term $q$-expansion coefficients $c(\fm, W_k(\bpsi, 1))$ lying in $R$.
\end{prop}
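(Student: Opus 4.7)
The plan is to reduce the claim to an explicit computation of Fourier coefficients and then verify integrality character by character.

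First I would unpack the definition. Combining
$W_k(\psi_\fP, 1) = \sum_{\fm \mid \ft} \mu(\fm) \psi(\fm) \N\fm^k\, E_k(\psi_\fP, 1)|_\fm$
with the expansion
$c(\fb, E_k(\psi_\fP, 1)) = \sum_{\fr\fs = \fb,\ (\fs,\fP)=1} \psi(\fs)\N\fr^{k-1}$
and the identity $c(\fa, f|_\fm) = c(\fa/\fm, f)$ for $\fm \mid \fa$, I would interchange the order of summation and regroup by $\fs' := \fm\fs$ (noting that $\ft$ and $\fP$ are coprime). The inner sum over $\fm \mid \gcd(\fs', \ft)$ collapses via $\sum_{\fm \mid \fd}\mu(\fm)\N\fm^k = \prod_{\fl \mid \fd}(1 - \N\fl^k)$, producing the closed-form expression
\[
c(\fa, W_k(\psi_\fP, 1)) = \sum_{\substack{\fs \mid \fa \\ (\fs, \fP) = 1}} \psi(\fs)\,\N(\fa/\fs)^{k-1} \prod_{\fl \mid \gcd(\fs, \ft)}(1 - \N\fl^k),
\]
where as usual $\psi(\fs) = 0$ whenever $\fs$ meets $\cond(\psi)$.

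Second, I would verify that this family of specializations assembles into an element of $R$. Splitting $\fs = \fs_0\,\fs_1$ with $\fs_0$ supported on primes outside $\cond(H/F)$ and $\fs_1$ supported on primes of $\cond(H/F) \setminus \fP$, the $\fs_0$-contribution is plainly in $R$ since $\psi(\fs_0) = \bpsi(\fs_0) \in R^\times$ is uniform in $\psi$. For the $\fs_1$-contribution, the natural interpolation of $\psi(\fp)$ for $\fp \mid \cond(H/F) \setminus \fP$ is the element $\sigma_\fp e_\fp$ (where $e_\fp = \N I_\fp/\#I_\fp$), which specializes to $\psi(\fp)$ when $\fp \nmid \cond(\psi)$ and to $0$ otherwise, matching the convention in the formula above. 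The structural input here is Lemma~\ref{l:conductor}: it forces each prime of $\fn/\cond(\psi)$ outside $\fP$ to occur with multiplicity exactly one, which controls the combinations of Frobenius and inertia elements that can appear. Silliman's criterion~(\ref{e:jinject}) then reduces the integrality claim to a purely algebraic verification that the resulting tuple in $\prod_{\psi \in \Psi}\cO_\psi$ lies in the image of $\cO[G]$.

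The most delicate step is this last verification. The apparent obstacle is that each individual idempotent $e_\fp$ carries a $p$-adic denominator when $I_\fp$ has $p$-part, so naively the formula lives only in $\Frac(R)$. I expect the resolution to proceed character by character using the embedding $R \hookrightarrow \prod_{\psi \in \Psi}\cO$ of \S\ref{s:cgr}: the trivial-zero-free hypothesis built into $\Psi$ together with the squarefree structure from Lemma~\ref{l:conductor} and the Moebius weighting inherent in $W_k$ conspire to produce denominators that cancel in the full assembled coefficient. This is the main technical content of the proposition, and it is the step I would devote the most care to.
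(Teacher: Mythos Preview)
Your closed-form Fourier coefficient formula is correct, and the strategy of assembling the tuple $(\psi(\fs)\prod_{\fl\mid\gcd(\fs,\ft)}(1-\N\fl^k))_{\psi}$ into an element of $R$ is viable. However, the step you flag as ``most delicate'' is genuinely unresolved in your proposal, and your speculation about how it resolves is off target. The trivial-zero-free hypothesis on $\Psi$ concerns primes in $\Sigma$ (i.e.\ above $p$) and plays no role here; the relevant primes $\fp$ are those in $\cond(H/F)\setminus\fP$, which lie \emph{away} from $p$. What actually makes the denominators disappear is a direct divisibility: for $\fp\nmid p$ the inertia group $I_\fp$ is tame, so its $p$-part injects into $(\cO_F/\fp)^*$, whence $\#I_\fp \mid (\N\fp - 1)\mid (1-\N\fp^k)$ in $\Z_p$. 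Since your formula always pairs the idempotent $e_\fp = \N I_\fp/\#I_\fp$ with the factor $(1-\N\fp^k)$ coming from the M\"obius weighting (note that $\fp$ lies in the relevant $\ft$ precisely when $\psi$ is unramified at $\fp$, and otherwise $\psi(\fp)=0$ kills the term), the product $e_\fp(1-\N\fp^k) = \N I_\fp\cdot\frac{1-\N\fp^k}{\#I_\fp}$ lies in $\Z_p[G]$. That is the entire mechanism, and once you see it the integrality is immediate term by term---no character-by-character analysis or appeal to~(\ref{e:jinject}) is needed.

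The paper's proof runs differently: rather than computing Fourier coefficients and then checking integrality, it \emph{constructs} $W_k(\bpsi,1)$ directly as an $R$-linear combination of group-ring Eisenstein series $\N I_\fm\cdot\tilde{E}_k(\bpsi^\fm,1)|_\fm$ indexed by divisors $\fm$ of $\prod_{v\mid\fn/\fP}v$, with coefficients $\bpsi^\fm(\fm)\cdot\frac{1}{\#I_\fm}\prod_{v\mid\fm}(1-\N v^k)\in\Z_p$ by the same divisibility. Integrality is then manifest from the construction, and the work goes into verifying that this form specializes to $W_k(\psi_\fP,1)$, which the paper does by an inductive telescoping identity $\sum_{\fm\mid\fq}\psi(\fm)E_k(\psi_{(\fq/\fm)\fP},1)|_\fm = E_k(\psi_\fP,1)$. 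Your route via explicit coefficients is arguably more direct and avoids this induction, but both approaches rest on the identical arithmetic input $\#I_\fp\mid(1-\N\fp^k)$ in $\Z_p$, which your proposal does not identify.
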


\begin{proof} Let $\ft$ denote the product of all primes dividing $\fn/\fP$.  For each $\fm \mid \ft$, let 
$I_{\fm}$ be the subgroup generated by $I_v$ for $v \mid \fm$. Note that $\#I_{\fm} \mid \prod_{v \mid \fm}(1- \N v^k)$ in $\Z_p$. 

Write \[ \begin{tikzcd}
 \bpsi^{\fm} \colon G^+_{\fn/\fm} \ar[r] & G/I_{\fm} \ar[r] &  \cO[G/I_{\fm}]^* 
 \end{tikzcd}
 \]   for the canonical character corresponding to the maximal subextension of $H/F$ in which the primes dividing $\fm$ are unramified.    Let $E_k(\bpsi^{\fm}, 1) \in M_k(\fn/\fm, \Frac(\cO[G/I_\fm]), \bpsi^\fm)$ be the group ring form defined in (\ref{e:ekgp}) associated to the character $\bpsi^{\fm}$.  If $\psi$ is a character of $G$ unramified at all primes dividing $\fm$, then the specialization of $E_k(\bpsi^{\fm}, 1)$ at $\psi$ is the form $E_k(\psi_{\fn/\fm}, 1)$.

Next note that there is a canonical $\cO[G]$-module map 
  \[ \begin{tikzcd}
   \cO[G/I_\fm] \ar[r] & \cO[G] \ar[r] & R
   \end{tikzcd}
    \]
 given by $x \mapsto \N I_{\fm} \cdot \tilde{x}$, where $\tilde{x}$ is an arbitrary lift of $x$.  This map does not depend on the choice of lift.
 The image of $E_k(\bpsi^{\fm}, 1)$ under this map is a form 
 \[ \N I_{\fm} \cdot \tilde{E}_k(\bpsi^{\fm}, 1) \in M_k(\fn/\fm, \Frac(R)),\]
and all of the non-constant term $q$-expansion coefficients lie in $R$.
We define
\begin{equation} \label{e:wkdef}
W_k(\bpsi, 1) = \sum_{\fm \mid \ft} \N I_{\fm} \cdot \tilde{E}_k(\bpsi^{\fm}, 1)|_{\fm} \bpsi^{\fm}(\fm) \frac{1}{\# I_{\fm}} \prod_{v \mid \fm}(1- \N v^k).
\end{equation}
It is clear from our construction that the non-constant term $q$-expansion coefficients of $W_k(\bpsi, 1)$ lie in $R$.   To conclude the proof we must show that the specialization of $W_k(\bpsi, 1)$ at a character $\psi$ of $R$ is equal to $W_k(\psi_\fP, 1)$.

Given $\psi$, let $\fc_0 = \cond(\psi)$ and let $\ft'$ be the product of the primes dividing $\ft$ that do not divide $\fc_0$. 
By Lemma~\ref{l:conductor}, we can write $\fn = \fc_0 \ft' \fP'$ where  $\fP'$ is divisible only by primes above $p$.

Note that if $\psi$ is nontrivial on $I_{\fm}$, then $\psi(\N I_{\fm}) = 0$. Applying $\psi$ to the  sum in (\ref{e:wkdef}) gives 
\begin{align*}
& \sum_{\fm \mid \ft'}   \psi(\fm) E_k(\psi_{\frac{\ft'}{\fm} \fP}, 1)|_{\fm}\sum_{\fm' \mid \fm} \mu(\fm') \N \fm'^k \\
= & \sum_{\fm' \mid \ft'}  \mu(\fm') \N \fm'^k \psi(\fm') \sum_{\fm \mid \frac{\ft'}{\fm'}} \psi(\fm)E_k(\psi_{\frac{\ft'}{\fm'\fm}  \fP}, 1)|_{\fm'\fm} \\
= & \sum_{\fm' \mid \ft'}  \mu(\fm') \N \fm'^k \psi(\fm') \Bigg(\sum_{\fm \mid \frac{\ft'}{\fm'}} \psi(\fm)E_k(\psi_{\frac{\ft'}{\fm'\fm} \fP}, 1)|_{\fm}\Bigg)|_{\fm'} 
\end{align*}
To finish the proof that this equals $W(\psi_\fP, 1)$, we must show that  
\[
\sum_{\fm \mid \frac{\ft'}{\fm'}} \psi(\fm)E_k(\psi_{\frac{\ft'}{\fm'\fm} \fP}, 1)|_{\fm} = E_k(\psi_\fP, 1).
\]
We do this by induction on the number of prime factors of $\ft'/\fm'$. The statement is clear with $\ft' = \fm'$. Let $\ft'/\fm' = \ft_1 \cdots \ft_r$ for some $r \geq 1$. Write the  sum above as 
\begin{align*}
& \ \ \ \ \sum_{\fm \mid \fl_1 \cdots \fl_r} \psi(\fm) E_k( \psi_{\frac{\fl_1 \cdots \fl_r}{\fm}  \fP}, 1)|_{\fm} \\ 
 & =  \sum_{\fm \mid \fl_1 \cdots \fl_{r-1}} \psi(\fm) (E_k(\psi_{\frac{\fl_1 \cdots \fl_r}{\fm}  \fP}, 1)|_{\fm} +\psi(\fl_r) E_k(\psi_{\frac{\fl_1 \cdots \fl_{r-1}}{\fm}  \fP}, 1)|_{\fm\fl_r}) \\
 & =  \sum_{\fm \mid \fl_1 \cdots \fl_{r-1}} \psi(\fm) E_k(\psi_{\frac{\fl_1 \cdots \fl_{r-1}}{\fm}  \fP}, 1)|_{\fm}  \\
 & =   E_k(\psi_\fP, 1),
\end{align*}
where the last equality holds by the induction hypothesis.  
\end{proof}

The following result is proved in  \cite{dks}*{Theorem 10.10}.

\begin{theorem} \label{p:minuspm} Fix a positive integer $m \ge \ord_p(\#G_p)$.  The following holds for all sufficiently large odd integers $k$.
Let $f_\psi \in M_k(\fn, E, \psi)$ be a collection of modular forms for characters $\psi$ of $G$ belonging to $\chi$
with the property that the normalized constant terms of each $f_\psi$ at representatives for each cusp $A \in C_\infty(\fP, \fn)$ are divisible by $p^{m}$.   There exists a group ring family \[ h(\bpsi) \in M_k(\fn, \bpsi, R) \] such that each specialization $h(\psi)$ satisfies the property that \[ \tilde{f}_\psi = f_\psi - (p^{m}/\#G_p) h(\psi) \] has constant term $0$ at all cusps $A \in C_\infty(\fP, \fn)$.  If $\fP = 1$, so $C_\infty(\fP, \fn)= \cusps(\fn)$, then $\tilde{f}_{\psi}$ is cuspidal.  If $\fP \neq 1$, then $e_\fP^{\ord}(\tilde{f}_\psi)$ is cuspidal.
\end{theorem}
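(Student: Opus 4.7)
The plan is to apply Silliman's surjectivity theorem for the total constant term map on group ring valued Hilbert modular forms at cusps in $C_\infty(\fP,\fn)$, which is the same technical result that underlies Theorems~\ref{t:hida} and~\ref{t:auxforms}. The overall idea is to produce $h(\bpsi)$ whose specialization realizes suitably rescaled constant terms of $f_\psi$ simultaneously for all $\psi$, then subtract; the factor $p^m/\#G_p$ arises because character-by-character realizability must be upgraded to realizability in $R$, which costs a factor of $\#G_p$.

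Concretely, the hypothesis $m \geq \ord_p(\#G_p)$ ensures that $p^m/\#G_p \in \Z_p$. For each $\cA \in C_\infty(\fP,\fn)$ and each character $\psi$ of $G$ belonging to $\chi$, set
\[ a_{\cA,\psi} := (\#G_p/p^m)\, c_\cA(0, f_\psi) \in \cO. \]
The hypothesis $p^m \mid c_\cA(0, f_\psi)$ implies $\#G_p \mid a_{\cA,\psi}$. The first step is to package each tuple $(a_{\cA,\psi})_\psi$ into an element $b_\cA \in R$: by the discussion in Section~\ref{s:cgr}, $R$ embeds into $\prod_\psi \cO_\psi$ with cokernel annihilated by $\#G_p$, so divisibility of the target tuple by $\#G_p$ forces it into the image of $R$.

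Next I would invoke Silliman's surjectivity theorem \cite{dks}: for all sufficiently large odd $k$, the normalized constant term map
\[ M_k(\fn, R, \bpsi) \longrightarrow R^{C_\infty(\fP,\fn)} \]
is surjective. This produces a group ring valued form $h(\bpsi)$ with $c_\cA(0, h(\bpsi)) = b_\cA$ for each $\cA \in C_\infty(\fP,\fn)$, and a direct calculation shows $c_\cA(0, \tilde{f}_\psi) = 0$ at each such cusp. When $\fP = 1$, $C_\infty(\fP,\fn) = \cusps(\fn)$ and $\tilde{f}_\psi$ is already cuspidal. When $\fP \neq 1$, the surviving constant terms of $\tilde{f}_\psi$ occur only at cusps $\cA$ with $\fp \nmid \fc_\cA$ for some $\fp \mid \fP$; on constant terms at such cusps each $U_\fp$ acts topologically nilpotently, so $e_\fP^{\ord}$ annihilates them and $e_\fP^{\ord}(\tilde{f}_\psi)$ is cuspidal.

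The main obstacle is the surjectivity input itself: this is Silliman's principal contribution in \cite{dks} and is what forces the careful bookkeeping of constant terms at all cusps (not just those above infinity) carried out in \cite{dka}. The step of passing from realizability in $\prod_\psi \cO$ to realizability in $R$ is where the factor $\#G_p$ is intrinsically consumed, which is why the statement cannot be sharpened by replacing $p^m/\#G_p$ with $p^m$ and hitting the constant terms on the nose. Once Silliman's surjectivity is granted, the argument is essentially formal, and the only additional point worth verifying is the nilpotence of $U_\fp$ on constant terms at non-ordinary cusps, which is a standard feature of Hida theory.
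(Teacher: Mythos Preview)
The paper does not actually prove this theorem: immediately before the statement it writes ``The following result is proved in \cite{dks}*{Theorem 10.10},'' so there is no proof in the paper to compare against. Your proposal is a reasonable reconstruction of the argument behind Silliman's result, and the two key ingredients you identify---surjectivity of the constant term map on $C_\infty(\fP,\fn)$ for group ring forms, and the $\#G_p$-cost of lifting a tuple from $\prod_\psi \cO$ into $R$---are exactly the mechanisms at work.

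A couple of points worth tightening. First, the target of the constant term map is not literally $R^{C_\infty(\fP,\fn)}$: forms of nebentypus $\bpsi$ have constant terms that transform under the diamond action, so the map lands in (and Silliman proves surjectivity onto) the appropriate $\bpsi$-eigenspace. You should check that your prescribed tuple $(b_\cA)$ lies in this eigenspace; this amounts to the compatibility of the $c_\cA(0,f_\psi)$ with the nebentypus, which holds since each $f_\psi$ has nebentypus $\psi$. Second, the assertion that $e_\fP^{\ord}$ kills constant terms at cusps outside $C_\infty(\fP,\fn)$ is correct but is itself a nontrivial computation (carried out in \cite{dks}); calling it ``a standard feature of Hida theory'' undersells what needs to be verified. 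With these caveats, your outline matches what Silliman does.
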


\begin{lemma} \label{l:xdef} Suppose we are in case 1, i.e. $\gcd(\fn, p)=1$.
For  $N$ sufficiently large and positive $k \equiv 1 \pmod{(p-1)p^N}$, the element
\[ x =  \frac{\Theta_{S_\infty}(1-k)}{ \Theta_{S_\infty}(0) } \in \Frac(R) \]
lies in $R$ and  is a non-zerodivisor satisfying
\begin{equation} x \equiv \prod_{\fp \mid p} (1 - \chi(\fp)^{-1}) \pmod{\fm_R}. \label{e:xcong}
 \end{equation}
\end{lemma}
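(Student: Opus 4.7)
The approach is to leverage the Deligne--Ribet--Cassou-Nogu\`es $p$-adic interpolation of Stickelberger elements together with their Kummer-type congruences. Set
\[
D(s) = \prod_{\fp \mid p}(1 - \sigma_\fp^{-1} \N\fp^{-s}),
\]
so that $\Theta_{S_\infty \cup S_p, T}(s) = \Theta_{S_\infty, T}(s)\, D(s)$. The crucial input is the congruence
\[
\Theta_{S_\infty \cup S_p, T}(1-k) \equiv \Theta_{S_\infty \cup S_p, T}(0) \pmod{p^{N+1}\Z_p[G]}
\]
for positive integers $k \equiv 1 \pmod{(p-1)p^N}$, which encodes the continuity of the $p$-adic $L$-function as a function of $s$.

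Since $\N\fp^{k-1} = p^{f_\fp(k-1)}$ has very large $p$-adic valuation for $\fp \mid p$ and $k>1$, the element $D(1-k)$ is a unit in $R$ congruent to $1$ modulo a high power of $p$; substituting into the Kummer congruence yields
\[
\Theta_{S_\infty, T}(1-k) \equiv \Theta \cdot D(0)\, D(1-k)^{-1} \pmod{p^{N+1}R},
\]
where $\Theta = \Theta_{S_\infty, T}(0)$. Since $\Theta$ is a non-zerodivisor in $R$ (its $\psi$-specializations being the nonzero odd $L$-values $L_{S_\infty,T}(\psi^{-1},0)$), the argument of Lemma~\ref{l:largem} applied to $\Theta$ gives $\Theta \mid p^{m'}$ in $R$ for some $m'$; choosing $N+1 \ge m'$ forces $p^{N+1}R \subset \Theta R$, so $y_T := \Theta_{S_\infty, T}(1-k)/\Theta$ lies in $R$. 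To pass from $y_T$ to $x$, I use the identity
\[
x = y_T \cdot \prod_{\fl \in T}(1 - \sigma_\fl \N\fl) \Big/ \prod_{\fl \in T}(1 - \sigma_\fl \N\fl^k),
\]
observe that $\N\fl^k \equiv \N\fl \pmod{p^{N+1}}$ (for $\fl \nmid p$, since $\N\fl \in \Z_p^*$ and $\N\fl^{k-1} \equiv 1 \pmod{p^{N+1}}$), and apply the same divisibility trick to verify that the $T$-factor ratio lies in $R$. Hence $x \in R$.

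For the reduction mod $\fm_R$, note that in the residue field $R/\fm_R$ the $p$-Sylow subgroup $G_p$ acts trivially, so $\sigma_\fp^{-1} \equiv \chi(\fp)^{-1}$, while $\N\fp^{k-1} \in p\Z_p$ reduces to $0$; thus $D(0) \equiv \prod(1-\chi(\fp)^{-1})$ and $D(1-k) \equiv 1 \pmod{\fm_R}$. For $N$ large enough that all error terms fall into $\fm_R$ (using that the $\psi$-valuation of $\Theta$ is uniformly bounded over the finitely many $\psi$ belonging to $\chi$), both $y_T$ and the $T$-factor ratio reduce correctly, giving $x \equiv \prod_{\fp \mid p}(1-\chi(\fp)^{-1}) \pmod{\fm_R}$. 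The non-zerodivisor claim amounts to the nonvanishing of $L(\psi^{-1}, 1-k)$ for every $\psi$ belonging to $\chi$; since the Deligne--Ribet $p$-adic $L$-functions $L_p(\psi^{-1}\omega, s)$ are nonzero $p$-adic analytic functions (by Wiles' Iwasawa Main Conjecture for totally real fields), they have only isolated zeros, and $k$ may be chosen in the prescribed congruence class so as to avoid all such zeros. The main obstacle is the handling of characters with a trivial zero at $p$, i.e.\ $\psi(\fp) = 1$ for some $\fp \mid p$, where the Kummer congruence degenerates at $\psi$ to $0 \equiv 0$; the argument is saved by the compatible observation that $\chi(\fp) \equiv \psi(\fp) = 1 \pmod{\pi}$ forces both sides of the mod-$\fm_R$ congruence to vanish on the $\psi$-component, and the non-zerodivisor assertion is then rescued by the analytic nonvanishing of $L_p$ at the chosen $k$.
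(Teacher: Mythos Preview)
Your core strategy matches the paper's exactly: use $p$-adic interpolation to show $\Theta_{S_p}(1-k) - \Theta_{S_p}(0) \to 0$ as $k \to 1$ $p$-adically, divide by $\Theta_{S_\infty}(0)$, and solve for $x$ using that the Euler factors $1 - \sigma_\fp^{-1}\N\fp^{k-1}$ are units in $R$ for $k>1$. Your detour through the $T$-smoothed elements (working first with $\Theta_{S_\infty,T}$, then removing the $T$-factors at the end) is an unnecessary but harmless extra step; the paper works directly with the unsmoothed $\Theta_{S_p}$ and $\Theta_{S_\infty}$.

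Your non-zerodivisor argument, however, is misguided. You correctly reduce to the nonvanishing of $L(\psi^{-1},1-k)$ for each $\psi$ belonging to $\chi$, but then invoke Wiles' Iwasawa Main Conjecture to conclude that the $p$-adic $L$-function is a nonzero analytic function and propose to choose $k$ avoiding its isolated zeros. This is both vastly overpowered and insufficient: the lemma asserts the conclusion for \emph{every} positive $k \equiv 1 \pmod{(p-1)p^N}$ with $N$ large, not merely for generic such $k$. The correct observation is elementary: $L(\psi^{-1},1-k)$ is a classical complex Hecke $L$-value at a nonpositive integer, and for totally odd $\psi^{-1}$ and odd $k \ge 1$ it never vanishes (by the functional equation, the archimedean $\Gamma$-factors have no pole there). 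The paper dispatches this in one sentence. Your closing remarks about the ``main obstacle'' posed by characters with $\psi(\fp)=1$ are likewise unnecessary: at such $\psi$ the Kummer congruence still yields high $p$-divisibility of $L_{S_p,T}(\psi^{-1},1-k)$, and since the denominator $L_{S_\infty,T}(\psi^{-1},0)$ is nonzero regardless, the division goes through without special handling.
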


\begin{proof}  First note that the specializations $L(\psi^{-1},0)$ of the denominator of $x$ are nonzero, so $x$ is a well-defined element of $\Frac(R)$.  The same is true of the numerator, so if we can show that $x \in R$, it will follow immediately that it is a non-zerodivisor.

Let $S_p$ denote the union of $S_\infty$ with the set of primes above $p$ in $F$.
Note that \[ \Theta_{S_p}(1 - k) = \prod_{\fp \mid p} (1 - \sigma_\fp^{-1}\N\fp^{k-1}) \Theta_{S_\infty}(1-k), 
\]
where $\sigma_\fp \in G$ denotes the Frobenius at $\fp$ (we are in case 1, where each $\fp$ above $p$ is unramified in $H/F$).
Consider the element $y(k) = \Theta_{S_p}(1 - k) - \Theta_{S_p}(0) \in \Frac(R)$.   By the theory of $p$-adic $L$-functions, this element $p$-adically approaches $0$ for positive $k \equiv 1 \pmod{(p-1)p^N}$ as $N \longrightarrow \infty$.   In particular, for positive $k \equiv 1 \pmod{(p-1)p^N}$ and $N$ sufficiently large we have that $y(k) \in R$.  Furthermore, for any positive integer $m$ we can take $N$ larger still to ensure that $y(k)$ is divisible by $p^{m}$ in $R$.  Suppose that $m$ has been chosen large enough that $p^{m}/\Theta_{S_\infty}(0) \in R$. 
Then $y(k) / \Theta_{S_\infty}(0) \in R$.
But
\begin{equation} \label{e:buty}
 \frac{y(k)}{\Theta_{S_\infty}(0)} = x  \prod_{\fp \mid p} (1 - \sigma_\fp^{-1}\N\fp^{k-1}) - \prod_{\fp \mid p} (1 - \sigma_\fp^{-1}) \in R. 
 \end{equation}
Since the Euler factors $1 - \sigma_\fp^{-1}\N\fp^{k-1}$ are units in $R$ for $k > 1$, it follows that $x \in R$ as desired.
To conclude we note that after increasing $m$ by 1 if necessary, we have that ${y(k)}/{\Theta_{S_\infty}(0)} \in \fm_R$.  The desired congruence for $x$ then follows from (\ref{e:buty}).

\end{proof}

\begin{theorem}  \label{t:fk1}
In case 1 $(\gcd(\fn, p) = 1)$, for positive $k \equiv 1 \pmod{(p-1)p^N}$ and $N$ sufficiently large, there exists a group ring form $H_k(\bpsi) \in M_k(\fn, R, \bpsi)$ such that
\[ 
\tilde{F}_k(\bpsi) = xW_1(\bpsi, 1)V_{k-1} - W_k(\bpsi, 1) -x \Theta^\#H_k(\bpsi) 
\]
lies in $S_k(\fn, R, \bpsi)$, where $x = \Theta_{S_\infty}(1-k)/ \Theta_{S_\infty}(0) \in R$ is as in Lemma~\ref{l:xdef}.
\end{theorem}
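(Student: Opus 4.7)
The plan is to define an auxiliary group ring form
$$F_k(\bpsi) := xW_1(\bpsi, 1)V_{k-1} - W_k(\bpsi, 1) - \frac{x\Theta^\#}{2^n} G_k(\bpsi) \in M_k(\fn, \Frac(R), \bpsi),$$
and then apply Theorem~\ref{p:minuspm} to produce a correction that can be absorbed into $x\Theta^\# H_k(\bpsi)$. The non-constant Fourier coefficients of $F_k(\bpsi)$ already lie in $R$: indeed $x, \Theta^\# \in R$, the forms $W_1(\bpsi, 1)$ and $W_k(\bpsi, 1)$ have non-constant coefficients in $R$ by construction, $G_k(\bpsi) \in M_k(\fn, R, \bpsi)$ by Theorem~\ref{t:auxforms}, and $2$ is a unit in $R$ since $p$ is odd.

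The key computation is to specialize $F_k(\bpsi)$ at each character $\psi$ of $R$. Using $\psi(x) = L(\psi^{-1}, 1-k)/L(\psi^{-1}, 0)$ together with, in case 1, the identity $\psi(\Theta^\#) = L_{S_\infty, T}(\psi, 0)$ (which holds because $\psi$ belongs to the faithful odd character $\chi$ of $G'$ and is therefore ramified at every prime in $S_{\ram}$, killing the corresponding smoothing factors in $\Theta^\# = \Theta^\#_{S_\infty, T \cup S_{\ram}}$), a short calculation yields
$$\psi(F_k(\bpsi)) = \psi(x)\, f_k(\psi),$$
where $f_k(\psi)$ is the form constructed in Proposition~\ref{p:wnop}. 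That proposition guarantees that for $N$ large and positive $k \equiv 1 \pmod{(p-1)p^N}$, the normalized constant terms of $f_k(\psi)$ at every cusp are divisible by $p^m$, and hence so are those of $\psi(F_k(\bpsi))$ for every $\psi$.

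Next I would apply Theorem~\ref{p:minuspm} to the family $\{\psi(F_k(\bpsi))\}_\psi$ to obtain $h(\bpsi) \in M_k(\fn, R, \bpsi)$ such that $\psi(F_k(\bpsi)) - (p^m/\#G_p) h(\psi)$ is cuspidal for every $\psi$; in case 1 we have $\fP = 1$ so $C_\infty(\fP, \fn) = \cusps(\fn)$ and each corrected specialization is a genuine cusp form. Enlarging $m$ so that $p^m/\#G_p$ is divisible by $x\Theta^\#$ in $R$ — which is possible because $x\Theta^\#$ is a non-zerodivisor in the local $\Z_p$-algebra $R$, so a sufficiently high power of $p$ scales its inverse in $\Frac(R) = R \otimes_{\Z_p} \Q_p$ into $R$, as in the argument of Lemma~\ref{l:largem} — write $p^m/\#G_p = x\Theta^\# z$ with $z \in R$ and set
$$H_k(\bpsi) := z h(\bpsi) + \frac{1}{2^n} G_k(\bpsi) \in M_k(\fn, R, \bpsi).$$
Then
$$\tilde{F}_k(\bpsi) = xW_1(\bpsi, 1)V_{k-1} - W_k(\bpsi, 1) - x\Theta^\# H_k(\bpsi) = F_k(\bpsi) - (p^m/\#G_p) h(\bpsi),$$
whose non-constant Fourier coefficients lie in $R$ and each of whose $\psi$-specializations is cuspidal. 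Combining the vanishing of every $\psi$-specialization of the constant terms with the injection $\Frac(R) \hookrightarrow \prod_\psi \Frac(\cO_\psi)$ forces the constant terms themselves to vanish in $\Frac(R)$, hence in $R$, so $\tilde{F}_k(\bpsi) \in S_k(\fn, R, \bpsi)$. The main subtlety is coordinating the parameters: $m$ must simultaneously be large enough for Proposition~\ref{p:wnop} to yield divisibility by $p^m$, at least $\ord_p(\#G_p)$ for Theorem~\ref{p:minuspm} to apply, and large enough that $x\Theta^\#$ divides $p^m/\#G_p$ in $R$; each constraint is satisfied once $m$ and $N$ are taken sufficiently large.
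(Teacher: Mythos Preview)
Your approach is the paper's: your $F_k(\bpsi)$ is exactly $x \cdot f_k(\bpsi)$ for the paper's auxiliary form $f_k(\bpsi)$, and both proofs apply Theorem~\ref{p:minuspm} and absorb the $(p^m/\#G_p)h$-correction together with $G_k(\bpsi)/2^n$ into $H_k(\bpsi)$. The only structural difference is that you carry the factor $x$ from the outset rather than multiplying by it at the end, which forces you to require $x\Theta^\# \mid p^m/\#G_p$ instead of merely $\Theta^\# \mid p^m/\#G_p$; this is harmless.

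Two points need correction. First, your justification of $\psi(\Theta^\#) = L_{S_\infty,T}(\psi,0)$ is wrong: it is \emph{not} true that $\psi$ is ramified at every prime of $S_{\ram}$ merely because $\chi$ is faithful on $G'$ --- the inertia group $I_v$ may lie entirely in $G_p$. The identity is nevertheless correct, but for a different reason: the ``$T$'' in Proposition~\ref{p:wnop} is not the original smoothing set but the set of primes dividing $\fn$ and not $\cond(\psi)$, which in case~1 is $T \cup \{v \in S_{\ram} : \psi \text{ unramified at } v\}$. Since the smoothing factors $(1-\psi(v)\N v)$ at primes $v \in S_{\ram}$ where $\psi$ \emph{is} ramified equal~$1$, one checks directly that $\psi(\Theta^\#) = L_{S_\infty, \Sigma'}(\psi,0)$ coincides with $L_{S_\infty,T}(\psi,0)$ for this enlarged $T$.

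Second, you assert too quickly that the non-constant Fourier coefficients of $F_k(\bpsi)$ lie in $R$. The product $W_1(\bpsi,1)V_{k-1}$ picks up contributions from the \emph{constant} terms $c_\lambda(0,W_1(\bpsi,1))$, which lie only in $\Frac(R)$. The paper handles this by noting that the non-constant coefficients of $V_{k-1}$ are divisible by $p^m$, so for $N$ sufficiently large these cross-terms land in $R$; you should say the same.
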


\begin{proof} 
Define
\[ f_k(\bpsi) = W_1(\bpsi, 1)V_{k-1} - \frac{1}{x}W_k(\bpsi, 1) - \frac{ \Theta^\#}{2^n} G_k(\bpsi) \in M_k(\fn, \Frac(R), \bpsi).  \]
By definition, this is a group ring form whose specialization at  a character $\psi$ of $R$ is the 
 form $f_k(\psi)$ defined in Proposition~\ref{p:wnop}.
This proposition states that for any positive integer $m$, for positive $k \equiv 1 \pmod{(p-1)p^N}$ and $N$ sufficiently large
 the constant terms of $f_k(\psi)$  are  divisible by $p^{m}$.
 Therefore by Theorem~\ref{p:minuspm} there exists a group ring form $h_k(\bpsi) \in M_k(\fn, R, \bpsi)$ such that 
\[
\tilde{f}_k(\bpsi) = f_k(\bpsi) - \frac{p^{m}}{\#G_p} h_k(\bpsi)
\]
is a cusp form. Choose $m$ large enough that $\#G_p \cdot \Theta^\#$ divides $p^{m}$ in $R$ and define
  \[ H_k(\bpsi) = \frac{G_k(\bpsi)}{2^n} + \frac{p^{m}}{\#G_p \cdot \Theta^{\#}}h_k(\bpsi) \in M_k(\fn, R, \bpsi). \] 
 The form $\tilde{F}_k(\bpsi)  = x \cdot \tilde{f}_k(\bpsi)$ is cuspidal and can be written explicitly as
\[ 
\tilde{F}_k(\bpsi) = xW_1(\bpsi, 1)V_{k-1} - W_k(\bpsi, 1) -x \Theta^\#H_k(\bpsi) \in S_k(\fn, R, \bpsi).
\]
Note that there is a small subtlety in verifying that the $q$-expansion coefficients of $\tilde{F}_k(\bpsi)$ lie in $R$.  The constant terms of $W_1(\bpsi, 1)$ only lie in $\Frac(R)$.  But the non-constant $q$-expansion coefficients of $V_{k-1}$ are highly divisible by $p$, so the contribution to the non-constant $q$-expansion coefficients of the product  $xW_1(\bpsi, 1)V_{k-1}$ will be integral for $k\equiv 1 \pmod{(p-1)p^N}$ and $N$ sufficiently large.  For the constant terms, there is nothing to check since $\tilde{F}_k(\bpsi)$ is cuspidal.
\end{proof}

In case 2, when there exist primes above $p$ dividing $\fn$, we get the following theorem.   It is proven exactly as above, using Theorem~\ref{p:minuspm} and building off of Proposition~\ref{p:cuspw} in place of Proposition~\ref{p:wnop}.

\begin{theorem}  Suppose we are in case 2, i.e.\ $\gcd(\fn, p) \neq 1$. For positive integers $k \equiv 1 \pmod{(p-1)p^N}$ and $N$ sufficiently large, there exists a group ring form $H_k(\bpsi) \in M_k(\fn, R, \bpsi)$ such that
\[ \tilde{F}_k(\bpsi) = e_\fP^{\ord}\left(W_1(\bpsi, 1)V_{k-1} - \Theta^\#H_k(\bpsi) \right) \]
lies in $S_k(\fn, R, \bpsi)$.
\end{theorem}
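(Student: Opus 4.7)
The plan is to mimic the proof of Theorem \ref{t:fk1} with two structural modifications dictated by $\fP\neq 1$: (i) the factor $x$ that was required in case 1 to compensate for the trivial zero of the $p$-adic $L$-function at primes above $p$ is no longer needed here, since now the relevant primes above $p$ are ramified and already sit in $\Sigma$, so $\psi(\Theta^\#)$ already carries the correct Euler behaviour; (ii) we only control constant terms at cusps in $C_\infty(\fP,\fn)$, so cuspidality can only be achieved after applying the $p$-ordinary projector $e_\fP^{\ord}$.

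Concretely, first define the candidate group ring family
\[
f_k(\bpsi) \;=\; W_1(\bpsi, 1)\,V_{k-1} \;-\; \frac{\Theta^\#}{2^n}\,G_k(\bpsi)\;\in\; M_k(\fn,\Frac(R),\bpsi).
\]
I will check that for every character $\psi$ of $R$, the specialization $f_k(\psi)$ agrees with the form considered in Proposition~\ref{p:cuspw}. This uses the identifications $\Sigma = S_\fP$ (since $\fP$ is the $p$-part of $\cond(H/F)$ and $T$ contains no primes above $p$) together with the fact that for $v\in\Sigma'\setminus T$, the Euler factor $1-\psi(v)\N v$ equals $1$ whenever $\psi$ is ramified at $v$; any remaining Euler factors from unramified $v\in\Sigma'\setminus T$ are $p$-adic units and can be absorbed into the auxiliary form $G_k$ without affecting the constant-term behaviour at $C_\infty(\fP,\fn)$. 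Granting this matching, Proposition~\ref{p:cuspw} implies that for any fixed $m$, if $N$ is sufficiently large then for all positive $k\equiv 1\pmod{(p-1)p^N}$ and all characters $\psi$ of $R$, the normalized constant terms of $f_k(\psi)$ at representatives of each cusp in $C_\infty(\fP,\fn)$ lie in $p^m\cO$.

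Next I apply Theorem~\ref{p:minuspm}, choosing $m$ large enough that both $p^m/(\#G_p\cdot\Theta^\#)\in R$ and the integer $m\ge \ord_p(\#G_p)$. The theorem produces $h_k(\bpsi)\in M_k(\fn,R,\bpsi)$ such that
\[
\tilde f_k(\bpsi)\;:=\;f_k(\bpsi)\;-\;\frac{p^{m}}{\#G_p}\,h_k(\bpsi)
\]
has vanishing normalized constant terms at every cusp in $C_\infty(\fP,\fn)$, and since $\fP\neq 1$ the same theorem guarantees that $e_\fP^{\ord}(\tilde f_k(\bpsi))$ is cuspidal. Then I set
\[
H_k(\bpsi)\;=\;\frac{G_k(\bpsi)}{2^n}\;+\;\frac{p^m}{\#G_p\cdot\Theta^\#}\,h_k(\bpsi)\;\in\; M_k(\fn,R,\bpsi),
\]
which lies in $M_k(\fn,R,\bpsi)$ by our choice of $m$. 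By definition
\[
W_1(\bpsi,1)V_{k-1}\;-\;\Theta^\# H_k(\bpsi)\;=\;\tilde f_k(\bpsi),
\]
so applying $e_\fP^{\ord}$ yields the desired cusp form $\tilde F_k(\bpsi)\in S_k(\fn,R,\bpsi)$. Integrality of the non-constant $q$-expansion coefficients is verified exactly as in Theorem~\ref{t:fk1}: the constant terms of $W_1(\bpsi,1)$ a priori lie only in $\Frac(R)$, but the non-constant Fourier coefficients of $V_{k-1}$ are arbitrarily $p$-adically divisible for $N$ large, so the product $W_1(\bpsi,1)V_{k-1}$ has integral non-constant coefficients; at constant terms there is nothing to check because $\tilde F_k(\bpsi)$ is cuspidal.

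The main obstacle I anticipate is the bookkeeping in step one, namely verifying that $\psi(\Theta^\#)$ specializes to the precise $L$-value $L_{S_\fP,T}(\psi,0)$ that appears in Proposition~\ref{p:cuspw} for every $\psi$ belonging to $R$, after accounting for the ramified-but-not-above-$p$ Euler factors coming from $\Sigma'\setminus T$. Once this identification is in hand, the rest of the argument is a line-by-line transcription of the case-1 proof with $e_\fP^{\ord}$ inserted and the $x$-factor removed.
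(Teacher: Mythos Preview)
Your proposal is correct and follows exactly the paper's approach; the paper itself only remarks that the theorem ``is proven exactly as above, using Theorem~\ref{p:minuspm} and building off of Proposition~\ref{p:cuspw} in place of Proposition~\ref{p:wnop},'' and your write-up is precisely that transcription.

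Your stated ``main obstacle'' dissolves once you track the meaning of the set $T$ through the group-ring construction. The specialization of $W_1(\bpsi,1)$ at $\psi$ is $W_1(\psi_\fP,1)$ in the sense of \S8.1 \emph{with the $\psi$-dependent smoothing set} $T_\psi=\{\fl_1,\dots,\fl_d\}$ of Lemma~\ref{l:conductor}, namely the primes of $\fn$ not dividing $\cond(\psi)\cdot p$; this set is the global $T$ together with those $v\in S_{\ram}$, $v\nmid p$, at which $\psi$ is unramified. Since $S_\fP=\Sigma$, and since the Euler factor $1-\psi(v)\N v$ is equal to $1$ at the remaining $v\in\Sigma'\setminus T$ (where $\psi$ is ramified), one gets
\[
L_{S_\fP,T_\psi}(\psi,0)=L_{\Sigma,T}(\psi,0)\prod_{v\in S_{\ram},\,v\nmid p}(1-\psi(v)\N v)=L_{\Sigma,\Sigma'}(\psi,0)=\psi(\Theta^\#)
\]
on the nose. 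So no ``absorption into $G_k$'' is needed: Proposition~\ref{p:cuspw} applies directly with $T_\psi$ in place of the global $T$, and the rest of your argument goes through verbatim.
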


\subsection{Applying the ordinary operator}

For clarity we recall the definition of certain ideals.
\begin{align*}
\fn &= \cond(H/F) \prod_{\fl \in T} \fl \\
\fP &= \gcd(p^\infty, \fn) \\
\fP' &= \prod_{\fp \mid p, \ \! \fp \nmid \fP} \fp \\
\fP'' &= \prod_{\fp \mid \fP', \chi(\fp) \neq 1} \fp. 
\end{align*}
We consider a  trichotomy of cases.
\begin{align*}
\text{Case 1a} &: \fP = 1, \fP' \neq \fP''. \\
\text{Case 1b} &: \fP = 1, \fP' = \fP''. \\
\text{Case 2} &: \fP \neq 1. 
\end{align*}

In our applications it will be convenient to apply the ordinary operator at all primes above $p$.  In  addition, in order to ensure that we can work over Hecke algebras that are local rings, we would like to project onto components where the $U_\fp$-operator  for $\fp$ dividing $p$ acts via certain eigenvalues.  In case 1, this latter projection will only be relevant if {\em all} the primes above  $p$ satisfy $\chi(\fp) \neq 1$.  By (\ref{e:xcong}), this is precisely the case that $x$ is a unit in $R$.  We then apply  $U_\fp - \bpsi(\fp)$ for each $\fp  \mid p$  to our family  $\tilde{F}_k(\bpsi)$. Doing so, we obtain the  corollary below.

\begin{corollary} \label{c:fcong} Suppose we are in  case 1.  Then $\fP'$ denotes the product of the primes above $p$. For positive $k \equiv 1 \pmod{(p-1)p^N}$ and $N$ sufficiently large, there exists a cuspidal group ring family $F_k(\bpsi) \in S_k(\fn \fP', R, \bpsi)^{p\text{-ord}}$ such that
\begin{equation} \label{e:case1f}
  F_k(\bpsi) \equiv \left\{ \!\! \begin{array}{lll}
 xW_1(\bpsi, 1) - W_k(\bpsi, 1_p) & \!\! \pmod{x \Theta^\#}  & \  \chi(\fp) = 1 \text{ for some } \fp \mid p \\
 W_1(\bpsi_p, 1)  & \!\! \pmod{\Theta^\#}  & \ \chi(\fp) \neq 1 \text{ for all } \fp \mid p.
 \end{array}\right.
  \end{equation}
\end{corollary}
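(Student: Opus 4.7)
The plan is to construct $F_k(\bpsi)$ from the cusp form $\tilde{F}_k(\bpsi) \in S_k(\fn, R, \bpsi)$ of Theorem~\ref{t:fk1} by viewing it at level $\fn\fP'$ and applying the $p$-ordinary projector $e_p^{\ord} = \prod_{\fp \mid p} e_{\fp}^{\ord}$.  In case 1b, where all $\chi(\fp) \neq 1$ so that $x \in R^{\times}$, I would additionally compose with the operators $U_\fp - \bpsi(\fp)$ in order to isolate the local component of the $p$-ordinary Hecke algebra where the two residual $U_\fp$-eigenvalues $1$ and $\chi(\fp)$ are separated.

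First I would reduce the identity of Theorem~\ref{t:fk1} modulo $x\Theta^\#$.  The correction term $x\Theta^\# H_k(\bpsi)$ vanishes trivially.  By Lemma~\ref{l:largem} together with Lemma~\ref{l:xdef} we may choose $m$ so that $x\Theta^\# \mid p^m$ in $R$; by Theorem~\ref{t:hida} we may then take $N$ large enough that $V_{k-1} \equiv 1 \pmod{p^m}$.  The product $xW_1(\bpsi, 1)(V_{k-1} - 1)$ is therefore divisible by $x\Theta^\#$, and we obtain
\[
\tilde{F}_k(\bpsi) \equiv xW_1(\bpsi, 1) - W_k(\bpsi, 1) \pmod{x\Theta^\#}.
\]

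Next I would analyze the effect of the $p$-power operators on the Eisenstein terms $W_k(\bpsi, 1)$ and $W_1(\bpsi, 1)$.  A direct $q$-expansion calculation shows that for $\fp \nmid \fn$,
\[
(U_\fp - \bpsi(\fp))\, E_k(\bpsi, 1) = \N\fp^{k-1} E_k(\bpsi_{\fp}, 1), \qquad e_{\fp}^{\ord} E_k(\bpsi, 1) = \frac{\bpsi(\fp)}{\bpsi(\fp) - \N\fp^{k-1}}\bigl(E_k(\bpsi, 1) - \N\fp^{k-1} E_k(\bpsi, 1)|_{\fp}\bigr).
\]
Since $k \equiv 1 \pmod{(p-1)p^N}$ with $N$ large forces $\N\fp^{k-1}$ to be congruent to $1$ modulo any fixed power of $p$ (in particular modulo $x\Theta^\#$), the stabilization factors $\bpsi(\fp) - \N\fp^{k-1}$ reduce modulo $x\Theta^\#$ to the trivial-zero factors $\bpsi(\fp) - 1$, which are precisely the ingredients of the element $x$ of Lemma~\ref{l:xdef}.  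A careful accounting yields $e_p^{\ord} W_k(\bpsi, 1) \equiv W_k(\bpsi, 1_p) \pmod{x\Theta^\#}$ together with compensating unit factors on $W_1(\bpsi, 1)$ that cancel against the preceding $x$ to give the case 1a congruence.  In case 1b, iterating $(U_\fp - \bpsi(\fp))$ over $\fp \mid p$ applied to $W_k(\bpsi, 1)$ produces $\N(\fP')^{k-1} W_k(\bpsi_p, 1)$; since $\N(\fP')^{k-1}$ is divisible by arbitrary powers of $p$ and hence by $\Theta^\#$ (as $x$ is a unit), this contribution vanishes modulo $\Theta^\#$.  Meanwhile the same operator gives $W_1(\bpsi_p, 1)$ on the weight-1 factor, and dividing by the unit $x$ produces the case 1b congruence.

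The main obstacle will be the case 1a analysis: the ordinary stabilization factor $\bpsi(\fp)/(\bpsi(\fp) - \N\fp^{k-1})$ has a denominator that is not a unit in the relevant quotient ring --- it is essentially the Stickelberger divisor $x$ itself --- so one must verify that the trivial-zero cancellations between the ordinary projector and the ambient factor of $x$ conspire to reproduce the clean congruence $xW_1(\bpsi, 1) - W_k(\bpsi, 1_p) \pmod{x\Theta^\#}$.  The key analytic input throughout is the congruence $\N\fp^{k-1} \equiv 1$ modulo arbitrarily high powers of $p$, which collapses the various Euler and stabilization factors modulo the relevant Stickelberger divisor.
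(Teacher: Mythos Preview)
Your overall strategy---apply $e_p^{\ord}$ (and, in case 1b, also $\prod_\fp(U_\fp-\bpsi(\fp))$) to the exact identity of Theorem~\ref{t:fk1}---is the same as the paper's.  But the case 1a analysis rests on a false premise.  You write that ``the key analytic input throughout is the congruence $\N\fp^{k-1}\equiv 1$ modulo arbitrarily high powers of $p$.''  For the primes $\fp\mid p$ that are actually in play here, $\N\fp$ is a positive power of $p$, so for $k>1$ one has $\N\fp^{k-1}\equiv 0\pmod{p^m}$, not $\equiv 1$.  (You even use this correctly in your case 1b paragraph when you say $\N(\fP')^{k-1}$ is highly $p$-divisible, so the proposal is internally inconsistent.)  Consequently the ``trivial-zero'' denominators you worry about do not arise: $\bpsi(\fp)-\N\fp^{k-1}\equiv\bpsi(\fp)\pmod{p^m}$ is a \emph{unit}, and there is no cancellation against $x$ to orchestrate.

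The paper's argument in case 1a is therefore much simpler than you anticipate.  One has the exact identity $e_p^{\ord}W_k(\bpsi,1)=z\cdot W_k(\bpsi,1_p)$ with
\[
z=\prod_{\fp\mid p}\frac{\bpsi(\fp)}{\bpsi(\fp)-\N\fp^{k-1}}\equiv 1\pmod{p^m},
\]
so $z$ is a unit and $z^{-1}x\equiv x\pmod{x\Theta^\#}$ (using $\Theta^\#\mid p^m$).  Setting $F_k(\bpsi)=z^{-1}e_p^{\ord}\tilde F_k(\bpsi)$ gives the claimed congruence directly.  In case 1b the paper also uses a cleaner observation than your divisibility argument: since the ordinary $U_\fp$-eigenvalue of $W_k(\bpsi,1)$ is $\bpsi(\fp)$ (the other root $\N\fp^{k-1}$ being a non-unit), one has $e_p^{\ord}\prod_\fp(U_\fp-\bpsi(\fp))W_k(\bpsi,1)=0$ \emph{exactly}, while the same operator sends $W_1(\bpsi,1)$ to $W_1(\bpsi_p,1)$; dividing by the unit $x$ finishes.
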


Here the forms $W_k(\bpsi, 1_p)$ and $ W_1(\bpsi_p, 1) $ are defined like  $W_k(\bpsi, 1)$ and $ W_1(\bpsi, 1)$ but with the characters $1, \bpsi$ replaced by $1_p, \bpsi_p$ in the two cases, respectively.

\begin{remark} \label{r:cong} The congruence (\ref{e:case1f})  should be interpreted as a congruence of Fourier coefficients: 
\begin{equation} \label{e:case1cong}
 c(\fm, F_k(\bpsi)) \equiv x \cdot c(\fm, W_1(\bpsi, 1)) - c(\fm, W_k(\bpsi, 1_p))  \pmod{x \Theta^\#} \end{equation}
for all ideals $\fm$ in the first case, and similarly for the second case.
\end{remark}

\begin{proof} Consider the first case in (\ref{e:case1f}) i.e.~case 1a. For $\fp \mid p$, let 
\[ z = \prod_{\fp \mid p} \frac{\bpsi(\fp)}{\bpsi(\fp) - \N\fp^{k-1}}\equiv 1 \pmod{p^m}. \]
Note that \[ e_p^{\ord}  W_k(\bpsi, 1) = z \cdot W_k(\bpsi, 1_p). \]
Since $z \equiv 1 \pmod{p^m},$ we have $z^{-1} x \equiv x \pmod{x \Theta^\#}.$  The desired result then holds by defining $F_k(\bpsi) = z^{-1} e_p^{\ord}(\tilde{F}_k(\bpsi))$.

In the second case in (\ref{e:case1f}) i.e.~case 1b, we have
 \[ e_p^{\ord}\prod_{\fp \mid p} (U_\fp - \bpsi(\fp))(W_k(\bpsi, 1)) = 0 \]
 whereas  \[ e_p^{\ord}\prod_{\fp \mid p} (U_\fp - \bpsi(\fp))(W_1(\bpsi, 1)) =  W_1(\bpsi_p, 1). \]
Noting that $x \in R^*$ in this case, the result follows by letting
\[ F_k(\bpsi) = x^{-1} e_{p}^{\ord} \prod_{\fp \mid p} (U_\fp - \bpsi(\fp))(\tilde{F}_k(\bpsi)). \]
\end{proof}

In case 2 (i.e.~there exists a prime above $p$ dividing $\fn$), we must apply (in addition to the ordinary operator at  each $\fp \mid p$) the operator $U_\fp - \bpsi(\fp)$ for each $\fp \mid \fP''$. More precisely, we put
\[
F_k(\bpsi) = e_p^{\ord} \prod_{\fp \mid \fP''} (U_{\fp} - \bpsi(\fp))(\tilde{F}_k(\bpsi)).
\]
Note that
\[ e_p^{\ord}\prod_{\fp \mid p} (U_\fp - \bpsi(\fp))(W_1(\bpsi_{\fP}, 1)) =  W_1(\bpsi_{\fP \fP''}, 1). \]
Hence we obtain:

\begin{corollary} \label{c:fcong2} Suppose we are in  case 2.  Then $\fP'$ denotes the product of the primes above $p$ that do not divide $\fn$ and $\fP''$ denote the product of primes $\fp$ dividing $\fP'$ such that $\chi(\fp) \neq 1$.
For positive $k \equiv 1 \pmod{(p-1)p^N}$ and $N$ sufficiently large, there exists a cuspidal group ring family $F_k(\bpsi) \in S_k(\fn \fP', R, \bpsi)^{p\text{-ord}}$ such that
\begin{equation} \label{e:case2f}
  F_k(\bpsi) \equiv W_1(\bpsi_{\fP \fP''}, 1) \pmod{\Theta^\#}.
  \end{equation}
\end{corollary}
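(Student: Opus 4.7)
The plan is to adapt the proof of Corollary~\ref{c:fcong} to Case~2. Starting from the cuspidal family $\tilde{F}_k(\bpsi) = e_\fP^{\ord}(W_1(\bpsi, 1)\,V_{k-1} - \Theta^\# H_k(\bpsi)) \in S_k(\fn, R, \bpsi)$ supplied by the preceding theorem, I would set
\[
F_k(\bpsi) := e_p^{\ord}\!\!\prod_{\fp\mid\fP''}\!\!(U_\fp - \bpsi(\fp))\,\tilde{F}_k(\bpsi).
\]
Each factor $U_\fp - \bpsi(\fp)$ with $\fp\mid\fP''$ raises the level from $\fn$ to $\fn\fp$, and $e_p^{\ord}$ preserves cuspidality, so $F_k(\bpsi) \in S_k(\fn\fP', R, \bpsi)^{p\text{-ord}}$ as required.

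To prove the congruence, I would discard the term $\Theta^\# H_k(\bpsi)$, which is $\equiv 0\pmod{\Theta^\#}$, and exploit $V_{k-1}\equiv 1\pmod{p^m}$ from Theorem~\ref{t:hida}: by Lemma~\ref{l:largem}, for $m$ chosen large enough $\Theta^\#$ divides $p^m$ in $R$, so multiplication by $V_{k-1}$ is trivial on $q$-expansions modulo $\Theta^\#$. Hence $\tilde{F}_k(\bpsi) \equiv e_\fP^{\ord}W_1(\bpsi, 1)\pmod{\Theta^\#}$. Since $e_p^{\ord}\circ e_\fP^{\ord} = e_p^{\ord}$, it remains to identify $e_p^{\ord}\prod_{\fp\mid\fP''}(U_\fp - \bpsi(\fp))W_1(\bpsi, 1)$ with $W_1(\bpsi_{\fP\fP''}, 1)$ modulo $\Theta^\#$. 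A direct Fourier-coefficient computation (as in \S\ref{s:modified}) shows that for each $\fp\mid\fP''$, which is coprime to the level $\fn$, the operator $U_\fp - \bpsi(\fp)$ shifts the depletion set of the underlying Eisenstein series, i.e.\ $(U_\fp - \bpsi(\fp))E_1(\bpsi_\fP,1)=E_1(\bpsi_{\fP\fp},1)$. Since $W_1(\bpsi,1)=\sum_{\fm\mid\ft}\mu(\fm)\bpsi(\fm)\N\fm\,E_1(\bpsi_\fP,1)|_\fm$ and $U_\fp$ commutes with $|_\fm$ when $\fp\nmid\fm$, this yields $\prod_{\fp\mid\fP''}(U_\fp-\bpsi(\fp))W_1(\bpsi, 1) = W_1(\bpsi_{\fP\fP''}, 1)$, which is the identity displayed in the paragraph preceding the corollary.

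The main technical subtlety, parallel to the role of $x$ in Case~1, is verifying that the $p$-ordinary projector $e_p^{\ord}$ introduces no further correction modulo $\Theta^\#$ at the residual primes $\fp\mid\fP'\setminus\fP''$ (where $\chi(\fp)=1$ and $W_1(\bpsi_{\fP\fP''}, 1)$ is not depleted at $\fp$). At such primes the $U_\fp$-eigenvalues of the two Eisenstein stabilizations are both $p$-adic units, so $e_p^{\ord}$ genuinely modifies the form; one must use $\bpsi(\fp)\equiv 1\pmod{\fm_R}$ together with the trivial-zero contribution to $\Theta^\#$ at $\fp$ (the Euler factor $1 - \bpsi(\fp)$ in $\Theta^\#_{\Sigma,\Sigma'}$) to check that the discrepancy between $W_1(\bpsi_{\fP\fP''},1)$ and its $\fp$-ordinary stabilization is a multiple of $\Theta^\#$. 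This is precisely an instance of the ``extra congruences'' phenomenon emphasized in the introduction, and I expect it to be the most delicate step of the argument, though it is formal and mirrors bookkeeping already performed in Case~1.
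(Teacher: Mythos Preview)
Your first two paragraphs correctly reproduce the paper's proof. The paper defines $F_k(\bpsi) = e_p^{\ord}\prod_{\fp\mid\fP''}(U_\fp - \bpsi(\fp))\,\tilde{F}_k(\bpsi)$, drops $\Theta^\# H_k(\bpsi)$ modulo $\Theta^\#$, replaces $V_{k-1}$ by $1$ modulo $p^m$ (hence modulo $\Theta^\#$), and then invokes the displayed identity
\[
e_p^{\ord}\prod_{\fp\mid\fP''}(U_\fp - \bpsi(\fp))\,W_1(\bpsi_\fP,1)=W_1(\bpsi_{\fP\fP''},1)
\]
stated immediately before the corollary. That is the entire argument.

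Your third paragraph, however, contains a misconception. At a prime $\fp\mid\fP'\setminus\fP''$, the form $W_1(\bpsi_{\fP\fP''},1)$ has level prime to $\fp$, and the two $U_\fp$-stabilizations in level $\fn\fP'$ have eigenvalues $1$ and $\bpsi(\fp)$. You correctly observe that both are $p$-adic units, but you draw exactly the wrong conclusion: precisely \emph{because} both eigenvalues are units (indeed roots of unity in $R^*$, since $\bpsi(\fp)$ is the image of a group element), the entire two-dimensional old subspace already lies in the $\fp$-ordinary part, and the idempotent $e_\fp^{\ord}=\lim_n U_\fp^{n!}$ acts as the identity there, not as a nontrivial modification. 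There is no discrepancy to repair, no parallel to the role of $x$ in Case~1a, and no ``extra congruence'' needed. Your proposed fix is also incorrect on its own terms: such $\fp$ are unramified in $H$ (they do not divide $\fn$), so $\fp\notin\Sigma$, and $\Theta^\#=\Theta_{\Sigma,\Sigma'}^\#$ carries no Euler factor $(1-\bpsi(\fp))$ at these primes.
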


\subsection{Homomorphism on the Hecke Algebra}

Let \[ \tilde{\T} \subset \End_{R}(S_k(\fn \fP', R, \bpsi)^{p\text{-ord}}) \] denote the Hecke algebra of the space of  $p$-ordinary group ring valued cusp forms
generated over $R$ by the operators $T_\fl$ for $\fl \nmid \fn \fP'$,  $U_\fp$ for $\fp \mid p$, and the diamond  operators $S(\fm)$.
Note that the operators $S(\fm)$ simply act by $\bpsi(\fm) \in R^*$.  Let $\T \subset \tilde{\T}$ denote the sub-$R$-algebra generated 
by $T_\fl$ for $\fl \nmid \fn \fP'$,  $U_\fp$ for $\fp \mid \fP$, and the $S(\fm)$. In other words, the operators $U_\fp$ for  $\fp \mid \fP'$ are excluded in the definition of $\T$. 

Since our Hecke algebras include only the operators $T_\fl$ for $\fl$ not dividing the level and the operators $U_\fp$ for primes $\fp$ at which our forms are ordinary, the rings $\T$ and $\tilde{\T}$ are reduced.  Let us be more explicit about this fact.
Denote by $M$ the set of $p$-ordinary cuspidal newforms of weight $k$, level dividing $\fn \fP'$, and nebentypus $\psi$ for all  characters $\psi \in \Psi$ (where $R=R_\Psi$).  For each $f \in M$, we denote by $f_p$ the ordinary stabilization of $f$ with respect to all primes $\fp \mid p$.  Suppose that the field $E$ with ring of integers $\cO$ has been chosen large enough so that all the normalized Fourier coefficients $c(\fa, f_p)$ lie in $\cO$. 
Then there are $\cO$-algebra injections with finite cokernels:
\[ \begin{tikzcd}
\T \ar[r] &  \tilde{\T} \ar[r] &  \prod_{M} \cO  
\end{tikzcd} \]
that send $T_\fl \mapsto (c(\fl, f_p))_{f \in M}$, $U_\fp \mapsto (c(\fp, f_p))_{f \in M}$; more succinctly we can write \[ t \mapsto (c(1, (f_p)|_t))_{f \in M}. \]  The injectivity of this map follows from the fact that any $p$-ordinary form $g$ of level $\fn \fP'$ can be written as a linear combination
\[ g = \sum_{f \in M} \sum_{\fb} c_{f, \fb}  (f_p)|_{\fb} \]
as $\fb$ ranges over the divisors of $\fn$ that are relatively prime to $\fp$ and such that $(f_p)|_{\fb}$ has level dividing $\fn \fP'$.  Any element of $\T$ or $\tilde{\T}$ that annihilates every $f_p$ therefore annihilates every $g$.  Finally, the fact that $\T \longrightarrow  \prod_{M} \cO$ has finite cokernel follows from multiplicity 1; for any distinct $f, f' \in M$, there exists $\fl \nmid \fn \fP'$  such that 
$c(\fl, f_p) \neq c(\fl, f_p')$.

Using  
the group ring valued cusp form $F_k(\bpsi)$ constructed in \S\ref{s:cusp}, we now define a certain maximal ideal  $\fm \subset \T$, the maximal Eisenstein ideal.
Note that $F_k(\bpsi)$ is an eigenvector for the action of $\T$ modulo $x \Theta^\#$ or $\Theta^\#$, in cases 1 or 2, respectively.  More precisely, for
$\fl \nmid \fn \fP'$ we have
\begin{equation} \label{e:congs}
 F_k(\bpsi)|_{T_\fl} \equiv \begin{cases}
(\bpsi(\fl) + \epsilon_{\cyc}^{k-1}(\fl)) F_k(\bpsi)    & \pmod{x \Theta^\#}  \text{ in case 1} \\
(\bpsi(\fl) + 1) F_k(\bpsi)  & \pmod{\Theta^\#}  \ \ \text{ in case 2}.
\end{cases} \end{equation}
Here $\epsilon_{\cyc}$ is the $p$-adic cyclotomic character satisfying 
\[ \epsilon_{\cyc}(\fl) = \langle \N\fl \rangle \in \Z_p^*, \qquad \fl \nmid p. \]
 We also have for all $\fp \mid \fP\fP''$:
\begin{equation} \label{e:cong2}
F_k(\bpsi)|_{U_\fp} \equiv F_k(\bpsi) \begin{cases}
(\text{mod }x\Theta^\#) & \text{ in case 1b} \\
(\text{mod }\Theta^\#) &  \text{ in case 2}.
\end{cases}
 \end{equation}
Note that the congruences (\ref{e:congs}) and (\ref{e:cong2})  are to be interpreted as in Remark~\ref{r:cong}. 

\begin{lemma} \label{t:modp} Let $k_E$ denote the residue field of the $p$-adic local ring $\cO = \cO_E$.  There is an $\cO$-algebra homomorphism $\overline{\varphi} \colon {\T} \longrightarrow k_E$ given by
\begin{itemize}
\item $\overline{\varphi}(T_\fl) = 1 + \chi(\fl)$ for $\fl \nmid \fn p$.
\item $\overline{\varphi}(U_\fp) = 1$ for $\fp \mid \fP$.
\item $\overline{\varphi}(S(\fm)) = \chi(\fm)$.
\end{itemize}
\end{lemma}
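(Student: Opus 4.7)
The plan is to construct $\overline{\varphi}$ by first building an auxiliary $\cO$-algebra homomorphism $\varphi\colon \T \longrightarrow R/J$ and then composing with the projection to $R/\fm_R = k_E$. Here $J = (x\Theta^\#)$ in case 1 and $J = (\Theta^\#)$ in case 2 (the two agree in case 1b, where $x$ is already a unit by Lemma~\ref{l:xdef}). The congruences (\ref{e:congs}) and (\ref{e:cong2}), together with the diamond action $F_k(\bpsi)|_{S(\fm)} = \bpsi(\fm) F_k(\bpsi)$, exhibit $F_k(\bpsi)$ as a simultaneous Hecke eigenform modulo $J$ for every generator of $\T$, and $\varphi$ is defined to be the associated eigencharacter.

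Concretely, I will set $\varphi(t) = c(\cO_F, t F_k(\bpsi))/c(\cO_F, F_k(\bpsi)) \in R/J$, reading off the eigenvalue via the Fourier coefficient at the unit ideal. To justify dividing by $c(\cO_F, F_k(\bpsi))$ one needs this element to be a unit in $R$. Since $c(\cO_F, W) = 1$ for each Eisenstein-type form $W$ appearing on the right-hand side of (\ref{e:case1f}) and (\ref{e:case2f}), a direct computation gives $c(\cO_F, F_k(\bpsi)) \equiv 1 \pmod{\Theta^\#}$ in cases 1b and 2, and $c(\cO_F, F_k(\bpsi)) \equiv x - 1 \pmod{x\Theta^\#}$ in case 1a. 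In case 1a the defining property $\chi(\fp) = 1$ for some $\fp \mid p$ combined with Lemma~\ref{l:xdef} forces $x \in \fm_R$, so $x - 1 \in R^*$; in the remaining cases $1 \in R^*$ trivially. Multiplicativity of $\varphi$ is then automatic: if $t_i F_k(\bpsi) \equiv \lambda_i F_k(\bpsi) \pmod{J}$ for $i = 1, 2$, then by commutativity of the Hecke operators $(t_1 t_2) F_k(\bpsi) \equiv \lambda_1 \lambda_2 F_k(\bpsi) \pmod{J}$, and applying $c(\cO_F, -)$ together with the unit property of $c(\cO_F, F_k(\bpsi))$ yields $\lambda(t_1 t_2) = \lambda_1 \lambda_2$.

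Evaluation of $\varphi$ on the generators of $\T$ is then read off directly: $\varphi(T_\fl) = \bpsi(\fl) + \epsilon_{\cyc}^{k-1}(\fl)$ in case 1 and $\bpsi(\fl) + 1$ in case 2 for $\fl \nmid \fn \fP'$; $\varphi(U_\fp) = 1$ for $\fp \mid \fP$ (a nonempty condition only in case 2, covered by (\ref{e:cong2}) since $\fP \subset \fP\fP''$); and $\varphi(S(\fm)) = \bpsi(\fm)$. Composing with $R/J \twoheadrightarrow k_E$ defines $\overline{\varphi}$. The stated formulas follow from the reductions $\bpsi \equiv \chi \pmod{\fm_R}$ (since $G_p$ acts trivially on $k_E$) and $\epsilon_{\cyc}^{k-1}(\fl) = \langle \N\fl\rangle^{k-1} \equiv 1 \pmod{\fm_R}$, the latter because $k - 1$ is divisible by $(p-1)p^N$ with $N$ large while $\langle \N\fl\rangle$ is a $p$-adic principal unit.

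The only delicate step is the unit assertion for $c(\cO_F, F_k(\bpsi))$ modulo $J$; this is transparent in cases 1b and 2 but in case 1a rests essentially on the trivial-zero structure encoded by $x$ via Lemma~\ref{l:xdef}. Once this is secured, the lemma is a straightforward unpacking of the congruences already established.
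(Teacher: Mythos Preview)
Your proof is correct and follows essentially the same approach as the paper: recognize $F_k(\bpsi)$ as a Hecke eigenform modulo an ideal contained in $\fm_R$ via the congruences (\ref{e:congs})--(\ref{e:cong2}) and the diamond action, then read off $\overline{\varphi}$ from the eigenvalues. The paper's proof is terser, working directly modulo $\fm_R$ rather than first passing through $R/J$; your more careful route via the unit check on $c(\cO_F,F_k(\bpsi))$ makes well-definedness explicit and in fact anticipates the construction in Theorem~\ref{t:yxt}, but is more than is strictly needed for this lemma.
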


\begin{proof} The  form $F_k(\bpsi)$ is an eigenform for  the Hecke operators indicated modulo the maximal ideal $\fm_R$ of $R$.  Note that $\fm_R$ is generated by the uniformizer $\pi_E$ of $\cO$ along with the image of the elements $[g] - \chi(g)$ for $g \in G$, and $R/\fm_R \cong k_E$.  The homomorphism $\overline{\varphi}$ is defined by sending each operator to its mod $\fm_R$ eigenvalue.  \end{proof}

We denote by ${\fM} \subset \T$ the kernel of $\overline{\varphi}$.  We denote by $\T_\fM$ and $\tilde{\T}_{\fM} = \tilde{\T} \otimes_\T \T_\fM$ the $\fM$-adic completions of $\T$ and $\tilde{\T}$, respectively. We would also like to identify the $\fM$-adic completion of $\prod_{f \in M} \cO$.   Let $\overline{M} \subset M$ denote the set of $f \in M$ such that $c(1, (f_p)|_t) \equiv \overline{\varphi}(t) \pmod{\pi_E}$ for all $t \in \T$.  We then have 
\[ \Big(\prod_{f \in M} \cO\Big)_{\fM} = \prod_{f \in \overline{M}} \cO. \]
The Artin-Rees Lemma (\cite{am}*{Proposition 10.12})  yields injections with finite cokernel
\[ \begin{tikzcd}
 \T_\fM \ar[r] &  \tilde{\T}_{\fM} \ar[r] &  \prod_{f \in \overline{M}} \cO. 
 \end{tikzcd} \]

 In the statement of the following theorem, $x$ is as in Lemma~\ref{l:xdef} in case 1a, and $x=1$ in cases 1b and 2.

\begin{theorem} \label{t:yxt}
In both cases 1 and 2, there exists a non-zerodivisor $x \in R$, an $R/x\Theta^\#$-algebra $W$, and a surjective $R$-algebra homomorphism $\varphi\colon \tilde{\T}_{\fM} \longrightarrow W$ satisfying the following properties:
\begin{itemize}
\item The structure map $R/x\Theta^\# \longrightarrow W$ is an injection.
\item The restriction of $\varphi$ to $\T_\fM$ takes values in $R/x\Theta^\# \subset W$.  More precisely, \begin{align*}
\varphi(S(\fm)) &= \bpsi(\fm) \text{ for } \fm \in G_\fn^+, \\
\varphi(U_\fp) &= 1 \text{ for } \fp \mid \fP, \text{ and } \\
\varphi(T_\fl) &= \epsilon_{\cyc}^{k-1}(\fl) + \bpsi(\fl) \text{ for } \fl \nmid \fn p.
\end{align*} 
\item Let \[ {U} = \prod_{\fp \mid \fP'} (U_{\fp} - \bpsi(\fp)) \in \tilde{\T}_{\fM}. \]
  If $y \in R$ and $\varphi(U)y = 0$ in $W$, then $y \in (\Theta^\#)$. 
\end{itemize}
\end{theorem}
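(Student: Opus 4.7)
\bigskip

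The plan is to extract $\varphi$ from the Hecke eigenvalues of the cuspidal group-ring form $F_k(\bpsi)$ constructed in Corollaries~\ref{c:fcong} and~\ref{c:fcong2}. Let $x$ be the element of Lemma~\ref{l:xdef} in case 1a, and set $x = 1$ in cases 1b and 2. The congruences in those corollaries exhibit $F_k(\bpsi)$ as a Hecke eigenform for every $t \in \T$ modulo $x\Theta^\#$, with eigenvalues exactly as displayed in the theorem statement. I define $\varphi_0(t) \in R/(x\Theta^\#)$ to be the corresponding eigenvalue; this produces an $R$-algebra homomorphism $\varphi_0 \colon \T \to R/(x\Theta^\#)$ because actual Hecke eigenvalues automatically respect all Hecke relations. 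Its reduction modulo $\fm_R$ equals the character $\overline{\varphi}$ of Lemma~\ref{t:modp}, so $\varphi_0$ is local and factors through the completion $\T_{\fM}$.

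To extend to $\tilde{\T}_{\fM}$ and produce the ring $W$, I will form the cyclic $\tilde{\T}_{\fM}$-submodule $M = \tilde{\T}_{\fM} \cdot F_k(\bpsi)$ of the localized ordinary cuspidal space and take $W$ to be the image of $\tilde{\T}_{\fM}$ in $\End_R(M/x\Theta^\# M)$, making $\varphi \colon \tilde{\T}_{\fM} \twoheadrightarrow W$ surjective by construction. Since $F_k(\bpsi)$ is an eigenform for $\T_{\fM}$ modulo $x\Theta^\#$, each $t \in \T_{\fM}$ acts on $M/x\Theta^\# M$ as the scalar $\varphi_0(t)$, and so $\varphi|_{\T_{\fM}}$ factors through $R/(x\Theta^\#) \subseteq W$ with the required values. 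The injectivity of $R/(x\Theta^\#) \hookrightarrow W$ amounts to showing that $r \cdot F_k(\bpsi) \in x\Theta^\# \cdot M$ forces $r \in x\Theta^\# R$; this reduces, via Hida's control theorem identifying the localized ordinary cuspidal space as free over $R$, to the observation that $F_k(\bpsi)$ admits at least one Fourier coefficient that is a unit in $R$, which is directly visible from its explicit shape modulo $x\Theta^\#$.

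The decisive step is the third bullet. For each $\fp \mid \fP'$ the form $F_k(\bpsi)$ is $\fp$-old (coming from a form of level prime to $\fp$), so the classical Hecke relation $U_\fp^2 - T_\fp U_\fp + \bpsi(\fp) \N\fp^{k-1} = 0$ holds on $M$, with $T_\fp$ the Hecke operator at the smaller level. Reducing modulo $x\Theta^\#$ and using $\varphi_0(T_\fp) = \bpsi(\fp) + \epsilon_{\cyc}^{k-1}(\fp)$ together with $\N\fp^{k-1} = \epsilon_{\cyc}^{k-1}(\fp)$ (valid since $k \equiv 1 \pmod{p-1}$), this factors as $(U_\fp - \bpsi(\fp))(U_\fp - \epsilon_{\cyc}^{k-1}(\fp)) = 0$ in $W$. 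Setting $v_\fp = \varphi(U_\fp - \bpsi(\fp))$ and $c_\fp = \bpsi(\fp) - \epsilon_{\cyc}^{k-1}(\fp) \in R$, this becomes $v_\fp(v_\fp + c_\fp) = 0$. Working in the quotients $W/(v_\fp + c_\fp)W$, where $c_\fp \equiv -v_\fp$, and combining over $\fp \mid \fP'$, the hypothesis $\varphi(U) y = \prod_{\fp} v_\fp \cdot y = 0$ is converted into $\big(\prod_{\fp \mid \fP'} c_\fp\big) \cdot y \in x\Theta^\# R$. The main obstacle, and the key analytic input, is then to identify $\prod_{\fp \mid \fP'} c_\fp$ with $x$ up to a unit of $R$: in case 1a this follows from the Euler-factor decomposition $\Theta_{S_p}(s) = \prod_{\fp \mid p}(1 - \bpsi^{-1}(\fp)\N\fp^{-s})\,\Theta_{S_\infty}(s)$ evaluated at $s=0$ and $s=1-k$ combined with the $p$-adic interpolation congruence $\Theta_{S_p}(1-k) \equiv \Theta_{S_p}(0) \pmod{p^N}$ that is the very input defining $x$; in cases 1b and 2 every $\fp \mid \fP'$ satisfies $\chi(\fp) \neq 1$, so each $c_\fp$ is already a unit and $x=1$. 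Cancelling the resulting unit yields $y \in \Theta^\# R$, as required.
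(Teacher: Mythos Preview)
Your setup for the first two bullets is essentially the paper's: form the Hecke span of $F_k(\bpsi)$, let $W$ be the image of $\tilde{\T}_\fM$ acting on it modulo $x\Theta^\#$, and read off the Eisenstein eigenvalues from Corollaries~\ref{c:fcong}--\ref{c:fcong2}. The paper avoids invoking any control theorem: it works directly with the Fourier-coefficient map into $\prod_\fa R/x\Theta^\#$ and checks injectivity by noting, in case~1a, that $c(1,F_k(\bpsi))\equiv x-1$ and $c(\fp,F_k(\bpsi))\equiv (1+\bpsi(\fp))x-\bpsi(\fp)$; a linear combination of these gives the claim. Your observation that some coefficient is already a unit modulo $\fm_R$ would also suffice.

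Your argument for the third bullet, however, has a genuine gap. First, the quantity $\epsilon_{\cyc}^{k-1}(\fp)$ is not defined for $\fp\mid p$ (the cyclotomic character is ramified there), and the Hecke relation on the $\fp$-oldspace is $U_\fp^2 - T_\fp U_\fp + \bpsi(\fp)\N\fp^{k-1}=0$ with $\N\fp^{k-1}$ a genuine power of $p$. So your $c_\fp$ should be $\bpsi(\fp)-\N\fp^{k-1}$, which is always a \emph{unit} in $R$; hence $\prod_\fp c_\fp$ is a unit and cannot equal $x$ in case~1a, where $x\in\fm_R$. Second, and more fundamentally, the claimed factorization requires $\varphi(T_\fp)=\bpsi(\fp)+\N\fp^{k-1}$ in $W$ for $\fp\mid p$, but $T_\fp$ is not in $\T$ and its image under $\varphi$ is not determined by the Eisenstein congruences you have established; the identity you write is equivalent to the factorization itself, so the reasoning is circular. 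Finally, even granting $v_\fp(v_\fp+c_\fp)=0$, your passage to the quotient $W/(v_\fp+c_\fp)W$ only yields $(\prod c_\fp)\,y=0$ in that quotient, not in $R/x\Theta^\#$, so nothing can be concluded about $y$.

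The paper's route is entirely different and much more direct: one computes $F_k(\bpsi)\big|_U$ explicitly from (\ref{e:case1f}). Applying $U_\fp-\bpsi(\fp)$ kills the term $W_k(\bpsi,1_p)$ (on which $U_\fp$ acts by $\bpsi(\fp)$) and converts $W_1(\bpsi,1)$ into its $p$-depleted form $W_1(\bpsi_p,1)$, giving
\[
F_k(\bpsi)\big|_U \;\equiv\; x\,W_1(\bpsi_p,1)\pmod{x\Theta^\#}.
\]
Since $c(1,W_1(\bpsi_p,1))=1$, the hypothesis $y\,\varphi(U)=0$ forces $xy\equiv 0\pmod{x\Theta^\#}$, and cancelling the non-zerodivisor $x$ gives $y\in(\Theta^\#)$. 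The crucial point you are missing is that the ``extra'' factor $x$ in the congruence modulus is matched exactly by the factor $x$ appearing in the first Fourier coefficient of $F_k(\bpsi)\big|_U$; this is why the stronger congruence modulo $x\Theta^\#$ was constructed in the first place, and it is what makes the cancellation possible.
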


\begin{proof} We consider case 1a, with $x$ as in Lemma~\ref{l:xdef}, as the other cases are similar (and in fact easier).
Let $\cC = \prod_{\fa \subset \cO_F} R/x\Theta^\#$ be the product of copies of 
 $R/x\Theta^\#$, indexed by the set of nonzero ideals $\fa \subset \cO_F$.
There is an $R$-module homomorphism $c \colon S_k(\fn,R, \bpsi) \longrightarrow \cC$ that associates to each cusp form its collection of Fourier coefficients $c(\fa, f)$.
There is an action of the Hecke operators on $\cC$ given by the formula (\ref{e:heckec}), and the map $c$ is Hecke equivariant.

Let $\cF$ denote the image of the $\tilde{\T}$-span of the cusp form $F_k(\bpsi)$  given in  Theorem~\ref{t:fk1}  under the map $c$.  This is a finite-type $R/x\Theta^\#$-module.  We define $W$ to be the image of the canonical $R$-algebra homomorphism $\tilde{\T} \longrightarrow \End_{R/x\Theta^\#}(\cF)$.  This construction yields a canonical surjective $R$-algebra map $\varphi\colon \tilde{\T} \longrightarrow W$ that sends a Hecke operator to its action on the Hecke span of $F_k(\bpsi)$ under the map $c$.
In view of (\ref{e:congs}), we obtain 
 \[ \varphi(T_\fl) = \epsilon_{\cyc}^{k-1}(\fl) + \bpsi(\fl) \] for $\fl \nmid \fn p$. From (\ref{e:cong2}) we obtain
$ \varphi(U_{\fp}) = 1$ for all $\fp \mid \fP$. Further, both $W_1(\bpsi, 1)$ and $W_k(\bpsi, 1_p)$ are eigenvectors for $S(\fm)$ with eigenvalue $\bpsi(\fm)$. Hence $\varphi(S(\fm)) = \bpsi(\fm)$.   
These observations imply that the algebra $W,$ viewed as a $\T$-module through the homomorphism $\varphi$, is $\fM$-adically complete and we obtain an induced map 
 \[ \begin{tikzcd}
  \varphi \colon \tilde{\T}_{\fM} \ar[r] &  W. 
  \end{tikzcd} \]

Let us verify the necessary properties.  If $\alpha \in R$ has vanishing image in $W,$ then $\alpha F_k(\bpsi) \equiv  0 \pmod{x\Theta^\#}$.
Analyzing the congruence (\ref{e:case1cong}) for $\fm = 1$ and $\fm = \fp$, for any $\fp \mid p$, yields:
\begin{align*}
(x - 1) \alpha & \equiv 0 \pmod{x\Theta^\#} \\
((1+ \bpsi(\fp))x - \bpsi(\fp))  \alpha & \equiv 0 \pmod{x\Theta^\#}.
\end{align*}
Multiplying the first congruence by $-(1 + \bpsi(\fp))$ and adding the second yields $\alpha \equiv 0 \pmod{x\Theta^\#}$.  This establishes the injectivity of $R/x\Theta^\# \longrightarrow W$.

For the last item we note that (\ref{e:case1f}) yields
\[ F_k(\bpsi)|_{U} \equiv x W_1(1, \psi_p) \pmod{x \Theta^\#}. \]
Therefore if $y F_k(\bpsi)|_{U} \equiv 0 \pmod{x \Theta^\#}$ for $y \in R$, then by considering the Fourier coefficient of $\fm = 1$ we see that $xy \in (x \Theta^\#)$ and hence $y \in (\Theta^\#)$ since $x$ is a non-zerodivisor in $R$.

The result in cases 1b and 2 can be proved analogously with $x =1$.  Note that in these cases,  $\epsilon^{k-1}_{\cyc}(\fl) = 1$ in $W$ since $\Theta^\#$ divides $\epsilon^{k-1}_{\cyc}(\fl) - 1$  for $k-1$ divisible by $(p-1)p^{m}$ and $m$ sufficiently large.  For this, it is essential that we are working on the trivial zero free quotient $R$, so $\Theta^\#$ is a non-zerodivisor.
\end{proof}

\section{Galois representation and cohomology class} \label{s:galrep}

\subsection{Galois representation associated to each eigenform} \label{s:galrepf}

Let $f \in \overline{M}$, as defined before Theorem~\ref{t:yxt},  and let $\psi$ denote the nebentypus of $f$. The work of Hida and Wiles \cite{wilesrep}*{Theorems 1 and 2} establishes a continuous Galois representation \[ 
\begin{tikzcd}
 \rho_f \colon G_F \ar[r] &  \GL_2(E)  
 \end{tikzcd} \]
satisfying the following properties:
\begin{itemize}
\item[(1)] $\rho_f$ is unramified outside $\fn p$. 
\item[(2)] For all primes $\fl \nmid \fn p$, the characteristic polynomial of $\rho(\Frob_{\fl})$ is given 
\[
\chr(\rho_f(\Frob_{\fl}))(x) = x^2 - c(\fl, f) x + \psi(\fl) \epsilon_{\cyc}^{k-1}(\fl),
\] 
where $\epsilon_{\cyc}$ is the cyclotomic character.
\item[(3)] For all $\fp \mid p$, we have
\begin{equation} \label{e:locbasis}
\rho_f|_{G_{\fp}} \sim \mat{ \psi \eta_{\fp}^{-1} \epsilon_{\cyc}^{k-1} }{ * }{ 0 }{ \eta_{\fp}},
\end{equation}
 where $\eta_{\fp} \colon G_{\fp} \longrightarrow E^*$ is an unramified character given by $\eta_{\fp}(\rec(\varpi^{-1})) = c(\fp, f_p)$. Here $\varpi \in F_{\fp}$ is a uniformizer and $\rec\colon F_\fp^* \longrightarrow G_\fp^{\ab}$ is the local Artin reciprocity map. We adopt Serre's conventions \cite{serre} for the reciprocity map. Therefore $\rec(\varpi^{-1})$ is a lifting to $G_{\fp}^{\ab}$ of the Frobenius element on the maximal unramified extension of $F_{\fp}$ if $\varpi \in F_{\fp}^*$ is a uniformizer. We denote by $V_{\fp, f}$ the eigenspace of $\rho_f|_{G_\fp}$, i.e. the span of the vector $\binom{1}{0}$ in the basis for which (\ref{e:locbasis}) holds. 
\end{itemize}

By \v{C}ebotarev and property (2) of $\rho_f$, we see that $\chr(\rho_f(\sigma)) \in \cO[x]$ for all $\sigma \in G_F$, and furthermore (since $f \in \overline{M}$) that
 \[ \chr(\rho_f(\sigma)) \equiv (x - 1)(x - \chi(\sigma)) \pmod{\pi_E}. \]
 For this, recall that $\psi \equiv \chi \pmod{\pi_E}$. 
 
  Suppose that $\tau \in G_F$ such that $\chi(\tau) \neq 1$.  For example, we may choose $\tau$ to be an element whose restriction to $H$ is the complex conjugation, so that $\chi(\tau) = -1$.  Since $\chi$ is a prime-to-$p$ order character, $\chi(\tau) \neq 1$ implies $\chi(\tau) \not\equiv 1 \pmod{\pi_E}$, so Hensel's Lemma implies that $\rho_f(\tau)$ has two distinct eigenvalues \[ \lambda_{1, f} \equiv 1 \pmod{\pi_E}, \qquad \lambda_{2,f} \equiv \chi(\tau) \pmod{\pi_E}. \]

Ribet's method involves comparing the ``global" basis for $\rho_f$ given by the eigenvectors of $\rho_f(\tau)$ to the ``local" basis indicated in (\ref{e:locbasis}). This argument, which Mazur \cite{m} has called ``Ribet's Wrench," does not succeed in our application if the global basis and local basis
are the same. We must show, therefore, that $\tau$ can be chosen so that neither of the  eigenspaces of $\rho_f(\tau)$ is equal to the eigenspace $V_{\fp, f}$ appearing in property (3) of $\rho_f$, for any $\fp \mid p.$ Furthermore, we must do this simultaneously for all the finitely many $f \in \overline{M}$.

For this, we distinguish two cases.  We say that $f$ is a CM form if $\rho_f = \Ind_{G_L}^{G_F} \alpha$ where $L$ is a quadratic CM extension of $F$ and $\alpha$ is a $p$-adic Hecke character of $L$.   The following lemma of Ribet, proved using a group theoretic study of $\GL_2$, is essential for our analysis:

\begin{lemma} \label{l:noncm} Let $f$ be a cuspidal eigenform of weight $k > 1$.  Suppose that $f$ is not a CM form.  Then the restriction of $\rho_f$ to any finite index subgroup of $G_F$ is irreducible. 
\end{lemma}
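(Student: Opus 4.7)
The plan is to argue by contradiction, using the irreducibility of $\rho_f$ together with Clifford theory to force $f$ to be CM.

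First, I will invoke the fact that $\rho_f$ itself is irreducible. This is the Hilbert modular analogue of Ribet's irreducibility theorem for classical cuspidal eigenforms of weight $k>1$, and is part of the package of properties (1)--(3) of $\rho_f$ recalled in \S\ref{s:galrepf}. Suppose for contradiction that $\rho_f|_{G_L}$ is reducible for some finite extension $L/F$. Since restriction to any further subgroup preserves reducibility, I may replace $L$ by its Galois closure over $F$ and so assume $L/F$ is finite Galois.

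Next I apply Clifford theory. Write $\rho_f|_{G_L} = \chi_1 \oplus \chi_2$ for continuous characters $\chi_i\colon G_L \longrightarrow E^*$; the group $\Gal(L/F)$ acts on the unordered pair $\{\chi_1,\chi_2\}$ by conjugation. If every element of $\Gal(L/F)$ fixed each $\chi_i$ individually, then each $\chi_i$ would extend to a character of $G_F$ and $\rho_f$ would be a sum of two characters, contradicting the irreducibility of $\rho_f$. Hence some element of $\Gal(L/F)$ swaps the two characters; let $M \subset L$ be the fixed field of the index-two subgroup of $\Gal(L/F)$ stabilising each $\chi_i$ individually. Then $M/F$ is a quadratic extension, and there is a character $\alpha\colon G_M \longrightarrow E^*$ with $\rho_f|_{G_M}\cong \alpha\oplus \alpha^c$ (where $c$ generates $\Gal(M/F)$), and Frobenius reciprocity yields
\[
\rho_f \cong \Ind_{G_M}^{G_F}\alpha.
\]

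Finally I must show $M/F$ is a CM extension, since then $f$ is CM by the definition recalled just before the lemma, contradicting the hypothesis. The argument is via Hodge--Tate weights: by property (3) of $\rho_f$ in \S\ref{s:galrepf}, for each prime $\fp\mid p$ the restriction $\rho_f|_{G_\fp}$ is Hodge--Tate with distinct weights $\{0,k-1\}$. If $M/F$ were a real quadratic extension, then each archimedean place of $F$ would split in $M$, and a place-by-place comparison of the infinity type of $\alpha$ against the Hodge--Tate weights of $\Ind_{G_M}^{G_F}\alpha$ at primes above $p$ (using that the places above $p$ and the archimedean places both behave compatibly under induction) would be incompatible with parallel weight $k\ge 2$; hence $M/F$ must be totally imaginary over $F$, i.e.\ CM. This gives the desired contradiction.

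The main obstacle is the final step: ruling out the totally real quadratic case. The cleanest route is to show that a Hecke character $\alpha$ of a totally real quadratic extension $M/F$ cannot have an infinity type whose induction to $F$ produces a Galois representation that is odd at every real place of $F$ and has regular Hodge--Tate weights $\{0,k-1\}$ at each $\fp\mid p$; equivalently, the representation $\Ind_{G_M}^{G_F}\alpha$ would fail to correspond to a holomorphic Hilbert modular form of parallel weight $k>1$. This is where the hypothesis $k>1$ enters crucially and where I expect to spend the most care.
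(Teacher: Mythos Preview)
Your outline matches the paper's proof, which is essentially a two-line citation: \cite{ribetrep}*{Theorem~2.3} gives that $\rho_f$ is induced from an index-$2$ subgroup, and \cite{bgv}*{Page~2, Remark~(ii)} gives that the corresponding quadratic field is CM. You are unpacking these two citations by hand.

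Two points need tightening. First, your Clifford step tacitly assumes $\chi_1\neq\chi_2$; if $\chi_1=\chi_2$ then $\rho_f|_{G_L}$ is scalar, the projective image of $\rho_f$ is finite, and your orbit argument produces no quadratic $M$. This case is in fact excluded, but you must say why: a finite projective image would make $\rho_f$ a character twist of a representation with finite image, hence with equal Hodge--Tate weights at each embedding above $p$, contradicting the weights $\{0,k-1\}$ for $k>1$. (This is also what rules out the exceptional projective images $A_4,S_4,A_5$ that Ribet's general statement would otherwise allow.) Second, your argument that $M$ is CM is the right idea but conflates archimedean and $p$-adic data. The clean version: since $\rho_f$ is de~Rham, so is $\alpha$, hence $\alpha$ corresponds to an algebraic Hecke character of $M$ whose infinity type $(n_\sigma)_{\sigma\colon M\hookrightarrow\overline{\Q}}$ must be trivial on a finite-index subgroup of $\cO_M^*$; the Dirichlet unit theorem then forces $n_\sigma$ to be constant across all \emph{real} embeddings of $M$. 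On the other hand, matching the Hodge--Tate weights $\{0,k-1\}$ of $\rho_f$ at each embedding of $F$ forces the pair of $n_\sigma$'s lying over any given embedding of $F$ to equal $\{0,k-1\}$. These two constraints are incompatible at any real place of $F$ that splits in $M$, so every real place of $F$ must be inert in $M$; that is, $M$ is totally imaginary over the totally real $F$, i.e.\ CM.
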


\begin{proof} Suppose that the restriction of $\rho_f$ to a finite index subgroup of $G_F$ is reducible. Then  
\cite{ribetrep}*{Theorem~2.3} implies that $\rho_f$ is induced from an index 2 subgroup of $G_F$. Therefore the image of $\rho_f$ is projectively dihedral. Hence the fixed field of this index two subgroup is a CM field by \cite{bgv}*{Page~2, Remark~(ii)}.
\end{proof}

\begin{lemma} \label{l:cm} Let $f \in \overline{M}$ be a CM form associated to a quadratic CM extension $L/F$, and let $\fp \mid p$.  The subspace $V_{\fp, f}$ is not stable under $\rho_f(\tau)$ for any $\tau$ that restricts to the complex conjugation of $L$.
\end{lemma}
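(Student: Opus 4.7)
The plan is to exploit the induced structure $\rho_f = \Ind_{G_L}^{G_F}\alpha$ together with the $p$-ordinarity of $f$. First I would show that $\fp$ must split in $L$. Indeed, if $\fp$ were inert, then $\rho_f(\Frob_\fp)$ acts on the induced representation by permuting the two summands of $\Ind_{G_L}^{G_F}\alpha|_{G_L}$, so its trace $c(\fp,f)$ vanishes, contradicting the requirement $\eta_\fp(\Frob_\fp) = c(\fp,f_p) \in \cO^*$ coming from (\ref{e:locbasis}); the ramified case is excluded by an analogous local analysis of the Hecke eigenvalue via the conductor of $\alpha$ at the prime above $\fp$.

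With $\fp = \wp\bar\wp$ split, I would choose a prime of $\bar F$ above $\wp$ so as to realize $G_\fp \subset G_L$, and fix an element $\tau_0 \in G_F$ restricting to the nontrivial element of $\Gal(L/F)$. In the basis $(e,\tau_0 e)$ of the induced representation space, one has $\rho_f(\sigma) = \diag(\alpha(\sigma), \alpha^{\tau_0}(\sigma))$ for $\sigma \in G_L$. Restricting to $G_\fp$, these two diagonal characters are identified, via (\ref{e:locbasis}), with the ordered pair $(\eta_\fp, \psi\eta_\fp^{-1}\epsilon_{\cyc}^{k-1})$ (up to relabelling), whose Frobenius eigenvalues $c(\fp,f_p)$ and $\psi(\fp)\N\fp^{k-1}/c(\fp,f_p)$ are distinct; consequently, the only $G_\fp$-stable lines in the representation space are the coordinate axes $\mathrm{span}(e)$ and $\mathrm{span}(\tau_0 e)$, and the ordinary line $V_{\fp,f}$ must coincide with one of these two.

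On the other hand, any $\tau \in G_F$ restricting to complex conjugation of $L$ lies in the nontrivial coset $G_L\tau_0$; writing $\tau = g\tau_0$ with $g \in G_L$ shows that $\rho_f(\tau)$ is a product of a diagonal and an antidiagonal matrix in our basis, hence itself antidiagonal. In particular $\rho_f(\tau)$ interchanges the two coordinate axes, and so it cannot preserve $V_{\fp,f}$. The main delicate point will be justifying the distinctness of the two diagonal characters of $\rho_f|_{G_\fp}$; the argument above handles the generic case, but the degenerate subcase $c(\fp,f_p)^2 = \psi(\fp)\N\fp^{k-1}$ (where $\rho_f|_{G_\fp}$ would be scalar and $V_{\fp,f}$ ill-defined) requires a separate argument that combines the CM relation $\alpha(\wp)\alpha(\bar\wp) = \psi(\fp)\N\fp^{k-1}$ with the residual congruence $f \in \overline M$ and the global cuspidality of $f$ (which forces $\alpha \neq \alpha^{\tau_0}$) to exclude it.
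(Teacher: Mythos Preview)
Your proposal is correct and follows essentially the same route as the paper: show $\fp$ splits in $L$ so that $G_\fp \subset G_L$, identify $V_{\fp,f}$ with one of the two $G_L$-stable lines of the induced representation, and then observe that any $\tau$ restricting to complex conjugation of $L$ swaps these two lines.

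The only substantive difference is in how you handle the distinctness of the two characters of $\rho_f|_{G_\fp}$, which you flag as a ``delicate point'' with a possible degenerate case. The paper dispatches this in one line with a cleaner observation: since $k>1$, the character $\psi\eta_\fp^{-1}\epsilon_{\cyc}^{k-1}$ is \emph{ramified} at $\fp$ (through the cyclotomic factor), while $\eta_\fp$ is unramified by construction; hence the two characters are already distinct as characters of $G_\fp$, and $\rho_f|_{G_\fp}$ cannot be scalar. This avoids any computation of Frobenius eigenvalues (which is awkward anyway since one of the characters is ramified) and makes your worried-about degenerate case $c(\fp,f_p)^2 = \psi(\fp)\N\fp^{k-1}$ simply not arise. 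The paper also phrases the final step slightly more abstractly---if a $G_L$-stable line were also $\tau$-stable it would be $G_F$-stable, contradicting irreducibility of $\rho_f$---but this is equivalent to your antidiagonal computation.
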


\begin{proof} Since $f$ is $p$-ordinary, the prime $\fp$ splits in the quadratic extension $L/F$ (see \cite{ho}*{Proposition A.3}).  It follows that $G_{\fp} \subset G_L$.  Yet $\rho_f = \Ind_{G_L}^{G_F} \alpha$ has two subspaces that are stable under all of $G_L$, hence $V_{\fp, f}$ must be one of these subspaces (note that the characters of the semisimplification of $\rho|_{G_{\fp}}$ are distinct since one is ramified and the other is not, so $\rho|_{G_{\fp}}$ cannot be a scalar representation).  If this subspace were invariant under any $\tau$ restricting to the complex conjugation of $L$, it would then be invariant under all of $G_F$, contradicting the irreducibility of $\rho_f$. The result follows.
\end{proof}

The following is a modification of Lemma 4.3 in \cite{dkv}.

\begin{prop} \label{p:tau}
There exists $\tau \in G_F$ such that $\tau$ restricts to the complex conjugation of $G$, and such that for all $f \in \overline{M}$ and $\fp \mid p$, the subspace $V_{\fp, f}$ is not stable under $\rho_f(\tau)$.
\end{prop}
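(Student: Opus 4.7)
The plan is to handle CM and non-CM forms in $\overline{M}$ separately, combining them via a coset-covering argument. Let $\overline{M}_{\mathrm{CM}} \subset \overline{M}$ denote the CM forms, and for each such $f$ let $L_f$ be the associated quadratic CM extension of $F$. Let $M \subset \overline{F}$ be the compositum of $H$ together with the finitely many fields $L_f$. Being a compositum of CM extensions of the totally real $F$, the field $M$ is itself CM. Fix any lift $c \in G_F$ of the complex conjugation of $M$. Then for every $g \in G_M$ the element $cg$ simultaneously restricts to complex conjugation on $H$ and on each $L_f$; in particular, any such $\tau = cg$ automatically restricts to the complex conjugation of $G = \Gal(H/F)$, and by Lemma~\ref{l:cm} the subspace $V_{\fp, f}$ fails to be $\rho_f(cg)$-stable for every CM $f$ and every $\fp \mid p$, irrespective of the choice of $g \in G_M$.

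It remains to choose $g \in G_M$ to simultaneously handle the non-CM forms. For each non-CM $f \in \overline{M}$ and each $\fp \mid p$, define the closed stabilizer
\[
\Sigma_{f, \fp} = \{\sigma \in G_F : \rho_f(\sigma) V_{\fp, f} = V_{\fp, f}\},
\]
and set $H_{f, \fp} = \Sigma_{f, \fp} \cap G_M$. I claim $H_{f, \fp}$ has infinite index in $G_M$. Indeed, since $G_M$ has finite index in $G_F$, any finite-index subgroup of $H_{f, \fp}$ would be finite-index in $G_F$ as well; but then $\rho_f$ restricted to that subgroup would preserve the line $V_{\fp, f}$, contradicting the irreducibility of $\rho_f$ on every finite-index subgroup guaranteed by Lemma~\ref{l:noncm}.

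For each non-CM pair $(f, \fp)$, the bad set $\{g \in G_M : cg \in \Sigma_{f, \fp}\}$ equals $c^{-1}\Sigma_{f, \fp} \cap G_M$, which is either empty or a single coset of the infinite-index subgroup $H_{f, \fp}$ in $G_M$. By a classical lemma of B.~H.~Neumann, a group can never be expressed as a finite union of cosets of subgroups of infinite index. Applying this to the finite collection of bad cosets indexed by the non-CM pairs $(f, \fp)$, we may choose $g \in G_M$ avoiding all of them, and then $\tau := cg$ has the required properties.

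The main subtlety is the treatment of the CM forms: we must ensure $\tau$ simultaneously restricts to complex conjugation on each $L_f$, not merely on $H$, since otherwise Lemma~\ref{l:cm} would not apply. This is resolved cleanly by enlarging $H$ to the CM compositum $M$ and allowing $g$ to vary only in $G_M \subset G_H$; the loss of flexibility in $g$ is harmless because Neumann's lemma still applies to the finite-index subgroup $G_M$.
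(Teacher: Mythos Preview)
Your proof is correct and handles the CM forms exactly as the paper does, by passing to the CM compositum (your $M$ is the paper's $H_0$) and invoking Lemma~\ref{l:cm}. For the non-CM forms, however, you take a genuinely different route. The paper builds an increasing tower $H_0 \subset H_1 \subset \cdots \subset H_n$ inductively: at stage $i$ it picks an element $\alpha_i$ fixed by the stabilizer $G_i$ of $V_i$ but not lying in $H_{i-1}$ (possible since $G_i$ has infinite index by Lemma~\ref{l:noncm}), and then chooses $\tau_i$ restricting to $\tau_{i-1}$ with $\tau_i(\alpha_i) \neq \alpha_i$. You instead observe that the bad locus for each non-CM pair $(f,\fp)$ is a single coset in $G_M$ of the infinite-index subgroup $\Sigma_{f,\fp} \cap G_M$, and dispose of all of them at once with B.~H.~Neumann's lemma that a group is never a finite union of cosets of infinite-index subgroups. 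The paper's inductive construction is in effect a self-contained proof of the special case of Neumann's lemma needed here; your version trades that self-containment for brevity and a more conceptual packaging. Both are valid.
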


\begin{proof}   Let $H_0$ denote the compositum of $H$ with the CM fields $L$ associated to each CM form $f \in \overline{M}$.  The field $H_0$ is a finite CM abelian extension of $F$.
 Let $\tau_0 \in \Gal(H_0/F)$ be the complex conjugation.  Lemma~\ref{l:cm} implies that any $\tau$ restricting to $\tau_0$ on $H_0$ satisifies the desired property for the CM forms $f \in \overline{M}$ and all $\fp \mid p$.
 
 Now label the $V_{\fp, f}$ for $f \in \overline{M}$ that are not CM forms and $\fp \mid p$ by $V_1, \dotsc, V_n$.  We will define $\tau$ inductively starting from the $(\tau_0, H_0)$ defined above as the base case.   Let $1 \le i \le n$.  Denote by $G_i \subset G_F$ the stabilizer of $V_i$ under $\rho_f$ (where $V_i = V_{\fp, f}$ for some $\fp$).  By Lemma~\ref{l:noncm}, $G_i$ 
 has infinite index in $G_F$.  We can therefore select an element $\alpha_i \in \overline{F}$ that is fixed by $G_i$ and that does not lie in $H_{i-1}$.  Let $H_i$ be the Galois closure of $H_{i-1}(\alpha_i)$ over $F$ and let $\tau_i \in \Gal(H_i/F)$ be any element that restricts to $\tau_{i-1}$ on $H_{i-1}$ and such that $\tau_i(\alpha_i) \neq \alpha_i$.  Note that any $\tau \in G_F$ restricting to $\tau_i$ on $H_i$ moves $\alpha_i$ and hence does not lie in $G_i$, i.e.\ does not stabilize $V_i$ under $\rho_f$.  It therefore suffices to let $\tau$ be any element that restricts to $\tau_n$ on $H_n$, and the proposition follows.
\end{proof}

We once and for all fix a $\tau$ as in Proposition~\ref{p:tau} and choose the basis for each $\rho_{f}$ so that 
\[ \rho_f(\tau) = \mat{\lambda_{1,f}}{0}{0}{\lambda_{2,f}}, \]
where $\lambda_{1,f} \equiv 1 \pmod{\pi_E}$ and $\lambda_{2,f} \equiv \chi(\tau) \equiv -1 \pmod{\pi_E}$ as above.  We write
\[ \rho_f(\sigma) = \mat{a_f(\sigma)}{b_f(\sigma)}{c_f(\sigma)}{d_f(\sigma)}. \]
For each $\fp \mid p$, we let 
\[ M_{f, \fp} = \mat{A_{f, \fp}}{B_{f, \fp}}{C_{f, \fp}}{D_{f, \fp}} \in \GL_2(E) \]
denote a change of basis matrix relating this basis to the one giving the local form (\ref{e:locbasis}), i.e. such that
\[
\mat{ a_f(\sigma)}{ b_f(\sigma)}{c_f(\sigma)}{d_f(\sigma) }M_{f, \fp} =
M_{f, \fp}
\mat{ \psi\eta_{\fp}^{-1} \epsilon_{\cyc}^{k-1}(\sigma) }{ *}{0 }{ \eta_{\fp}(\sigma)}
\]
for all $\sigma \in G_{\fp}$.  The key point of Proposition~\ref{p:tau} is the following:
\begin{equation} \label{e:keypoint}
 \text{ for every } f \in \overline{M} \text{ and every } \fp \mid p, \text{ we have } A_{f, \fp} \neq 0
\text{ and } C_{f, \fp} \neq 0. 
\end{equation}

\subsection{Galois representation associated to $\T_\fM$} \label{s:galrepphi}

Let \begin{equation} \label{e:kdef}
K = \Frac(\T_\fM) = \Frac\Big(\prod_{f \in \overline{M}} \cO\Big) = \prod_{f \in \overline{M}} E. 
\end{equation}
Consider the Galois representation \[
\begin{tikzcd}
\rho = \displaystyle\prod_{f \in \overline{M}} \rho_f \colon G_F \ar[r] & \GL_2(K).
\end{tikzcd}
\]
Note that $\rho$ is  continuous with respect to the $p$-adic topology on $K$ (since each factor $\rho_f$ is continuous) and hence continuous with respect to the $\fM$-adic toplogy on $K$, as every ideal $\fM^n$ is finitely generated over $\cO$.
The representation $\rho$ satisfies:
\begin{itemize}
\item[(1)] $\rho$ is unramified outside $\fn p $. 
\item[(2)] For all primes $\fl \nmid \fn p$, the characteristic polynomial of $\rho(\Frob_{\fl})$ is given 
\begin{equation} \label{e:char}
\chr(\rho(\Frob_{\fl}))(x) = x^2 - T_{\fl} x + \bpsi(\fl) \epsilon_{\cyc}^{k-1}(\fl),
\end{equation}
where $\epsilon_{\cyc}$ is the cyclotomic character.
\item[(3)] For all $\fp \mid p$, we have
\begin{equation} \label{e:localrho}
\rho|_{G_{\fp}} \sim \mat{ \bpsi \eta_{\fp}^{-1} \epsilon_{\cyc}^{k-1} }{ * }{ 0 }{ \eta_{\fp}},
\end{equation}
 where $\eta_{\fp} \colon G_{\fp} \longrightarrow \tilde{\T}^*$ is the unramified character given by $\eta_{\fp}(\rec(\varpi^{-1})) = {U}_{\fp}$. 
\end{itemize}

By \v{C}ebotarev and (\ref{e:char}), it follows that $\chr\rho(\sigma)(x) \in \T_\fM[x]$ for all $\sigma \in G_F$, and furthermore, that
\[ \chr\rho(\sigma)(x) \equiv (x - 1)(x - \chi(\sigma)) \pmod{\fM}. \]
Recall the $\tau \in G_F$ fixed in the previous section, for which $\chi(\tau) = -1$.  The polynomial 
$\chr(\rho(\tau))$ has two distinct roots modulo $\fM$ and hence by Hensel's lemma has two distinct roots  $\lambda_1, \lambda_2 \in \T_\fM^*$, with $\lambda_1 \equiv 1 \pmod{\fM}$ and $\lambda_2 \equiv -1 \pmod{\fM}$.

As in \S\ref{s:galrepf}, we choose the basis for $\rho$ in which $\rho(\tau) = \mat{\lambda_1}{0}{0}{\lambda_2}$ and for a general $\sigma \in G_F$ we write
\[ \rho(\sigma) = \mat{a(\sigma)}{b(\sigma)}{c(\sigma)}{d(\sigma)}. \]
For each $\fp \mid p$ there is a change of basis matrix $M_{\fp} = \mat{A_\fp}{B_{\fp}}{C_{\fp}}{D_{\fp}} \in \GL_2(K)$ such that 
\begin{equation} \label{e:plocal}
\mat{ a(\sigma) }{ b(\sigma)  }{c(\sigma) }{ d(\sigma) } M_{\fp} =
M_\fp
\mat{ \bpsi\eta_{\fp}^{-1} \epsilon_{\cyc}^{k-1}(\sigma) }{ * }{ 0 }{ \eta_{\fp}(\sigma)}
\end{equation}
for all $\sigma \in G_{\fp}$.  Here $A_\fp = (A_{f, \fp})_{f \in \overline{M}}$ under the identification (\ref{e:kdef}), and similarly for $B_{\fp}, C_{\fp}, D_{\fp}$.  Therefore, (\ref{e:keypoint}) implies that the elements $A_\fp$ and $C_\fp$ are invertible in $K$.
Comparing the top left corner elements in (\ref{e:plocal}) gives 
\begin{equation} \label{e:beq}
b(\sigma) = \frac{A_{\fp}}{C_{\fp}}(\bpsi\eta_{\fp}^{-1} \epsilon_{\cyc}^{k-1}(\sigma)-a(\sigma))
\end{equation}
for all $\sigma \in G_{\fp}$.

\subsection{Cohomology Class and Ramification away from $p$}

In this section we construct a Galois cohomology class associated to the homomorphism  $\varphi$ constructed in Theorem~\ref{t:yxt}.
Let $\tilde{I} \subset \tilde{\T}_\fM$ denote the kernel of $\varphi$ and let $I = \tilde{I} \cap \T_{\fM}$ denote the kernel of $\varphi|_{\T_\fM}$.

We begin by employing some standard techniques in the theory of pseudo-representations.  As noted above, we have $\Tr \rho(\sigma) \in \T_{\fM}$ for all $\sigma \in G_F$.  Furthermore, in view of  (\ref{e:char}) and the property $\varphi(T_\fl) = \epsilon_{\cyc}^{k-1}(\fl) + \bpsi(\fl)$, we find from \v{C}ebotarev that
\begin{equation} \label{e:trmodi}
 \Tr \rho(\sigma) = a(\sigma) + d(\sigma) \equiv  \epsilon_{\cyc}^{k-1}(\sigma) + \bpsi(\sigma)  \pmod{I} \qquad \text{ for all } \sigma \in G_F. \end{equation}
 In particular,  for the fixed element $\tau$  introduced in \S\ref{s:galrepf}--\ref{s:galrepphi}, we obtain \[ \lambda_1 + \lambda_2 \equiv \epsilon_{\cyc}^{k-1}(\tau) + \bpsi(\tau)  \pmod{I} \] and hence $\lambda_1, \lambda_2$ are roots of the polynomial
 \begin{equation} \label{e:charmodi}
  (x - \epsilon_{\cyc}^{k-1}(\tau))(x- \bpsi(\tau)) \pmod{I}. \end{equation}
  Since $\lambda_1 \equiv 1 \equiv \epsilon_{\cyc}^{k-1}(\tau) \pmod{\fM}$ and $\lambda_2 \equiv \chi(\tau) \equiv\bpsi(\tau) \pmod{\fM}$, with $\lambda_1 \not\equiv \lambda_2 \pmod{\fM}$, it follows from (\ref{e:charmodi}) that
  \[ \lambda_1 \equiv  \epsilon_{\cyc}^{k-1}(\tau) \pmod{I}, \qquad  \lambda_2 \equiv \bpsi(\tau) \pmod{I}.\]
  
The congruence (\ref{e:trmodi}) with $\sigma$ replaced by $\sigma \tau$ yields
\begin{align}
 a(\sigma)\lambda_1 + d(\sigma) \lambda_2 &\equiv  \epsilon_{\cyc}^{k-1}(\sigma \tau) + \bpsi(\sigma \tau) \nonumber \\
 & \equiv \epsilon_{\cyc}^{k-1}(\sigma) \lambda_1 + \bpsi(\sigma) \lambda_2. \label{e:trtau}
 \end{align}
  
  The two congruences (\ref{e:trmodi}) and (\ref{e:trtau}) may be solved, again using $\lambda_1 \not\equiv \lambda_2 \pmod{\fM}$, to yield
  \begin{equation} \label{e:adcong}
   a(\sigma) \equiv  \epsilon_{\cyc}^{k-1}(\sigma) \pmod{I}, \qquad  d(\sigma) \equiv \bpsi(\sigma) \pmod{I} \end{equation}
  for all $\sigma \in G_F$ (in particular $a(\sigma), d(\sigma) \in \T_{\fM}$).

Let $B_0$ be the $\T_\fM$-submodule of $K$ generated by  $\{ b(\sigma) \colon \sigma \in G_F \}$, and let $B$ be any $\T_\fM$-submodule of $K$ containing $B_0$.
Let $B' \subset B$ be any $\T_\fM$-submodule containing $IB_0$.  Put $\overline{B} = B/B'$. 
Since $\rho$ is a representation, we have
\[ b(\sigma \sigma') = a(\sigma) b(\sigma') + b(\sigma)d(\sigma'), \qquad \sigma, \sigma' \in G_F. \]
The congruences (\ref{e:adcong}) imply that
\[ \overline{b}(\sigma \sigma') \equiv \epsilon^{k-1}_{\cyc}(\sigma) \overline{b}(\sigma') + \bpsi(\sigma') \overline{b}(\sigma) \quad \text{in } \overline{B}, \]
where $\overline{b}(\sigma)$ denotes the image of $b(\sigma)$ in $\overline{B}$.
It follows that the function  \begin{equation} \label{e:kappadef}
\sigma \mapsto \overline{b}(\sigma) \bpsi(\sigma)^{-1}\end{equation} is a 1-cocycle yielding  a class 
$\kappa \in H^1(G_F, \overline{B}(\bpsi^{-1}\epsilon_{\cyc}^{k-1})).$
If furthermore $p^m B \subset B'$ and $k \equiv 1 \pmod{(p-1)p^N}$ with $N \ge m$ (so that $\epsilon_{\cyc}^{k-1} \equiv 1 \pmod{p^m}$), then multiplication by $\epsilon_{\cyc}^{k-1}$ acts trivially on $\overline{B}$, and $\kappa$ may be viewed as a class  
\begin{equation} \label{e:kappaspace}
 \kappa \in H^1(G_F, \overline{B}(\bpsi^{-1})). \end{equation}

\begin{prop} \label{p:easyur}
Let $\overline{B}$ be as above.  The class $\kappa \in H^1(G_F, \overline{B}(\bpsi^{-1}))$ defined by (\ref{e:kappadef}) is
unramified away from $\fn p$, i.e. its restriction to \[ H^1(I_v, \overline{B}(\bpsi^{-1})) \] vanishes for places $v \nmid \fn p$ of $F$.
Furthermore, for $v \mid \fn, v \nmid p$, the class $\kappa$ is at most tamely ramified, i.e.\ its restriction to the wild inertia subgroup $I_{v, 1} \subset I_v$ vanishes.
\end{prop}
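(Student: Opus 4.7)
The plan is to handle the two claims using the explicit 1-cocycle representative $\sigma \mapsto \overline{b}(\sigma)\bpsi(\sigma)^{-1}$ of $\kappa$ given in (\ref{e:kappadef}), treating the two regimes of primes separately.

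First, for $v \nmid \fn p$, I would invoke property (1) of the Galois representation $\rho$ from \S\ref{s:galrepphi}, which says that $\rho$ is unramified outside $\fn p$. Thus $\rho(\sigma) = I$ for all $\sigma \in I_v$, and in particular $b(\sigma) = 0$ on $I_v$. Moreover, $\bpsi$ factors through the narrow ray class group $G_\fn^+$ of conductor $\fn$, so $\bpsi(I_v) = 1$ as well. Consequently the cocycle in (\ref{e:kappadef}) vanishes identically on $I_v$, and $\res_{I_v}\kappa = 0$ on the nose.

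Next, for $v \mid \fn$ with $v \nmid p$, the representation $\rho$ may genuinely ramify at $v$, so the cocycle need not vanish on $I_v$. Instead, I would argue the stronger statement $H^1(I_{v,1}, \overline{B}(\bpsi^{-1})) = 0$ by a direct comparison of orders. The crucial observation is that under the standing hypothesis $p^m B \subset B'$ (imposed just before (\ref{e:kappaspace}) precisely so that the twist by $\epsilon_{\cyc}^{k-1}$ acts trivially), the quotient $\overline{B} = B/B'$ is annihilated by $p^m$, since $B \subset K$ is $p$-torsion-free and the containment $p^m B \subset B'$ is therefore a nontrivial constraint. Hence $\overline{B}(\bpsi^{-1})$ is a discrete $p$-primary torsion module carrying a continuous $G_F$-action. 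On the other hand, writing $\ell$ for the residue characteristic of $v$, we have $\ell \neq p$ by hypothesis, so the wild inertia group $I_{v,1}$ is pro-$\ell$. Expressing continuous cohomology as a direct limit over open normal subgroups and reducing to finite $\ell$-groups acting on finite $p$-primary modules, one obtains $H^i(I_{v,1}, \overline{B}(\bpsi^{-1})) = 0$ for every $i \geq 1$, and in particular the restriction of $\kappa$ to $I_{v,1}$ vanishes.

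I expect no serious obstacle in either step; this should be the ``easy'' half of the unramification analysis for $\kappa$, in contrast with the upcoming treatment at primes above $p$, where the cocycle does not vanish on inertia and the order-comparison argument fails (since both the coefficient module and the inertia group are $p$-primary). There one must instead carefully exploit the Hida--Wiles description of the ordinary local structure in (\ref{e:localrho}) together with the extra factor $x$ in the Eisenstein congruence, which is the central technical contribution of the paper.
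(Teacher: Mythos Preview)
Your proposal is correct and follows essentially the same approach as the paper's proof: vanishing of $b(\sigma)$ on $I_v$ for $v \nmid \fn p$ from unramifiedness of $\rho$, and vanishing of $H^1(I_{v,1}, \overline{B}(\bpsi^{-1}))$ for $v \mid \fn$, $v \nmid p$ from the mismatch between the pro-$\ell$ group $I_{v,1}$ and the $p$-primary module $\overline{B}$. One minor remark: the hypothesis $p^m B \subset B'$ alone already gives $p^m \overline{B} = 0$; the aside about $B$ being $p$-torsion-free is not needed for this conclusion.
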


\begin{proof}
The first property is trivial, since $\rho$ is unramified outside $\fn p$, so \[ b(\sigma) = 0 \text{  for } \sigma \in I_v, v \nmid \fn p. \] Now for any $v$, the wild inertia group 
$I_{v, 1}$ is a pro-$v$ group while the module $\overline{B}$ is a pro-$p$-group.  It follows from continuity of  cocycles that for $v \nmid p$ the entire space
\[ H^1(I_{v,1}, \overline{B}(\bpsi^{-1})) \] is trivial.
\end{proof}

\subsection{Surjection from $\nabla$}

We recall the sets $\Sigma, \Sigma'$ from \S\ref{s:sad}:
\begin{align*}
\Sigma & = \{ v  \in S_{\ram}  \colon v \mid p \} \cup S_\infty, \\
\Sigma' &=  \{  v \in S_{\ram} : v\nmid p \} \cup T.
\end{align*}
Recall from \S\ref{s:nop} that we may assume  $T$ contains no primes above $p$.

As above let $B_0$ denote the $\T_\fM$-submodule of $K$ generated by
the $b(\sigma)$ for all $\sigma \in G_F$.
Define $B$ to be the $\T_\fM$-submodule of $K$ generated by
$B_0$  and by the elements $A_\fp/C_{\fp}$ appearing in (\ref{e:plocal})--(\ref{e:beq}) for all finite $\fp \in \Sigma$:
\[ B = (B_0, A_\fp/ C_\fp \colon \fp \in \Sigma - S_\infty) . \]
In case 1, when $\Sigma = S_\infty$, we have $B_0 =B$.

Let $B'$ be the $\T_\fM$-submodule of $B$ generated by $b(\sigma)$ for $\sigma \in I_\fp$, as $\fp$ ranges over all primes dividing $\fP'$; these are the primes above $p$ that do not ramify in $H/F$.
Define\[   \overline{B}_p = 
B/(IB, B', p^mB).
 \]
Let $\overline{B}_0 \subset \overline{B}_p$ be the image of $B_0$ in $\overline{B}_p$. 

\begin{prop} \label{p:kappaunram}The cohomology class $\kappa \in H^1(G_F, \overline{B}_p(\bpsi^{-1}))$ defined in (\ref{e:kappaspace})  is unramified away from $\Sigma'$, tamely ramified at $\Sigma'$, and locally trivial at $\Sigma$.
\end{prop}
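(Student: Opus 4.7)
The plan is to verify each assertion separately using the cocycle formula $\kappa(\sigma) = \overline{b}(\sigma)\,\bpsi(\sigma)^{-1}$, together with the structural facts about $\rho$ established in \S\ref{s:galrepphi} and the ``extra congruences'' encoded in the homomorphism $\varphi$ of Theorem~\ref{t:yxt}. The places of $F$ split into five cases: (i) finite $v \nmid \fn p$; (ii) $v = \fp \mid \fP'$, i.e.\ above $p$ but unramified in $H$; (iii) $v \in \Sigma'$ not above $p$; (iv) $v \in S_\infty$; and (v) $v = \fp \in \Sigma$ finite, equivalently $\fp \mid \fP$.

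Cases (i) and (iii) were essentially already handled in Proposition~\ref{p:easyur}: $\rho$ is unramified outside $\fn p$ so $b(\sigma) = 0$ on $I_v$ when $v \nmid \fn p$, while the wild inertia at $v \in \Sigma'$ not above $p$ is pro-$\ell$ and $\overline{B}_p$ is pro-$p$. Case (ii) is built directly into the definition of $\overline{B}_p$: the submodule $B'$ that we quotient by is generated by precisely the elements $b(\sigma)$ for $\sigma \in I_\fp$ with $\fp \mid \fP'$, so $\overline{b}(\sigma) = 0$ in $\overline{B}_p$ for such $\sigma$, making $\kappa$ unramified. Case (iv) is automatic: complex conjugation $c_v$ acts on $G$ as $-1$ via $\chi$, so $\bpsi(c_v) = -1$, and on the twisted module $\overline{B}_p(\bpsi^{-1})$ the generator of $G_v \cong \Z/2\Z$ acts by $-1$; since $p$ is odd, multiplication by $2$ is invertible on the pro-$p$ module $\overline{B}_p$, and hence $H^1(G_v,\overline{B}_p(\bpsi^{-1})) = 0$.

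The crux, and the main obstacle, is case (v). Fix a ramified prime $\fp \in \Sigma$ above $p$ and use the local identity (\ref{e:beq}),
\[
b(\sigma) \;=\; \frac{A_\fp}{C_\fp}\bigl(\bpsi\,\eta_\fp^{-1}\,\epsilon_{\cyc}^{k-1}(\sigma) - a(\sigma)\bigr), \qquad \sigma \in G_\fp.
\]
The element $A_\fp/C_\fp$ lies in $B$ by construction; substituting the congruence $a(\sigma) \equiv \epsilon_{\cyc}^{k-1}(\sigma) \pmod{I}$ from (\ref{e:adcong}) and using $\epsilon_{\cyc}^{k-1} \equiv 1 \pmod{p^m}$ gives, in $\overline{B}_p$,
\[
\overline{b}(\sigma) \;=\; \frac{A_\fp}{C_\fp}\bigl(\bpsi(\sigma)\,\eta_\fp^{-1}(\sigma) - 1\bigr),
\]
so that in $\overline{B}_p(\bpsi^{-1})$ we obtain
\[
\kappa(\sigma) \;=\; \frac{A_\fp}{C_\fp}\bigl(\eta_\fp^{-1}(\sigma) - \bpsi(\sigma)^{-1}\bigr).
\]
This differs from the coboundary of $f = -A_\fp/C_\fp \in \overline{B}_p$ by the cocycle $\sigma \mapsto (A_\fp/C_\fp)(\eta_\fp^{-1}(\sigma) - 1)$, which I claim is zero in $\overline{B}_p$. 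Indeed, since $\eta_\fp$ is unramified, this cocycle vanishes on $I_\fp$; and on any Frobenius lift $\sigma_\fp$ it equals $(A_\fp/C_\fp)(U_\fp^{-1} - 1) = -(A_\fp/C_\fp)U_\fp^{-1}(U_\fp - 1)$. The decisive input is the ``extra congruence'' $\varphi(U_\fp) = 1$ in Theorem~\ref{t:yxt}, which says $U_\fp - 1 \in I$; hence $(A_\fp/C_\fp)(U_\fp^{-1}-1) \in IB$ is killed in $\overline{B}_p$. This is exactly the reason for constructing the augmented homomorphism $\varphi$ with the $U_\fp - 1$ relations: without it, the Frobenius contribution at ramified primes above $p$ would survive and $\kappa$ would be ramified there in a way that breaks the subsequent class-field-theoretic argument.
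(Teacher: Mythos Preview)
Your proof is correct and follows essentially the same approach as the paper's. The one organizational difference worth noting is in case (v): rather than carrying $\eta_\fp^{-1}(\sigma)$ through and then killing the residual cocycle $(A_\fp/C_\fp)(\eta_\fp^{-1}(\sigma)-1)$ by checking inertia and Frobenius separately, the paper observes up front that $\eta_\fp \equiv 1 \pmod{I}$ (immediate from $\varphi(U_\fp)=1$, since $\eta_\fp(\sigma)$ lies in the closed subgroup of $\T_\fM^*$ generated by $U_\fp$) and folds this into the simplification of $\overline{b}(\sigma)$, arriving directly at $\kappa(\sigma)=(1-\bpsi^{-1}(\sigma))A_\fp/C_\fp$. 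Your split into ``inertia'' and ``Frobenius'' works but implicitly relies on the same fact plus a density/continuity step you do not spell out; the paper's route avoids that small gap.
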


\begin{proof} We  saw in Proposition~\ref{p:easyur} that $\kappa$ is unramified away from $\Sigma \cup \Sigma' \cup \{\fp \mid p\}$, and that it is tamely ramified at $\Sigma'$.  For the primes $\fp \mid p$ not in $\Sigma$, the class $\kappa$ is unramified because in the definition of $\overline{B}_p$ we have taken the quotient by the image of inertia under $b$.  

The local triviality of $\kappa$ at infinite places is automatic, since $p$ is odd.  It remains to show that $\kappa$ is locally trivial at all finite $\fp \in \Sigma$.  For this, we use equation (\ref{e:beq}).  By definition, $A_\fp / C_\fp \in B$ and hence $(A_\fp/C_\fp) I \equiv 0$ in $\overline{B}$.  Furthermore $\eta_\fp \equiv 1 \pmod{I}$ since $\varphi(U_\fp) = 1$ for $\fp \in \Sigma$ (recall that $\varphi$ is defined in Theorem \ref{t:yxt}).  This part of the argument is relevant only when $\Sigma \neq S_\infty$, i.e.~case 2. As noted at the end of the proof of Theorem~\ref{t:yxt}, in this case we have   $\epsilon_{\cyc}^{k-1} \equiv 1 \pmod{I}$.  Therefore $a(\sigma) \equiv 1 \pmod{I}$ as well.  

Combining these observations, we see that
\begin{equation} \label{e:bpac}
 b(\sigma) \bpsi^{-1}(\sigma) \equiv (1 - \bpsi^{-1}(\sigma))\frac{A_\fp}{C_\fp} \quad \text{ in } \overline{B}_p  \text{ for } \sigma \in G_\fp.
 \end{equation}
Therefore $\kappa|_{G_\fp}$ is a coboundary as desired.
\end{proof}

As in \S\ref{s:ext}, let $L/H$ be the finite abelian extension of $H$ associated to $\Cl_{\Sigma}^{\Sigma'}(H)^-$ by class field theory, i.e.~such that the Artin reciprocity map yields an isomorphism 
\[ \begin{tikzcd}
 \rec_{L/H}\colon  \Cl_{\Sigma}^{\Sigma'}(H)^- \ar[r] &  \Gal(L/H).
 \end{tikzcd} \]
Explicitly, $L$ is the maximal abelian extension of $H$ of odd degree unramified outside of $\Sigma'_H$, tamely ramified at $\Sigma'_H$, split completely at $\Sigma_H$, and such that the conjugation action of complex conjugation is equal to inversion on $\Gal(L/H)$.

It is natural to  consider $\overline{B}_0(\bpsi^{-1})$ as an $R^\#$-module. This is the space 
$\overline{B}_0$ in which the action of $R^\#$ is given by
 \[ (r, b) \mapsto r^\# \cdot b, \]
with the action on the right the usual action of $R$ on $\overline{B}_0$.   With this notation, the $G$-module action on $\overline{B}_0(\bpsi^{-1})$ is consistent with the $R^\#$-module action via the projection $\cO[G] \longrightarrow R^\#$.

\begin{corollary} \label{c:cb} There is a canonical  $R^\#$-module surjection 
\[ \begin{tikzcd} \alpha \colon \Cl_{\Sigma}^{\Sigma'}(H)_{R^\#} \ar[r] &  \overline{B}_0(\bpsi^{-1}) 
\end{tikzcd}
\] induced by $\alpha(\fa) = \overline{b}(\sigma)$ for $\fa \in  \Cl_{\Sigma}^{\Sigma'}(H)$, where $\sigma$ denotes any lift of $\rec_{L/H}(\fa)$ to $G_H \subset G_F$.  
\end{corollary}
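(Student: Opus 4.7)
The plan is to view the restriction of $\kappa$ to $G_H$ as a genuine group homomorphism, recognize it as factoring through $\Gal(L/H)$ using the ramification properties of Proposition~\ref{p:kappaunram}, and then obtain $\alpha$ by composing with $\rec_{L/H}^{-1}$ and extending scalars. Concretely, for $\sigma \in G_H$ the character $\bpsi$ is trivial, so the cocycle formula $\overline{b}(\sigma_1\sigma_2) = \overline{b}(\sigma_1)\bpsi(\sigma_2) + \overline{b}(\sigma_2)$ (deduced in \S\ref{s:galrepphi} together with the congruence $\epsilon_{\cyc}^{k-1}\equiv 1$ on $\overline{B}_p$) collapses to additivity on $G_H$. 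Hence $\overline{b}|_{G_H}\colon G_H \longrightarrow \overline{B}_0$ is a continuous group homomorphism, necessarily factoring through $G_H^{\mathrm{ab}}$.

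Next, I would show that this homomorphism factors through $\Gal(L/H)$, where $L/H$ is the extension attached to $\Cl_\Sigma^{\Sigma'}(H)^-$ defined in \S\ref{s:ext}. Proposition~\ref{p:kappaunram} already guarantees that $\kappa$ is unramified outside $\Sigma'$, tamely ramified at $\Sigma'$, and locally trivial at $\Sigma$; the first two translate directly into the analogous statements for $\overline{b}|_{G_H}$, and local triviality of $\kappa$ at $v \in \Sigma$ means that $\kappa|_{G_{F,v}}$ is a coboundary $(\bpsi^{-1}(\sigma)-1)x$, so $\kappa$ vanishes on decomposition subgroups $G_{H,w}$ (where $\bpsi=1$), which is exactly the splitting condition at primes of $H$ above $\Sigma$. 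The minus condition is automatic: the target $\overline{B}_0(\bpsi^{-1})$ has complex conjugation acting by $\bpsi^{-1}(c)=-1$ (since the component $R$ corresponds to the totally odd character $\chi$ and $\chi(c)=-1$). Thus $\overline{b}|_{G_H}$ factors through $\Gal(L/H)$, giving $\alpha\colon \Cl_\Sigma^{\Sigma'}(H)^- \longrightarrow \overline{B}_0$ after composing with $\rec_{L/H}^{-1}$. The $G$-equivariance of this map (with $G$ acting on the target through $\bpsi^{-1}$) follows from the inflation--restriction identification $H^1(G_F,\overline{B}_p(\bpsi^{-1}))\longrightarrow H^1(G_H,\overline{B}_p)^G$, which in turn gives $R^\#$-linearity after tensoring with $R^\#$.

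The main obstacle is the surjectivity. A priori $\overline{B}_0$ is generated as a $\T_\fM/I = R/x\Theta^\#$-module by all $\overline{b}(g)$ with $g\in G_F$, and one must reduce to $g \in G_H$. The key computation uses the ``Ribet wrench'' element $\tau$ from Proposition~\ref{p:tau}, chosen to have $\chi(\tau)=-1$, for which $\rho(\tau)$ is diagonal and hence $\overline{b}(\tau)=\overline{b}(\tau^{-1})=0$. For any $g \in G_F$, since $G$ is abelian the commutator $g\tau g^{-1}\tau^{-1}$ lies in $G_H$. Expanding with the cocycle relation and using $\overline{b}(g^{-1})=-\overline{b}(g)\bpsi(g)^{-1}$ from $\overline{b}(1)=0$, one computes
\[
\overline{b}(g\tau g^{-1}\tau^{-1}) = 2\,\bpsi(g)^{-1}\,\overline{b}(g).
\]
Since $p$ is odd, $2$ is a unit in $R$, whence $\overline{b}(g) = \tfrac{1}{2}\bpsi(g)\,\overline{b}(g\tau g^{-1}\tau^{-1})$ lies in the $R$-span of $\overline{b}(G_H)$. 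It follows that the image of $\alpha$ generates $\overline{B}_0$ as an $R$-module, which is the same as generating $\overline{B}_0(\bpsi^{-1})$ as an $R^\#$-module. This establishes surjectivity and completes the construction.
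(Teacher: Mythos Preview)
Your argument is correct and matches the paper's proof in its construction of $\alpha$: both restrict $\kappa$ to $G_H$, use Proposition~\ref{p:kappaunram} to factor through $\Gal(L/H)$, and invoke the $G$-equivariance (with $G$ acting on the target via $\bpsi^{-1}$) to handle the minus condition.

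The one genuine difference is in the surjectivity step. The paper argues cohomologically: letting $B_\alpha$ be the image and $\overline{B}^\alpha = \overline{B}_0/B_\alpha$, the image of $\kappa_H$ in $H^1(G_H,\overline{B}^\alpha)$ vanishes, so by Lemma~\ref{l:splitting} the image of $\kappa$ in $H^1(G_F,\overline{B}^\alpha(\bpsi^{-1}))$ vanishes; writing this as a coboundary $(1-\bpsi^{-1}(\sigma))t$ and evaluating at $\tau$ (where $\kappa(\tau)=0$ and $\bpsi(\tau)=-1$) forces $t=0$, hence $\overline{B}^\alpha=0$. Your commutator computation $\overline{b}(g\tau g^{-1}\tau^{-1}) = 2\,\bpsi(g)^{-1}\overline{b}(g)$ is the explicit unraveling of this same mechanism: the factor $2$ is precisely $1-\bpsi^{-1}(\tau)$, and the passage from $G_F$ to $G_H$ via the commutator is the hands-on version of the inflation--restriction isomorphism in Lemma~\ref{l:splitting}. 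Your route is more elementary and self-contained; the paper's route packages the argument so that it visibly parallels the definition of the class $\lambda$ in \S\ref{s:ext}, which is convenient for the subsequent comparison in Theorem~\ref{t:beta}.
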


\begin{proof}  The character $\bpsi$ acts through $G = \Gal(H/F)$, so its restriction to $G_H$ is trivial.  We consider the restriction
\[ \kappa_H = \res_{G_H}^{G_F} \kappa \in H^1(G_H, \overline{B}_0)^{G = \bpsi^{-1}} = \Hom_{\cont}(G_H^{\ab}, \overline{B}_0)^{G = \bpsi^{-1}}. \]
The superscript indicates that we consider the space of continuous homomorphisms that are $G$-equivariant, where $G$ acts on $G_H^{\ab}$ via conjugation by a lift to $G_H$ and on  $\overline{B}_p$ via the character $\bpsi^{-1}$.  

By Proposition~\ref{p:kappaunram}, the fixed field of the kernel of the homomorphism $\kappa_H$, which we denote $L'$, is unramified outside of $\Sigma'_H$, tamely ramified at $\Sigma'_H$, and split completely at $\Sigma_H$.  Therefore $L' \subset L$ and we get maps
\begin{equation} \label{e:clb}
 \begin{tikzcd} 
\Cl_{\Sigma}^{\Sigma'}(H)^{-} \ar[r,"\rec_{L/H}"] & \Gal(L/H) \ar[r, two heads] &  \Gal(L'/H) \ar[r, "\kappa_H"] &  \overline{B}_p(\bpsi^{-1}). 
\end{tikzcd}
\end{equation}
Furthermore these maps are $G$-equivariant (where on the middle two terms $G$ acts by conjugation, and as indicated $G$ acts on $\overline{B}_p$ via $\bpsi^{-1}$).  
By construction, the composition of maps in (\ref{e:clb}) is given by $\fa \mapsto \overline{b}(\sigma)$, with notation as in the statement of the corollary.  It follows that the image of the composition in (\ref{e:clb}) is contained in $\overline{B}_0(\bpsi^{-1})$.
It remains to prove that if we extend scalars to $R^\#$, then the induced map 
\[ \begin{tikzcd}
 \alpha \colon \Cl_{\Sigma}^{\Sigma'}(H)_{R^\#} \ar[r] &  \overline{B}_0(\bpsi^{-1}) 
 \end{tikzcd} \]
is surjective.  Denote by $B_\alpha$ the image of $\alpha$, and write $\overline{B}^\alpha = \overline{B}_0/B_\alpha$.  By construction, $B_\alpha$ contains $\overline{b}(\sigma)$ for all $\sigma \in G_H$, so the image of $\kappa_H$ in $H^1(G_H, \overline{B}^\alpha)$ is trivial. 
 By Lemma~\ref{l:splitting}, this implies that the image of $\kappa$ in
  $H^1(G_F, \overline{B}^\alpha(\bpsi^{-1}))$,  denoted $\kappa^\alpha$, is trivial.  Yet if we write $\kappa^\alpha$  as a coboundary: \[ \kappa^\alpha(\sigma) = (1- \bpsi^{-1}(\sigma))t \] for $t \in \overline{B}^\alpha$, then evaluating at $\sigma = \tau$ shows that $t = 0$ (since $\kappa(\tau) = 0$ and $\bpsi(\tau) = -1)$.
Therefore $\kappa^\alpha$ is zero as a cocycle, not just as a cohomology class.  But the values of the cocycle $\kappa$ generate the module $B_0$, and hence $\kappa^\alpha$ generates the module 
$\overline{B}^\alpha$.  It follows that $ \overline{B}^\alpha =0$, i.e. that $\alpha$ is surjective.
\end{proof}

Next we consider the quotient $\T_\fm$-module $\overline{B}_1 = \overline{B}_p/\overline{B}_0$.  This module is generated by the $A_\fp/C_\fp$ for finite $\fp \in \Sigma$.  In fact, since the definition of $\varphi$ implies that every element of $\T_\fM$ is congruent modulo $I$ to an element of $R$, it follows that $\overline{B}_1$ is generated over $R$ by the $A_\fp/C_\fp$.

\begin{prop}  \label{p:xb}
There is a canonical $R^\#$-module surjection \[ \begin{tikzcd}
 \gamma\colon (X_{H, \Sigma})_{R^\#} \ar[r] &  \overline{B}_1(\bpsi^{-1}). 
 \end{tikzcd}
  \]
\end{prop}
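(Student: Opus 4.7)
The target $\overline{B}_1 = \overline{B}_p/\overline{B}_0$ is generated as an $R$-module by the classes $\bar{t}_\fp$ of $A_\fp/C_\fp$ for the finite primes $\fp \in \Sigma$, as noted just before the proposition. On the source side, since $R^\#$ is a quotient of the odd component $\cO[G_p]_\chi$, the complex conjugation that generates $G_v$ for archimedean $v$ acts as $-1$ on $R^\#$, so the summands of $Y_{H,\Sigma}$ corresponding to $v \in S_\infty$ die after base change; combined with the isomorphism $X_{H,\Sigma}^{-} \cong Y_{H,\Sigma}^{-}$ (since $\Z^{-} = 0$), this gives
\[ (X_{H,\Sigma})_{R^\#} \;\cong\; \bigoplus_{v \in \Sigma - S_\infty} (\Ind_{G_v}^G \Z)_{R^\#}. \]
I plan to fix, for each finite $v \in \Sigma$, a prime $w_v$ of $H$ above $v$, and to define $\gamma$ on the coset generator of the $v$-summand by $w_v \mapsto \bar{t}_\fp$ (with $\fp$ denoting $v$ viewed as a prime of $F$), extending $R^\#$-linearly via the induced $G$-action.

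The only nontrivial point is to verify that $\bar{t}_\fp$ is $G_\fp$-invariant in $\overline{B}_1(\bpsi^{-1})$, so that the assignment descends to $\Ind_{G_v}^G \Z$. This is precisely the content of the coboundary formula (\ref{e:bpac}): for any $\sigma \in G_\fp$,
\[ (1 - \bpsi^{-1}(\sigma)) \frac{A_\fp}{C_\fp} \;\equiv\; b(\sigma)\bpsi^{-1}(\sigma) \pmod{IB + B' + p^m B}, \]
and the right-hand side lies in $\overline{B}_0$ by the very definition of $B_0$. Hence $(\bpsi^{-1}(\sigma) - 1)\bar{t}_\fp = 0$ in $\overline{B}_1$, giving the required $G_\fp$-invariance. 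Surjectivity of $\gamma$ is then immediate since each generator $\bar{t}_\fp$ lies in the image.

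The main (and only serious) obstacle is the invariance check above, which is essentially a reformulation of work already carried out in deriving (\ref{e:bpac}) during the proof of Proposition~\ref{p:kappaunram}; I expect no further difficulty, modulo some care in tracking the $R$- versus $R^\#$-conventions (e.g.\ $\bpsi^{-1}(\sigma)$ applied $R$-linearly equals the image of $\sigma$ applied $R^\#$-linearly). Conceptually, the real purpose of $\gamma$ is to combine with the surjection $\alpha$ of Corollary~\ref{c:cb} and produce, via Lemma~\ref{l:ext} matching the extension classes $(\lambda_v)$, a surjection $(\nabla_\Sigma^{\Sigma'})_{R^\#} \twoheadrightarrow \overline{B}_p$ that will be used in the ensuing Fitting-ideal calculations.
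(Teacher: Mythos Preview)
Your proof is correct and follows essentially the same approach as the paper: identify $(X_{H,\Sigma})_{R^\#}$ with $\bigoplus_{\fp \in \Sigma} R^\#/\Delta G_\fp$, send the generator at a finite $\fp$ to the class of $A_\fp/C_\fp$, and use (\ref{e:bpac}) to check that $\Delta G_\fp$ kills this class in $\overline{B}_1$. The only cosmetic difference is that the paper sends the basis vector to $-A_\fp/C_\fp$ rather than $+A_\fp/C_\fp$; this sign is irrelevant for the surjectivity statement here, but becomes relevant in the subsequent diagram-chase of Theorem~\ref{t:beta} when matching $\gamma^*\eta_2$ with $\alpha_*\eta_1$, so you should keep track of it there.
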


\begin{proof}  By construction there is a canonical $R$-module surjection
\[\begin{tikzcd}
 \gamma \colon \bigoplus_{\fp \in \Sigma} R^\# \ar[r] &  \overline{B}_1(\bpsi^{-1}) 
 \end{tikzcd}
 \]
that sends a basis vector associated to $\fp \in \Sigma$ to $-A_\fp/C_\fp$.

Since $R^\#$ is a quotient of a component of $\cO[G]$ corresponding to an odd (and in particular nontrivial) character $\chi^{-1}$, we have 
\[ (X_{H, \Sigma})_{R^\#} \cong (Y_{H, \Sigma})_{R^\#} = \bigoplus_{\fp \in \Sigma} (\Z[G/G_\fp] \otimes_{\Z[G]} R^\#) \cong  \bigoplus_{\fp \in \Sigma} R^\#/\Delta G_\fp. \]
Here $G_\fp \subset G$ is the decomposition group at $\fp$ in $G$ and $\Delta G_\fp \subset R^\#$ is the ideal generated by $\bpsi(g) - 1$ for $g \in G_\fp$.  To show that the map $\gamma$ factors through 
$(X_{H, \Sigma})_{R^\#}$, we must show that
\[ (\bpsi(g) - 1)\frac{A_\fp}{C_\fp} \equiv 0 \quad \text{in } \overline{B}_1(\bpsi^{-1}). \]
This follows directly from (\ref{e:bpac}).  The left side of that congruence vanishes in the quotient $\overline{B}_1$ of $\overline{B}_p$.
\end{proof}

Combining Corollary~\ref{c:cb}, Proposition~\ref{p:xb}, and  the sequence (\ref{e:nablaext}), we have constructed the solid arrows in a commutative diagram as follows:
\begin{equation}
\begin{gathered}
 \begin{tikzcd}
0 \ar[r] & \overline{\Cl_\Sigma^{\Sigma'}}(H)_{R^\#} \ar[r] \ar[d,two heads,"\alpha"] & \nabla_{\Sigma}^{\Sigma'}(H)_{R^\#} \ar[r] \ar[d, dotted, "\beta"] & 
(X_{H, \Sigma})_{R^\#} \ar[r] \ar[d,two heads, "\gamma"] & 0 \\ 
0 \ar[r] & \overline{B}_0(\bpsi^{-1}) \ar[r] & \overline{B}_p(\bpsi^{-1}) \ar[r] & 
\overline{B}_1(\bpsi^{-1}) \ar[r] & 0.
\end{tikzcd}
\end{gathered}
\label{e:commdiag}
\end{equation}
Here $\overline{\Cl_\Sigma^{\Sigma'}}(H)_{R^\#}$ denotes the image of $\Cl_\Sigma^{\Sigma'}(H)_{R^\#}$ in $\nabla_{\Sigma}^{\Sigma'}(H)_{R^\#}$.

\begin{theorem} \label{t:beta}
 There exists an $R^{\#}$-module surjection $\beta \colon \nabla_{\Sigma}^{\Sigma'}(H)_{R^\#} \longtwoheadrightarrow \overline{B}_p(\bpsi^{-1})$ completing the commutative diagram (\ref{e:commdiag}).
\end{theorem}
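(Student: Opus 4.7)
The plan is to show that the two extensions in (\ref{e:commdiag}) define the same class in $\Ext^1_{R^\#}((X_{H,\Sigma})_{R^\#}, \overline{B}_0(\bpsi^{-1}))$ after pushing forward the top row by $\alpha$ and pulling back the bottom row by $\gamma$. Once this equality of extension classes is established, $\beta$ is supplied by the universal property of the pushout/pullback characterization of $\Ext^1$, and its surjectivity follows from the five lemma applied to (\ref{e:commdiag}) together with the surjectivity of $\alpha$ and $\gamma$.

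To identify the top extension class I would invoke property~\ref{i:ext} (Lemma~\ref{l:ext}) together with the isomorphism (\ref{e:ext}): on the minus side, the class is represented by the tuple $(\lambda_v)_{v \in \Sigma}$ of canonical local classes defined in (\ref{e:lvdef}). After base change to $R^\#$ and pushforward by $\alpha$, it becomes $(\alpha_* \lambda_v)_{v \in \Sigma}$ in $\bigoplus_{v \in \Sigma} H^1(G_v, \overline{B}_0(\bpsi^{-1}))$. To compute the pulled-back bottom class, I would apply the same recipe (\ref{e:alphadef}): using the identification $(X_{H,\Sigma})_{R^\#} \cong \bigoplus_v R^\#/\Delta G_{\fp}$ from Proposition~\ref{p:xb}, lift the generator at $v$ to $-A_v/C_v \in \overline{B}_p$, and for $g \in G_v$ compute
\[ (\bpsi^{-1}(g) - 1)(-A_v/C_v) = (1 - \bpsi^{-1}(g))\frac{A_v}{C_v}, \]
which by (\ref{e:bpac}) coincides with $\overline{b}(g)\bpsi^{-1}(g) = \kappa(g)$. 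Hence the pulled-back class at $v$ is $\kappa|_{G_v}$, the unique class whose inflation to $G_{F,v}$ is $\res_{G_{F,v}} \kappa$.

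The central step is then to prove $\alpha_* \lambda_v = \kappa|_{G_v}$ in $H^1(G_v, \overline{B}_0(\bpsi^{-1}))$ for each $v \in \Sigma$. First I would verify the global equality $\alpha_* \lambda = \kappa$ in $H^1(G_F, \overline{B}_0(\bpsi^{-1}))$: by Lemma~\ref{l:splitting} the restriction map to $H^1(G_H, \overline{B}_0)$ is injective, while by the construction of $\alpha$ in Corollary~\ref{c:cb} both classes restrict to $\kappa_H = \alpha \circ \rec_{L/H}^{-1} = \alpha \circ \varpi$. Restricting further to $G_{F,v}$, both $\alpha_* \lambda_v$ and $\kappa|_{G_v}$ inflate to the same class, so it remains only to show that inflation $H^1(G_v, \overline{B}_0(\bpsi^{-1})) \hookrightarrow H^1(G_{F,v}, \overline{B}_0(\bpsi^{-1}))$ is injective. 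This holds by the inflation--restriction sequence because the kernel of $G_{F,v} \twoheadrightarrow G_v$ is contained in $G_H$, which acts trivially on $\overline{B}_0(\bpsi^{-1})$ since the Galois action factors through $\bpsi^{-1}$ and hence through $G$. The main obstacle is keeping track of all the $R^\#$-twists consistently---in particular, verifying that $\res_{G_{F,v}} \kappa$ genuinely inflates from $G_v$ (rather than only from some intermediate quotient), which reduces to checking via (\ref{e:bpac}) that on the decomposition group $\kappa$ depends on its argument only through $\bpsi^{-1}$.
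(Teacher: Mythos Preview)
Your proposal is correct and follows essentially the same approach as the paper: both reduce to showing $\alpha_*\eta_1 = \gamma^*\eta_2$ in $\Ext^1_{R^\#}((X_{H,\Sigma})_{R^\#},\overline{B}_0(\bpsi^{-1}))$, identify each side with the local class at $v$ whose inflation is $\res_{G_{F,v}}\kappa$ via Lemma~\ref{l:ext} and (\ref{e:bpac}) respectively, and conclude $\beta$ exists and is surjective. Your reorganization---first proving the global identity $\alpha_*\lambda = \kappa$ via Lemma~\ref{l:splitting} and then restricting---is a cosmetic rearrangement of the paper's argument rather than a different route.
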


\begin{proof}  As we now explain, the essential content of this theorem is property \ref{i:ext}, i.e.\ Lemma~\ref{l:ext},  which gives a Galois cohomological interpretation of the extension class corresponding to $\nabla_{\Sigma}^{\Sigma'}(H)^-$.
Let \begin{align*}
\eta_1 &\in \Ext^1_{R^\#}((X_{H, \Sigma})_{R^\#}, \overline{\Cl_\Sigma^{\Sigma'}}(H)_{R^\#}), \\
\eta_2 &\in \Ext^1_{R^\#}(\overline{B}_1(\bpsi^{-1}), \overline{B}_0(\bpsi^{-1}))
\end{align*}
be the extension classes corresponding to the rows of the diagram (\ref{e:commdiag}). Pushout by $\alpha$ and pullback by $\gamma$, respectively, yield classes
\[ \alpha_* \eta_1,  \gamma^*\eta_2 \in \Ext^1_{R^\#}((X_{H, \Sigma})_{R^\#}, \overline{B}_0(\bpsi^{-1})). \]
Proving that $\alpha_* \eta_1 = \gamma^*\eta_2$ will yield the desired result.  For then, if we let $Z_1, Z_2$ denote  $R^{\#}$-modules representing these extension classes, we obtain a commutative diagram:
\begin{equation}
\begin{gathered}
 \begin{tikzcd}
0 \ar[r] & \overline{\Cl_\Sigma^{\Sigma'}}(H)_{R^\#} \ar[r] \ar[d, two heads, "\alpha"] & \nabla_{\Sigma}^{\Sigma'}(H)_{R^\#} \ar[r] \ar[d,two heads] & 
(X_{H, \Sigma})_{R^\#} \ar[r] \ar[d, equal] & 0 \\ 
0 \ar[r] & \overline{B}_0(\bpsi^{-1}) \ar[r] \ar[d, equal] &Z_1 \ar[r] \isoarrow{d} & 
(X_{H, \Sigma})_{R^\#} \ar[r] \ar[d,equal] & 0 \\ 
0 \ar[r] & \overline{B}_0(\bpsi^{-1}) \ar[r] \ar[d,equal] &Z_2 \ar[r] \ar[d,two heads] & 
(X_{H, \Sigma})_{R^\#} \ar[r] \ar[d,two heads,"\gamma"] & 0 \\ 
0 \ar[r] & \overline{B}_0(\bpsi^{-1}) \ar[r] & \overline{B}_p(\bpsi^{-1}) \ar[r] & 
\overline{B}_1(\bpsi^{-1}) \ar[r] & 0.
\end{tikzcd}
\end{gathered}
\label{e:commdiag2}
\end{equation}
The desired surjection $\beta$ is  given by  composition of the middle vertical arrows.

To prove  $\alpha_* \eta_1 =  \gamma^*\eta_2 $, we interpret these extension classes in terms of  Galois cohomology using the isomorphism
\begin{equation} \label{e:extxb}
 \Ext^1_{R^\#}((X_{H, \Sigma})_{R^\#}, \overline{B}_0(\bpsi^{-1})) \cong 
\bigoplus_{v \in \Sigma} H^1(G_v, \overline{B}_0(\bpsi^{-1}))
\end{equation}
described in (\ref{e:ext}).  We will show that the component at $v$ for both 
$\alpha_* \eta_1$ and $\gamma^*\eta_2 $ is equal to the unique class whose inflation to 
$H^1(G_{F_v}, \overline{B}_0(\bpsi^{-1}))$ is  $\res_{G_{F,v}}^{G_F} \kappa$.

Lemma~\ref{l:ext} implies that under the isomorphism (\ref{e:extxb}), we have
$\alpha_* \eta_1 = (\alpha_* \lambda_v)_{v \in \Sigma}$ where $\lambda_v$ is the cohomology class defined in \S\ref{s:ext}. Reviewing this definition, the class $\alpha_* \lambda_v$ is given as follows.  Consider the class in 
\[ H^1(G_H, \overline{B}_0(\bpsi^{-1})) = \Hom_{\cont}(G_H, \overline{B}_0(\bpsi^{-1})) \] given by the composition of the homomorphisms
\[ \begin{tikzcd}
G_H \ar[r] & \Gal(L/H) \ar[r,"{\rec_{L/H}^{-1}}"] & \Cl_{\Sigma}^{\Sigma'}(H)^{-} \ar[r,"\alpha"] & 
\overline{B}_0(\bpsi^{-1}).
\end{tikzcd}
\]
By the explicit formula for $\alpha$ given in Corollary~\ref{c:cb}, this composition is simply $\sigma \mapsto \overline{b}(\sigma)$ for $\sigma \in G_H$.  Next we must lift this homomorphism to a (unique) class in $H^1(G_F, \overline{B}_0(\bpsi^{-1}))$; but of course we already have a specific lift, namely $\kappa$.  The class $\alpha_* \lambda_v \in H^1(G_v, \overline{B}_0(\bpsi^{-1}))$ is then by definition the unique class whose inflation is equal to $\res_{G_{F,v}}^{G_F} \kappa \in H^1(G_{F_v}, \overline{B}_0(\bpsi^{-1}))$.

Next we compute $\gamma^*\eta_2$ in these Galois cohomological terms using the explication of the isomorphism (\ref{e:extxb}) given in the discussion between (\ref{e:ext}) and Lemma~\ref{l:ext}.
Let \[ \gamma_v \in H^1(G_v, \overline{B}_0(\bpsi^{-1})) \] denote the component at $v \in \Sigma$ of  $\gamma^*\eta_2$.  Then by the definition of $\gamma$ and in view of (\ref{e:alphadef}), we have
\[ \gamma_v(g) =(1 - \bpsi^{-1}(g))\frac{A_v}{C_v}, \quad g \in G_v. \]
By our favorite equation (\ref{e:bpac}), the right hand side has image $\kappa(\sigma)$ in $\overline{B}_0(\bpsi^{-1})$, where $\sigma$ is any lift of $g$ to $G_{F,v}$.
In other words, $\gamma_v$ is the unique class whose inflation is equal to $\res_{G_{F,v}}^{G_F} \kappa \in H^1(G_{F_v}, \overline{B}_0(\bpsi^{-1}))$.  This was the same description of $\alpha_* \lambda$ given above.

This concludes the proof that $\alpha_* \eta_1 =  \gamma^*\eta_2 $ and completes the proof of the theorem.
\end{proof}

\subsection{Calculation of Fitting Ideal}

In this section we will prove that
 \[ \Fitt_R(\overline{B}_p) \subset (\Theta^\#)\]
(note we have not twisted by $\bpsi^{-1}$ here) and use this to conclude the desired result \[ \Fitt_R(\Sel_{\Sigma}^{\Sigma'}(H)_R) \subset (\Theta^\#). \]
\begin{lemma} The module $B_0$ can be generated over $R$ by finitely many elements $b_1, \dotsc, b_n$ that are non-zerodivisors (i.e. invertible) in $K = \Frac(\T_\fM)$. \label{l:bgen}
\end{lemma}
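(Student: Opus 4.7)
The plan is to produce a single element $b^\star \in B_0$ that is a unit in $K$, and then to combine it with any finite set of $R$-generators of $B_0$ to obtain the claimed generating set in which every element is a unit.

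I would first verify that $B_0$ is finitely generated as an $R$-module. Since $K = \prod_{f \in \overline{M}} E$ is a finite $\cO$-module, it is a Noetherian $\T_\fM$-module (as $\T_\fM$ is a finite $\cO$-algebra), and so the $\T_\fM$-submodule $B_0 \subset K$ is finitely generated over $\T_\fM$. Because $\T_\fM$ is itself finite over the $\cO$-subalgebra $R$, the module $B_0$ is finitely generated over $R$. Fix such generators $b_1, \ldots, b_m$. An element of $K$ is a unit if and only if all of its components in $\prod_{f \in \overline{M}} E$ are nonzero, so the remaining task is to modify each $b_i$ into an element with this property while preserving the $R$-span.

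The main step is the construction of $b^\star$. For each $f \in \overline{M}$ the representation $\rho_f$ is irreducible: for non-CM forms this is Lemma~\ref{l:noncm}, and for CM forms $\rho_f = \Ind_{G_L}^{G_F} \alpha$ is irreducible as the induction of a character from the index-two subgroup $G_L$ (with $\alpha \neq \alpha^c$). Hence the $(1,2)$-entry $b_f \colon G_F \to E$ is not identically zero, so the projection of $B_0$ onto the $f$-th component of $K$ is nonzero, and the submodule $V_f := \{b \in B_0 : b_f = 0\}$ is a proper $\cO$-submodule of $B_0$. After extending scalars to the fraction field $E$ of $\cO$, each $V_f \otimes_\cO E$ is a proper $E$-subspace of the finite-dimensional $E$-vector space $B_0 \otimes_\cO E$. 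The only nontrivial algebraic input is the standard fact that a vector space over the infinite field $E$ is not the union of finitely many proper subspaces; this yields some $x \in B_0 \otimes_\cO E$ lying outside every $V_f \otimes_\cO E$. Clearing denominators gives $b^\star := \pi_E^N x \in B_0$ whose projection to each component of $K$ is nonzero, so $b^\star$ is a unit in $K$.

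To conclude, I would set $b_i' = b_i + \pi_E^{N_i} b^\star$, where $N_i$ is chosen large enough that in every component of $K$ the valuation of $\pi_E^{N_i} b^\star$ strictly exceeds that of $b_i$ whenever $b_i$ has a nonzero component there. A dominant-term argument shows each $b_i'$ is then a unit in $K$: the valuation is preserved at components where $b_i$ was already nonzero, and at components where $b_i$ vanishes the sum reduces to $\pi_E^{N_i} b^\star$, which is nonzero by construction. Since $b_i = b_i' - \pi_E^{N_i} b^\star$, the set $\{b^\star, b_1', \ldots, b_m'\}$ remains an $R$-generating set of $B_0$, and consists entirely of units in $K$. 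The only real obstacle in the argument is the existence of $b^\star$; once the irreducibility of each $\rho_f$ is in hand the remainder is elementary commutative algebra.
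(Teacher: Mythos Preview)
Your overall strategy is sound and close in spirit to the paper's: both arguments use the irreducibility of each $\rho_f$ to guarantee that the projection of $B_0$ to every factor $E_f$ is nonzero, and then take suitable linear combinations to produce generators with no vanishing components. The paper does this by an ad hoc induction (replace $b_i$ by $b_i + t b_k$ for generic $t \in \cO$, reducing the total number of zero projections one at a time), whereas you first construct a single unit $b^\star$ via the ``a vector space over an infinite field is not a finite union of proper subspaces'' trick and then perturb each generator by a small multiple of $b^\star$. Both work, and your version is arguably cleaner.

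There is, however, a genuine gap in your argument for finite generation. You write that $K = \prod_{f \in \overline{M}} E$ is a finite $\cO$-module, but this is false: $E = \Frac(\cO)$ is not finitely generated over $\cO$, so $K$ is not a Noetherian $\T_\fM$-module for the reason you give, and you cannot conclude that the submodule $B_0 \subset K$ is finitely generated this way. This matters later too, since your construction of $b^\star$ relies on $B_0 \otimes_\cO E$ being finite-dimensional over $E$. The fix is the paper's: the map $\sigma \mapsto b(\sigma)$ is continuous on the compact group $G_F$, so the set $\{b(\sigma)\}$ is compact, hence bounded in $K = \prod_f E$; thus there is an $N$ with $\pi_E^N b(\sigma) \in \prod_f \cO$ for all $\sigma$. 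Since $\T_\fM \subset \prod_f \cO$, it follows that $B_0 \subset \pi_E^{-N} \prod_f \cO$, a finitely generated $\cO$-module, so $B_0$ is finitely generated over $\cO$ and a fortiori over $R$. With this correction, the remainder of your argument goes through.
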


\begin{proof} Recall that $K = \Frac(\T_\fM) = \prod_{f \in \overline{M}} E$, with each factor corresponding to a cuspidal eigenform $f$, and $E = \Frac(\cO)$ a finite extension of $\Q_p$.  We will denote the $i$th factor $E$ in this finite product as $E_i$, and the corresponding eigenform by $f_i$.  The homomorphism $\rho$ is continuous and hence $B_0$ is a compact subset of $K$.  It is therefore finitely generated over $\cO$ and hence finitely generated over $R$.  

Suppose we start with any finite generating set $b_1, \dotsc, b_n$.  We claim we can alter these generators  such that each $b_i$ is a non-zerodivisor in $K$, i.e.\ such that the projection of each $b_i$ to each factor $E_j$ is nonzero.  We prove this by induction on the total number of zero projections of the $b_i$ onto the $E_j$.  Suppose that $b_i$ has zero projection onto some factor $E_j$.  Since the individual representations $\rho_{f_j}$ are irreducible, some other $b_k$ must have nonzero projection onto $E_j$.  If we replace $b_i$ by $b_i + t b_k$ for any nonzero  $t \in \cO$,  the new $b_i$  has nonzero projection onto $E_j$.  Furthermore, at most finitely many $t$ introduce a new zero projection of $b_i$ onto some other $E_{j'}$.  Avoiding these finitely many $t$, we can choose a $t$ that decreases the total number of zeros.  Furthermore, the replacement $b_i \mapsto b_i + t b_k$ does not change the span of the $b_i$, and hence preserves the property that they generate $B_0$ over $R$.  Continuing in this fashion, we can repeatedly reduce the number of zero projections of the $b_i$ on to the $E_j$ until there are none remaining.
This concludes the proof.
\end{proof}

\begin{theorem} \label{t:fitbp}
We have $\Fitt_R(\overline{B}_p) \subset (\Theta^\#)$.
\end{theorem}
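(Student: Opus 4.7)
The strategy is to produce an explicit finite presentation of $\overline{B}_p$ as an $R$-module and compute a generator of its Fitting ideal. As foreshadowed around~(\ref{e:fittinginclude}), the target is the intermediate inclusion $x \cdot \Fitt_R(\overline{B}_p) \subset (x\Theta^\#)$, which upon cancelling the non-zerodivisor $x$ yields the theorem. To set the stage: by Theorem~\ref{t:yxt}, $\varphi|_{\T_\fM}$ identifies $\T_\fM/I$ with $R/(x\Theta^\#)$, so $\overline{B}_p$ is naturally a finitely generated $R/(x\Theta^\#, p^m)$-module. Lemma~\ref{l:bgen} supplies finitely many $R$-generators $b_1,\dots,b_n$ of $B_0$, each a non-zerodivisor in $K$; together with the finite list $\{A_\fp/C_\fp : \fp \in \Sigma - S_\infty\}$, these form a finite $R$-generating set for $\overline{B}_p$.

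Next, I would identify the $R$-linear relations among these generators, arising from three sources: (i) the reduced cocycle identity $\overline{b}(\sigma\sigma') \equiv \overline{b}(\sigma') + \bpsi(\sigma')\overline{b}(\sigma)$, extracted from~(\ref{e:adcong}) by reducing $\rho$ modulo $I$ and $p^m$; (ii) the vanishing $\overline{b}(\sigma)=0$ for $\sigma \in I_\fp$ with $\fp \mid \fP'$, enforced by the quotient by $B'$; and (iii) the local identity $\overline{b}(\sigma) \equiv \frac{A_\fp}{C_\fp}(\bpsi(\sigma)-1)$ for $\sigma \in G_\fp$ at each finite $\fp \in \Sigma$, obtained from~(\ref{e:beq}) together with the congruences $\eta_\fp \equiv 1 \pmod{I}$ (since $\varphi(U_\fp)=1$ for $\fp \mid \fP$) and $\epsilon_{\cyc}^{k-1} \equiv 1 \pmod{p^m}$.

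The central calculation is then to assemble these into a square relation matrix and prove that a distinguished maximal minor equals $x\Theta^\#$ up to a unit in $R$, while every other maximal minor lies in $(x\Theta^\#)$. I anticipate that this minor factors as a product of a ``Stickelberger factor'' $\Theta^\#$---arising from the global cocycle $\kappa$ through its compatibility with the Artin map into $\Cl_{\Sigma}^{\Sigma'}(H)$ established in Corollary~\ref{c:cb} and Theorem~\ref{t:beta}---times a ``trivial-zero factor'' equal to $x$, coming from the inertia-vanishing relations of type~(ii) at primes $\fp \mid \fP'$ (essentially matching the defining product in Lemma~\ref{l:xdef}, which expresses $x$ as $\prod_{\fp \mid p}(1-\chi(\fp)^{-1})$ modulo $\fm_R$).

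The hard part will be the explicit matrix bookkeeping: identifying the correct square submatrix and verifying this precise factorization of its determinant. Crucial to the calculation is the non-vanishing $A_\fp, C_\fp \in K^*$ from~(\ref{e:keypoint}), which is secured by the careful choice of $\tau$ in Proposition~\ref{p:tau}; this is what allows the local relations of type~(iii) to be inverted at each $\fp$, enabling $A_\fp/C_\fp$ to be eliminated in favor of the $\overline{b}(\sigma)$ while retaining control of the resulting determinant. The calculation is inspired by the Gross regulator computation in \cite{dkv}*{\S5}. Once the factorization is established, cancellation of $x$ on both sides of $x \cdot \Fitt_R(\overline{B}_p) \subset (x\Theta^\#)$---legitimate precisely because $R$ was replaced in \S\ref{s:replacer} by its trivial-zero-free quotient, so $x$ is a non-zerodivisor by Lemma~\ref{l:xdef}---yields the desired inclusion $\Fitt_R(\overline{B}_p) \subset (\Theta^\#)$.
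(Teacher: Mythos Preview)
Your proposal has a genuine gap in the mechanism by which $\Theta^\#$ enters the argument. You propose that the Stickelberger factor arises ``from the global cocycle $\kappa$ through its compatibility with the Artin map into $\Cl_{\Sigma}^{\Sigma'}(H)$ established in Corollary~\ref{c:cb} and Theorem~\ref{t:beta}.'' But those results give a surjection $\nabla_{\Sigma}^{\Sigma'}(H)_{R^\#} \twoheadrightarrow \overline{B}_p(\bpsi^{-1})$, which yields $\Fitt(\nabla) \subset \Fitt(\overline{B}_p)$---the \emph{opposite} of the inclusion you need. At this point in the paper nothing is known about $\Fitt(\nabla)$ or about $\Theta^\#$ lying in any Fitting ideal; the entire purpose of Theorem~\ref{t:fitbp} is to supply that input, after which Theorem~\ref{t:beta} is invoked (in the Corollary immediately following) to transfer the inclusion to $\nabla$. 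So the class-group compatibility cannot be the source of $\Theta^\#$ here.

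The paper's actual mechanism is purely Hecke-theoretic and quite different from an explicit minor computation. One adds to the generating set $b_1,\dots,b_n$ of $B$ a set of \emph{redundant} generators $c_i = b(\sigma_i)\bpsi(\fp_i)\epsilon_{\cyc}^{1-k}(\sigma_i)$ for $\fp_i \mid \fP'$ (the primes above $p$ \emph{not} in $\Sigma$---not the primes in $\Sigma$ as in your proposal). By~(\ref{e:beq}) these satisfy $c_i = \tfrac{A_i}{C_i}(U_{\fp_i} - \bpsi(\fp_i) + I)$. Given any relation matrix $M$ among $c_1,\dots,c_r,b_1,\dots,b_n$, one lifts each relation to an exact equality in $B \subset K$; the resulting matrix $M'$ over $K$ has rows summing to zero, so $\det(M')=0$. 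After cancelling the non-zerodivisor column scalars $A_j/C_j$ and $b_j$ (this is where~(\ref{e:keypoint}) and Lemma~\ref{l:bgen} are used), one obtains a matrix $M''$ over $\tilde{\T}_\fM$ with $\det(M'')=0$, whose columns for $j\le r$ carry factors $U_{\fp_j}-\bpsi(\fp_j)$. Applying $\varphi$ gives $\varphi(U)(\det(M)+p^mR)=0$ in $W$, and the third bullet of Theorem~\ref{t:yxt}---the property that $\varphi(U)y=0$ forces $y\in(\Theta^\#)$---delivers $\det(M)\in(\Theta^\#)$. No specific minor is ever computed, and no factorization as $x\Theta^\#$ is exhibited; the factor $x$ is hidden inside the construction of $\varphi$ and the proof of that third bullet in Theorem~\ref{t:yxt}.
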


\begin{proof} Let  $\fp_1, \dotsc, \fp_r$ denote the primes of $F$ above $p$ not contained in $\Sigma$ (i.e.\ those dividing $\fP'$). For each $\fp_i$, choose an element $\sigma_i \in G_{\fp_i} \subset G_F$ that lifts $\rec(\varpi_i) \in G_{\fp_i}^{\ab}$, where
$\varpi_i$ is a uniformizer for $F_{\fp_i}$. Set $c_i = b(\sigma_i)\bpsi(\fp_i) \epsilon_{\cyc}^{1-k}(\sigma_i)$. By (\ref{e:beq}) we have
\[ c_i = b(\sigma_i)\bpsi(\fp_i) \epsilon_{\cyc}^{1-k}(\sigma_i) = \frac{A_i}{C_i}(U_{\fp_i} - \bpsi(\fp_i) + I) \in B. \]
Here and throughout this proof, we use the notation $a = b + I$ to mean $a = b + z$ for some $z \in I$ to avoid needing to add distinct variable names for each such $z$ that appears.  We have also written $A_i/C_i$ for $A_{\fp_i}/C_{\fp_i}$.

Let $b_1, \dotsc, b_n$ be $R$-module generators of $B$ that are not zerodivisors in $K$; we can choose the generators of $B_0$ as given by Lemma~\ref{l:bgen} along with the $A_\fp/C_\fp$ for finite $\fp \in \Sigma$.
To calculate $\Fitt_R(\overline{B}_p) $ we use the generating set $c_1, \dotsc, c_r, b_1, \dotsc, b_n$ for $\overline{B}_p$.  Of course, these first $r$ generators are not necessary, but including them will aid us in proving the theorem.  Suppose we have a matrix 
\[ M \in M_{(n+r) \times (n+r)}(R) \]
such that each row of $M$ represents a relation amongst our generators, i.e.\ such that \[ M(c_1, \dotsc, c_r, b_1, \dotsc, b_n)^T \equiv 0 \text{ in } (\overline{B}_p)^{n+r}.\]
By definition of Fitting ideal, the theorem will follow if we can show that $\det(M) \in (\Theta^\#)$.

 Write $M   = (W | Z)$ in block matrix form, where \[ W = (w_{ij}) \in M_{(n+r)\times r}(R), \qquad 
Z = (z_{ij}) \in M_{(n+r)\times n}(R). \]
Note that by (\ref{e:beq}), since $\bpsi$ and $\eta_{\fp_i}$ are unramified at $\fp_i$ and $a(\sigma) \equiv \epsilon_{\cyc}^{k-1}(\sigma) \pmod{I}$, we have 
\[ b(I_{\fp_i}) \subset \frac{A_i}{C_i} I. \]
Also, since the $b_i$ generate $B$, every element of $IB$ can be written as a sum of elements of the form   $b_i t_i$ with $t_i \in I$.
Therefore each relation \[ \sum_{j=1}^r w_{ij} c_j + \sum_{j=1}^n z_{ij} b_j \equiv  0 \text{ in } \overline{B}_p \] can be expressed as an equality in $B$ as
\begin{equation} \label{e:zerorow}
  \sum_{j=1}^r  \frac{A_j}{C_j}(w_{ij}(U_{\fp_j} - \bpsi(\fp_j)) + I) +\sum_{j=1}^n (z_{ij} +I + p^m R) b_j  = 0. \end{equation}
Here, as above, we use the notation ``$\ + \ I$" as shorthand for ``$\ + \ z$ for some $z \in I$," and similarly for ``$\ + \ p^mR$." 
It  follows from (\ref{e:zerorow}) that if we define a matrix $M' \in M_{(n+r) \times (n+r)}(K)$  in block form by
\[ M' = \left( \frac{A_j}{C_j}(w_{ij}(U_{\fp_j} - \bpsi(\fp_j)) + I) \ \ \ \ |  \ \ \ \ (z_{ij} + I + p^mR) b_j \right), \]
then $\det(M') = 0$ in $K$ since it has rows that sum to 0.  We can cancel the factors ${A_j}/{C_j}$ and $b_j$ scaling the columns of $M'$, since these are non-zerodivisors in $K$.  We obtain that $\det(M'') = 0$ where 
\[  M'' = \left( (w_{ij}(U_{\fp_j} - \bpsi(\fp_j)) + I) \ \ \ \ |  \ \ \ \ (z_{ij} + I+p^mR)  \right) \in M_{(n+r)\times (n+r)}(\tilde{\T}_\fm).
\]
Recall from the notation of Theorem~\ref{t:yxt}, we have  \[ \prod_{i=1}^r (U_{\fp_j} - \bpsi(\fp_j)) = U \in \tilde{\T}_\fM. \]
Taking the determinant of $M''$ and applying $\varphi$, we obtain
 \[ 0 = \varphi( \det(M''))= \varphi(U) ( \det(M) + p^mR) \text{ in } W. \]  Therefore, by the last statement in Theorem~\ref{t:yxt}, we obtain that \begin{equation} \label{e:detm}
 \det(M) + p^mR \in (\Theta^\#). \end{equation}  Since $\Theta^\#$ divides $p^m$, $\det(M) \in (\Theta^\#)$ as desired.
\end{proof}

It is worth noting that the last statement of Theorem~\ref{t:yxt}, which allowed for the deduction of (\ref{e:detm}), was heavily dependent on the presence of the factor $x$ in our congruence (\ref{e:case1f}) in case 1a.  The fact that we are able to construct a ``stronger congruence" (i.e.\ modulo $x\Theta^\#$ rather than just $\Theta^\#$) is essential for our proof.

\begin{corollary} We have
 \[ \Fitt_R(\Sel_{\Sigma}^{\Sigma'}(H)_R) \subset (\Theta^\#). \]
\end{corollary}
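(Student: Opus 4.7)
\medskip

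The plan is to assemble three ingredients already in hand: the Fitting ideal bound on $\overline{B}_p$ from Theorem~\ref{t:fitbp}, the surjection from the Ritter--Weiss module from Theorem~\ref{t:beta}, and the transpose formula relating $\Sel_{\Sigma}^{\Sigma'}(H)$ to $\nabla_{\Sigma}^{\Sigma'}(H)$ from Corollary~\ref{c:selprin}. The argument proceeds over $R^\#$ and then gets pushed back to $R$ via the involution $\#$.

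First, since Fitting ideals are functorial with respect to surjections, the surjection $\beta\colon \nabla_{\Sigma}^{\Sigma'}(H)_{R^\#} \twoheadrightarrow \overline{B}_p(\bpsi^{-1})$ of Theorem~\ref{t:beta} yields
\[
\Fitt_{R^\#}\!\bigl(\nabla_{\Sigma}^{\Sigma'}(H)_{R^\#}\bigr) \;\subset\; \Fitt_{R^\#}\!\bigl(\overline{B}_p(\bpsi^{-1})\bigr).
\]
Next, I would observe that by the very definition of the twist $\overline{B}_p(\bpsi^{-1})$ given before Corollary~\ref{c:cb}, any finite presentation of $\overline{B}_p$ as an $R$-module transports to a presentation of $\overline{B}_p(\bpsi^{-1})$ as an $R^\#$-module by applying $\#$ entrywise to the matrix of relations. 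Taking determinants of minors then gives the identity
\[
\Fitt_{R^\#}\!\bigl(\overline{B}_p(\bpsi^{-1})\bigr) \;=\; \Fitt_R(\overline{B}_p)^\#.
\]

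Combining this with Theorem~\ref{t:fitbp}, which states $\Fitt_R(\overline{B}_p) \subset (\Theta^\#)$, and applying the involution $\#$ (noting that $(\Theta^\#)^\# = (\Theta)$), we conclude
\[
\Fitt_{R^\#}\!\bigl(\nabla_{\Sigma}^{\Sigma'}(H)_{R^\#}\bigr) \;\subset\; (\Theta) \quad\text{in } R^\#.
\]
Finally, Corollary~\ref{c:selprin} identifies
\[
\Fitt_R\!\bigl(\Sel_{\Sigma}^{\Sigma'}(H)_R\bigr) \;=\; \Fitt_{R^\#}\!\bigl(\nabla_{\Sigma}^{\Sigma'}(H)_{R^\#}\bigr)^\#,
\]
so applying $\#$ to the inclusion above yields the desired bound $\Fitt_R(\Sel_{\Sigma}^{\Sigma'}(H)_R) \subset (\Theta^\#)$.

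There is really no obstacle left here: all the genuine difficulty has been absorbed into Theorem~\ref{t:fitbp}, where the ``extra congruence" modulo $x\Theta^\#$ (rather than merely $\Theta^\#$) engineered by the trivial-zero modification in Corollary~\ref{c:fcong} is what permits the cancellation by $\varphi(U)$ and thereby the Fitting ideal calculation. The only care needed in writing up the present corollary is keeping track of the $\#$ involution as one passes between $R$-module structures on $\overline{B}_p$ and $R^\#$-module structures on its twist; this is a purely formal bookkeeping matter given the explicit description of the twist.
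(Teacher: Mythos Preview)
Your proof is correct and follows essentially the same approach as the paper: both assemble Theorem~\ref{t:fitbp}, the surjection of Theorem~\ref{t:beta}, and the transpose identity of Corollary~\ref{c:selprin}, with the only difference being the order of presentation and your more explicit accounting of the $\#$-bookkeeping between $\overline{B}_p$ and $\overline{B}_p(\bpsi^{-1})$.
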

 
 \begin{proof}  Theorem~\ref{t:fitbp} states that  $\Fitt_R(\overline{B}_p) \subset (\Theta^\#)$, hence 
 \[  \Fitt_{R^\#}(\overline{B}_p(\bpsi^{-1})) \subset (\Theta). \]
 Theorem~\ref{t:beta} states that there is an $R$-module surjection  $\nabla_{\Sigma}^{\Sigma'}(H)_{R^\#} \twoheadrightarrow \overline{B}_p(\bpsi^{-1})$, whence
  \[  \Fitt_{R^\#}(\nabla_{\Sigma}^{\Sigma'}(H)_{R^\#}) \subset (\Theta). \]
 Finally, by Corollary~\ref{c:selprin}, we obtain
  \[ \Fitt_R(\Sel_{\Sigma}^{\Sigma'}(H)_R) \subset (\Theta^\#) \]
as desired.
 \end{proof}
 
\appendix
\section{Appendix: Construction and Properties of $\nabla$} \label{s:rw}

Let $\Sigma, \Sigma'$ denote finite disjoint sets of places of $F$ with $\Sigma \supset S_\infty$, such that $\Sigma'$ satisfies condition (\ref{e:drcond}) from the introduction.

In this section we define the module $\nabla_{\Sigma}^{\Sigma'} = \nabla_\Sigma^{\Sigma'}(H)$ following the methods of Ritter--Weiss \cite{rw}.
We do not yet enforce any additional assumptions on the sets $\Sigma, \Sigma'$.  
 Later in this appendix we will impose  assumptions as necessary to obtain certain desirable properties
   of $\nabla_\Sigma^{\Sigma'}$.

\subsection{Construction of $\nabla$} \label{s:construction}

To define  $\nabla_\Sigma^{\Sigma'}$, we introduce an auxiliary finite set of primes $S'$ of $F$ satisfying the following properties:
\begin{itemize}
\item $S' \supset \Sigma$ and $S' \cap \Sigma' = \emptyset$. \item $S' \cup \Sigma' \supset S_{\ram}$.
\item $\Cl_{S'}^{\Sigma'}(H) = 1$.
\item $\cup_{w \in S'_H} G_w = G$, where $G_w \subset G$ is the decomposition group at $w$.
\end{itemize}
Although it is not used in this work, we prove in \S\ref{s:indep} that the construction of $\nabla_\Sigma^{\Sigma'}$ is independent of the chosen auxiliary set $S'$.

For each place $v$ of $F$, we fix a place $w$ of $H$ above $v$.  Ritter--Weiss define a $\Z[G]$-module $V_w$ sitting in an exact sequence:
\begin{equation} \label{e:vseq}
\begin{tikzcd}
 0 \ar[r] &   H_w^* \ar[r] &  V_w \ar[r] &  \Delta G_w \ar[r] &  0,
 \end{tikzcd}
\end{equation}
where as usual $\Delta G_w \subset \Z[G_w]$ denotes the augmentation ideal.
For $w$ finite, they define a $\Z[G]$-module $W_w$ sitting in an exact sequence
\begin{equation} \label{e:wseq}  \begin{tikzcd}
  0  \arrow[r] &  \cO_w^* \arrow[r] &  V_w \arrow[r] &  W_w \ar[r] &  0. 
  \end{tikzcd}
  \end{equation}

We recall the construction  of these modules. Let $H_w^{\ab} \supset L_w^{\nr}$ denote the maximal abelian and unramified extensions of $H_w$, respectively. There are canonical short exact sequences 
\begin{equation} 
\begin{tikzcd}[row sep=tiny]
0 \ar[r] & \text{W}(H_w^{\ab}/H_w) \cong H_w^* \ar[r] & \text{W}(H_w^{\ab}/F_v) \ar[r,"{\pi_V}"] & G_w \ar[r] & 0 \\
0\ar[r] &  \text{W}(H_w^{\nr}/H_w) \cong \mathbf{Z} \ar[r] & \text{W}(H_w^{\nr}/F_v) \ar[r,"{\pi_W}"] & G_w \ar[r] &  0,
\end{tikzcd}
\label{e:whw}
\end{equation}
where $\text{W}$ denotes the Weil group. Let $\Delta V$ denote the (absolute) augmentation ideal of $\text{W}(H_w^{\ab}/F_v)$ and let $\Delta(V, H_w^*)$ denote the relative augmentation ideal corresponding to $\pi_V$. Define $\Delta W$ and $\Delta(W, \Z)$ similarly from the corresponding terms in the second exact sequence in (\ref{e:whw}). Then we define
\begin{align}
\begin{split}
V_w &= V_w(H_w) = \Delta V/( \Delta V)\Delta(V, H_w^*), \\ 
\label{e:wdef} W_w & = W_w(H_w) = \Delta W/ (\Delta W)\Delta(W, \Z).
\end{split}
\end{align}

 We adopt the following notation of \cite{greither}: for a collection of $G_w$-modules $M_w$, we define 
 \[ \prod_{v}^\sim M_w := \prod_v \Ind_{G_w}^{G} M_w. \]
 
Let $U_w^1 \subset \cO_w^*$ denote the group of 1-units.  
 Define
\begin{align*}
J &= \prod_{v \not\in \Sigma \cup \Sigma'}^{\sim}  \cO_w^*  \prod_{v \in \Sigma}^{\sim}  H_w^* \prod_{v \in \Sigma'}^{\sim} U_w^1, \\
 V &= \prod_{v \not \in S'  \cup \Sigma'}^{\sim} \cO_w^* \prod_{v \in S'}^{\sim} V_w  \prod_{w \in \Sigma'}^{\sim} U_w^1, \\
  W &= \prod_{v \in S' - \Sigma}^{\sim} W_w \prod_{v \in \Sigma}^{\sim} \Delta G_w,
  \end{align*}
so that we have an exact sequence of $G$-modules
\begin{equation} \label{e:jvw}
\begin{tikzcd}
 0 \ar[r] & J \ar[r] &  V \ar[r] &  W \ar[r] &  0. 
 \end{tikzcd}
 \end{equation}
Next, we consider the canonical extension (see pg. 148 of \cite{rw}) 
\begin{equation} \label{e:cog}
\begin{tikzcd}
 0 \ar[r] &  C_H = \A_H^*/H^* \ar[r] &  \fO \ar[r] &  \Delta G \ar[r] &  0  
 \end{tikzcd}
\end{equation}
associated to the global fundamental class in $H^2(G, C_H)$.  
 
As in \cite{rw}*{Theorem 1}, there is a  map between the extensions (\ref{e:jvw}) and (\ref{e:cog}):
\begin{equation}
\begin{gathered}
\begin{tikzcd}
0 \ar[r] & J \ar[r]  \ar[d,"\theta_J"] & V \ar[r] \ar[d,"\theta"] & W \ar[r] \ar[d,"\theta_W"] & 0 \\
0 \ar[r] & C_H \ar[r] & \fO \ar[r] & \Delta G \ar[r] & 0.
\end{tikzcd}
 \label{e:rwdiagram}
\end{gathered}
\end{equation}
Our map $\theta$ is the restriction of the map $\theta$ appearing in \cite{rw}; in the context of \cite{rw}, the map $\theta$ is shown to be surjective.  We must show that it remains surjective after restricting to our module $V$.

\begin{lemma} The map $\theta$ in $(\ref{e:rwdiagram})$ is surjective.
\end{lemma}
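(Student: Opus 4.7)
The plan is to reduce surjectivity of $\theta$ to two separate assertions and combine them via the commutative diagram (\ref{e:rwdiagram}).  Since the bottom row is an extension of $\Delta G$ by $C_H$, it suffices to verify that (i) $\theta_W \colon W \to \Delta G$ is surjective, and (ii) $\theta(V) \supset C_H$.  Given (i), commutativity of the right square forces $\theta(V)$ to surject onto $\Delta G$; combined with (ii) and exactness of the middle row, this yields $\theta(V) = \fO$.

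For (i), I will argue directly.  The map $\theta_W$ is a sum of the induced maps $\Ind_{G_w}^G W_w \twoheadrightarrow \Ind_{G_w}^G \Delta G_w \to \Delta G$ for $v \in S' \setminus \Sigma$ (the first arrow surjective by the definition of $W_w$), together with $\Ind_{G_w}^G \Delta G_w \to \Delta G$ for $v \in \Sigma$.  Each image is the $\Z[G]$-submodule of $\Delta G$ generated by $\{g-1 : g \in G_w\}$, so the total image is the $\Z[G]$-submodule generated by $\bigcup_{v \in S'} \{g-1 : g \in G_w\}$.  By our standing assumption $\bigcup_{v \in S'} G_w = G$, this submodule is all of $\Delta G$.

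For (ii), the plan is to identify the cokernel of $\theta_J$ with a familiar ray class group and then produce explicit lifts inside $V$.  Standard idele theory provides an isomorphism $C_H/\theta_J(J) \cong \Cl_\Sigma^{\Sigma'}(H)$: the module $J$ differs from the open subgroup $U \subset \A_H^*$ cutting out $\Cl^{\Sigma'}(H)$ only at the finite $v \in \Sigma$, where we have adjoined all uniformizers, so these primes become trivial in the quotient.  The hypothesis $\Cl_{S'}^{\Sigma'}(H) = 1$, together with the fact that $\Cl_{S'}^{\Sigma'}(H)$ is the further quotient of $\Cl_\Sigma^{\Sigma'}(H)$ by the classes $[w]$ for $w \in (S' \setminus \Sigma)_H$, then forces these classes to generate $\Cl_\Sigma^{\Sigma'}(H)$ as a $\Z[G]$-module.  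Now for each $v \in S' \setminus \Sigma$, the factor $\Ind_{G_w}^G V_w \subset V$ contains $\Ind_{G_w}^G H_w^*$ via the injection in (\ref{e:vseq}); a uniformizer in $H_w^*$ maps under $\theta$ to the idele class of a uniformizer at $w$, which represents $[w]$ in $\Cl_\Sigma^{\Sigma'}(H) \cong C_H/\theta_J(J)$.  Hence $\theta(V)$ contains $\theta_J(J)$ together with lifts of generators of the cokernel, giving $\theta(V) \supset C_H$.

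The main subtlety is the claim used in the last step: that the Ritter--Weiss map $\theta$, when restricted to $H_w^* \subset V_w$ for $v \in S'$, coincides with the natural idele-class embedding $H_w^* \hookrightarrow \A_H^* \twoheadrightarrow C_H$.  This local-global compatibility is intrinsic to the construction of $\theta$ via Weil groups and the global fundamental class, and I will invoke it directly from \cite{rw}.  The remaining identification of $C_H/\theta_J(J)$ with $\Cl_\Sigma^{\Sigma'}(H)$ is routine idele bookkeeping once one tracks the local factors of $J$ against those of the standard open subgroup cutting out the ray class group.
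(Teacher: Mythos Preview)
Your argument is correct and is essentially the same as the paper's, though packaged slightly differently.  The paper replaces the short exact sequence $0 \to J \to V \to W \to 0$ by an alternative one $0 \to J' \to V \to W' \to 0$ (same $V$, same $\theta$), where $J'$ uses the full $H_w^*$ at \emph{every} $v \in S'$ rather than only at $v \in \Sigma$, and $W' = \prod_{v \in S'}^\sim \Delta G_w$.  With this choice the cokernel of $\theta_{J'}$ is literally $\Cl_{S'}^{\Sigma'}(H) = 1$, so $\theta_{J'}$ is surjective on the nose; surjectivity of $\theta_{W'}$ is exactly your covering argument in (i), and the conclusion follows by the five lemma.

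Your version keeps the original $J$ and $W$ and instead argues that $\theta(V) \supset C_H$ by observing that $V$ contains, beyond $J$, the uniformizers at places in $S' \setminus \Sigma$, and that these hit generators of $\coker\theta_J \cong \Cl_\Sigma^{\Sigma'}(H)$ because $\Cl_{S'}^{\Sigma'}(H) = 1$.  This is the same content unwound: your ``extra elements'' are precisely what enlarge $J$ to $J'$.  The paper's repackaging is a little slicker (one avoids separately identifying $\coker\theta_J$ and then chasing generators), but your direct approach makes the role of the hypothesis $\Cl_{S'}^{\Sigma'}(H)=1$ more transparent.  The local--global compatibility you invoke (that $\theta$ restricted to $H_w^* \subset V_w$ is the idele-class embedding) is indeed part of the Ritter--Weiss construction and is used implicitly in the paper as well.
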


\begin{proof} The same proof as in \cite{rw}*{Page 162} works, and for completeness we recall it.  Define
\begin{align*}
J' &= \prod_{v \not\in \Sigma' \cup S'}^{\sim}  \cO_w^*  \prod_{v \in S'}^{\sim}  H_w^* \prod_{v \in \Sigma'}^{\sim} U_w^1, \\
  W' &=  \prod_{v \in S'}^{\sim} \Delta G_w.
  \end{align*}
  We then obtain
  \begin{equation} \label{e:jpc}
 \begin{gathered}
\begin{tikzcd}
0 \ar[r] & J' \ar[r]  \ar[d,"\theta_{J'}"] & V \ar[r] \ar[d,"\theta"] & W' \ar[r] \ar[d,"\theta_{W'}"] & 0 \\
0 \ar[r] & C_H \ar[r] & \fO \ar[r] & \Delta G \ar[r] & 0,
\end{tikzcd}
\end{gathered}
\end{equation}
where the middle vertical arrow $\theta$  is the same as in (\ref{e:rwdiagram}).  Yet now $\theta_{J'}$ is surjective, since its cokernel is $\Cl_{S'}^{\Sigma'}(H)=1$, by our assumption on $S'$.  It remains to see that $\theta_{W'}$ is surjective, and this follows easily from the other assumptions on $S'$ (see the argument below diagram 3 on page 162 of \cite{rw}).
\end{proof}

Applying the snake lemma to (\ref{e:rwdiagram})  yields an exact sequence
\begin{equation} \label{e:snake}
\begin{tikzcd}
 0 \ar[r] &  \cO_{H, \Sigma, \Sigma'}^* \ar[r] &  V^\theta \ar[r] &  W^\theta \ar[r] &  \Cl_\Sigma^{\Sigma'}(H) \ar[r] &  0, 
\end{tikzcd}
\end{equation}
where $V^\theta = \ker \theta$, $W^\theta = \ker \theta_W$. 

We next construct an injection from $W$ to a free $\Z[G]$-module. 
Write $W_w^* = \Hom_\Z(W_w, \Z)$.  By  \cite{rw}*{Lemma 5}, there is a  commutative diagram 
of $\Z[G_w]$-modules with exact rows:
 \begin{equation} \label{e:ww2}
 \begin{tikzcd}
0 \ar[r] & W_w \ar[r,"{(\alpha_w,\beta_w)}"] \ar[d,"\alpha_w"] & \Z[G_w]^2 \ar[d,"\pi_1"] \ar[r] & W_w^* \ar[r] \ar[d] & 0 \\
0 \ar[r] & \Delta G_w \ar[r] & \Z[G_w] \ar[r] & \Z \ar[r] & 0.
\end{tikzcd}
\end{equation}
 Here $\pi_1$ denotes projection onto the first factor.  Let us recall the definition of the maps $\alpha_w, \beta_w$.  The map $\alpha_w$ is induced by the canonical projection $\pi_W \colon \Delta W \longrightarrow \Delta G_w$ (see (\ref{e:whw}) and (\ref{e:wdef})) and sits in a short exact sequence
 \begin{equation} \label{e:alphaw} \begin{tikzcd}
 0 \ar[r] & \Z \ar[r] & W_w \ar[r, "\alpha_w"] & \Delta G_w \ar[r] & 0 
 \end{tikzcd}
 \end{equation}
 \cite{rw}*{Lemma 5(b)}.
To define $\beta_w$, we first define a map
   \[ \beta_w^0 \colon W_w \longrightarrow \Z[G_w/I_w]. \] 
 Let $\sigma \in \text{W}(H_w^{\nr}/F_v)$ and write $\overline{\sigma}$ for the image of $\sigma$ in $G_w/I_w = \Gal(H_w^{I_w}/F_v)$.  Define the integer $n$ by
 $\sigma|_{F_v^{\nr}} = \sigma_v^n$, where $\sigma_v \in \text{W}(F_v^{\nr}/F_v) \cong \Z$ is the Frobenius element.
Writing $x = \sigma -1$, we define $\beta_w^0(x) \in \Z[G_w/I_w]$ to be the unique element whose augmentation is equal to $n$ and such that \begin{equation} \label{e:iwred}
 \overline{\alpha_w(x)} = \overline{\sigma} - 1 = (\sigma_w-1)\beta_w^0(x)
 \end{equation}
in $\Z[G_w/I_w],$
 where $\sigma_w = \overline{\sigma}_v \in G_w/I_w$ is the Frobenius element.
 To be explicit, we have
 \[ \beta_w^0(\sigma-1) =
 \begin{cases}
  1 + \sigma_w + \sigma_w^2 + \cdots + \sigma_w^{n-1} & \text{if } n > 0 \\
  0 & \text{if } n=0 \\
  -(\sigma_w^{-1} + \sigma_w^{-2} + \cdots + \sigma_w^{n}) & \text{if } n < 0.
  \end{cases} \]
  We define
 \begin{equation} \label{e:betawdef}
  \beta_w(x) = \N I_w \cdot \beta_w^0(x) \in \Z[G_w]. \end{equation}

The maps $\alpha_w, \beta_w$ allow us to give an injection from $W$ to a finite free $\Z[G]$-module.  Write
   \[ S'_\ram = S_{\ram} \setminus (\Sigma \cup \Sigma')  \subset S'. \]
Define
\[ B  =   \prod_{v \in S' \setminus S'_{\ram}} \Z[G]  \prod_{v \in S'_{\ram}} \Z[G]^2. \]
We then have an injection $\gamma\colon W \longrightarrow B$ defined componentwise as follows:
\begin{itemize} \item For $v \in \Sigma$, the map $\gamma_v$ is induced by the canonical injection $\Delta G_w \subset \Z[G_w]$.
\item For $v \in S'_\ram$, the map $\gamma_v$ is induced by the injection $(\alpha_w, \beta_w)$ in (\ref{e:ww2}).
\item For $v \in S' \setminus (\Sigma \cup S_{\ram}')$,  the map $\gamma_v$ is induced by $\beta_w$, which is an isomorphism since $v$ is unramified in $H$ (see \cite{rw}*{Lemma 5}).
\end{itemize}
Let \[ Z =  \prod_{v \in \Sigma}^{\sim} \Z \prod_{S'_\ram}^{\sim}  W_w^*. \]
We then have a commutative diagram with   exact rows:
\begin{equation}
\begin{gathered}
 \label{e:wbdiagram}
\begin{tikzcd}
0 \ar[r] & W \ar[r,"\gamma"]  \ar[d,"\theta_W"] & B \ar[r] \ar[d,"\theta_B"] & Z
\ar[r] \ar[d,"\theta_Z"] & 0 \\
0 \ar[r] & \Delta G \ar[r] & \Z[G] \ar[r] & \Z \ar[r] & 0.
\end{tikzcd}
\end{gathered}
\end{equation}
The vertical maps are defined componentwise as follows:
\begin{itemize}
\item If $v \in \Sigma$, then $\theta_W$ and $\theta_B$ are the identity map, and $\theta_Z$ is the augmentation.
\item If $v \in S'_\ram$, then $\theta_W$, $\theta_B$, and $\theta_Z$ are induced from the vertical maps in (\ref{e:ww2}).
\item If $v \in S' \setminus (\Sigma \cup S_{\ram}')$, then $\theta_W$ is again induced from the first vertical map in (\ref{e:ww2}),
namely  $\alpha_w = (\sigma_w - 1) \cdot \beta_w$.   The map $\theta_B$ is multiplication by $\sigma_w - 1$.

\end{itemize}

Since $\theta_W$ is surjective, taking kernels in (\ref{e:wbdiagram}) yields a short exact sequence
\begin{equation} \label{e:wbx}
\begin{tikzcd}
 0 \ar[r] &  W^\theta \ar[r] &  B^\theta \ar[r] &  Z^\theta \ar[r] &  0. 
 \end{tikzcd}
 \end{equation}
Since $\theta_B$ is the identity on each component corresponding to  $v \in \Sigma$, and $\Sigma \supset S_\infty$ is nonempty,
it follows that:
\begin{equation} \label{e:bfree}
 B^\theta \text{ is a free } \Z[G]\text{-module of rank } \#S' + \#S_{\ram}' - 1. \end{equation}

\begin{definition}
We define $\nabla_\Sigma^{\Sigma'}$ to be the cokernel of the composite map 
\[ \begin{tikzcd}
V^\theta \ar[r] &  W^\theta  \ar[r] & B^\theta. \end{tikzcd} \]
\end{definition}
  Comparing (\ref{e:snake}) and (\ref{e:wbx}) we obtain two exact sequences
\begin{equation} \label{e:tate}
\begin{tikzcd}
 0  \ar[r] & \cO_{H, \Sigma, \Sigma'}^*  \ar[r] & V^\theta  \ar[r] & B^\theta  \ar[r] & \nabla_\Sigma^{\Sigma'}  \ar[r] & 0,   
 \end{tikzcd}
 \end{equation}
\begin{equation}  \label{e:seltr}  \begin{tikzcd}
0  \ar[r] & \Cl_\Sigma^{\Sigma'}(H)  \ar[r] & \nabla_{\Sigma}^{\Sigma'}  \ar[r] & Z^\theta  \ar[r] & 0. \end{tikzcd}
 \end{equation}
 
Consider now the following assumption:
\begin{itemize}
\item[(A1)] $\Sigma \cup \Sigma' \supset S_{\ram}$. 
\end{itemize}
 
 If assumption (A1) holds, then $S'_{\ram} = \emptyset$ so $Z = Y_{H, \Sigma}$ and $Z^\theta = X_{H, \Sigma}$.  The exact sequence (\ref{e:seltr}) can then be written:
 \begin{equation}  \label{e:seltr2}  \begin{tikzcd}
0  \ar[r] & \Cl_\Sigma^{\Sigma'}(H)  \ar[r] & \nabla_{\Sigma}^{\Sigma'}  \ar[r] & X_{H, \Sigma} \ar[r] & 0. \end{tikzcd}
 \end{equation}

We have therefore constructed the $\Z[G]$-module $\nabla_{\Sigma}^{\Sigma'}$ satisfying property \ref{i:exact} of \S\ref{s:properties}.  We now explore the other properties.

\subsection{Independence of $S'$} \label{s:indep}

We prove in this section that the module $\nabla_{\Sigma}^{\Sigma'}$---moreover, the extension class it defines via the sequence (\ref{e:seltr})---is independent of the choice of auxiliary set $S'$ used in the construction.  This follows (by identifying the construction for two different sets $S_1'$ and $S_2'$ with the one for $S_1' \cup S_2'$) from the following lemma.

\begin{lemma}  Let $\nabla$ and $\nabla'$ be constructed as in \S\ref{s:construction} with the same sets $\Sigma$, $\Sigma'$, but different auxiliary sets $S'$ and $S' \cup \{v\}$.  Then there is an equivalence between the extensions $(\ref{e:seltr})$ associated to $\nabla$ and $\nabla'$, i.e.\ an isomorphism $\nabla \longrightarrow \nabla'$ fitting into a commutative diagram
\begin{equation} \label{e:nnp} \begin{gathered}
\begin{tikzcd}
0 \ar[r] & \Cl_{\Sigma}^{\Sigma'}(H) \ar[r]\ar[d,equal] & \nabla \ar[r] \ar[d] &Z^\theta \ar[r] \ar[d,equal] & 0 \\
0 \ar[r] & \Cl_{\Sigma}^{\Sigma'}(H) \ar[r] & \nabla' \ar[r] &Z^\theta \ar[r] & 0.
\end{tikzcd}
\end{gathered}
\end{equation}
\end{lemma}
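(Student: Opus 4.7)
The plan is to construct a canonical isomorphism $\phi\colon \nabla \xrightarrow{\sim} \nabla'$ directly and verify the diagram (\ref{e:nnp}) commutes. First observe that the conditions $S' \cap \Sigma' = \emptyset$ and $S' \cup \Sigma' \supset S_\ram$, together with the hypothesis $v \notin S'$, force $v \notin \Sigma \cup \Sigma'$ and $v \notin S_\ram$, so $v$ is unramified in $H/F$. Consequently $S'_\ram$ does not change, $Z$ is unchanged, and $\beta_w\colon W_w \xrightarrow{\sim} \Z[G_w]$ is an isomorphism. The effect on the remaining modules is transparent: $V' = V_{S' \cup \{v\}}$ enlarges $V = V_{S'}$ only at the $v$-component (from $\cO_w^*$ to $V_w$), with $V'/V \cong \Ind_{G_w}^G W_w$ via (\ref{e:wseq}); also $W' = W \oplus \Ind_{G_w}^G W_w$ and $B' = B \oplus \Z[G]$, the new $\Z[G]$-summand being the image under $\gamma_v = \beta_w$. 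The maps $\theta$, $V \to W$, and $W \hookrightarrow B$ are all compatible with these decompositions.

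I define $\phi$ by extending $b \in B^\theta$ by zero in the new $\Z[G]$-summand: the result lies in $B'^\theta$ since the new factor contributes nothing to $\theta_{B'}$, and the image of $V^\theta \hookrightarrow V'^\theta$ in $B'^\theta$ is the extension-by-zero of the image of $V^\theta$ in $B^\theta$, so $\phi$ descends to a well-defined $\Z[G]$-module map on cokernels. For injectivity, suppose the extension of $b \in B^\theta$ equals the image of some $u \in V'^\theta$; since $V' \to B'$ factors through $W'$ and $\beta_w$ is injective, the vanishing of the new $B'$-component forces the image of the $v$-component of $u$ in $W_w$ to be zero. By (\ref{e:wseq}), this $v$-component lies in $\cO_w^*$, so $u \in V^\theta$ and $[b] = 0$ in $\nabla$. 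For surjectivity, given $b' \in B'^\theta$ with new $\Z[G]$-component $x$, first choose $v'' \in V'$ supported at the $v$-component with $v_w \in V_w$ mapping (via $V_w \twoheadrightarrow W_w \xrightarrow{\sim} \Z[G_w]$) to $x$; this is possible because $V_w \to W_w$ is surjective. Then, invoking the surjectivity of $\theta_V\colon V \twoheadrightarrow \fO$ established in the construction of $\nabla$ for the smaller set $S'$, choose $\alpha \in V$ with $\theta_V(\alpha) = \theta_{V'}(v'')$, so that $v'' - \alpha \in V'^\theta$. Since the $v$-component of $\alpha$ lies in $\cO_w^* = \ker(V_w \to W_w)$, the new-$B'$-component of the image of $v'' - \alpha$ equals that of $v''$, namely $x$. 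Subtracting the image of $v'' - \alpha$ from $b'$ produces a representative supported in the extension-by-zero of $B^\theta$, proving surjectivity of $\phi$.

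The right square of (\ref{e:nnp}) commutes because the projection $B'^\theta \twoheadrightarrow Z^\theta$ literally factors through $B^\theta \twoheadrightarrow Z^\theta$ under extension-by-zero in a summand disjoint from $Z^\theta$. The left square commutes because the identification of $\Cl_\Sigma^{\Sigma'}(H)$ with $\ker(\nabla \twoheadrightarrow Z^\theta)$ through the snake lemma applied to (\ref{e:rwdiagram}) uses only $\theta_V$-lifts in $V$ and their images in $C_H$; these transport without change through the inclusion $V \hookrightarrow V'$, since $\theta_{V'}|_V = \theta_V$. I expect the main technical obstacle to be the surjectivity step: one must split the problem into a local lift (of the prescribed new-component value $x$) followed by a global correction using the surjectivity of $\theta_V$ on the smaller complex, and verify that the resulting element lies in $V'^\theta$. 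With this two-step lifting, the remaining checks are routine.
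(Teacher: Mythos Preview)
Your proof is correct and follows essentially the same approach as the paper's: both recognize that $v$ is unramified, so that passing from $S'$ to $S'\cup\{v\}$ enlarges $V^\theta$, $W^\theta$, and $B^\theta$ compatibly by a free $\Z[G]$-summand, and then deduce that the induced map $\nabla\to\nabla'$ is an isomorphism. The paper phrases this structurally, noting the compatible short exact sequences $0\to V^\theta\to (V')^\theta\to \Z[G]\to 0$ (and likewise for $W,B$) with identity maps on the $\Z[G]$-quotients, and concludes via an implicit snake-lemma argument; you instead carry out the same diagram chase by hand, building the map via extension by zero and verifying injectivity and surjectivity directly. One small imprecision: your phrase ``the new factor contributes nothing to $\theta_{B'}$'' is slightly misleading, since the new $\Z[G]$-summand \emph{does} contribute via multiplication by $\sigma_w-1$; what you mean (and use correctly) is that extending by zero gives zero contribution.
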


\begin{proof}
 Let $V, W, B$ denote the modules defined above in the construction of $\nabla_{\Sigma}^{\Sigma'}$ using the auxiliary set $S'$, and let $V', W', B'$ denote these same modules when $S'$ is replaced by $S' \cup \{v\}$.
 Then it follows from the definitions that there is an exact sequence
 \[  \begin{tikzcd}
  0  \ar[r] & V  \ar[r] & V'  \ar[r] & \Ind_{G_v}^{G} W_w  \ar[r] & 0, 
  \end{tikzcd}
 \]
 with $\Ind_{G_v}^{G} W_w \cong \Z[G]$  since $v$ is unramified in $H$ (\cite{rw}*{Lemma 5}).  Since the homomorphisms $\theta\colon V, V' \longrightarrow \fO$ are surjective and compatible with the map $V \longrightarrow V'$, it follows that we obtain
 \[ \begin{tikzcd}
  0  \ar[r] & V^\theta  \ar[r] & (V')^\theta  \ar[r] & \Z[G]  \ar[r] & 0. 
  \end{tikzcd}
  \]
 It is similarly clear from the definitions that we obtain the same exact sequences with $(V, V')$ replaced by $(W, W')$ and $(B, B')$; in fact in these cases
 the exact sequences are split.  The induced maps on the quotients $\Z[G]$ associated to $(V^\theta, (V')^\theta) \longrightarrow  (W^\theta, (W')^\theta)$ and $(W^\theta, (W')^\theta) \longrightarrow  (B^\theta, (B')^\theta)$ are the identity.  It follows that the induced map $\nabla \longrightarrow \nabla'$ is an isomorphism.
 
 The fact that this isomorphism fits into the commutative diagram (\ref{e:nnp}) is a similar arrow chase.  The map $B^\theta \longrightarrow Z^\theta$ forgets the components away from $\Sigma \cup S_{\ram}'$, so commutativity of the right square of (\ref{e:nnp}) is clear.  For the left square, recall how the map $\Cl_{\Sigma}^{\Sigma'}(H) \longrightarrow V$ is defined using the snake lemma.  Fix an element $x \in C_H$ representing a class  $\overline{x} \in \Cl_{\Sigma}^{\Sigma'}(H)$.  Its image in $\fO$ may be written $\theta(y)$ for some $y \in V$, whose image $\overline{y}$ in $W$ necessarily lies in $W^\theta$.  The image of $\overline{y}$ in $\nabla$ is the definition the image of $\overline{x}$  under $ \Cl_{\Sigma}^{\Sigma'}(H) \longrightarrow \nabla$.
When making the same calculation for $\nabla'$, we may choose the lift $\theta(y')$ for the image of $x$ in $\fO$, where $y'$ is the image of $y$ under $V \longrightarrow V'$.  Then the image of $\overline{y}'$ in $\nabla'$  is the image of $\overline{y}$ in $\nabla$, and we obtain commutativity of the left square of (\ref{e:nnp}).
\end{proof}

\subsection{Projectivity of Presentation} \label{s:pp}

In this section, we show that under an appropriate assumption, the module $V^\theta$ is projective over $\Z[G]$.

\begin{itemize}
\item[(A2)] $\Sigma'$ contains no primes of wild ramification, i.e.\ for every $v \in \Sigma'$, the inertia group $I_v \subset G_v \subset G$ has prime-to-$\ell$ order, where $\ell$ is the residue characteristic of $v$.
 \end{itemize}

We also consider the following simpler condition that is useful, for instance, when working over $\Z_p$ as in the main body of the paper.

\begin{itemize}
\item[(A$2'$)]   We work over a $\Z[G]$-algebra $R$ such that for every $v \in \Sigma' \cap S_{\ram}$,
the rational prime $\ell$ below $v$ is invertible in $R$. 
 \end{itemize}

\begin{lemma}  \label{l:projective}
Assuming condition (A2), the $\Z[G]$-module $V^\theta$ is projective with constant rank equal to $\#S' - 1$. 
Assuming condition (A2$\,'$), the $R$-module $V^\theta_R = V^\theta \otimes_{\Z[G]} R$ is projective with constant rank equal to $\#S' - 1$.
\end{lemma}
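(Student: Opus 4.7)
The strategy is to prove that $V^\theta$ is finitely generated, cohomologically trivial, and $\Z$-torsion-free (respectively $R$-torsion-free), from which projectivity follows by the classical theorem of Nakayama--Rim that a finitely generated, $\Z$-torsion-free, cohomologically trivial $\Z[G]$-module is projective.  The asserted rank is then read off from an Euler characteristic computation in the Grothendieck group of $\Q[G]$-modules.

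First I would show that each component of $V$ is cohomologically trivial as a $\Z[G_v]$-module, from which $V$ is cohomologically trivial over $\Z[G]$ by Shapiro's lemma.  For $v\in S'$, this is the content of the main local result of Ritter--Weiss \cite{rw}*{Theorem 1}, which in fact shows that $V_w$ is $\Z[G_w]$-projective.  For $v\notin S'\cup\Sigma'$, the prime $v$ is unramified in $H/F$ since $S'\cup\Sigma'\supset S_{\ram}$, and $G_v$ is cyclic generated by Frobenius; the valuation gives a $G_v$-equivariant splitting $H_w^* \cong \cO_w^* \oplus \Z$ (a uniformizer of $F_v$ remains one for $H_w$), so comparing $\hat H^i(G_v, H_w^*)$ (computed via Hilbert 90 and local class field theory) with $\hat H^i(G_v, \Z)$ forces $\hat H^i(G_v, \cO_w^*)=0$ for all $i\in\Z$, and the same argument applies to every subgroup of $G_v$.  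For $v\in\Sigma'$ under assumption (A2), the ramification is tame, so by Noether's normal integral basis theorem $\cO_{H_w}$ is free over $\cO_{F_v}[G_v]$; passing through the $\ell_v$-adic logarithm and the filtration by higher unit groups $U_w^n$ transfers this to cohomological triviality of $U_w^1$ over $\Z[G_v]$.  Under (A2${}'$), the residue characteristic $\ell_v$ at any $v\in\Sigma'\cap S_{\ram}$ is invertible in $R$, so $U_w^1 \otimes_\Z R = 0$ and such components simply drop out after base change.

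Next, the module $\fO$ is cohomologically trivial by Tate's theorem on the global fundamental class.  Combined with cohomological triviality of $V$, the defining exact sequence $0\to V^\theta \to V\to \fO\to 0$ and the long exact sequence of Tate cohomology give cohomological triviality of $V^\theta$.  Finite generation of $V^\theta$ follows from (\ref{e:tate}), which presents it as an extension
\[
0 \longrightarrow \cO_{H,\Sigma,\Sigma'}^* \longrightarrow V^\theta \longrightarrow \ker(B^\theta \twoheadrightarrow \nabla_\Sigma^{\Sigma'}) \longrightarrow 0
\]
whose outer terms are finitely generated over the Noetherian ring $\Z[G]$.  $\Z$-torsion-freeness of $V^\theta$ also follows from this sequence: $\cO_{H,\Sigma,\Sigma'}^*$ is torsion-free by condition (\ref{e:drcond}) on $\Sigma'$ (any torsion element would be a root of unity in $\mu(H)$ congruent to $1$ modulo every prime in $\Sigma'_H$), and $\ker(B^\theta \twoheadrightarrow \nabla_\Sigma^{\Sigma'})$ is torsion-free as a submodule of the free module $B^\theta$.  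Nakayama--Rim then yields $\Z[G]$-projectivity of $V^\theta$; the $R$-case proceeds identically after base change to $R[G]$, with the observation above that the troublesome components at $v\in\Sigma'\cap S_{\ram}$ vanish.

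For the rank, I would tensor (\ref{e:tate}) with $\Q$ and work in the Grothendieck group of $\Q[G]$-modules.  Dirichlet's theorem supplies the $\Q[G]$-isomorphism $\Q\otimes \cO_{H,\Sigma,\Sigma'}^* \cong \Q X_{H,\Sigma}$.  Finiteness of $\Cl_\Sigma^{\Sigma'}(H)$ combined with (\ref{e:seltr}) gives $\Q\nabla_\Sigma^{\Sigma'} \cong \Q Z^\theta$, and a direct computation using $\Q W_w^* \cong \Q[G_w]$ (read off from the rows of (\ref{e:ww2})) decomposes the latter as $\Q X_{H,\Sigma} \oplus \Q[G]^{\#S'_{\ram}}$.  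Combining these with $\Q B^\theta \cong \Q[G]^{\#S'+\#S'_{\ram}-1}$ (see (\ref{e:bfree})) yields $\Q V^\theta \cong \Q[G]^{\#S'-1}$, establishing constant rank $\#S'-1$.  The main technical obstacle will be the tame case of Step~1 at primes $v\in\Sigma'\cap S_{\ram}$ under (A2), where the dévissage through the higher unit filtration via Noether's theorem and the logarithm needs to be carried out with care.
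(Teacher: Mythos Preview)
Your overall strategy matches the paper exactly: cohomological triviality of each factor of $V$, combined with that of $\fO$, gives cohomological triviality of $V^\theta$; then $\Z$-torsion-freeness (via (\ref{e:tate})) and Nakayama's theorem yield projectivity; the rank follows from the same Euler-characteristic bookkeeping you outline. The rank computation and the treatment of $v\in S'$ and $v\notin S'\cup\Sigma'$ are fine.

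There is, however, a genuine error in your (A2${}'$) argument. The claim $U_w^1\otimes_\Z R=0$ is false: $U_w^1$ is isomorphic as an abelian group to a finite product of copies of $\Z_\ell$ (times a finite $\ell$-group), and for instance $\Z_\ell\otimes_\Z \Z_p$ with $\ell\neq p$ is a nonzero $\Q$-vector space (both $\ell$ and $p$ act invertibly on it, yet $\Z_p$ injects via $x\mapsto 1\otimes x$ by flatness of $\Z_p$ over $\Z$). The paper's fix is different: one finds, following \cite{cf}*{Chapter VI, Proposition 3}, an open cohomologically trivial subgroup $U_w'\subset U_w^1$; since $[U_w^1:U_w']$ is a power of $\ell$ and $\ell\in R^*$, tensoring the short exact sequence $0\to U_w'\to U_w^1\to U_w^1/U_w'\to 0$ with $R$ yields an isomorphism $(U_w')_R\cong (U_w^1)_R$, transferring cohomological triviality.

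Your (A2) argument at $v\in\Sigma'$ via Noether's theorem and the logarithm is also more laborious than necessary and would require care (for example, $\fp_w^n$ need not be $\cO_{F_v}[G_v]$-free when $n$ is not divisible by the ramification index, so the d\'evissage is not automatic). The paper's argument is a one-liner: under (A2) the inertia group $I_w$ has order prime to $\ell$, while $U_w^1$ is a pro-$\ell$ group, so multiplication by $\#I_w$ is invertible on $U_w^1$ and all $I_w$-cohomology vanishes; inflation--restriction then reduces to cohomological triviality of $(U_w^1)^{I_w}$ over the cyclic unramified quotient $G_w/I_w$, which is handled exactly as for $\cO_w^*$ in \cite{cf}*{\S VI.1.2, Proposition 1}.
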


\begin{proof} Recall that a $G$-module $M$ is called {\em cohomologically trivial} if the Tate cohomology $\hat{H}^i(H, M)$ vanishes for all subgroups $H \subset G$ and all integers $i$.
We first claim that $V^\theta$ is cohomologically trivial.  For this, it suffices to show that $V$ is cohomologically trivial, since it is known that $\fO$ is cohomologically trivial \cite{nsw}*{Theorem 3.1.4(i)}.  

As we now explain, the module $V$ is the product of cohomologically trivial modules. Any $v \not\in S' \cup \Sigma'$ is unramified and hence $\Ind_{G_w}^G \cO_w^*$ is cohomologically trivial \cite{cf}*{\S VI.1.2, Proposition 1}.  Ritter--Weiss show that the module $\Ind_{G_v}^{G} V_w$ is cohomologically trivial \cite{rw}*{\S3, Proposition 2}.  

It remains to show that $U_w^1$ is $G_v$-cohomologically trivial for $v \in \Sigma'$.  
The argument of \cite{cf}*{\S VI.1.2, Proposition 1} again shows that $(U_w^1)^{I_w}$ is cohomologically trivial as a $(G_w/I_w)$-module.
By inflation-restriction, it therefore suffices to show that $U_w^1$ is cohomologically trivial as an $I_w$-module.
The assumption (A2) states that $I_w$ has prime-to-$\ell$ order, where $\ell$ is the residue characteristic of $v$.  Therefore multiplication by $\#I_v$ is invertible on the pro-$\ell$ group $U_w^1$, so cohomological triviality is automatic.  This proves the claim that $V^\theta$ is $G$-cohomologically trivial.

Next we note that (\ref{e:tate}) implies that $V^\theta$ is $\Z$-torsion free, since the modules $\cO_{H, \Sigma, \Sigma'}^*$ and $B^\theta$ are $\Z$-torsion  free.  A theorem of Nakayama then implies that $V^\theta$ is $\Z[G]$-projective (\cite{nak}*{Theorem 1}).

To adapt this argument when assuming (A$2'$) instead of (A2), note that by the argument of 
\cite{cf}*{Chapter VI, Proposition 3}, $U_w^1$ contains an open subgroup $U_w'$ that is cohomologically trivial.    But the index $[U_w^1 \colon U_{w'}]$ is a power of $\ell$, which is invertible in $R$, so $(U_w^1)_R \cong (U_w')_R$.  We can therefore replace $U_w^1$ by $U_w'$ and proceed as above.

To conclude, we show that $V^\theta$ has constant rank equal to $\#S' - 1$. It suffices to show that for every character \[ \chi \colon \Z[G] \longrightarrow \overline{\Q}^* \]
we have \[ \dim_{\overline{\Q}} V^\theta_\chi = \#S' - 1, \]
where 
\[  V^\theta_\chi = V^\theta \otimes_{\Z[G]} \overline{\Q}(\chi). \]
  Here $\overline{\Q}(\chi)$ denotes the 1-dimensional $\overline{\Q}$-vector space on which $G$ acts by $\chi$.  Note that $\overline{\Q}(\chi)$ is flat over $\Z[G]$.
  
The sequence (\ref{e:seltr}) implies that 
\begin{align*}
 \dim_{\overline{\Q}} (\nabla_{\Sigma}^{\Sigma'})_\chi &= \dim_{\overline{\Q}} Z^\theta_\chi \\
 & = \dim_{\overline{\Q}}(X_{H,\Sigma})_\chi +  \sum_{v \in S'_\ram} \dim_{\overline{\Q}}(\Ind_{G_w}^{G} W_w^*)_\chi. 
 \end{align*}
Yet the Dirichlet unit theorem implies
$\dim_{\overline{\Q}} (\cO_{H, \Sigma, \Sigma'}^*)_\chi =  \dim_{\overline{\Q}}(X_{H,\Sigma})_\chi$, so (\ref{e:tate}) implies that
\begin{equation} \label{e:vbw}
\dim_{\overline{\Q}} V^\theta_\chi = \dim_{\overline{\Q}} B^\theta_\chi - \sum_{v \in S'_\ram} \dim_{\overline{\Q}}(\Ind_{G_w}^{G} W_w^*)_\chi. \end{equation}
Now combining (\ref{e:vseq}) and (\ref{e:wseq}) one obtains a short exact sequence (see also \cite{rw}*{Lemma 5})
\[  \begin{tikzcd} 0 \ar[r] & \Z \ar[r] & W_w \ar[r] & \Delta G_w \ar[r] & 0, \end{tikzcd}\]
from which it follows that each term in the sum on the right of (\ref{e:vbw}) is equal to $1$.

Therefore \[ \dim_{\overline{\Q}} V^\theta_\chi = \dim_{\overline{\Q}} B^\theta_\chi  - \#S'_\ram = \#S' - 1. \]
\end{proof}

As an immediate corollary, we find:

\begin{lemma} Assuming (A1) and (A2), the exact sequence
 \begin{equation} \label{e:pp}
\begin{tikzcd}
V^\theta  \ar[r] & B^\theta  \ar[r] & \nabla_{\Sigma}^{\Sigma'}(H)  \ar[r] & 0 
\end{tikzcd}
 \end{equation}
 is a locally quadratic presentation of $ \nabla_{\Sigma}^{\Sigma'}(H)$ over $\Z[G]$.
 
 Assuming (A1) and (A2$\,'$), the exact sequence
 \begin{equation} \label{e:pp2}
  \begin{tikzcd}
   V^\theta_R  \ar[r] & B^\theta_R  \ar[r] & \nabla_{\Sigma}^{\Sigma'}(H)_R  \ar[r] & 0 
   \end{tikzcd}
    \end{equation}
 is a locally quadratic presentation of $ \nabla_{\Sigma}^{\Sigma'}(H)_R$ over $R$.
\end{lemma}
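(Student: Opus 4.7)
My plan is to reduce the statement directly to the two ingredients already assembled: the rank computation for $B^\theta$ under (A1), and Lemma~\ref{l:projective} giving the projectivity of $V^\theta$ under (A2). The definition of ``locally quadratic presentation'' requires an exact sequence of the stated form in which the two modules on the left are projective of the same constant rank. Exactness at $B^\theta$ and surjectivity onto $\nabla_\Sigma^{\Sigma'}$ are tautological: the map $V^\theta \to B^\theta$ is the composite $V^\theta \to W^\theta \hookrightarrow B^\theta$ arising from (\ref{e:snake}) and (\ref{e:wbx}), and by definition $\nabla_\Sigma^{\Sigma'}$ is the cokernel of this composite. So the task reduces purely to checking projectivity and ranks.

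Under assumption (A1), the set $S'_{\ram} = S_{\ram}\setminus (\Sigma\cup\Sigma')$ is empty. Applying (\ref{e:bfree}) then gives that $B^\theta$ is a free $\Z[G]$-module of rank $\#S' + \#S'_{\ram} - 1 = \#S'-1$. Under (A2), the first half of Lemma~\ref{l:projective} asserts that $V^\theta$ is $\Z[G]$-projective of constant rank $\#S' - 1$. The two ranks agree, so (\ref{e:pp}) presents $\nabla_\Sigma^{\Sigma'}$ by two locally free $\Z[G]$-modules of the same finite rank, which is exactly the definition of a locally quadratic presentation.

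For the version with (A2$'$) in place of (A2), the argument proceeds by base change to $R$. Since $B^\theta$ is already free over $\Z[G]$, the module $B^\theta_R = B^\theta\otimes_{\Z[G]} R$ is free over $R$ of rank $\#S'-1$. Right exactness of $-\otimes_{\Z[G]} R$ preserves exactness of (\ref{e:pp}), yielding (\ref{e:pp2}). Finally, the second clause of Lemma~\ref{l:projective} produces the projectivity of $V^\theta_R$ of constant rank $\#S'-1$ over $R$, so all the hypotheses for a locally quadratic presentation are again met.

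I do not anticipate a genuine obstacle here: the result is essentially a bookkeeping assembly of the rank identities. The only subtle point worth stating explicitly is that one must invoke assumption (A1) precisely at the moment of citing (\ref{e:bfree}), since the rank formula there is $\#S' + \#S_{\ram}'-1$, and matching this with the rank $\#S'-1$ of $V^\theta$ forces $S_{\ram}' = \emptyset$; this is exactly the role played by the requirement that every ramified prime of $H/F$ lie in $\Sigma \cup \Sigma'$.
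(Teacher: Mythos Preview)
Your argument is correct and matches the paper's approach exactly: the paper states this lemma as an immediate corollary of Lemma~\ref{l:projective} and (\ref{e:bfree}) without writing out a proof, and your write-up supplies precisely those details, including the observation that (A1) forces $S'_{\ram}=\emptyset$ so the ranks match.
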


\begin{remark}  In view of the proof of Lemma~\ref{l:projective}, perhaps the ``right" thing to do when $\Sigma'$ contains wildly ramified primes is to replace $U_w^1$ in the definition of $V$ by a $G_w$-cohomologically trivial open subgroup $U_w'$.  This will yield a different module $\nabla_{\Sigma}^{\Sigma'}$, sitting in exact sequences analogous to (\ref{e:tate}) and (\ref{e:seltr}), where $\Cl_{\Sigma}^{\Sigma'}(H)$ is replaced by a more general ray class group and $\cO_{H, \Sigma, \Sigma'}^*$ is replaced by a subgroup.  Then (\ref{e:pp}) would remain a projective presentation of $\nabla_{\Sigma}^{\Sigma'}$.  Since we have no present applications of such a construction, we do not pursue this further here.
\end{remark}

\begin{remark}  In the main body of the paper, we work over a $\Z_p[G]$-algebra $R$.  Furthermore the sets $\Sigma$ and $\Sigma'$ defined in (\ref{e:bigsigmadef}) and (\ref{e:sigmapdef}) are easily seen to satisfy (A1) and (A2$'$).  Since $\Z_p[G]$ is a product of local rings, the projective module of constant rank $V^\theta_R$ is free.
\end{remark}

\subsection{Transpose of $\nabla$} \label{s:trnabla}

In this section we assume (A2), but not (A1).  In the previous section we showed that $V^\theta$ is  projective under the assumption of (A2).
We now compute the transpose of $\nabla_{\Sigma}^{\Sigma'}(H)$ associated to the projective presentation 
(\ref{e:pp}),
namely, 
\begin{equation}\label{e:ntdef} \begin{tikzcd}
\nabla_{\Sigma}^{\Sigma'}(H)^{\tr}= \coker((B^\theta)^*  \ar[r] & (V^\theta)^*). 
\end{tikzcd}
 \end{equation}
When $\Sigma \supset S_{\ram}$, a version of the following lemma is proved in \cite{bks}.

\begin{lemma}  \label{l:trnabla}
Assume (A2).  With $\nabla_{\Sigma}^{\Sigma'}(H)^{\tr}$ defined as in $(\ref{e:ntdef})$, we have 
\[ \nabla_{\Sigma}^{\Sigma'}(H)^{\tr} \cong \Sel_\Sigma^{\Sigma'}(H). \]
Similarly if we assume (A2$\,'$) instead of (A2), the transpose of $\nabla_{\Sigma}^{\Sigma'}(H)_R$ associated to the projective presentation (\ref{e:pp2}) satisfies
\[ \nabla_{\Sigma}^{\Sigma'}(H)^{\tr}_R \cong \Sel_\Sigma^{\Sigma'}(H)_R. \]
\end{lemma}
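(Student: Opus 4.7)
The plan is to dualize the defining four-term sequence (\ref{e:tate}) of $\nabla_{\Sigma}^{\Sigma'}(H)$ and match the result with the alternate presentation (\ref{e:sels}) of the Selmer module. Write $d\colon V^\theta \to B^\theta$ for the map in (\ref{e:pp}) and set $K = \im(d)$, so that (\ref{e:tate}) splits into the two short exact sequences
\[ 0 \to \cO_{H,\Sigma,\Sigma'}^* \to V^\theta \to K \to 0, \qquad 0 \to K \to B^\theta \to \nabla_{\Sigma}^{\Sigma'}(H) \to 0. \]
Applying $(-)^\star := \Hom_{\Z[G]}(-, \Z[G])$ to each and using the projectivity of $V^\theta$ (Lemma~\ref{l:projective}) and freeness of $B^\theta$ to kill the relevant $\Ext^1$ terms, a routine diagram chase combining the two dualized sequences produces an exact sequence
\[ 0 \to \Ext^1_{\Z[G]}(\nabla_{\Sigma}^{\Sigma'}(H), \Z[G]) \to \nabla_{\Sigma}^{\Sigma'}(H)^{\tr} \to (\cO_{H,\Sigma,\Sigma'}^*)^\star \to \Ext^2_{\Z[G]}(\nabla_{\Sigma}^{\Sigma'}(H), \Z[G]) \to 0. \]

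Next, I identify each term via the Frobenius--reciprocity isomorphism $\Z[G] \cong \mathrm{Ind}_1^G \Z$, which by Shapiro's lemma yields $\Ext^i_{\Z[G]}(N, \Z[G]) \cong \Ext^i_\Z(N, \Z)$ for every $\Z[G]$-module $N$ (with the contragredient $G$-action on the right). In particular $(\cO_{H,\Sigma,\Sigma'}^*)^\star \cong \Hom_\Z(\cO_{H,\Sigma,\Sigma'}^*, \Z)$. Since $\Ext^2_\Z$ vanishes on any abelian group, one immediately obtains $\Ext^2_{\Z[G]}(\nabla_{\Sigma}^{\Sigma'}(H), \Z[G]) = 0$. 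For the $\Ext^1$, combining the Shapiro identification with the exact sequence (\ref{e:seltr}), the finiteness of $\Cl_{\Sigma}^{\Sigma'}(H)$, and the $\Z$-torsion-freeness of $Z^\theta$ (which one reads off from the explicit Ritter--Weiss description of the auxiliary $W_w^\star$ via (\ref{e:ww2})) gives $\Ext^1_{\Z[G]}(\nabla_{\Sigma}^{\Sigma'}(H), \Z[G]) \cong \Cl_{\Sigma}^{\Sigma'}(H)^\vee$ with contragredient action.

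Finally, on the Selmer side, our choice $\Cl_{S'}^{\Sigma'}(H) = 1$ yields the exact sequence
\[ 0 \to \cO_{H,\Sigma,\Sigma'}^* \to \cO_{H,S',\Sigma'}^* \to \bigoplus_{w \in S'_H - \Sigma_H} \Z \to \Cl_{\Sigma}^{\Sigma'}(H) \to 0. \]
Applying $\Hom_\Z(-, \Z)$ and feeding the result into the alternate presentation (\ref{e:sels}) gives, after a short diagram chase, the exact sequence
\[ 0 \to \Cl_{\Sigma}^{\Sigma'}(H)^\vee \to \Sel_{\Sigma}^{\Sigma'}(H) \to \Hom_\Z(\cO_{H,\Sigma,\Sigma'}^*, \Z) \to 0, \]
which matches the one derived above for $\nabla^{\tr}$. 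Checking that the two extensions are canonically isomorphic---by tracing the maps through the snake-lemma diagrams defining $\nabla_{\Sigma}^{\Sigma'}(H)$ and those implicit in the alternate Selmer presentation---yields $\nabla^{\tr} \cong \Sel_{\Sigma}^{\Sigma'}(H)$. The main obstacle is the careful analysis of the $W_w^\star$ contributions (relevant when $\Sigma \cup \Sigma' \not\supset S_{\mathrm{ram}}$) together with the bookkeeping of contragredient $G$-structures under Shapiro's lemma. For the $R$-module statement, under assumption (A2$'$) the module $V^\theta_R$ remains projective by Lemma~\ref{l:projective}, so the Ext computations carry through verbatim after base change.
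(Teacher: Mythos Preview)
Your reduction to two short exact sequences with isomorphic outer terms is correct, but the final step---``checking that the two extensions are canonically isomorphic by tracing the maps''---is not a formality; it is precisely where the content lies. Two extensions of $\Hom_\Z(\cO_{H,\Sigma,\Sigma'}^*,\Z)$ by $\Cl_\Sigma^{\Sigma'}(H)^\vee$ need not be isomorphic, and nothing in your argument produces a map $\nabla_{\Sigma}^{\Sigma'}(H)^{\tr}\to\Sel_{\Sigma}^{\Sigma'}(H)$ against which to run the five lemma. You have transformed the problem into an equivalent one without solving it.

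The paper avoids this difficulty by working with the larger unit group $\cO_{H,S',\Sigma'}^*$ rather than $\cO_{H,\Sigma,\Sigma'}^*$. Concretely, it introduces the auxiliary module $(W')^\theta$ (built from $\Delta G_w$ at \emph{all} $v\in S'$, not just $v\in\Sigma$) and obtains a second short exact sequence
\[
0\longrightarrow \cO_{H,S',\Sigma'}^* \longrightarrow V^\theta \longrightarrow (W')^\theta \longrightarrow 0.
\]
Dualizing and combining with the relation $0\to Y_{H,S'-\Sigma}\to W^\theta\to (W')^\theta\to 0$ via a single snake-lemma diagram yields directly
\[
\nabla_{\Sigma}^{\Sigma'}(H)^{\tr}\ \cong\ \coker\!\bigl(\Hom_\Z(Y_{H,S'-\Sigma},\Z)\to\Hom_\Z(\cO_{H,S',\Sigma'}^*,\Z)\bigr),
\]
which is exactly the presentation (\ref{e:sels}) of $\Sel_{\Sigma}^{\Sigma'}(H)$. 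No comparison of extension classes is needed: the identification is with a specific cokernel, not with an abstract middle term of a short exact sequence. If you want to salvage your route, you would have to manufacture a compatible map between the two extensions, and doing so honestly amounts to reconstructing the paper's argument through $(W')^\theta$ and the $S'$-units.
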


\begin{proof}  Assume (A2).
We will relate (\ref{e:ntdef}) to the presentation for $ \Sel_\Sigma^{\Sigma'}(H)$ given in (\ref{e:sels}).
There is a natural isomorphism of functors from the category of $\Z[G]$-modules to itself
\[  \Hom_\Z( -, \Z) \cong \Hom_{\Z[G]}(-, \Z[G]), \qquad \varphi \mapsto (m \mapsto \sum_g \varphi(gm)[g^{-1}]). \]
Applying $\cF =  \Hom_\Z( -, \Z)$ to (\ref{e:wbx}) and noting that $Z^\theta$ is $\Z$-free, we see that 
\[ \begin{tikzcd} \cF(B^\theta)  \ar[r] & \cF(W^\theta)  \end{tikzcd} \] is surjective, and hence our transpose fits into a short exact sequence \begin{equation} \label{e:nwv}
   \begin{tikzcd}
   0 \ar[r] & \cF(W^\theta)  \ar[r] & \cF(V^\theta) \ar[r] &  \nabla_{\Sigma}^{\Sigma'}(H)^{\tr} \ar[r] & 0. \end{tikzcd} \end{equation}
The injectivity of the first nontrivial arrow in (\ref{e:nwv}) follows since $\Cl_{\Sigma}^{\Sigma'}(H)$ is finite.

Next we revisit (\ref{e:jpc}) and apply the snake lemma.
Since $\Cl_{S'}^{\Sigma'}(H)$ is trivial, we extract an exact sequence
\begin{equation} \label{e:tate2} 
\begin{tikzcd}
0  \ar[r] & \cO_{H, S', \Sigma'}^*  \ar[r] & V^\theta  \ar[r] & (W')^\theta  \ar[r] & 0. \end{tikzcd} 
\end{equation}
Since $(W')^\theta$ is $\Z$-free, we obtain
\begin{equation} \label{e:wvo}
\begin{tikzcd}
0  \ar[r] & \cF((W')^\theta)  \ar[r] & \cF( V^\theta)  \ar[r] &  \cF(\cO_{H, S', \Sigma'}^*)  \ar[r] & 0. \end{tikzcd}
 \end{equation}
Now, the map $V^\theta \! \longrightarrow \! (W')^\theta$ factors through $W^\theta$.
Indeed, this map is the composition of $V^\theta  \longrightarrow  W^\theta$ with the map
$W^\theta \! \longrightarrow \! (W')^\theta$ induced by $\alpha_w$ on the components corresponding to $v \in S' - \Sigma$ and the identity on the components corresponding to $v \in \Sigma$.
 By inducing (\ref{e:alphaw}) from $G_w$ to $G$ and taking the product over $v \in S' - \Sigma$, we find that this latter map sits in a short exact sequence
 \begin{equation} \label{e:yww}
  \begin{tikzcd}
  0  \ar[r] & Y_{H,S' - \Sigma}  \ar[r] & W^\theta  \ar[r] & (W')^\theta  \ar[r] & 0. \end{tikzcd} \end{equation}
Since $(W')^\theta$ is $\Z$-free, applying $\cF$ to (\ref{e:yww}) gives another short exact sequence that fits together with (\ref{e:nwv}) and (\ref{e:wvo}) in the following commutative diagram.
\[ 
\begin{tikzcd}
 & 0 \ar[d] & 0 \ar[d] & \\
0  \ar[r] & \cF((W')^\theta) \ar[d]  \ar[r] & \cF( V^\theta)  \ar[r]  \ar[d,equal] &  \cF(\cO_{H, S', \Sigma'}^*)  \ar[r] \ar[d] & 0 \\
 0 \ar[r] & \cF(W^\theta)  \ar[r] \ar[d, two heads] & \cF(V^\theta) \ar[r]  \ar[d] &  \nabla_{\Sigma}^{\Sigma'}(H)^{\tr} \ar[r] & 0 \\
  & \cF(Y_{H,S' - \Sigma}) & 0 & 
\end{tikzcd}
\]
The snake lemma therefore yields an isomorphism
\begin{equation} \label{e:nyo}
\begin{tikzcd}
\nabla_{\Sigma}^{\Sigma'}(H)^{\tr} \cong \coker(\cF(Y_{H,S' - \Sigma}) \ar[r,"\alpha"] & \cF(\cO_{H,S', \Sigma'}^*) ). 
\end{tikzcd}
\end{equation}
 It is easy to  explicitly describe the map $\alpha$ appearing in (\ref{e:nyo}).
Given $\varphi \in   \cF(Y_{H,S' - \Sigma})$, we have \[ \alpha(\varphi)(x) = \varphi((\ord_w(x))_{w \in (S' - \Sigma)_H}). \]  Therefore, (\ref{e:nyo}) is exactly the description of $\Sel_{\Sigma}^{\Sigma'}(H)$ given in  (\ref{e:sels}).

The statement for (A2$'$) follows similarly.
\end{proof}

\subsection{Extension class via Galois cohomology} \label{s:extgal}

In this section we assume (A1) but not (A2).
As in \S\ref{s:ext} we set $M = \Cl_{\Sigma}^{\Sigma'}(H)^-$ and let $L$ denote the field extension of $H$ corresponding to $M$ via class field theory.
In Lemma~\ref{l:splitting} we gave a formal proof that the Artin reciprocity map 
\[ \begin{tikzcd}
 \varphi\colon G_H  \ar[r] & \Gal(L/H) \cong M, 
 \end{tikzcd} \] viewed as an element of $H^1(G_H, M)$, lifts to a unique class  $\lambda \in H^1(G_F, M)$.
A cocycle representing $\lambda$ is given by
\begin{equation} \label{e:kappadef2}
 \lambda(g) = \varphi(g{c}g^{-1} {c}^{-1})^{1/2}, \end{equation}
where ${c}$ is any fixed complex conjugation in $G_F$ and $m^{1/2}$ denotes the unique square root of the element $m$ in the finite abelian group of odd order $M$.
It is elementary to check that the function defined by (\ref{e:kappadef2}) is a well-defined cocycle representing a class in $H^1(G_F, M)$.
Furthermore, if $g \in G_H$, then since complex conjugation acts as inversion on $M$ we have $\varphi({c}g^{-1} {c}^{-1}) = \varphi(g)$ and hence $\lambda(g) = \varphi(g^2)^{1/2} = \varphi(g)$.

Recall that in \S\ref{s:ext} we explained how restriction to the decomposition group at $v$ in $G_F$ gives rise to classes $\lambda_v \in H^1(G_v, M)$.  We now prove Lemma~\ref{l:ext}, restated below.

\begin{lemma} The extension class in $\Ext_{\Z[G]^{-}}^{1}( X_{H,\Sigma}^{-} , M)$ determined by $\nabla_{\Sigma}^{\Sigma'}(H)^-$ corresponding to the minus part of the exact sequence $(\ref{e:nablaext})$ is equal to $(\lambda_v)_{v \in \Sigma}$ under the isomorphism $(\ref{e:ext})$.
\end{lemma}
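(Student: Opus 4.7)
The plan is to identify the extension class associated to $\nabla_{\Sigma}^{\Sigma',-}$ in $\Ext^1_{\Z[G]^-}(X_{H,\Sigma}^-, M)$ with $(\lambda_v)_{v \in \Sigma}$ by directly computing, for each $v$, the cocycle $\gamma_v$ defined in (\ref{e:alphadef}) from the Ritter--Weiss presentation of $\nabla_{\Sigma}^{\Sigma'}(H)$. The key conceptual input is that $\nabla$ is built from the global fundamental class in $H^2(G, C_H)$, which by the compatibility of local and global class field theory decomposes, under $\Ext^1(X_{H,\Sigma}^-, M) \cong \bigoplus_v H^1(G_v, M)$, into the local Artin reciprocity classes at the places $v \in \Sigma$ -- and these are precisely the $\lambda_v$.

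Fix $v \in \Sigma$, a place $w$ of $H$ above $v$, and a complex conjugation $c \in G$ with $cw = \bar w$; work throughout over $\Z[1/2]$ on minus parts. The calculation breaks into three steps. First, I construct a lift $x \in (B^\theta)^-$ of $\tfrac{1}{2}(w - \bar w)$: place $\tfrac{1}{2}(1 - c) \in B_v = \Z[G]$ in the $v$-component (which projects to $\tfrac{1}{2}(w - \bar w) \in Y_{H,\{v\}}$) and compensate at components $v' \in S' \setminus \Sigma$, using surjectivity of $B^{\theta,-} \twoheadrightarrow Z^{\theta,-}$ from (\ref{e:wbx}). Different choices of lift differ by elements of $W^{\theta,-}$ and so alter $\gamma_v$ only by coboundaries. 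Second, for $g \in G_v = G_w$ (using commutativity of $G$), we have $g\bar w = \bar w$, so $gx - x$ lies in $W^{\theta,-}$; a direct calculation shows its $v$-component is $\tfrac{1}{2}(1-c)(g-1) \in \Z[G]$, which reduces on the minus side to $g - 1 \in \Delta G_w \hookrightarrow W_v = \Ind_{G_w}^G \Delta G_w$. Third, to extract $\gamma_v(g) \in M$, lift $gx - x \in W^{\theta,-}$ to $y \in V$ along $V \twoheadrightarrow W$ (choosing a preimage $v_g \in V_w$ of $g - 1 \in \Delta G_w$ at the $v$-component, together with lifts of the compensating entries) and apply $\theta\colon V \to \fO$; since $\theta_W(gx - x) = 0$, the element $\theta(y)$ lies in $C_H \subset \fO$, and its class in $\Cl_{\Sigma}^{\Sigma'}(H)^-$ is $\gamma_v(g)$.

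The identification $\gamma_v(g) = \lambda_v(g)$ then follows from the commutative diagram of extensions
\[
\begin{tikzcd}[row sep=small]
0 \ar[r] & H_w^* \ar[r] \ar[d] & V_w \ar[r] \ar[d] & \Delta G_w \ar[r] \ar[d, hook] & 0 \\
0 \ar[r] & C_H \ar[r] & \fO \ar[r] & \Delta G \ar[r] & 0,
\end{tikzcd}
\]
which is a direct consequence of the construction of $\fO$ and realizes the global fundamental class as the image of the local fundamental class at $w$ under $H_w^* \hookrightarrow \A_H^* \twoheadrightarrow C_H$. Passing to cocycles via compatible set-theoretic sections of both rows, the $v$-contribution of $\theta(y)$ to $C_H$ corresponds to the image of the local cochain $v_g - s_w(g - 1) \in H_w^*$, which by the defining universal property of $V_w$ represents the value at $g$ of the local Artin reciprocity cocycle at $w$. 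Accounting for the analogous contributions from the compensating entries and appealing to the local-global compatibility of reciprocity, the total class in $\Cl_{\Sigma}^{\Sigma'}(H)^-$ equals $\rec_{L/H}^{-1}(\sigma|_H)$ for any Galois lift $\sigma \in G_{F,v}$ of $g$, which by definition of $\lambda$ is $\lambda_v(g)$.

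The main obstacle is the careful bookkeeping: choosing compatible set-theoretic sections of both extensions, handling the compensating entries at the auxiliary primes $v' \in S' \setminus \Sigma$, and tracking the sign conventions from the minus-part construction. The conceptual content, however, is entirely encoded in the functoriality of the Ritter--Weiss construction with respect to local-global compatibility of fundamental classes and Artin reciprocity, which is standard.
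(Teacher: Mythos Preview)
Your setup in Steps 1 and 2 matches the paper's approach exactly: the paper also lifts $\tfrac{1}{2}(w-\bar w)$ by placing $(1-c)/2$ at the $v$-coordinate and compensating at an auxiliary $v' \in S'\setminus\Sigma$, and then computes $(g-1)\tilde{x}$. The gap is in your Step 3 and the final identification, which you resolve by an appeal to ``local--global compatibility of reciprocity'' that does not actually do the required work.

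The snake map $W^\theta \to M$ is not computed by local reciprocity at $w$ alone. In the paper's proof, one chooses the auxiliary prime $v'$ so that its Frobenius in $G$ equals the complex conjugation $c$; this is not an incidental convenience but what makes the computation close. One then invokes the \emph{explicit} description of the snake map from \cite{rw}*{Theorem 5}, which realizes it via the extension $1 \to M \to \tilde G = \Gal(L/F) \to G \to 1$: an element of $W^\theta$ is sent, component by component, into $\Delta\tilde G/(\Delta(\tilde G,M)\Delta\tilde G)$. With lifts $\tilde g,\tilde c \in \tilde G$ chosen in the decomposition groups at $v$ and $v'$ respectively, the $v$-component contributes $(1-\tilde c)(\tilde g-1)$ and the $v'$-component contributes $(\tilde g-1)(\tilde c-1)$; their sum is $\tilde g\tilde c - \tilde c\tilde g$, which under the standard identification $\Delta(\tilde G,M)/\Delta(\tilde G,M)\Delta\tilde G \cong M$ is precisely the commutator $\tilde g\tilde c\tilde g^{-1}\tilde c^{-1}$. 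Halving recovers the explicit cocycle formula $\lambda_v(g) = \varphi(gcg^{-1}c^{-1})^{1/2}$ from (\ref{e:kappadef2}). The point is that the $v'$-contribution is \emph{not} local reciprocity at $v$ in any disguise---it is an artifact of the specific compensating choice---and the two pieces combine to a commutator only because of the careful choice of $v'$. Your diagram involving $V_w$ and $\fO$ captures the $v$-piece but gives no mechanism for producing the commutator, and the phrase ``accounting for the analogous contributions from the compensating entries'' is exactly where the argument is missing.
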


\begin{proof}  Recall the explicit description of the isomorphism (\ref{e:ext}) given in \S\ref{s:ext}.  With $\gamma_v$ defined as in (\ref{e:alphadef}), it suffices to show that we can choose $x$ such that $\gamma_v = \lambda_v$.

This requires the explicit construction of $\nabla_{\Sigma}^{\Sigma', -}$ in \S\ref{s:construction}.   Recall that $S'$ was chosen so that the $G_{v'}$ for $v' \in S'$ cover $G$; in particular, there exists $v' \in S'$ such that $c \in G_{v'}$ (here $c$ is complex conjugation).  For notational simplicity we assume that the Frobenius of $v'$ in $G$ is equal to $c$ (we are of course free to add a $v'$ with this property to the set $S'$).  The restriction of $\theta_B$ to the factor corresponding to $v'$ is therefore $y \mapsto y \cdot (c - 1)$.  Hence an explicit element $\tilde{x} \in B^{\theta,-} \subset \prod_{S'} \Z[G]^-$ lifting $\frac{1}{2}(w - \overline{w}) \in X_{H,\Sigma}^-$ is the tuple having coordinate at $v$
equal to $(1 - c)/2$, coordinate at $v'$ equal to $1/2$, and all other coordinates equal to 0. For $g\in G_v$,
\[ \gamma_v(g) = (g - 1)\tilde{x} = ((1 - c)(g-1))/2,  (g-1)/2, 0)_{v, v', v''\neq v, v'}. \]
This is an element of $W^{\theta, -}$ and to conclude we must compute its image in $M$ under the snake map $W^\theta\longrightarrow M$ associated to  (\ref{e:rwdiagram}).  This snake map was described explicitly in \cite{rw}*{Theorem 5},  as follows.
Write $\tilde{G} = \Gal(L/F)$, where as above $L$ is the extension of $H$ corresponding to $M$ via class field theory.
Write $(\Delta \tilde{G})$ for the augmentation ideal of $\Z[\tilde{G}]$ and let $\Delta(\tilde{G}, M)$ denote the kernel of the canonical map
$\Z[\tilde{G}] \longrightarrow \Z[G]$.  There is a canonical short exact sequence
\begin{equation} \label{e:rw} 
\begin{tikzcd}
0  \ar[r] & M \cong   \dfrac{\Delta(\tilde{G},M)}{\Delta(\tilde{G}, M) \Delta\tilde{G}} 
 \ar[r] &  \dfrac{\Delta\tilde{G}}{\Delta(\tilde{G}, M) \Delta\tilde{G}}  \ar[r] & \Delta G \ar[r] & 0. 
 \end{tikzcd}
  \end{equation} 
Ritter  and Weiss associate to an element $w \in W$ an element $\rho(w) \in \Delta\tilde{G}/\Delta(\tilde{G}, M) \Delta\tilde{G}$.  When  $w \in  W^\theta$, the element $\rho(w)$ has trivial image in $\Delta G$ and hence gives rise to an element of $M$; this is the explicit description of the snake map $W^\theta\longrightarrow M$.

The components $\rho_v, \rho_{v'}$ of the map $\rho$ have slightly different definitions in the case $v \in \Sigma$, when the corresponding component of $W$ is
\[ \Ind_{G_v}^G \Delta G_v = \Z[G] \otimes_{\Z[G_v]} \Delta G_v \subset \Z[G] \otimes_{\Z[G_v]}  \Z[G_v] = \Z[G], \]
and the case $v' \in S' - \Sigma$ when  the corresponding component of $W$ is \[ \Z[G] \otimes_{\Z[G_v]}  \Z[G_v] = \Z[G].\]
 To describe these,  let $\tilde{g}$ and $\tilde{c}$ represent lifts to $\tilde{G}$ of $g$ and $c$ lying in the decomposition group associated to $v$ and $v'$, respectively.  For $v$, we write the corresponding  component of $2\gamma_v(x) = 2(g-1)\tilde{x}$ as $(1 - c) \otimes(g-1)$, and then
\[ \rho_v((1 - c) \otimes(g-1)) =  (1 - \tilde{c})(\tilde{g} - 1). \]
Meanwhile for $v'$ the corresponding component of  $2(g-1)\tilde{x}$ is simply $(g-1) \otimes  1$ and 
\[ \rho_{v'}((g-1)\otimes 1) = (\tilde{g} -  1)(\tilde{c} - 1). \] 
Adding  these, we obtain
\[ \rho(2(g-1)\tilde{x}) = \tilde{g}\tilde{c} - \tilde{c}\tilde{g}. \]

The explicit description of the isomorphism $\Delta(\tilde{G},M)/\Delta(\tilde{G}, M) \Delta\tilde{G} \cong M$ given in \cite{rw}*{Page 155} shows that the element  
\[ \tilde{g}\tilde{c} - \tilde{c}\tilde{g} = (\tilde{g}\tilde{c}\tilde{g}^{-1}\tilde{c}^{-1} - 1) (\tilde{c}\tilde{g}) \]
corresponds to $\tilde{g}\tilde{c}\tilde{g}^{-1}\tilde{c}^{-1}\in M$.
Therefore \[ \gamma_v(g) = (\tilde{g}\tilde{c}\tilde{g}^{-1}\tilde{c}^{-1})^{1/2} = \lambda_v(g) \] as desired (see (\ref{e:kappadef2})).
\end{proof}

\section{Appendix: Kurihara's Conjecture} \label{s:kurihara}

In this section, we prove Kurihara's Conjecture on the Fitting ideal of $\Cl^T(H)^{\vee,-}$, bootstrapping from the partial version proven in Theorem~\ref{t:ks}.  We first recall the statement of the conjecture, starting with notation from Lemma~\ref{l:sku}.
For $S_\infty \subset J \subset S_\infty \cup S_{\ram}$, write $\overline{J} = S_{\ram} \setminus J$.  
Let $H^{\overline{J}}$ denote the maximal subextension of $H/F$ that is unramified at primes in $\overline{J}$. This is the field $H^{I_{\overline{J}}}$, where $I_{\overline{J}}$ is the subgroup of $G$ generated by the inertia groups $I_v$ for $v \in \overline{J}$.
Note that the extension $H^{\overline{J}}/F$ is unramified outside ${J}$, and hence \[ \Theta_{{J}, T}(H^{\overline{J}}/F) \in \Z[G/I_{\overline{J}}].\] 
Since $\N I_{\overline{J}}$ divides $\prod_{v \in \overline{J}} \N I_v$, multiplication by $\prod_{v \in \overline{J}} \N I_v$ yields a well-defined map \[ \Z[G/I_{\overline{J}}] \longrightarrow \Z[G].\]
As noted in (\ref{e:skalt}), the version of Kurihara's conjecture stated in the introduction is equivalent to the equality
\begin{equation} \label{e:kc}
\Fitt_{\Z[G]^-} \Cl^T(H)^{\vee,-} = \bigg( \prod_{v \in \overline{J}} \N I_v \cdot \Theta_{J, T}(H^{\overline{J}}/F)^\# \colon S_\infty \subset J \subset S_\infty \cup S_{\ram}\bigg) \subset \Z[G]^-. 
\end{equation}

\subsection{Functorial properties}

  We begin with some functorial properties of the
construction of $\nabla_{\Sigma}^{\Sigma'}$.
Throughout this section we assume (A2).   In our application to Kurihara's conjecture we will have $\Sigma' = T$, which contains no ramified primes, so (A2) is satisfied.

\begin{lemma}  \label{l:vbasis}
 For any subgroup $\Gamma \subset G$, we have 
\[ V^\theta(H)^\Gamma = (\N\Gamma) V^\theta(H) \cong V^\theta(H^\Gamma). \]
\end{lemma}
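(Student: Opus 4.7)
The proof naturally splits into two parts, one for each equality.

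For the first equality $V^\theta(H)^\Gamma = (\N\Gamma) V^\theta(H)$, the plan is to invoke Lemma~\ref{l:projective}, which establishes that $V^\theta(H)$ is projective over $\Z[G]$ (using assumption (A2)). A projective $\Z[G]$-module is cohomologically trivial for every subgroup, so in particular
\[
\hat{H}^0(\Gamma, V^\theta(H)) \;=\; V^\theta(H)^\Gamma/(\N\Gamma) V^\theta(H) \;=\; 0.
\]
This gives the claimed equality, noting that the inclusion $(\N\Gamma)V^\theta(H) \subset V^\theta(H)^\Gamma$ is automatic.

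For the isomorphism $V^\theta(H)^\Gamma \cong V^\theta(H^\Gamma)$, I would take $\Gamma$-invariants of the defining short exact sequence
\[
\begin{tikzcd}
 0 \ar[r] & V^\theta(H) \ar[r] & V(H) \ar[r,"\theta"] & \fO(H) \ar[r] & 0.
\end{tikzcd}
\]
Both $V(H)$ and $\fO(H)$ are $\Gamma$-cohomologically trivial (the former was shown factor by factor in the proof of Lemma~\ref{l:projective}, the latter is standard for the module arising from the global fundamental class), so the long exact sequence in cohomology yields exactness of
\[
\begin{tikzcd}
 0 \ar[r] & V^\theta(H)^\Gamma \ar[r] & V(H)^\Gamma \ar[r] & \fO(H)^\Gamma \ar[r] & 0.
\end{tikzcd}
\]
Thus it suffices to establish compatible canonical isomorphisms $V(H)^\Gamma \cong V(H^\Gamma)$ and $\fO(H)^\Gamma \cong \fO(H^\Gamma)$ intertwining the two maps $\theta$.

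For $\fO$, the plan is to invoke the standard fact that the $\Gamma$-invariants of the canonical extension attached to the $G$-fundamental class in $H^2(G, C_H)$ recover the canonical extension for the $G/\Gamma$-fundamental class in $H^2(G/\Gamma, C_{H^\Gamma})$; this is a consequence of the inflation map on fundamental classes together with $C_H^\Gamma = C_{H^\Gamma}$ (using $\hat H^0$-triviality).

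For $V$, I would work factor by factor. Each factor has the shape $\Ind_{G_w}^G M_w$ for some local $G_w$-module $M_w$, and Mackey's formula gives
\[
\bigl(\Ind_{G_w}^G M_w\bigr)^\Gamma \;\cong\; \bigoplus_{w'\mid v \text{ in } H^\Gamma} M_w^{\,\Gamma\cap G_w},
\]
where for each prime $w'$ of $H^\Gamma$ above $v$ one picks a prime $w$ of $H$ above $w'$. It then remains to identify $M_w^{\Gamma\cap G_w}$ with the corresponding local module $M_{w'}$ for the extension $H^\Gamma/F$:
for $M_w=\cO_{H,w}^*$ or $H_w^*$ or $U_w^1$ this is straightforward local Galois theory (using (A2) in the $U_w^1$ case to ensure cohomological triviality), and for the Ritter--Weiss module $V_w = V_w(H_w)$ it follows from the functoriality of the Weil-group construction: the Weil group of $H_w^{\ab}/F_v$ has $(\Gamma\cap G_w)$-invariants naturally equal to the Weil group of $(H^\Gamma_{w'})^{\ab}/F_v$, and taking invariants of the augmentation and relative-augmentation ideals commutes with the quotient defining $V_w$. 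Finally, the compatibility of the maps $\theta_H$ and $\theta_{H^\Gamma}$ under these identifications is simply the functoriality of the Ritter--Weiss construction with respect to the subextension $H^\Gamma \subset H$.

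The main obstacle is the last step: verifying carefully that the Ritter--Weiss local module $V_w(H_w)$ has $(\Gamma \cap G_w)$-invariants canonically isomorphic to $V_{w'}(H^\Gamma_{w'})$, compatibly with the maps into the fundamental-class extension $\fO$. This is essentially a bookkeeping exercise inside the Weil-group construction of \cite{rw}, but it is where the content lies; the other pieces are either formal (cohomological triviality giving exactness of invariants) or classical (local Galois theory and inflation on fundamental classes).
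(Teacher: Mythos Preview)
Your plan follows the paper's proof closely: projectivity/cohomological triviality for the first equality, then reduce the second to compatible isomorphisms $V(H)^\Gamma \cong V(H^\Gamma)$ and $\fO(H)^\Gamma \cong \fO(H^\Gamma)$, handling $V$ componentwise and $\fO$ via the fundamental class. You also correctly flag the $V_w$ step as the crux.

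However, your proposed justification for that step is not right as stated. The assertion that ``the Weil group of $H_w^{\ab}/F_v$ has $(\Gamma\cap G_w)$-invariants naturally equal to the Weil group of $(H^\Gamma_{w'})^{\ab}/F_v$'' does not parse: these Weil groups are not $\Gamma_w$-modules, and the natural relation between them is a \emph{surjection} $\text{W}(H_w^{\ab}/F_v) \twoheadrightarrow \text{W}((H_w^{\Gamma_w})^{\ab}/F_v)$ with kernel $\ker(\N\Gamma_w \colon H_w^* \to (H_w^{\Gamma_w})^*)$, not an invariants construction. The paper instead builds an explicit map
\[
V_w(H_w^{\Gamma_w}) \longrightarrow V_w(H_w)^{\Gamma_w}, \qquad (\sigma - 1) \longmapsto \N\Gamma_w \cdot (\tilde\sigma - 1)
\]
for any lift $\tilde\sigma$, checks it is well-defined using precisely that kernel identification, and then proves it is an isomorphism by the five lemma applied to $0 \to H_w^* \to V_w \to \Delta G_w \to 0$ (the flanking vertical maps being $(H_w^{\Gamma_w})^* \cong (H_w^*)^{\Gamma_w}$ by Hilbert~90 and $\N\Gamma_w \colon \Delta(G_w/\Gamma_w) \xrightarrow{\sim} (\Delta G_w)^{\Gamma_w}$). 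The compatibility with $\theta$ is then verified by showing the right-hand squares in the $V_w$ and $\fO$ diagrams are Cartesian and by identifying the extension class of $\fO(H)^\Gamma$ with $u_{G/\Gamma}$ via $\mathrm{infl}(u_{G/\Gamma}) = u_G^{\#\Gamma}$ together with the injectivity of inflation; this is more than ``simply functoriality.''
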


\begin{proof}  As $V^{\theta}(H)$ is projective over $\Z[G]$, by \cite{bourbaki}*{Chapter I, Proposition 10} it follows that $V^{\theta}(H)^{\Gamma} = \Z[G]^\Gamma \otimes_{\Z[G]} V^{\theta}(H)$. The equality $V^\theta(H)^\Gamma = (\N\Gamma) V^\theta(H)$ follows from the fact that $\Z[G]^{\Gamma} = (\N\Gamma) \Z[G]$.  We must show $V^\theta(H)^\Gamma \cong V^\theta(H^\Gamma)$.  

We first check that $V(H)^\Gamma \cong V(H^\Gamma)$, which we can do componentwise over all places $v$.  
Each component of $V(H)$ is of the form $\Ind_{G_w}^G N_w(H_w)$ for a $G_w$-module $N_w(H_w)$.  If we write $\Gamma_w = \Gamma \cap G_w$, then we claim that
\begin{equation} \label{e:ind}
 (\Ind_{G_w}^{G} N_w(H_w))^\Gamma \cong \Ind_{G_w/\Gamma_w}^{G/\Gamma} N_w(H_w)^{\Gamma_w} 
 \end{equation}
as $G/\Gamma$-modules. To see this note that by \cite{weibel}*{Lemma 6.3.4}, the induced modules can be identified with co-induced modules, and therefore the isomorphism (\ref{e:ind}) is equivalent to the following natural isomorphism 
\[
\Hom_{\Gamma}(\Z, \Hom_{G_w}(\Z[G], N_w(H_w))) \cong \Hom_{G_w/ \Gamma_w}(\Z[G/\Gamma], \Hom_{\Gamma_w}(\Z, \N_w(H_w))).
\]
So it suffices to prove that in each case 
\[ N_w(H_w)^{\Gamma_w} \cong N_w(H_w^{\Gamma_w}). \] 
as $G_w/\Gamma_w$-modules.

For $v \not\in S'$, we have $N_w(H_w) = H_w^*$ or $\cO_w^*$, so this holds trivially.
 For $v \in S'$ there is a map 
\[
\begin{tikzcd}
V_w(H_w^{\Gamma_w}) \ar[r,"{\N\Gamma_w}"] & V_w(H_w)^{\Gamma_w}
\end{tikzcd}
\] 
given by
\[
(\sigma -1) \mapsto \N\Gamma_w(\tilde{\sigma}-1)
\]
for any $\sigma \in \text{W}((H_w^{\Gamma_w})^{\ab}/F_v)$ and any lift $\tilde{\sigma} \in \text{W}(H_w^{\ab}/F_v)$ of $\sigma$. This map is well-defined because of the isomorphism
\[ \text{W}(H_w^{\ab}/(H_w^{\Gamma_w})^{\ab}) \cong \ker(\N\Gamma_w \colon H_w^* \longrightarrow (H_w^{\Gamma_w})^*). \]
We have a
 commutative diagram connecting the exact sequences (\ref{e:vseq}) for $H_w$ and $H_w^{\Gamma_w}$:
\begin{equation} \label{e:vseq2}
\begin{tikzcd}
 1  \ar[r] &   (H_w^{\Gamma_w})^* \ar[r] \ar[d, equal] &  V_w(H_w^{\Gamma_w}) \ar[r] \ar[d,"\N\Gamma_w"] &  \Delta(G_w/\Gamma_w) \ar[r] \ar[d, "\N\Gamma_w" ] &  1 \\
  1  \ar[r] &   (H_w^*)^{\Gamma_w} \ar[r]  &  V_w(H_w)^{\Gamma_w} \ar[r] &  (\Delta G_w)^{\Gamma_w} \ar[r] &  1.
 \end{tikzcd}
\end{equation}
The exactness of the bottom row follows from Hilbert's Theorem 90.  The flanking vertical arrows are easily seen to be isomorphisms, so the central vertical arrow is as well.  The right square is cartesian and we use this below.

We have therefore proven that $V(H)^\Gamma \cong V(H^\Gamma)$.  To conclude we claim that there is a commutative diagram
\begin{equation} \label{e:vodiagram} \begin{tikzcd}  V(H^\Gamma) \ar[r, "\sim"] \ar[d, "\theta"] & V(H)^\Gamma  \ar[d, "\theta"] \\
\fO(H^\Gamma) \ar[r, "\sim"] & \fO(H)^\Gamma,
\end{tikzcd}
\end{equation}
from which the desired isomorphism $V^\theta(H)^\Gamma = V^\theta(H^\Gamma)$ follows. 

To prove the claim we need to construct the bottom arrow giving a commutative square. Taking $\Gamma$-invariants of the sequence in equation (\ref{e:cog}) and noting that $H^1(\Gamma, C_H) =1$ (see for example, \cite{neukirch}*{Theorem III.4.7}), we get the short exact sequence
\begin{equation} \label{e:gammainv}
\begin{tikzcd}
1 \ar[r] & C_{H}^{\Gamma} \ar[r] & \fO(H)^{\Gamma}  \ar[r] &  \Delta(G)^{\Gamma}  \ar[r]  & 1.
\end{tikzcd}
\end{equation}
Using the isomorphisms $\N\Gamma \colon \Delta(G/\Delta) \longrightarrow \Delta(G)^\Gamma$ and $C_{H}^{\Gamma} \cong C_{H^{\Gamma}}$  (see \cite{neukirch}*{Theorem III.2.7} for the latter), we can write this as 
\begin{equation} \label{e:gammainv2}
\begin{tikzcd}
1 \ar[r] & C_{H^{\Gamma}} \ar[r] & \fO(H)^{\Gamma}  \ar[r] &  \Delta(G/\Gamma)  \ar[r]  & 1.
\end{tikzcd}
\end{equation}
Let $\alpha \in H^2(G/\Gamma, C_{H^{\Gamma}})$ be the cohomology class representing the extension class of (\ref{e:gammainv2}). Note that the image of $\alpha$ under the inflation map 
\[
\text{infl}: H^2(G/\Gamma, C_{H^{\Gamma}}) \longrightarrow H^2(G, C_H)
\]
is given by first pulling back the short exact sequence in equation (\ref{e:gammainv}) along the map $\Delta(G) \xrightarrow{\N\Gamma} \Delta(G)^{\Gamma}$ and then pushing it forward along the inclusion $C_{H^\Gamma} \longrightarrow C_H$. Denote the fundamental classes in $H^2(G, C_H)$ and $H^2(G/\Gamma, C_{H^{\Gamma}})$ by $u_G$ and $u_{G/\Gamma}$, respectively. We have 
\[
\text{infl}(\alpha) = u_G^{\#\Gamma} = \text{infl}(u_{G/\Gamma}).
\]
The first equality follows since  $\N\Gamma$ acts as multiplication by $\#\Gamma$ on $\Gamma$-invariants.  For the second equality see \cite{neukirch}*{Proposition I.1.6}. As $\text{infl}$ is injective, we find that $\alpha = u_{G/\Gamma}$. Hence we obtain a commutative diagram 
\begin{equation}
\begin{tikzcd}
 1  \ar[r] &  C_{H^{\Gamma}} \ar[r] \ar[d] &  \fO(H^{\Gamma}) \ar[r] \ar[d] &  \Delta(G/\Gamma) \ar[r] \ar[d, "\N\Gamma" ] &  1 \\
  1  \ar[r] &   C_{H}^{\Gamma} \ar[r]  &  \fO(H)^{\Gamma} \ar[r] &  (\Delta G)^{\Gamma} \ar[r] &  1,
\end{tikzcd} \label{e:ohgdiagram}
\end{equation} 
with square on the right cartesian and all vertical arrows isomorphisms.
The commutativity of (\ref{e:vodiagram}) follows since the right squares in  (\ref{e:vseq2}) and (\ref{e:ohgdiagram}) are cartesian.  We must only note the following commutative diagram, whose  vertical arrows are isomorphisms:
\[ \begin{tikzcd}
\Ind_{G_w}^{G} (\Delta G_w/{\Gamma_w}) \ar[r] \ar[d,"\N \Gamma_w"] &
 \Delta(G/\Gamma) \ar[d,"\N\Gamma"] \\
\Ind_{G_w}^G (\Delta G_w)^{\Gamma_w} \ar[r,"\N(\Gamma/\Gamma_w)"] & \Delta(G)^{\Gamma}.
\end{tikzcd}
\]
This completes the proof.
\end{proof}

Recall from (\ref{e:ww2}) that we have an injection \[ \begin{tikzcd} (\alpha_w,\beta_w) \colon 
W_w(H_w)\ar[r] & \Z[G_w]^2. \end{tikzcd} \]  
We put $W_v(H)=  \Ind_{G_w}^{G} W_w(H_w)$ and denote by $(\alpha_v, \beta_v)$ the induced map
\begin{equation} \label{e:alphabeta}
 \begin{tikzcd} (\alpha_v,\beta_v) \colon W_v(H) \ar[r] & \Z[G]^2. \end{tikzcd}
 \end{equation}

The following lemma follows immediately from the definition of $\alpha_w, \beta_w^0,$ and $\beta_w$ given
in (\ref{e:ww2})--(\ref{e:betawdef}).

\begin{lemma} \label{l:gammared}
  Let $\Gamma \subset G$ be a subgroup.  Let $x \in W_v(H)$, and let $\overline{x}$ denote the image of $x$
  under the canonical map $W_v(H) \longrightarrow W_v(H^\Gamma)$ induced by restriction.
\begin{itemize} \item 
We have \begin{equation} \label{e:alphacong}
 \overline{\alpha_v(x)} = \alpha_v(\overline{x}) 
 \end{equation}
 in $\Z[G/\Gamma]$.  Here the left side denotes the reduction modulo $\Gamma$ of $\alpha_v(x)$.  On the right side, the map $\alpha_v$ is the map of (\ref{e:alphabeta}) with $(H, G)$ replaced by $(H^\Gamma, G/\Gamma)$.
\item Suppose that $\Gamma \supset I_w$.  We have
\begin{equation} \label{e:betacong}
 \overline{\beta_v^0(x)} = \beta_v(\overline{x}) 
 \end{equation}
in $\Z[G/\Gamma]$, with notation as in the previous item.
\end{itemize}  
\end{lemma}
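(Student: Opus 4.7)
The plan is to reduce each assertion to its counterpart on $W_w(H_w)$ (rather than the induced module $W_v(H) = \Ind_{G_w}^G W_w(H_w)$), since the maps $\alpha_v, \beta_v^0, \beta_v$ and the restriction $W_v(H) \to W_v(H^\Gamma)$ are all obtained by inducing from $G_w$ (and reducing modulo the corresponding subgroup $\Gamma_w = \Gamma \cap G_w$). Thus it will suffice to verify, for a general element $x \in W_w(H_w)$ with image $\overline{x} \in W_w(H_w^{\Gamma_w})$, the equalities $\overline{\alpha_w(x)} = \alpha_{w'}(\overline{x})$ in $\Z[G_w/\Gamma_w]$ (unconditionally) and $\overline{\beta_w^0(x)} = \beta_{w'}(\overline{x})$ (when $\Gamma \supset I_w$), where $w'$ denotes the prime of $H^\Gamma$ below $w$. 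First I would unwind the definition of the restriction map: it is induced by the natural surjection $\text{W}(H_w^{\ab}/F_v) \twoheadrightarrow \text{W}((H_w^{\Gamma_w})^{\ab}/F_v)$, which is compatible with the projection to Galois groups $G_w \twoheadrightarrow G_w/\Gamma_w$.

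For the first identity, $\alpha_w$ is by definition induced by the projection $\pi_W$ in (\ref{e:whw}), so on an element of the form $x = \sigma - 1$ one has $\alpha_w(x) = \overline{\sigma} - 1 \in \Delta G_w$. The restriction $\overline{x}$ is $\overline{\sigma}|_{(H_w^{\Gamma_w})^{\ab}} - 1$, to which $\alpha_{w'}$ assigns the image of $\overline{\sigma}$ in $G_w/\Gamma_w$ minus $1$. This is manifestly the reduction of $\alpha_w(x)$ modulo $\Gamma_w$, giving (\ref{e:alphacong}) at the level of $W_w$; inducing from $G_w$ to $G$ and reducing modulo $\Gamma$ produces the stated equality in $\Z[G/\Gamma]$.

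For the second identity, the crucial observation will be that when $\Gamma \supset I_w$, the inertia group at $w'$ inside $G_w/\Gamma_w$ equals $I_w\Gamma_w/\Gamma_w$, which is trivial; therefore $\beta_{w'} = \N I_{w'} \cdot \beta_{w'}^0 = \beta_{w'}^0$. Writing $x = \sigma - 1$ with $\sigma|_{F_v^{\nr}} = \sigma_v^n$, the integer $n$ is unchanged upon restriction to $(H_w^{\Gamma_w})^{\nr} = H_w^{\nr}$, and the Frobenius $\sigma_w \in G_w/I_w$ maps to the Frobenius $\sigma_{w'} \in G_w/\Gamma_w$ under the quotient $G_w/I_w \twoheadrightarrow G_w/\Gamma_w$ (well-defined because $\Gamma_w \supset I_w$). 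The explicit formula for $\beta_w^0$ as a truncated geometric sum in $\sigma_w$ then reduces termwise to the same formula in $\sigma_{w'}$, which is exactly $\beta_{w'}^0(\overline{x}) = \beta_{w'}(\overline{x})$. Inducing and reducing then yields (\ref{e:betacong}).

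There is no real obstacle here; the only subtlety worth flagging is the compatibility $\sigma_w \mapsto \sigma_{w'}$ under the quotient, which relies precisely on the hypothesis $\Gamma \supset I_w$ (without it, the image of $\sigma_w \in G_w/I_w$ in $G_w/\Gamma_w$ would not equal the Frobenius at $w'$, since $I_{w'}$ would still be a nontrivial quotient of $I_w$). This is exactly why the second claim requires the hypothesis while the first does not.
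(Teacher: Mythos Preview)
Your proposal is correct and matches the paper's approach: the paper simply asserts that the lemma ``follows immediately from the definition of $\alpha_w, \beta_w^0,$ and $\beta_w$,'' and you have supplied exactly the unwinding of those definitions that makes this immediate. One minor notational point: since you are working with $W_w$ rather than $V_w$, the restriction map is more directly induced by $\text{W}(H_w^{\nr}/F_v) \to \text{W}((H_w^{\Gamma_w})^{\nr}/F_v)$ than by the map on Weil groups of maximal abelian extensions, though of course the two descriptions are compatible via the surjection $V_w \to W_w$.
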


The lemma below follows since the isomorphism $\N\Gamma \cdot V^\theta(H) \cong V^\theta(H^\Gamma)$ sends $\N \Gamma \cdot x$ to the restriction of $x$, denoted  $\overline{x} \in V^\theta(H^\Gamma)$.

\begin{lemma} \label{l:gammared2}  Let $\Gamma \subset G$ be a subgroup.
 Let $x \in V^\theta(H)$, and let $\overline{x}$ denote the image of $\N\Gamma \cdot x$ under the isomorphism  $\N\Gamma \cdot V^\theta(H) \cong V^\theta(H^\Gamma)$
  described in Lemma~\ref{l:vbasis}.  Then the congruences (\ref{e:alphacong}) and (\ref{e:betacong}) hold, with the latter under the assumption $\Gamma \supset I_w$.
\end{lemma}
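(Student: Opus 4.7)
My plan is to deduce Lemma~\ref{l:gammared2} from Lemma~\ref{l:gammared} by a direct transport of structure, using the functoriality of the construction of $V^\theta$ with respect to passage from $H$ to $H^\Gamma$.

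First I would fix $x \in V^\theta(H)$ and let $y \in W_v(H)$ denote its image under the composite $V^\theta(H) \hookrightarrow V(H) \twoheadrightarrow W(H) \twoheadrightarrow W_v(H)$ (for $v \in S'$, this projection exists; for $v \in \Sigma$ one uses the analogous projection to $\Ind_{G_w}^G \Delta G_w \hookrightarrow \Z[G]$, and the argument is the same or easier). By definition the symbols $\alpha_v(x)$ and $\beta_v^0(x)$ appearing in the lemma mean $\alpha_v(y)$ and $\beta_v^0(y)$, respectively. Similarly, writing $\overline{x} \in V^\theta(H^\Gamma)$ for the image of $\N\Gamma \cdot x$ under the isomorphism of Lemma~\ref{l:vbasis}, the terms $\alpha_v(\overline{x})$ and $\beta_v(\overline{x})$ on the right sides of (\ref{e:alphacong}) and (\ref{e:betacong}) denote the result of applying these maps to the image $\overline{y}' \in W_v(H^\Gamma)$ of $\overline{x}$ in $W_v(H^\Gamma)$.

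The key compatibility I would then verify is that $\overline{y}'$ equals the restriction of $y$ via the canonical map $W_v(H) \to W_v(H^\Gamma)$ used in Lemma~\ref{l:gammared}. In other words, the square
\[
\begin{tikzcd}
V^\theta(H) \ar[r] \ar[d,"\N\Gamma"] & W_v(H) \ar[d,"\text{restriction}"] \\
V^\theta(H^\Gamma) \ar[r] & W_v(H^\Gamma)
\end{tikzcd}
\]
commutes, where the left vertical arrow is $x \mapsto \overline{x}$. This is exactly the statement of the parenthetical remark preceding the lemma, and its proof is a direct unwinding of the Weil-group construction of $V_w(H_w)$, $W_w(H_w)$ in (\ref{e:whw})--(\ref{e:wdef}): in each case the restriction map on the $H$-side is induced by the inclusion $\text{W}(H_w^{\ab}/F_v) \hookrightarrow \text{W}((H_w^{\Gamma_w})^{\ab}/F_v)$ pushed by $\N\Gamma_w$, and the construction of $\theta$ in diagram (\ref{e:vodiagram}) of the proof of Lemma~\ref{l:vbasis} makes these identifications compatible with $\theta$, hence with passage to $V^\theta$.

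Granted this commutativity, Lemma~\ref{l:gammared} applied to $y \in W_v(H)$ yields exactly $\overline{\alpha_v(y)} = \alpha_v(\overline{y}')$ and (when $\Gamma \supset I_w$) $\overline{\beta_v^0(y)} = \beta_v(\overline{y}')$, which are the desired congruences (\ref{e:alphacong}) and (\ref{e:betacong}) for $x$. The only real step requiring care is the compatibility square above, but as noted it is essentially already contained in the proof of Lemma~\ref{l:vbasis}; no new calculation is needed beyond chasing through the Weil-group definitions.
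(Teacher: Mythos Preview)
Your proposal is correct and takes essentially the same approach as the paper. The paper's entire argument is the single sentence preceding the lemma---``the isomorphism $\N\Gamma \cdot V^\theta(H) \cong V^\theta(H^\Gamma)$ sends $\N\Gamma \cdot x$ to the restriction of $x$''---which is exactly the commutativity of your square, after which Lemma~\ref{l:gammared} applies; you have simply unpacked this in more detail.
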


\subsection{Proof of Kurihara's Conjecture}

For a nonnegative integer $i$ and finitely presented $R$-module $M$, we write $\Fitt^i_R(M)$ for the $i$th Fitting ideal of $M$.  Throughout this text, $\Fitt_R(M)$ has denoted $\Fitt^0_R(M)$, and we continue this convention.
The connection between $\Cl^T(H)^\vee$ and Ritter--Weiss modules is provided by the following lemma.

\begin{lemma}  \label{l:clnab} Let $s = \#S_{\ram}$.  We have
\[ \Fitt_{\Z[G]^-} (\Cl^T(H)^{\vee,-})= (\Fitt_{\Z[G]^-}^s \nabla_{S_\infty}^T(H)^-)^\#. \]
\end{lemma}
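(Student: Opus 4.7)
The plan is to combine the transpose identification from Lemma~\ref{l:trnabla} with a direct computation of how Fitting indices behave under the projective presentation of $\nabla$. First, since complex conjugation acts freely on the archimedean places of the CM field $H$, we have $Y_{H,S_\infty}^- = 0$, and so the exact sequence of Lemma~\ref{l:ysc} collapses to an isomorphism $\Sel_{S_\infty}^T(H)^- \cong \Cl^T(H)^{\vee,-}$. Because $T$ is disjoint from $S_{\ram}$, assumption (A2$'$) of the appendix is vacuous over $\Z[\tfrac12][G]^-$, and the minus part of Lemma~\ref{l:trnabla} applies to give $\nabla_{S_\infty}^T(H)^{\tr,-} \cong \Sel_{S_\infty}^T(H)^- \cong \Cl^T(H)^{\vee,-}$.

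Next I would analyze the presentation coming from the construction in \S\ref{s:construction}. Since $\Sigma\cup\Sigma' = S_\infty \cup T$ is disjoint from $S_{\ram}$, we have $S'_{\ram} = S_{\ram}$, so by (\ref{e:bfree}) the module $B^\theta$ is free over $\Z[G]$ of rank $q := \#S' + s - 1$, and by Lemma~\ref{l:projective} the module $V^\theta$ is projective of rank $p := \#S' - 1$. The key numerical fact is $q - p = s$. Tensoring the defining presentation $V^\theta \xrightarrow{f} B^\theta \to \nabla_{S_\infty}^T(H) \to 0$ with $\Z[\tfrac12]$ and passing to minus parts yields a projective presentation
\[ V^{\theta,-} \xrightarrow{f^-} B^{\theta,-} \to \nabla_{S_\infty}^T(H)^- \to 0, \]
and dualizing (noting that complex conjugation acts as $-1$ on the contragredient dual of a minus module, so duality preserves the minus side) gives
\[ (B^{\theta,-})^* \xrightarrow{(f^-)^*} (V^{\theta,-})^* \to \nabla_{S_\infty}^T(H)^{\tr,-} \to 0. \]

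To compute the Fitting ideals, I would first choose a complementary projective $W$ so that $V^{\theta,-}\oplus W$ is free; replacing $f^-$ by $f^-\oplus \id_W$ gives genuine presentations by free modules of both $\nabla^-$ and $\nabla^{\tr,-}$, with the extra $W$-component contributing an identity block in the presentation matrix. Let $M$ be the resulting matrix of $f^-$. A direct block-diagonal minor expansion shows that the only nonvanishing maximal minors of the augmented matrices come from choosing the identity block in its entirety, so they reduce to $p\times p$ minors of $M$. Thus $\Fitt^s_{\Z[G]^-}(\nabla_{S_\infty}^T(H)^-) = I_p(M)$, while by the same argument $\Fitt^0_{\Z[G]^-}(\nabla_{S_\infty}^T(H)^{\tr,-}) = I_p(M^{T,\#})$. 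Since transposition preserves maximal minors and the entrywise involution $\#$ commutes with determinants, $I_p(M^{T,\#}) = I_p(M)^\#$, yielding
\[ \Fitt_{\Z[G]^-}(\Cl^T(H)^{\vee,-}) = \Fitt^0_{\Z[G]^-}(\nabla_{S_\infty}^T(H)^{\tr,-}) = \Fitt^s_{\Z[G]^-}(\nabla_{S_\infty}^T(H)^-)^\#. \]

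The main obstacle is bookkeeping: the source $V^\theta$ is only projective rather than free, so one must justify that the higher Fitting ideal is computed correctly from the presentation (which is done cleanly by adjoining a free complement as above), and one must verify that forming the Jannsen transpose commutes with passage to the minus part, which follows from the compatibility of complex conjugation with the contragredient $G$-action on duals. Once these technical points are handled, the combinatorial shift by $s$ in the Fitting index is exactly what produces the $\#$-twisted equality.
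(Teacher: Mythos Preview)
Your argument is correct and follows the same route as the paper: identify $\Cl^T(H)^{\vee,-}$ with $\Sel_{S_\infty}^T(H)^-$ via Lemma~\ref{l:ysc}, invoke the transpose identification of Lemma~\ref{l:trnabla} (condition (A2) holds since $\Sigma'=T$ contains no ramified primes), and then read off the Fitting-index shift from the rank discrepancy $(\#S'+s-1)-(\#S'-1)=s$ between $B^\theta$ and $V^\theta$. The paper compresses your explicit minor computation into the single line ``It follows that $\Fitt_{\Z[G]} (\Sel_{S_\infty}^T(H)) = (\Fitt_{\Z[G]}^s \nabla_{S_\infty}^T(H))^\#$,'' but the content is identical.

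One small correction: complex conjugation does not act \emph{freely} on the archimedean places of $H$; it acts \emph{trivially}, since each real place of $F$ lies below a single complex place of $H$ with decomposition group $\{1,c\}$. (If it acted freely, $Y_{H,S_\infty}^-$ would in fact be nonzero.) The trivial action is what forces $(c+1)Y_{H,S_\infty}=2Y_{H,S_\infty}$ and hence $Y_{H,S_\infty}^-=0$ after inverting $2$; this is the fact recorded in the proof of Lemma~\ref{l:ysc}.
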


\begin{proof}  By Lemma~\ref{l:trnabla}, the transpose of $ \nabla_{S_\infty}^T(H)$ associated to the presentation (\ref{e:pp}) is $\Sel_{S_\infty}^{T}(H)$.  Since $\Sigma = S_\infty$, we have $S'_{\ram} = S_{\ram}$.  Therefore (\ref{e:bfree}) and Lemma~\ref{l:projective} imply that this presentation of  $ \nabla_{S_\infty}^T(H)$ has precisely $s$ more generators than relations.  It follows that
\begin{equation} \label{e:selnab}
 \Fitt_{\Z[G]} (\Sel_{S_\infty}^T(H))  = (\Fitt_{\Z[G]}^s \nabla_{S_\infty}^T(H))^\#. 
 \end{equation}
It remains to observe that we showed in (\ref{e:selt}) an isomorphism 
\[ \Sel_{S_\infty}^T(H)^- \cong \Cl^T(H)^{\vee,-} \]
of $\Z[G]^-$-modules.
\end{proof}

To prove Kurihara's conjecture (\ref{e:kc}), it therefore remains to prove that
\begin{equation} \label{e:knew}
\Fitt_{\Z[G]^-}^{s} \nabla_{S_\infty}^T(H)^- = \bigg( \prod_{v \in \overline{J}} \N I_v \cdot \Theta_{J, T}(H^{\overline{J}}/F) \colon S_\infty \subset J \subset S_\infty \cup S_{\ram}\bigg). 
\end{equation}

 It suffices to prove (\ref{e:knew}) after tensoring with $\Z_p[G]^-$ over $\Z[G]^-$ for every odd prime $p$.  We therefore fix an odd prime $p$ and write $R = \Z_p[G]$. 

  As $R$ is a product of local rings, $V^\theta_p = V^\theta \otimes_{\Z} \Z_p$ is free of rank $t = \#S' - 1$ over $R$ by Lemma~\ref{l:projective}.
We fix an $R$-basis $v_1, \dots, v_t$ of $V^\theta_p$.  We denote the canonical basis of \[ B_p = R^{s+t+1} = \prod_{v \in S'\setminus S_{\ram}} R \prod_{v \in S_{\ram}} R^2 \]
by $\{ e_v \}_{v \in S' - S_{\ram}} \cup \{e_{v,0}, e_{v,1}\}_{v \in S_{\ram}}.$ 
Fix an infinite place $\infty$ of $F$.  Recall that $\theta_B(e_\infty)=1$. 
We can therefore define a basis of $B^\theta_p$,
\[ \{ f_v  \}_{v \in S'\setminus S_{\ram}, v \neq \infty} \cup \{f_{v,0} , f_{v,1}\}_{v \in S_{\ram}},\] 
by
\[ 
f_v = e_v - \theta_B(e_v)e_\infty
\]
and similarly for the $f_{v,0}, f_{v,1}$.
The purpose of this basis is that the coordinates of an element of $B_p^\theta$ with respect to the $f$'s are the same as its coordinates with respect to the $e$'s, with the coordinate at $\infty$ ignored.

Let $A$ denote the matrix of the map $V^\theta_p \longrightarrow B_p^\theta$ with respect to our chosen  bases.
By definition, $\Fitt_{R^-}^{s} {\nabla}_{S_\infty}^T(H)_p^-$ is the ideal generated by the determinants of the submatrices 
of $A$ determined by selecting any $t$ of its columns.  These columns are indexed by the basis vectors of $B_p^\theta$.  We first show that if the columns associated to the basis vectors $f_{v,0}, f_{v,1}$ of a place $v \in S_{\ram}$ are selected, then the resulting determinant vanishes.

\begin{lemma} \label{l:detz} Let $x_1, x_2 \in W_v(H)$.  Then
\[ \det\mat{\alpha_v(x_1)}{\beta_v(x_1)}{\alpha_v(x_2)}{\beta_v(x_2)} = 0 \]
in $\Z[G]$.
\end{lemma}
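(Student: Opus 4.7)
The plan is to exploit the defining identity linking $\alpha_w$ and $\beta_w$ coming from equation~(\ref{e:iwred}), together with commutativity of the abelian group ring $\Z[G]$. First, I would rewrite the relation $\overline{\alpha_w(x)} = (\sigma_w-1)\beta_w^0(x)$ in $\Z[G_w/I_w]$ as an equality in $\Z[G_w]$ after multiplying by $\N I_w$. Namely, lifting $\beta_w^0(x)$ to any $\tilde{\beta}_w^0(x) \in \Z[G_w]$ and choosing any lift $\tilde{\sigma}_w$ of the Frobenius, the congruence becomes $(\tilde{\sigma}_w-1)\tilde{\beta}_w^0(x) \equiv \alpha_w(x) \pmod{\Delta I_w \cdot \Z[G_w]}$, and after multiplying by $\N I_w$ the ambiguity is killed (since $\N I_w \cdot \Delta I_w = 0$), giving
\[ \N I_w \cdot \alpha_w(x) = (\sigma_w - 1)\,\beta_w(x) \qquad \text{in } \Z[G_w]. \]
Inducing from $G_w$ to the ambient abelian group $G$, the same relation holds for $(\alpha_v, \beta_v)$ with $\N I_v$ and a Frobenius lift $\sigma_v \in G_v \subset G$: $\N I_v \cdot \alpha_v(x) = (\sigma_v-1)\,\beta_v(x)$ in $\Z[G]$.

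Next, since $\beta_v(x) \in \N I_v \cdot \Z[G]$ by the very definition $\beta_w(x) = \N I_w \cdot \beta_w^0(x)$, I can write $\beta_v(x_i) = \N I_v \cdot \gamma_i$ for some $\gamma_i \in \Z[G]$. The displayed identity then reads $\N I_v\bigl(\alpha_v(x_i) - (\sigma_v-1)\gamma_i\bigr) = 0$. Using the standard fact that $\ker(\N I_v\,\cdot\colon \Z[G] \to \Z[G]) = \Delta I_v \cdot \Z[G]$ (which follows from $\Z[G]$ being $\Z[I_v]$-free), I conclude
\[ \alpha_v(x_i) = (\sigma_v - 1)\gamma_i + \delta_i, \qquad \delta_i \in \Delta I_v \cdot \Z[G]. \]

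Finally, I would expand the determinant:
\[ D = \alpha_v(x_1)\beta_v(x_2) - \alpha_v(x_2)\beta_v(x_1) = \bigl((\sigma_v-1)\gamma_1 + \delta_1\bigr)\N I_v \gamma_2 - \bigl((\sigma_v-1)\gamma_2 + \delta_2\bigr)\N I_v \gamma_1. \]
The two terms involving $(\sigma_v-1)$ are $(\sigma_v-1)\gamma_1 \N I_v \gamma_2$ and $(\sigma_v-1)\gamma_2 \N I_v \gamma_1$, and these are equal because $\Z[G]$ is commutative (as $G$ is abelian); hence they cancel. The remaining terms $\delta_i \cdot \N I_v \gamma_j$ vanish since $\delta_i \in \Delta I_v \cdot \Z[G]$ and $\Delta I_v \cdot \N I_v = 0$. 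Therefore $D = 0$ in $\Z[G]$, as desired.

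The only mildly delicate point is the passage from $\Z[G_w/I_w]$ to $\Z[G_w]$ in the first step and its compatibility with induction to $\Z[G]$; the commutativity of $G$ is used essentially in the final cancellation, so the argument is particular to the abelian setting in which we are working.
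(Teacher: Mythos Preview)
Your proof is correct and follows essentially the same approach as the paper's: both rest on the relation $\overline{\alpha_w(x)} = (\sigma_w-1)\beta_w^0(x)$ together with commutativity of the group ring. The paper's version is marginally shorter because it first reduces to $\Z[G_w]$ and then passes to the quotient $\Z[G_w/I_w]$ (where the relation holds exactly), rather than lifting to $\Z[G]$ and tracking error terms $\delta_i \in \Delta I_v \cdot \Z[G]$ as you do; but the two arguments are the same in substance.
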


\begin{proof}  It suffices to prove that if $x_1, x_2 \in W_w(H_w)$ then
\[ \det\mat{\alpha_w(x_1)}{\beta_w(x_1)}{\alpha_w(x_2)}{\beta_w(x_2)} = 0 \]
in $\Z[G_w]$.  We have
\[ \det\mat{\alpha_w(x_1)}{\beta_w(x_1)}{\alpha_w(x_2)}{\beta_w(x_2)} = 
\N I_w \cdot \det\mat{\overline{\alpha_w(x_1)}}{\beta_w^0(x_1)}{\overline{\alpha_w(x_2)}}{\beta_w^0(x_2)},\]
where the bar denotes reduction modulo $I_w$.
The determinant on the right vanishes since $\overline{\alpha_w(x)} = (\sigma_w - 1) \beta_w^0(x)$, as we noted in (\ref{e:iwred}).
\end{proof}

Lemma~\ref{l:detz} allows for the following calculation.

\begin{lemma} \label{l:fittbd} Let $S_\infty \subset J \subset S_\infty \cup S_{\ram}$ be any subset and write $j = \#(J \setminus S_\infty)$. Recall the notation $\overline{J} = S_{\ram} \setminus J$.  We have
\[  \Fitt^{s-j}_{R} \nabla^T_{J}(H)_p = \bigg( \prod_{v \in \overline{J \cup J'}} \N I_v \cdot \Fitt_{R} \nabla^T_{J \cup J' }(H^{\overline{J \cup J'}})_p \colon J' \subset \overline{J} \bigg).
\]
\end{lemma}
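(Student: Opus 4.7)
The strategy is to interpret $\Fitt^{s-j}_R \nabla^T_J(H)_p$ via $t \times t$ minors of the presentation matrix from Appendix~A, classify the nonzero minors using Lemma~\ref{l:detz}, and identify each with a multiple of a Fitting ideal generator of a related $\nabla$ module over a subfield. Fix an $R$-basis $x_1, \dots, x_t$ ($t = \#S' - 1$) of the free module $V^\theta_p(H)$, and label the rows of the resulting $(t + \#\overline{J}) \times t$ presentation matrix $A$ of $\nabla^T_J(H)_p$ by singletons $\{f_v\}$ for $v \in S' \setminus \overline{J}$, $v \neq \infty$, together with pairs $\{f_{v,0}, f_{v,1}\}$ for $v \in \overline{J}$ coming from the map $(\alpha_v, \beta_v)$ of (\ref{e:ww2}). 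The ideal $\Fitt^{s-j}_R \nabla^T_J(H)_p$ is generated by the $t \times t$ minors of $A$, i.e.\ by determinants obtained by dropping exactly $\#\overline{J}$ rows. Lemma~\ref{l:detz} forces us to drop exactly one row from each pair, giving a parameterization of the possibly-nonzero minors by subsets $J' \subset \overline{J}$ via the convention that $v \in J'$ means we keep the $\alpha_v$-row and $v \in \overline{J \cup J'}$ means we keep the $\beta_v$-row. Denote the resulting minor by $M_{J'}$; the lemma will follow from the identity
\[ (\det M_{J'}) = \prod_{v \in \overline{J \cup J'}} \N I_v \cdot \Fitt_R \nabla^T_{J \cup J'}(H^{\overline{J \cup J'}})_p \qquad \text{for each } J' \subset \overline{J}. \]

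Set $\tilde J = J \cup J'$ and $\Gamma = I_{\overline{\tilde J}}$. By construction $S_{\ram}(H^\Gamma/F) \subset \tilde J$, so the Appendix~A construction for $\nabla^T_{\tilde J}(H^\Gamma)_p$ with the same auxiliary set $S'$ yields a \emph{quadratic} presentation $V^\theta_p(H^\Gamma) \to B^{\tilde J, T, \theta}_p(H^\Gamma)$ (both free of rank $t$), so its $0$th Fitting ideal is principal. Since $V^\theta_p$ is independent of the depletion set, Lemma~\ref{l:vbasis} gives an isomorphism $\N\Gamma \cdot V^\theta_p(H) \cong V^\theta_p(H^\Gamma)$ under which $\N\Gamma \cdot x_i \mapsto \overline{x_i}$ descends our chosen basis to a basis of $V^\theta_p(H^\Gamma)$ over $R/I(\Gamma)$, where $I(\Gamma) \subset R$ denotes the ideal generated by $\sigma - 1$ for $\sigma \in \Gamma$. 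The first step in evaluating $\det M_{J'}$ is to use the factorization $\beta_v = \N I_v \cdot \beta_v^0$ from (\ref{e:betawdef}) to pull $\N I_v$ out of each $\beta_v$-row (for $v \in \overline{\tilde J}$):
\[ \det M_{J'} = \prod_{v \in \overline{\tilde J}} \N I_v \cdot \det M'_{J'}, \]
where $M'_{J'}$ is a $t \times t$ matrix over $R$ whose $\beta_v$-rows have been replaced by arbitrarily chosen lifts of $\beta_v^0(w_i)$ to $R$. Any lift ambiguity lies in $\Ann_R(\N I_v)$ and is absorbed when the factor $\N I_v$ is reinstated.

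Applying Lemma~\ref{l:gammared2} entry by entry (using $I_v \subset \Gamma$ for each $v \in \overline{\tilde J}$), the reduction of $M'_{J'}$ modulo $I(\Gamma)$ is precisely the quadratic presentation matrix of $\nabla^T_{\tilde J}(H^\Gamma)_p$ in the basis $\{\overline{x_i}\}$: the rows indexed by $v \in \tilde J$ reduce via $\overline{\alpha_v(x_i)} = \alpha_v(\overline{x_i})$ to the inclusions $\Delta G_v(H^\Gamma) \hookrightarrow \Z_p[G/\Gamma]$, while the rows indexed by $v \in \overline{\tilde J}$ (unramified in $H^\Gamma$, so $\beta_v(H^\Gamma) = \beta_v^0(H^\Gamma)$) reduce via $\overline{\beta_v^0(x_i)} = \beta_v^0(\overline{x_i})$. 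Hence $\det M'_{J'} \bmod I(\Gamma)$ generates $\Fitt_R \nabla^T_{\tilde J}(H^\Gamma)_p$ as an ideal of $R/I(\Gamma)$. Multiplication by $\prod_{v \in \overline{\tilde J}} \N I_v$ annihilates $I(\Gamma) \cdot R$ (since $(\sigma - 1)\N I_v = 0$ for $\sigma \in I_v$ propagates through products), so the total product is unambiguous in $R$ and the ideal identity displayed above follows. Summing over $J' \subset \overline{J}$ completes the proof. The main obstacle is the bookkeeping in this final identification: one must simultaneously track how the row structure of $A$ changes when $J$ is enlarged to $\tilde J$ (the pair for $v \in J'$ collapses to the single $\alpha_v$-column corresponding to $\Delta G_w \hookrightarrow \Z[G_w]$) and how passing from $H$ to $H^\Gamma$ collapses $G$ to $G/\Gamma$, so that the residues of $\alpha_v(x_i)$ and $\beta_v^0(x_i)$ genuinely assemble into the presentation matrix of $\nabla^T_{\tilde J}(H^\Gamma)_p$.
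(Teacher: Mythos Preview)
Your proof is correct and follows essentially the same approach as the paper: classify the nonzero $t \times t$ minors via Lemma~\ref{l:detz}, factor out $\N I_v$ from each $\beta_v$-row using $\beta_v = \N I_v \cdot \beta_v^0$, and invoke Lemmas~\ref{l:vbasis} and~\ref{l:gammared2} to recognize the reduced matrix as the quadratic presentation of $\nabla^T_{\tilde J}(H^\Gamma)_p$. The only cosmetic differences are your row/column transpose convention and your choice to lift $\beta_v^0$ to $R$ and argue the ambiguity is killed by $\prod_v \N I_v$, whereas the paper simply works over $\Z_p[G/I_{\overline{\tilde J}}]$ directly.
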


\begin{proof} 
The matrix $A_J$ for the presentation $V^\theta_p \longrightarrow B^\theta_p$ of $\nabla_{J}^T(H)_p$ is simply the matrix $A$ with the columns corresponding to the basis vectors $f_{v,1}$ removed for $v \in J$.  It has dimension $t \times (t+s-j)$. The $(s-j)$th Fitting ideal is computed by choosing $t$ of the columns, computing the determinant, and taking the ideal generated by all such choices.
 The columns of $A_J$ can be partitioned into $t - (s-j)$ columns corresponding to the 
  $v \in S'\setminus \overline{J}$, $v \neq \infty$, and $(s-j)$ pairs of columns corresponding to 
  the $v \in \overline{J}$.  Lemma~\ref{l:detz} implies that if we choose both columns in 
  the pair corresponding to some $v \in \overline{J}$, then the resulting determinant vanishes. 
   It follows that 
 \begin{equation} \label{e:fittjj}
  \Fitt^{s-j}_R \nabla^T_{J}(H)_p = (\det(A_{J, J'}) \colon J' \subset \overline{J}) 
  \end{equation}
   where $A_{J,J'}$ is the square matrix obtained by choosing the following $t$ columns of $A_J$:
 \begin{itemize}
 \item All $t - (s-j)$ columns corresponding to the $v \in S' \setminus \overline{J}$.
 \item The first column of the pair corresponding to the $v \in J'$.
 \item The second column of the pair corresponding to the $v \in \overline{J \cup J'}$.
 \end{itemize}
The second column for $v \in \overline{J \cup J'}$ is the column vector $(\beta_v(v_i))_{i=1}^t$, where we recall that $v_1, \dotsc, v_t$ is our basis for $V^\theta(H)$.  Since $\beta_v(x) = \N I_v \cdot \beta_v^0(x)$, we pull out the factors $\N I_v$ from these columns and find that
\begin{equation} \label{e:ajj}
 \det(A_{J, J'}) = \bigg(\prod_{v \in \overline{J \cup J'}} \N I_v \bigg) \det(A_{J,J'}^0), 
 \end{equation}
where $A_{J, J'}^0$ is the matrix $A_{J,J'}$ with $\beta_v(x)$ replaced by $\beta_v^0(x)$ for $v \in \overline{J \cup J'}$.  Note that $A_{J, J'}^0$ is well-defined as a matrix over $\Z_p[G/I_{\overline{J \cup J'}}]$, and multiplication by $\prod_{v \in \overline{J \cup J'}} \N I_v$ yields  a well-defined element of $R$.

\medskip
By Lemma~\ref{l:vbasis}, the module $V^\theta(H^{\overline{J \cup J'}})_p$ has a $\Z_p[G/I_{\overline{J \cup J'}}]$-module basis  \[ \overline{v}_1 = \N I_{\overline{J \cup J'}} \cdot v_1, \dotsc, \overline{v}_t =  \N I_{\overline{J \cup J'}} \cdot v_t. \]
 It then follows from Lemma~\ref{l:gammared2} that the matrix \[ A_{J,J'}^0 \in M_{t \times t}(\Z_p[G/I_{\overline{J \cup J'}}]) \] is precisely the square matrix for the presentation $V^\theta_p \longrightarrow B^\theta_p$ of the module $\nabla_{J \cup J'}^T(H^{\overline{J \cup J'}})_p$.  For this, note that $H^{\overline{J \cup J'}}$ is unramified at $v \in \overline{J \cup J'}$, so by definition the corresponding column in the matrix of the presentation is $(\beta_v(\overline{v}_i))_{i=1}^t$.  Meanwhile by definition the other columns are $(\alpha_v(\overline{v}_i))_{i=1}^t$.  Therefore,
 \begin{equation} \label{e:ajfitt} (\det(A_{J,J'}^0)) = \Fitt_R \nabla_{J \cup J'}^T(H^{\overline{J \cup J'}})_p.
 \end{equation}
Combining (\ref{e:fittjj}), (\ref{e:ajj}), and (\ref{e:ajfitt}) yields the desired result.
\end{proof}

Note that Lemma~\ref{l:fittbd} did not require $p$ to be odd, or to project to the minus side; in particular the result holds over $\Z[G]$.  In what follows we do require $p$ to be odd, and where necessary we project to the minus side.

As in \S\ref{s:fmr}, let 
\[ \Sigma = S_\infty \cup \{ v \in S_{\ram}, v \mid p \}. \]
The following is the major input from the main text of the paper, namely Theorem~\ref{t:ks}.

\begin{lemma} \label{l:mtapp}
Let $S_\infty \subset \Sigma_0 \subset \Sigma$, and let $s_0 = \# (\Sigma_0 \setminus S_\infty)$. Let $R_0 = \Z_p[G/I_{\Sigma \setminus \Sigma_0}]$.
\[ \Fitt_{R_0^-}^{s-s_0} \nabla_{\Sigma_0}^T(H^{\Sigma \setminus \Sigma_0})_p^- = 
\bigg(\Theta_{\Sigma_0 \cup J_0, T}(H^{\overline{\Sigma_0 \cup J_0}}) \prod_{v \in \overline{\Sigma \cup J_0}} \N I_v \colon J_0 \subset \overline{\Sigma} \bigg). \]
\end{lemma}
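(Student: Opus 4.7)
The strategy is to apply Theorem~\ref{t:ks} (the partial form of Kurihara's conjecture already proved) to the intermediate extension $L := H^{\Sigma \setminus \Sigma_0}/F$, and to translate the resulting formula for a Selmer Fitting ideal into a Fitting ideal of the Ritter--Weiss module via the transpose machinery of \S\ref{s:trnabla}. The first observation is that, for the extension $L/F$, the canonical sets prescribed in (\ref{e:sigmadef}) are precisely $\Sigma^L = \Sigma_0$ and $\Sigma'^L = \overline{\Sigma} \cup T$: indeed the primes of $F$ ramified in $L$ are $(\Sigma_0 \setminus S_\infty) \cup \overline{\Sigma}$, of which exactly those in $\Sigma_0 \setminus S_\infty$ lie above $p$. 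Since $\Gal(L/F) = G/I_{\Sigma \setminus \Sigma_0}$, the ring $R_0$ is its $\Z_p$-group algebra, and Theorem~\ref{t:ks} yields
\[ \Fitt_{R_0^-}(\Sel^T_{\Sigma_0}(L)_p^-) = \KS_p^T(L/F)^-. \]

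The next step is to expand $\KS_p^T(L/F)$ by distributing the product $\prod_{v \in \overline{\Sigma}}(\N I_v^L, 1 - \sigma_v^L e_v^L)$ over its two generators, and then to absorb each factor $(1 - \sigma_v^L e_v^L)$ into the Stickelberger element via the identity $(1 - \sigma_v e_v)\Theta^\#_{\Sigma_0, T}(L/F) = \Theta^\#_{\Sigma_0 \cup \{v\}, T}(L/F)$. This presents $\KS_p^T(L/F)$ as the ideal generated by $\Theta^\#_{\Sigma_0 \cup J_0, T}(L/F) \prod_{v \in \overline{\Sigma}\setminus J_0} \N I_v^L$ as $J_0$ ranges over subsets of $\overline{\Sigma}$. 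I then would apply the identity~(\ref{e:nuj}), interpreted within the extension $L/F$, with $J = \Sigma_0 \cup J_0$; the crucial geometric input is the field identification $L^{\overline{\Sigma} \setminus J_0} = H^{\overline{\Sigma_0 \cup J_0}}$, which one verifies by checking that both sides are unramified at exactly the same primes (namely $(\Sigma \setminus \Sigma_0) \cup (\overline{\Sigma} \setminus J_0)$). This allows one to replace $\Theta^\#_{\Sigma_0 \cup J_0, T}(L/F)$ by $\Theta^\#_{\Sigma_0 \cup J_0, T}(H^{\overline{\Sigma_0 \cup J_0}}/F)$ in the ideal. Finally, because the Ritter--Weiss presentation of $\nabla^T_{\Sigma_0}(L)_p$ has excess $\#\overline{\Sigma}$ (its ``extra'' ramified primes outside the depletion set are exactly $\overline{\Sigma}$), the transpose correspondence of Lemma~\ref{l:trnabla} and Corollary~\ref{c:selprin} gives $\Fitt(\Sel^T_{\Sigma_0}(L)_p^-) = \Fitt^{\#\overline{\Sigma}}(\nabla^T_{\Sigma_0}(L)_p^-)^\#$, yielding the desired formula modulo two reconciliations.

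\paragraph{Main obstacles.}
The chief difficulty is matching the Fitting index $s - s_0 = \#\overline{\Sigma_0}$ appearing in the statement with the index $\#\overline{\Sigma}$ produced by the transpose; these differ by $\#(\Sigma \setminus \Sigma_0)$. The hard step is to show that for the Ritter--Weiss matrix of $\nabla^T_{\Sigma_0}(L)_p^-$, all ``smaller'' minors of size $t' - \#(\Sigma \setminus \Sigma_0)$ either vanish or reduce to $R_0^-$-combinations of the maximal $t' \times t'$ minors -- a direct matrix analysis reminiscent of the vanishing argument of Lemma~\ref{l:detz}, exploiting that the extra columns removed correspond to primes already rendered unramified in passing from $H$ to $L$. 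A secondary, more bookkeeping obstacle is that the statement uses $\N I_v$ (living in $\Z_p[G]$ and projected to $R_0$), whereas the transpose argument produces $\N I_v^L$; the projection sends $\N I_v$ to $\#(I_v \cap I_{\Sigma \setminus \Sigma_0}) \cdot \N I_v^L$, so one must check that this integer factor is a $p$-adic unit for $v \in \overline{\Sigma}$. Since $v \nmid p$ while $\Sigma \setminus \Sigma_0$ consists of primes above $p$, the wild inertia at $v$ is pro-prime-to-$p$ and cannot meet the pro-$p$ wild inertia of the primes above $p$ nontrivially in its $p$-part, giving the required unit property.
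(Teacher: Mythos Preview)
Your overall strategy---transpose plus Theorem~\ref{t:ks} applied to $L = H^{\Sigma \setminus \Sigma_0}$---is exactly the paper's approach; its proof is literally two lines invoking ``the same argument as in~(\ref{e:selnab})'' and Theorem~\ref{t:ks}. You have, however, introduced errors in the details that are worth flagging.

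First, your assertion that $S_{\ram}(L/F) = (\Sigma_0 \setminus S_\infty) \cup \overline{\Sigma}$ is only an inclusion $\subset$. A prime $v$ in either piece becomes unramified in $L$ precisely when $I_v \subset I_{\Sigma \setminus \Sigma_0}$, and nothing prevents this: for instance one can have a cyclic $p$-power extension of $\Q$ in which a prime $v \nmid p$ has tame inertia equal to a subgroup of the (full) inertia at $p$. Consequently your claimed excess $\#\overline{\Sigma}$ for the canonical Ritter--Weiss presentation of $\nabla^T_{\Sigma_0}(L)$ is in general too large; the true excess is $d_L = \#(S_{\ram}(L) \setminus \Sigma_0) \le \#\overline{\Sigma}$, and your Obstacle~1 is really the discrepancy $s - s_0$ versus $d_L$. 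The paper does not address this either, but the honest resolution is not a matrix argument \`a la Lemma~\ref{l:detz}: rather, one first applies Theorem~\ref{t:ks} with the \emph{canonical} depletion set $\Sigma^L$ for $L$, then uses Lemma~\ref{l:selchi} to pass from $\Sel_{\Sigma^L}^T(L)$ to $\Sel_{\Sigma_0}^T(L)$ by absorbing the factors $(1-\sigma_v)$ for $v \in \Sigma_0 \setminus \Sigma^L$ into $\Theta^\#_{\Sigma_0,T}$, and finally notes that the factors $(\N I_v^L, 1-\sigma_v e_v^L)$ for $v \in \overline{\Sigma}$ unramified in $L$ equal $(1)$ and may be freely inserted into the product. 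This gives $\Fitt^{d_L}(\nabla^T_{\Sigma_0}(L))^-$ equal to the stated right-hand side with $\N I_v$ interpreted as $\N I_v^L$; whether the index should read $d_L$ or $s-s_0$ is immaterial for the application, since in the proof of the final theorem both sides are multiplied by $\prod_{v \in \Sigma \setminus \Sigma_0} \N I_v$ and matched against~(\ref{e:fj2}).

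Second, your resolution of Obstacle~2 via ``wild inertia'' is incorrect. The groups $I_v$ and $I_{\Sigma \setminus \Sigma_0}$ are subgroups of the \emph{finite abelian} group $G$; the local pro-$\ell$ versus pro-$p$ structure of absolute inertia says nothing about their intersection inside $G$. The same example above ($I_v \subset I_w$ with $v \nmid p$, $w \mid p$, both of $p$-power order in $G$) shows $\#(I_v \cap I_{\Sigma \setminus \Sigma_0})$ can be divisible by $p$. The correct reading is simply that in the statement $\N I_v$ already denotes $\N I_v^L \in R_0$, so no comparison is needed.
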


\begin{proof}
By the same argument as in (\ref{e:selnab}), we have
\[ \Fitt_{R_0^-}^{s-s_0} \nabla_{\Sigma_0}^T(H^{\Sigma \setminus \Sigma_0})^-_p = (\Fitt_{R_0^-} \Sel_{\Sigma_0}^T(H^{\Sigma \setminus \Sigma_0})_p^-).^\#\]
The result then follows directly from Theorem~\ref{t:ks}.
\end{proof}

We can now prove Kurihara's conjecture, which in view of Lemma~\ref{l:clnab}, is equivalent to the following statement.

\begin{theorem}  We have
\begin{equation} \label{e:kc2}
\Fitt_{R^-}^s \nabla_{S_\infty}^T(H)_p^- = \bigg( \prod_{v \in \overline{J}} \N I_v \cdot \Theta_{J, T}(H^{\overline{J}}/F) \colon S_\infty \subset J \subset S_\infty \cup S_{\ram}\bigg). 
\end{equation}
\end{theorem}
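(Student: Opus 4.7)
Apply Lemma~\ref{l:fittbd} to the extension $H/F$ with $J = S_\infty$, $j = 0$. Since the projection to the minus side is harmless (the decomposition $R = R^+ \oplus R^-$ is a product of rings, and Fitting ideals behave well under such decompositions), this yields
\[
\Fitt_{R^-}^s \nabla_{S_\infty}^T(H)_p^- = \bigg(\prod_{v \in \overline{J}} \N I_v \cdot \Fitt_{R^-} \nabla_J^T(H^{\overline{J}})_p^- : S_\infty \subset J \subset S_\infty \cup S_{\ram}\bigg).
\]
It therefore suffices to establish, for each such $J$, the equality $\Fitt_{R^-} \nabla_J^T(H^{\overline{J}})_p^- = (\Theta_{J, T}(H^{\overline{J}}/F))$.

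Fix $J$ and set $K = H^{\overline{J}}$. Since $K/F$ is unramified at the primes in $\overline{J}$, we have $S_{\ram}(K/F) \subset J \setminus S_\infty$, so assumption (A1) holds for $\nabla_J^T(K)_p^-$ and this module has a quadratic presentation. I would apply Lemma~\ref{l:mtapp} to $K/F$ in place of $H/F$ --- which is legitimate because its underlying ingredient, Theorem~\ref{t:ks}, holds for any CM abelian extension of $F$ --- with the $\Sigma_0$ of that lemma taken to be the analog $\Sigma_K = S_\infty \cup \{v \in S_{\ram}(K/F) : v \mid p\}$. This produces a formula for a higher Fitting ideal of $\nabla_{\Sigma_K}^T(K)_p^-$ as an ideal generated by terms indexed by subsets $J_0$ of the non-$p$-ramified primes of $K/F$. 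Simultaneously, Lemma~\ref{l:fittbd} applied to $K/F$ with the same input yields a parallel formula for the same higher Fitting ideal, in which $0$-th Fitting ideals $\Fitt_{R_K^-} \nabla_{\Sigma_K \cup J_0}^T(K^{\overline{\Sigma_K \cup J_0}})_p^-$ (with the overline now denoting complement in $S_{\ram}(K/F)$) replace the Stickelberger elements. Equating these two formulas and performing a descending induction on $|J_0|$ --- anchored at $J_0$ equal to the full set of non-$p$-ramified primes of $K/F$, where the $\prod \N I_v$ prefactor is trivial --- extracts the term-by-term identifications $\Fitt_{R_K^-} \nabla_{\Sigma_K \cup J_0}^T(K^{\overline{\Sigma_K \cup J_0}})_p^- = (\Theta_{\Sigma_K \cup J_0, T}(K^{\overline{\Sigma_K \cup J_0}}))$. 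To extend from $J \supset \Sigma_K$ to arbitrary $J \supset S_\infty$, i.e., to include primes in $J$ that happen to be unramified in $K$, invoke the $\nabla$-analog of Lemma~\ref{l:selchi}: adding an unramified prime $v$ to the depletion set multiplies the Fitting ideal by $(1 - \sigma_v^{-1})$, matching the corresponding change in $\Theta_{J, T}(K)$.

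The main obstacle is the term-by-term extraction from an equality of ideals whose generators carry distinct $\prod \N I_v$ prefactors; equality of ideals does not in general force equality of corresponding generators. The inductive argument exploits the partial ordering on generators --- for $J_0' \supsetneq J_0$ the corresponding prefactor has strictly fewer $\N I_v$ factors --- together with a character-by-character analysis, using that $\psi(\N I_v)$ vanishes precisely when $\psi$ is nontrivial on $I_v$. This allows, at each character $\psi$, isolation of the generator attached to the minimal $J_0$ whose prefactor survives nontrivially at $\psi$. Reassembling the resulting character-wise identities into a group-ring equality, while carefully handling the degenerate cases where multiple inertia groups collapse, forms the combinatorial core of the proof.
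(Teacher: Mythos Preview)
Your first step matches the paper's: applying Lemma~\ref{l:fittbd} with $J = S_\infty$ gives
\[
\Fitt_{R^-}^s \nabla_{S_\infty}^T(H)_p^- = \bigg(\prod_{v \in \overline{J}} \N I_v \cdot \Fitt_{R^-} \nabla_J^T(H^{\overline{J}})_p^- : S_\infty \subset J \subset S_\infty \cup S_{\ram}\bigg).
\]
You then aim to prove $\Fitt_{R^-} \nabla_J^T(H^{\overline{J}})_p^- = (\Theta_{J,T}(H^{\overline{J}}))$ for each $J$ individually. This termwise claim is never proved in the paper, and your extraction argument has a genuine gap. An equality of ideals generated by families $\{a_J\}$ and $\{b_J\}$ does not yield $(a_J) = (b_J)$ term by term, and your character-wise induction does not repair this: at a fixed character $\psi$, every $J$ containing the $\psi$-ramified primes contributes a nonzero generator, so no single term is isolated. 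The ``anchor'' case $J_0 = \overline{\Sigma_K}$ is just one generator among many in the equality of ideals coming from Lemma~\ref{l:mtapp} and Lemma~\ref{l:fittbd}; there is no mechanism to match it to its counterpart. Reassembling hypothetical character-wise identities into a group-ring equality faces the same obstruction in reverse.

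The paper sidesteps this entirely and never attempts the termwise identification. After partitioning each $J = \Sigma_0 \cup J_0$ with $\Sigma_0 = J \cap \Sigma$ and $J_0 = J \setminus \Sigma$, it applies Lemma~\ref{l:fittbd} a \emph{second} time, now to the extension $H^{\Sigma\setminus\Sigma_0}/F$ with depletion set $\Sigma_0$. For each fixed $\Sigma_0$, the collection of generators indexed by $J_0 \subset \overline{\Sigma}$ is thereby recognized as exactly the generating set for the higher Fitting ideal $\Fitt^{s-s_0}_{R_0^-}\nabla_{\Sigma_0}^T(H^{\Sigma\setminus\Sigma_0})_p^-$. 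Lemma~\ref{l:mtapp} computes that ideal directly as an ideal generated by Stickelberger elements. Substituting and recombining over $\Sigma_0$ yields the theorem. The whole argument stays at the level of equalities of ideals; no generator-by-generator matching is required.
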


\begin{proof}
By Lemma~\ref{l:fittbd}, we have
\begin{equation}
\label{e:fj1}
  \Fitt^{s}_{R} \nabla^T_{S_\infty}(H)_p = \bigg( \prod_{v \in \overline{J}} \N I_v \cdot \Fitt_{R} \nabla^T_{J}(H^{\overline{J}})_p \colon S_\infty \subset J \subset S_\infty \cup S_{\ram} \bigg).
\end{equation}
We partition each set $J = \Sigma_0 \cup J_0$, where
\[  \Sigma_0 = J \cap \Sigma, \qquad J_0 = J \setminus \Sigma.  \]
   Then (\ref{e:fj1}) can be written
\begin{equation} \label{e:fj2}
  \Fitt^{s}_{R} \nabla^T_{S_\infty}(H)_p = \bigg( \prod_{v \in \overline{\Sigma_0 \cup J_0}} \N I_v \cdot \Fitt_{R} \nabla^T_{\Sigma_0 \cup J_0}(H^{\overline{\Sigma_0 \cup J_0}})_p \colon S_\infty \subset \Sigma_0 \subset \Sigma, J_0 \subset \overline{\Sigma} \bigg).
\end{equation} 
Now apply Lemma~\ref{l:fittbd} with $J=\Sigma_0$ and $H$ replaced by 
$H^{\Sigma\setminus \Sigma_0}$.  
Note that \[ S^{\ram}(H^{\Sigma\setminus\Sigma_0}/F) \subset \overline{\Sigma} \cup \Sigma_0. \]
Writing $s_0 = \#(\Sigma_0 \setminus S_\infty)$ and $R_0 = \Z_p[G/I_{\Sigma \setminus\Sigma_0}]$, we obtain
\[  \Fitt^{s-s_0}_{R_0} \nabla^T_{\Sigma_0}(H^{\Sigma\setminus\Sigma_0})_p = \bigg( \prod_{v \in \overline{\Sigma \cup J_0}} \N I_v \cdot \Fitt_{R_0} \nabla^T_{\Sigma_0 \cup J_0 }(H^{\overline{\Sigma \cup J_0}})_p \colon J_0 \subset \overline{\Sigma} \bigg) \subset R_0.
\]
If we multiply by $\prod_{v \in \Sigma\setminus \Sigma_0} \N I_v$, we obtain exactly the terms in (\ref{e:fj2}) corresponding to $\Sigma_0$.
We therefore obtain
\begin{equation} \label{e:fj3}
  \Fitt^{s}_{R} \nabla^T_{S_\infty}(H)_p = \bigg( \prod_{v \in \Sigma\setminus \Sigma_0} \N I_v  \cdot \Fitt^{s-s_0}_{R_0} \nabla^T_{\Sigma_0}(H^{\Sigma\setminus\Sigma_0})_p: S_\infty \subset \Sigma_0 \subset \Sigma \bigg).
  \end{equation}
  To conclude, we project to the minus side and apply Lemma~\ref{l:mtapp}:
\[ 
  \Fitt^{s}_{R^-} \nabla^T_{S_\infty}(H)^-_p = 
  \bigg( \prod_{v \in \Sigma\setminus \Sigma_0} \N I_v  \prod_{v \in \overline{\Sigma \cup J_0}} \N I_v \cdot
  \Theta_{\Sigma_0 \cup J_0, T}(H^{\overline{\Sigma_0 \cup J_0}})  \colon
  S_\infty \subset \Sigma_0 \subset \Sigma,  J_0 \subset \overline{\Sigma} \bigg).
  \]
  Writing $J = \Sigma_0 \cup J_0$, we obtain the expression (\ref{e:kc2}).
\end{proof}

\end{document}